\documentclass[12pt]{amsart}
\usepackage{hyphenat}
\usepackage{amsmath}
\usepackage{amssymb}
\usepackage{amsthm}
\newtheorem{problem}{Problem}
\usepackage{amsfonts}
\usepackage{mathtools}
\usepackage{tikz-cd} 
\usepackage{array}
\usepackage{silence}
\usepackage{tikz}
\usepackage{booktabs}
\usepackage{faktor}

\usepackage{microtype} 
\usepackage{hyperref}  

\usepackage{geometry}
\newtheorem{theorem}{Theorem}[section]
\newtheorem{lemma}[theorem]{Lemma}
\newtheorem{proposition}[theorem]{Proposition}
\newtheorem{corollary}[theorem]{Corollary}

\theoremstyle{definition}
\newtheorem{openproblem}{Open Problem}

\theoremstyle{definition}
\newtheorem{definition}[theorem]{Definition}
\newtheorem{example}[theorem]{Example}
\newtheorem{remark}[theorem]{Remark}
\newcommand{\Pn}{P_n}
\newcommand{\StG}[1]{\text{St}(#1)}
\DeclareMathOperator{\Inj}{Inj}

\begin{document}


\title[The Margolis-Rhodes Monoid on a Graph]{The Margolis-Rhodes Monoid of Continuous Functions on a Graph}

\author{Stuart Margolis}
\address{Department of Mathematics \\ Bar-Ilan University \\ Ramat-Gan, Israel}
\email{margolis@math.biu.ac.il}

\author{John Rhodes}
\address{Department of Mathematics \\ University of California-Berkeley \\ Berkeley, California, USA}
\email{blvdbastille@gmail.com}


\date{}

\begin{abstract}
We investigate the structural, combinatorial, ideal-theoretic, and Krohn-Rhodes complexity of the Margolis-Rhodes monoid $\operatorname{MR}(\Gamma)$ of partial continuous functions on a finite simple graph $\Gamma$, viewed topologically as a 1-dimensional simplicial complex. Alongside the full monoid, we examine some subsemigroups including $\StG{\Gamma}$, defined by the condition that the full inverse image of any vertex is strictly an edge or the empty set, and $\Inj(\Gamma)$, the monoid of all partial $1$-$1$ continuous functions. We provide explicit combinatorial enumerations and structure for path graphs ($\Pn$) and cycle graphs ($C_n$). We compute Green's relations, showing in particular that the partial order of regular $\mathcal{J}$-classes is isomorphic to the poset of induced subgraphs of $\Gamma$. Finally, we apply these structural invariants to Krohn-Rhodes complexity theory. It is known that the Margolis-Rhodes monoid has complexity at most $2$ for $\operatorname{MR}(\Gamma)$ and $1$ for $\StG\Gamma$ and $\operatorname{Inj}(\Gamma)$. We show that for cycles $C_n$, the complexity of its Margolis-Rhodes monoid is 2 if and only if $n$ is at least 4. For path graphs, we prove that the complexity of the Margolis-Rhodes monoid of $P_n$ is 2 if $n$ is at least 14.
\end{abstract}

\maketitle

\section{Introduction}

One of the most important results of finite semigroup theory is the Krohn-Rhodes Decomposition Theorem \cite{qtheory}. This theorem proves that any finite transformation semigroup can be decomposed via wreath products into finite groups and finite aperiodic semigroups. This naturally leads to the definition of Krohn-Rhodes complexity, which measures the minimal number of groups needed in such a wreath product decomposition.

 The authors of this paper explore transformation semigroups whose fiber (inverse image of a point) size is strictly bounded. Let $(Q,S)$ be a finite transformation semigroup. The system is defined to have degree at most 2 if every transformation $s \in S$ is at most 2-to-1; that is, for every point $x \in Q$, the fiber satisfies $|x s^{-1}| \le 2$. Margolis and Rhodes \cite{cremona, deg2part2} encoded fibers geometrically: elements of the underlying set $Q$ are treated as vertices, while the pairs of points collapsed by the action define the edges of a graph called the {\em fiber graph} of the transformation semigroup. This led to the Margolis-Rhodes Representation Theorem, which proves that a finite transformation semigroup has degree at most 2 if and only if it can be faithfully embedded as a subsemigroup of a Margolis-Rhodes monoid $\operatorname{MR}(\Gamma)$ on some finite simple graph $\Gamma = (V, E)$.

Within this framework, the graph $\Gamma$ is viewed as a 1-dimensional simplicial complex whose simplices comprise single vertices $\{v\}$ and single edges $\{u, v\} \in E$. A partial transformation $f \colon V \rightarrow V$ belongs to the Margolis-Rhodes monoid $\operatorname{MR}(\Gamma)$ if and only if it satisfies a continuity condition: the full preimage of any simplex under the action must itself be a single simplex or completely empty. This condition mandates that vertex preimages  and edge preimages cannot encompass more than two vertices, connecting the size of collapse of functions to the dimension of the complex.. See \cite{cremona, deg2part2} for background on transformation semigroups of degree at most 2 and their representation as monoids of continuous functions on graphs.

Refining this further introduces subsemigroups that isolate distinct topological behaviors. We focus primarily on the subsemigroups $\StG{\Gamma}$ and $\Inj(\Gamma)$. Rather than allowing arbitrary partial continuous maps, $\StG{\Gamma}$ restricts its elements to those continuous maps where the full inverse image of any vertex is strictly a single edge or the empty set. Equivalently, $\StG{\Gamma}$ can be characterized as the collection of all maps that send the edges of a matching of size $k$ to an independent set of the same size. Given the importance of matchings and independent sets in graph theory, we are surprised that this semigroup has never been considered as far as we know.

$\Inj(\Gamma)$ is the submonoid of $\operatorname{MR}(\Gamma)$ consisting of all partial $1$-$1$ continuous maps on $\Gamma$. Equivalently, it is the monoid of all partial functions $f$ such that $f^{-1}$ induces a graph embedding from the induced subgraph on the image of $f$ to the induced subgraph on the domain of $f$.

The aim of this paper is to study $\operatorname{MR}(\Gamma)$ and $\StG{\Gamma}$ across classical graph families, providing explicit combinatorial computations and analyzing their ideal structures. We tabulate the precise orders of these monoids for path graphs ($\Pn$) and cycle graphs ($C_n$). We compute Green's relations and show that the partial order of regular $\mathcal{J}$-classes of $\operatorname{MR}(\Gamma)$ is isomorphic to the poset of isomorphism classes of induced subgraphs of $\Gamma$. On the other hand, the partial order of regular $\mathcal{J}$-classes of $\StG{\Gamma}$ is always a chain. We determine when these semigroups are regular.

We then study Krohn-Rhodes complexity. It is known that the Krohn-Rhodes complexity $\operatorname{MR}(\Gamma)c$ is at most $2$ for any graph. We prove that for any graph $\Gamma$, the Krohn-Rhodes complexity of both $\StG{\Gamma}$ and $\Inj(\Gamma)$ are at most 1. We show that $MR(C_n)c=2$ if and only if $n>3$. For paths $\Pn$, we prove that $MR(\Pn)c=2$ if $n>13$.

We then give a semigroup theoretic and an order theoretic characterization of the Margolis-Rhodes monoid. We prove that $\operatorname{MR}(\Gamma)$ is the translational hull of the aperiodic 0-simple semigroup whose structure matrix is the incidence matrix of the simplicial complex associated to $\Gamma$. This allows us to prove that $\Gamma$ is isomorphic to $\Gamma'$ is and only if $\operatorname{MR}(\Gamma)$ is isomorphic to $\operatorname{MR}(\Gamma')$. Analogous results hold for $\StG{\Gamma}$ if $\Gamma$ has no isolated vertices.

We then show that $\operatorname{MR}(\Gamma)$ is a monoid of Galois correspondences on the face poset of $\Gamma$ extended by adding a new top element. Thus continuous functions in our sense are also continuous functions in the sense of topology in the Alexandroff topology on the extended face poset.

We close with open questions including questions about the quasivariety and pseudovariety of finite semigroups generated by transformation semigroups of degree at most $2$. We recall the generalization of this work to transformation semigroups of degree at most $k$ and their faithful representations on simplicial complexes of dimension $k-1$.

In the rest of the paper we write ``complexity'' for Krohn-Rhodes complexity. For background in complexity theory see \cite{Arbib, qtheory}. Throughout the paper for all $n \ge 1$, $\Pn$ denotes the path graph with vertex set $V = \{1, \dots, n\}$ and edge set $E = \{\{i, i+1\} \mid 1 \le i < n\}$.

\section{Preliminaries}

\subsection{Semigroups and Transformation Semigroups}
We assume the reader is familiar with the basic theory of finite semigroups and monoids. A \emph{transformation semigroup} $(Q, S)$ consists of a finite set $Q$ and a subsemigroup $S$ of the partial transformation monoid $\mathcal{PT}(Q)$ acting on the right of $Q$. The action of an element $s \in S$ on a state $q \in Q$ is denoted by $qs$ if $q \in \operatorname{Dom}(s)$ and undefined otherwise. 

Two transformation semigroups $(Q_1, S_1)$ and $(Q_2, S_2)$ are \emph{isomorphic} if there exists a bijection $\phi \colon Q_1 \to Q_2$ and a semigroup isomorphism $\psi \colon S_1 \to S_2$ such that $(qs)\phi = (q\phi)(s\psi)$ for all $q \in Q_1$ and $s \in S_1$, where defined. 

We freely use standard concepts from semigroup theory, including Green's relations ($\mathcal{R}$, $\mathcal{L}$, $\mathcal{J}$, $\mathcal{H}$, $\mathcal{D}$), ideals, idempotents, and regular elements, as found in standard introductory texts such as \cite{Arbib, CP, GM, howiebook, Lallement, qtheory}.

\subsection{The Krohn-Rhodes Theorem and Complexity}
The Krohn-Rhodes Decomposition Theorem is a foundational result in algebraic automata theory \cite{Arbib, Lallement, qtheory}. It states that every finite transformation semigroup $(Q, S)$ divides a wreath product of finite groups and finite aperiodic semigroups. A semigroup is aperiodic if it contains no non-trivial subgroups. 

The \emph{Krohn-Rhodes complexity} (or simply \emph{complexity}), denoted $Sc$, is the minimum number of groups required in such a wreath product decomposition to faithfully cover $S$. By definition:
\begin{enumerate}
    \item $Sc = 0$ if and only if $S$ is aperiodic.
    \item $Sc \le n$ if $S$ divides a wreath product of the form $A_n \wr G_n \wr \dots \wr A_1 \wr G_1 \wr A_0$, where the $G_i$ are finite groups and the $A_i$ are finite aperiodic semigroups.
\end{enumerate}

Recently, Margolis, Rhodes and Schilling \cite{complexity1, complexityn} proved that the complexity $Sc$ is computable. The semigroups considered in this paper will have complexity at most 2. We will look for combinatorial, geometric and topological conditions on a transformation semigroup in our class that helps distinguish between those of complexity 1 and those of complexity 2.

\subsection{Monoids of Continuous Maps on Incidence Structures}
The algebraic study of continuous maps on discrete combinatorial structures was initiated by Margolis \cite{Tilsonnumber}. An \emph{incidence structure} is a pair $\mathcal{D} = (V, \mathcal{B})$ where $V$ is a set of points and $\mathcal{B}$ is a collection of non-empty subsets of $V$ called blocks. Important examples include BIBDs (Balanced Incomplete Block Designs), PBDs (Pairwise Balanced Designs), Affine Geometries (where the blocks are points, lines, planes, etc.), projective geometries, simplicial complexes (where simplices form the blocks) and more.

A partial function $f \colon V \rightarrow V$ is said to be \emph{continuous} if the full inverse image of every block $B \in \mathcal{B}$ is either a block or the empty set; that is, $B f^{-1} \in \mathcal{B} \cup \{\emptyset\}$. 

The collection of all such continuous maps forms a submonoid of the partial transformation monoid on $V$. Dinitz and Margolis \cite{bibdtrans, bibdcont, projcont} classified the continuous maps for finite projective spaces and various classes of BIBDs, demonstrating that these monoids encode geometry beyond automorphism groups, which form the group of units of the monoid of continuous functions on a design. 

In \cite{AmigoWilson}, Margolis, Rhodes, and Silva showed that continuous maps on open subsets of pairwise balanced designs are precisely the morphisms used by Wilson \cite{Wilson1, Wilson2, Wilson3} in his fundamental result on the existence of designs. In this connection, the monoid of continuous morphisms on the open sets of a PBD $\mathcal{D}$ is called the \emph{Wilson Monoid} of $\mathcal{D}$. It is an algebraic invariant capturing the combinatorics of such structures. It gives a deep connection between PBDs and Truncated Boolean Representable Simplicial Complexes \cite{Stu9}. The paper \cite{AmigoWilson} surveys Wilson monoids for Steiner triple systems of size up to 19.

\subsection{Transformation Semigroups of Degree 2}
The continuous maps defined on 1-dimensional simplicial complexes (graphs) provide the geometric foundation for studying transformation semigroups with fiber size at most 2 . Let $(Q, S)$ be a finite partial transformation semigroup. We say that $(Q, S)$ has \emph{degree at most 2} if, for every $s \in S$ and every $q \in Q$, the cardinality of the fiber satisfies $|q s^{-1}| \le 2$.

In \cite{cremona} and \cite{deg2part2}, the authors established a connection between degree 2 transformation semigroups and the topology of graphs. If $\Gamma = (V, E)$ is a finite simple graph, we view it as a simplicial complex where the simplices are the vertices and edges. The Margolis-Rhodes monoid $\operatorname{MR}(\Gamma)$ is the monoid of all continuous partial functions on $V$. Thus these are the partial functions such that the inverse image of every vertex or edge is either a vertex, an edge, or empty. The Margolis-Rhodes Representation Theorem proves that a finite transformation semigroup has degree of at most $2$ if and only if it can be faithfully represented as a subsemigroup of $\operatorname{MR}(\Gamma)$ for some graph $\Gamma$.

More generally, a transformation semigroup $(Q,S)$ has degree at most $k$ (we write $(X,S)d \leq k$) if for each $x \in X, s \in S$, the fiber $x s^{-1}$ satisfies $|x s^{-1}| \leq k$. Margolis proved in \cite{Tilsonnumber} that a transformation semigroup has degree at most $k$ if and only if it is embeddable into a monoid of continuous functions on a simplicial complex of dimension at most $k-1$. The Margolis-Rhodes Representation Theorem is the case $k=2$ of this case.

The main result of \cite{Tilsonnumber} shows that if $(Q,S)$ is a transformation semigroup of degree $k$, then the complexity of $S$ is bounded above by $k$. That is, degree is an upper bound to complexity, $(Q,S)c \leq (Q,S)d$.

In this paper we study the case of transformation semigroups of degree 2 as continuous maps on graphs. Thus a transformation semigroup $(Q,S)$ of degree at most 2 is either aperiodic, has complexity 1 or has complexity 2. We use algebraic, combinatorial, geometric and topological tools to distinguish between the transformation semigroups of degree 2 with complexity 1 and those with complexity 2.

\section{The Margolis-Rhodes Monoids and Their Subsemigroups}

\subsection{Topological Continuity on Graphs}
Let $\Gamma = (V, E)$ be a finite simple graph. We view $\Gamma$ as a $1$-dimensional simplicial complex whose set of simplices $\Delta(\Gamma)$ consists of all singletons $\{v\}$ for $v \in V$ and all pairs $\{u, v\}$ such that $\{u, v\} \in E$. For any partial transformation $f \colon V \rightarrow V$, we denote its domain by $\operatorname{Dom}(f) \subseteq V$ and its image by $\operatorname{Im}(f) = (\operatorname{Dom}(f))f \subseteq V$. For any subset $U \subseteq V$, the full inverse image under $f$ is defined as:
\[
U f^{-1} = \{ v \in \operatorname{Dom}(f) \mid vf \in U \}.
\]
Following the general formulation of continuous maps on incidence structures, we formally define the full Margolis-Rhodes monoid.

\begin{definition}
The \emph{Margolis-Rhodes monoid} $\operatorname{MR}(\Gamma)$ is the set of all partial transformations $f \colon V \rightarrow V$ such that for every simplex $\sigma \in \Delta(\Gamma)$, the full preimage $\sigma f^{-1}$ belongs to $\Delta(\Gamma) \cup \{\emptyset\}$.
\end{definition}

Composition of elements in $\operatorname{MR}(\Gamma)$ is the standard composition of partial functions from left to right: for $f, g \in \operatorname{MR}(\Gamma)$, the domain of $fg$ is $\operatorname{Dom}(fg) = \{v \in \operatorname{Dom}(f) \mid vf \in \operatorname{Dom}(g)\}$, and the action yields $v(fg) = (vf)g$. It is immediate that if $f$ and $g$ are continuous, their composition $fg$ is also continuous, making $\operatorname{MR}(\Gamma)$ a monoid with the identity map $1_V \colon V \rightarrow V$ serving as the identity element.

\subsection{Strict Continuous Maps on a Graph}

We now define a subsemigroup $\StG{\Gamma}$ called the {\em semigroup of strict continuous functions} on  $\Gamma$ defined by a stricter topological requirement on vertex fibers.

\begin{definition}
The subsemigroup $\StG{\Gamma} \subseteq \operatorname{MR}(\Gamma)$ consists of all continuous partial transformations $f \in \operatorname{MR}(\Gamma)$ such that for every vertex $v \in V$, the fiber $v f^{-1}$ is strictly a single edge or empty. That is, $v f^{-1} \in E \cup \{\emptyset\}$.
\end{definition}

This requirement leads to an elegant combinatorial characterization of the elements of $\StG{\Gamma}$ in terms of the underlying graph geometry. Recall that a \emph{matching} in $\Gamma$ is a set of pairwise vertex-disjoint edges, and an \emph{independent set} (sometimes called an anti-clique) is a set of pairwise non-adjacent vertices.

\begin{lemma}\label{strict}
A partial transformation $f$ belongs to $\StG{\Gamma}$ if and only if:
\begin{enumerate}
    \item $\operatorname{Dom}(f)$ is the union of vertices of a matching $M_f = \{e_1, e_2, \dots, e_k\}$ in $\Gamma$.
    \item $\operatorname{Im}(f)$ is an independent set $I_f = \{v_1, v_2, \dots, v_k\}$ in $\Gamma$.
    \item Both endpoints of each edge $e_i \in M_f$ are mapped by $f$ to the unique vertex $v_i \in I_f$, i.e., $u f = v_i$ for all $u \in e_i$.
\end{enumerate}
\end{lemma}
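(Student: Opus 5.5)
The proof will check both directions directly from the definitions of $\operatorname{MR}(\Gamma)$ and $\StG{\Gamma}$, working one simplex at a time.

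\emph{Forward direction.} Let $f \in \StG{\Gamma}$ and let $I_f = \operatorname{Im}(f) = \{v_1,\dots,v_k\}$. Every vertex of the domain lies in exactly one fiber, so
\[
\operatorname{Dom}(f) = \bigsqcup_{v_i \in I_f} v_i f^{-1}.
\]
By the strictness condition each nonempty fiber $e_i := v_i f^{-1}$ is an edge. Distinct fibers are disjoint, so $M_f = \{e_1,\dots,e_k\}$ is a matching covering $\operatorname{Dom}(f)$, and $f$ collapses $e_i$ onto $v_i$. This gives (1) and (3).

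For (2), suppose $\{v_i,v_j\} \in E$ with $i \neq j$. Continuity applied to this edge says its full preimage $e_i \cup e_j$ must be a simplex or empty. But $e_i \cup e_j$ has four vertices, so it is neither, a contradiction. Hence $I_f$ is independent.

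\emph{Reverse direction.} Given $f$ satisfying (1)--(3), we check continuity and strictness on each simplex.
\begin{itemize}
\item For a vertex $v$: if $v = v_i \in I_f$ then $v f^{-1} = e_i \in E$, and otherwise $v f^{-1} = \emptyset$. This gives both continuity at vertices and the strictness condition. It uses that the $v_i$ are distinct (they are labelled as the $k$ distinct elements of $I_f$, one per edge), so that $v_i f^{-1}$ is exactly $e_i$ and not a union of several edges.
\item For an edge $\{u,w\}$: if neither endpoint is in $I_f$, the preimage is empty. If exactly one endpoint, say $u = v_i$, is in $I_f$, the preimage is $e_i \in \Delta(\Gamma)$. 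Both endpoints cannot lie in $I_f$, since $I_f$ is independent.
\end{itemize}
So $f \in \operatorname{MR}(\Gamma)$ and all vertex fibers lie in $E \cup \{\emptyset\}$, i.e.\ $f \in \StG{\Gamma}$.

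The only real point is using edge-continuity to force $I_f$ to be independent, together with reading condition (3) as saying the $v_i$ are distinct. Everything else is unwinding the definitions.
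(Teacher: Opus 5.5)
Your proof is correct and follows essentially the same route as the paper. Nonempty fibers are pairwise disjoint edges, which gives the matching; continuity at an edge $\{v_i,v_j\}$ of the image would force the four-vertex set $e_i\cup e_j$ to be a simplex, which gives independence; and the converse checks vertex and edge preimages, using independence to rule out edges with both endpoints in $I_f$. Your explicit remark that the $v_i$ must be distinct, so that $v_i f^{-1}$ is exactly $e_i$, states a point the paper leaves implicit in its labelling $I_f=\{v_1,\dots,v_k\}$.
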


\begin{proof}
Suppose $f \in \StG{\Gamma}$. By definition, for every $v \in \operatorname{Im}(f)$, $v f^{-1}$ is a single edge. Since $f$ is a well-defined function, the fibers $v f^{-1}$ for distinct vertices in $\operatorname{Im}(f)$ must be vertex-disjoint. Thus, the collection of edges $M_f = \{v f^{-1} \mid v \in \operatorname{Im}(f)\}$ forms a matching, and $\operatorname{Dom}(f) = \bigcup_{e \in M_f} e$.

Next, we show that $\operatorname{Im}(f)$ is an independent set. Suppose for a contradiction that there exist distinct $v, w \in \operatorname{Im}(f)$ such that the edge $e = \{v, w\} \in E$. By the definition of $\operatorname{MR}(\Gamma)$, continuity requires that the preimage $e f^{-1} = v f^{-1} \cup w f^{-1}$ must be an element of $\Delta(\Gamma) \cup \{\emptyset\}$. However, since $f \in \StG{\Gamma}$, the fibers $v f^{-1} = e_i$ and $w f^{-1} = e_j$ are two distinct, disjoint edges. Their union $e_i \cup e_j$ contains exactly 4 distinct vertices, which can neither be a single vertex nor a single edge. This contradiction proves that $\operatorname{Im}(f)$ is an independent set.

Conversely, if $f$ satisfies conditions (1)--(3), then for any vertex $v \in V$, $v f^{-1}$ is either an edge in $M_f$ (if $v \in I_f$) or empty. For any edge $e = \{v, w\} \in E$, since $\operatorname{Im}(f) = I_f$ is an independent set, at most one of $v$ or $w$ can belong to $\operatorname{Im}(f)$. Thus, the inverse image of any edge $\{v, w\}$ is at most the inverse image of a single vertex, which is an edge in $M_f$ or empty. In all cases, preimages of simplices map to simplices or the empty set. Hence, $f \in \operatorname{MR}(\Gamma)$ and satisfies the defining condition of $\StG{\Gamma}$.
\end{proof}

\begin{corollary}

The maximum rank of an element $f \in \operatorname{St}(\Gamma)$ is $\min\{\alpha(\Gamma), \mu(\Gamma)\}$ where $\mu(\Gamma)$ is the largest size of a matching of $\Gamma$ and $\alpha(\Gamma)$ is the largest size of an independent set in $\Gamma)$. In a bipartite graph, the maximum rank of an element of $\operatorname{St}(\Gamma)$ is $\mu(\Gamma)$.

\end{corollary}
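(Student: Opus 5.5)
The plan is to read both inequalities directly off Lemma~\ref{strict}, which describes every element of $\StG{\Gamma}$ completely. Here the rank of $f$ means $|\operatorname{Im}(f)|$.

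\emph{Upper bound.} Let $f \in \StG{\Gamma}$ have rank $k$. By Lemma~\ref{strict}, $\operatorname{Im}(f) = I_f$ is an independent set of size $k$, so $k \le \alpha(\Gamma)$. The matching $M_f$ has one edge $v f^{-1}$ for each $v \in \operatorname{Im}(f)$. These edges are pairwise disjoint because $f$ is a function, so $|M_f| = k$ and hence $k \le \mu(\Gamma)$. Therefore $k \le \min\{\alpha(\Gamma), \mu(\Gamma)\}$.

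\emph{Attainment.} Let $k = \min\{\alpha(\Gamma), \mu(\Gamma)\}$. Choose a matching $\{e_1, \dots, e_k\}$ by taking any $k$ edges of a maximum matching. Choose an independent set $\{v_1, \dots, v_k\}$ by taking any $k$ vertices of a maximum independent set; subsets of matchings and of independent sets are again matchings and independent sets. Define $f$ on $e_1 \cup \dots \cup e_k$ by $uf = v_i$ for $u \in e_i$. Lemma~\ref{strict} imposes no compatibility condition between the matching and the independent set; they may even overlap. So conditions (1)--(3) hold and $f \in \StG{\Gamma}$ has rank exactly $k$. If $k = 0$, the empty map (when $\Gamma$ has no edges) gives rank $0$, so the formula holds in that degenerate case as well.

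\emph{Bipartite case.} It suffices to show $\mu(\Gamma) \le \alpha(\Gamma)$ when $\Gamma$ is bipartite. Let $V = A \sqcup B$ be a bipartition and $n = |V|$. A matching of size $\mu$ covers $2\mu$ distinct vertices, so $2\mu \le n$. The larger class, say $A$, is independent and has $|A| \ge n/2 \ge \mu$. Hence $\alpha(\Gamma) \ge \mu(\Gamma)$ and the minimum equals $\mu(\Gamma)$. Alternatively, König's theorem gives $\alpha = n - \mu \ge \mu$.

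I do not expect any real obstacle here. The only point needing care is the attainment step: one must notice that Lemma~\ref{strict} allows the matching and the independent set to be chosen independently of each other.
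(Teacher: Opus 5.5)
Your proof is correct and takes the same approach as the paper. The paper just says the first claim follows from Lemma~\ref{strict} and cites the well-known inequality $\mu(\Gamma)\le\alpha(\Gamma)$ for bipartite graphs. You fill in the details the paper leaves implicit: the matching and the independent set in Lemma~\ref{strict} can be chosen independently to attain the bound, and $\mu\le\alpha$ has an elementary proof via the larger bipartition class.
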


\begin{proof}

The  first statement follows from Lemma \ref{strict}. In a bipartite graph it is well known that $\mu(\Gamma) \le \alpha(\Gamma)$.
    
\end{proof}
\subsection{The Submonoid $Inj(\Gamma)$}

On the opposite end of fiber restrictions, we can enforce that every partial map is one-to-one.

\begin{definition}
The submonoid $\Inj(\Gamma) \subseteq \operatorname{MR}(\Gamma)$ consists of all injective continuous partial maps on $\Gamma$. 
\end{definition}

For any subset $U \subseteq V$, we denote by $\Gamma[U]$ the induced subgraph of $\Gamma$ on the vertex set $U$. Injective continuous maps have a completely natural description via induced subgraph embeddings. Recall that a graph morphism, that is a function between graphs that sends an edge to an edge, is an embedding if it is 1-1 on vertices and edges.

\begin{lemma}\label{Inj}
A partial transformation $f$ belongs to $\Inj(\Gamma)$ if and only if $f$ is a bijection from $\operatorname{Dom}(f)$ to $\operatorname{Im}(f)$, and the inverse mapping $f^{-1}$ induces a graph embedding from $\Gamma[\operatorname{Im}(f)]$ to $\Gamma[\operatorname{Dom}(f)]$.
\end{lemma}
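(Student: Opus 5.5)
We prove both directions directly from the definition of $\operatorname{MR}(\Gamma)$, by translating the continuity condition on vertex preimages and on edge preimages separately. The key observation is that for an injective partial map $f$, the preimage of a vertex $v$ is $\{vf^{-1}\}$ if $v\in\operatorname{Im}(f)$ and empty otherwise. So vertex continuity is automatic, and all the content lies in the edge condition.

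For the forward direction, let $f\in\Inj(\Gamma)$. Then $f$ is injective by definition, hence a bijection from $\operatorname{Dom}(f)$ onto $\operatorname{Im}(f)$. Let $g=f^{-1}\colon\operatorname{Im}(f)\to\operatorname{Dom}(f)$. Take an edge $\{v,w\}$ of $\Gamma[\operatorname{Im}(f)]$. Then $\{v,w\}f^{-1}=\{vg,wg\}$ consists of two distinct vertices, by injectivity of $f$. Continuity forces this set to be a simplex, and a two-element simplex is an edge of $\Gamma$. Both endpoints lie in $\operatorname{Dom}(f)$, so $\{vg,wg\}$ is an edge of $\Gamma[\operatorname{Dom}(f)]$. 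Thus $g$ is a graph morphism that is one-to-one on vertices, and hence one-to-one on edges, so it is an embedding in the sense recalled before the lemma.

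For the converse, suppose $f$ is a bijection $\operatorname{Dom}(f)\to\operatorname{Im}(f)$ and $g=f^{-1}$ is an embedding $\Gamma[\operatorname{Im}(f)]\to\Gamma[\operatorname{Dom}(f)]$. Vertex preimages are singletons or empty, as noted above. For an edge $\{v,w\}\in E$ we split into cases according to $|\{v,w\}\cap\operatorname{Im}(f)|$:
\begin{itemize}
\item if the intersection has size $0$, the preimage is empty;
\item if it has size $1$, the preimage is a single vertex;
\item if it has size $2$, then $\{v,w\}$ is an edge of the induced subgraph $\Gamma[\operatorname{Im}(f)]$, so $\{vg,wg\}$ is an edge of $\Gamma$.
\end{itemize}
In every case the preimage lies in $\Delta(\Gamma)\cup\{\emptyset\}$. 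So $f\in\operatorname{MR}(\Gamma)$, and being injective, $f\in\Inj(\Gamma)$.

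No step is a real obstacle. The only point needing care is the precise meaning of ``embedding'': the lemma only requires that $g$ send edges to edges and be injective, not that it be induced, i.e.\ not that non-edges go to non-edges. Non-adjacent pairs in the image may have adjacent preimages, since continuity constrains only preimages of simplices. I would remark on this explicitly so the statement is not misread as claiming that $\Gamma[\operatorname{Im}(f)]$ and $\Gamma[\operatorname{Dom}(f)]$ are isomorphic.
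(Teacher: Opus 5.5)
Your proof is correct and follows essentially the same route as the paper. In the forward direction, continuity applied to an edge of $\Gamma[\operatorname{Im}(f)]$ gives a preimage of two distinct vertices, which must therefore be an edge. In the converse, vertex preimages are handled by injectivity, and edge preimages by how many endpoints lie in $\operatorname{Im}(f)$. Your closing remark that the embedding need not be induced is accurate and matches the paper's definition of embedding. It also matches the paper's subsequent corollary, where regularity in $\operatorname{Inj}(\Gamma)$ corresponds exactly to $f$ being an isomorphism $\Gamma[\operatorname{Dom}(f)]\to\Gamma[\operatorname{Im}(f)]$.
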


\begin{proof}
Let $f \in \Inj(\Gamma)$. Since $f$ is injective, so is $f^{-1}$, that is, for every $v \in \operatorname{Im}(f)$, the fiber $v f^{-1}$ is a single vertex. If $w \in \operatorname{Im}(f)$ and $\{v, w\} \in E$, continuity forces $\{v, w\} f^{-1} = v f^{-1} \cup w f^{-1}$ to be a simplex. Since $f$ is injective, these two preimages are disjoint vertices, so their union must form an edge in $\Gamma[\operatorname{Dom}(f)]$. Therefore, $f^{-1}$ is a graph embedding.

Conversely, assume that $f$ is a partial function such that $f^{-1}$ is a graph embedding from $\Gamma[\operatorname{Im}(f)]$ to $\Gamma[\operatorname{Dom}(f)]$. Let $v$ be a vertex. If $v$ is not in $\operatorname{Im}(f)$ then $vf^{-1}$ is empty. If $v$ is in $\operatorname{Im}(f)$ then $vf^{-1}$ is a vertex since $f^{-1}$ is injective. Now let $e=\{v,w\}$ be an edge of $\Gamma$. If neither vertex of $e$ is in $\operatorname{Im}(f)$, then $ef^{-1}$ is empty. If one vertex of $e$ is in $\operatorname{Im}(f)$ then $ef^{-1}$ is a vertex by injectivity. If both vertices of $e$ are in $\operatorname{Im}(f)$ then $ef^{-1}$ is an edge of $\Gamma$ since $f^{-1}$ is a graph embedding. Therefore $f \in \operatorname{Inj}(\Gamma)$.

\end{proof}

We characterize the regular elements of $\operatorname{Inj}(\Gamma)$. It is well known that if $S$ is a semigroup of partial 1-1 functions, then the set of regular elements $\operatorname{Reg}(S)$ is an inverse semigroup.

\begin{corollary}

An element $f$ of $\operatorname{Inj}(\Gamma)$ belongs to $\operatorname{Reg}(\operatorname{Inj}(\Gamma))$ if and only if $f$ is an isomorphism between $\Gamma[\operatorname{Dom}(f)]$ and $\Gamma[\operatorname{Im}(f)]$.

\end{corollary}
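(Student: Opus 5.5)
We prove the two directions separately, using Lemma \ref{Inj} throughout. For $f \in \operatorname{Inj}(\Gamma)$, write $f^{-1}$ for the inverse bijection $\operatorname{Im}(f) \to \operatorname{Dom}(f)$, viewed as a partial map on $V$. Lemma \ref{Inj} says $f^{-1}$ sends edges of $\Gamma[\operatorname{Im}(f)]$ to edges of $\Gamma[\operatorname{Dom}(f)]$. Applying the same lemma with the roles of $f$ and $f^{-1}$ exchanged, $f^{-1}$ lies in $\operatorname{Inj}(\Gamma)$ exactly when $f$ sends edges of $\Gamma[\operatorname{Dom}(f)]$ to edges of $\Gamma[\operatorname{Im}(f)]$. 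Both conditions together say precisely that $f$ is a graph isomorphism $\Gamma[\operatorname{Dom}(f)] \to \Gamma[\operatorname{Im}(f)]$. So it suffices to show that $f$ is regular in $\operatorname{Inj}(\Gamma)$ if and only if $f^{-1} \in \operatorname{Inj}(\Gamma)$.

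\emph{Sufficiency.} If $f$ is an isomorphism of the induced subgraphs, then $f^{-1}$ is continuous by the above. Since $ff^{-1}f = f$ holds for any partial injection, $f$ is regular in $\operatorname{Inj}(\Gamma)$.

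\emph{Necessity.} Suppose $f = fgf$ with $g \in \operatorname{Inj}(\Gamma)$. For $v \in \operatorname{Dom}(f)$ we have $vfgf = vf$. Hence $vf \in \operatorname{Dom}(g)$, and injectivity of $f$ gives $(vf)g = v$. So $g$ restricted to $\operatorname{Im}(f)$ equals $f^{-1}$. It remains to check that this restriction is still continuous, i.e.\ that restrictions of continuous maps are continuous. Write $h = 1_{\operatorname{Im}(f)}\, g$. Since $\operatorname{Im}(f)$ is the image of $f$ and $f$ is injective, $1_{\operatorname{Im}(f)} = f^{-1}f$. Alternatively, $1_{\operatorname{Im}(f)}$ can be checked to be continuous directly: the preimage of any simplex under it is its intersection with $\operatorname{Im}(f)$, which is a face and hence a simplex or empty. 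The cleanest route is to note $h = (f^{-1}f)g = f^{-1}(fg)$, and more simply that $1_U \in \operatorname{MR}(\Gamma)$ for every $U \subseteq V$, because subsets of simplices are simplices or empty. Therefore $f^{-1} = 1_{\operatorname{Im}(f)}\, g \in \operatorname{Inj}(\Gamma)$, and by the first paragraph $f$ is an isomorphism.

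The only point requiring care is this last step: that a partial identity on an arbitrary vertex set is continuous. This relies on the simplicial complex being closed under taking nonempty subsets, and in our 1-dimensional complex every nonempty subset of a simplex is a vertex or an edge.
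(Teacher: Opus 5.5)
Your proof is correct and follows the same route as the paper: Lemma \ref{Inj}, applied to both $f$ and $f^{-1}$, reduces the claim to showing that $f$ is regular in $\operatorname{Inj}(\Gamma)$ exactly when $f^{-1}\in\operatorname{Inj}(\Gamma)$, and sufficiency comes from $ff^{-1}f=f$. The only difference is that where the paper cites standard inverse semigroup theory for necessity, you verify it directly via $f^{-1}=1_{\operatorname{Im}(f)}\,g$, which is valid; note that $1_{\operatorname{Im}(f)}=gf$ already lies in $\operatorname{Inj}(\Gamma)$, so $f^{-1}=gfg$ without a separate continuity check, and your aside $1_{\operatorname{Im}(f)}=f^{-1}f$ is circular at that point and can be dropped.
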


\begin{proof}

It follows from Lemma \ref{Inj} that if $f$ is an isomorphism between $\Gamma[\operatorname{Dom}(f)]$ and $\Gamma[\operatorname{Im}(f)]$, then both $f$ and $f^{-1}$ belong to $\operatorname{Inj}(\Gamma)$.  Since $ff^{-1}f=f$, $f$ is a regular element of $\operatorname{Inj}(\Gamma)$.

Conversely, if $f$ is a regular element of $\operatorname{Inj}(\Gamma)$, then $f^{-1} \in \operatorname{Inj}(\Gamma)$. This is because  $\operatorname{Inj}(\Gamma)$ is a semigroup of partial 1-1 functions and an element is regular if and only if its inverse also belongs to $\operatorname{Inj}(f)$ by standard inverse semigroup theory \cite{Lawson}. Therefore, by Lemma \ref{Inj}  $f^{-1}$ is a graph embedding between   $\Gamma[\operatorname{Im}(f)]$ and $\Gamma[\operatorname{Dom}(f)]$  and similarly $f$ is a graph embedding between $\Gamma[\operatorname{Dom}(f)]$ and $\Gamma[\operatorname{Im}(f)]$. Since $\Gamma$ is a finite graph, $f$ is an isomorphism between $\Gamma[\operatorname{Dom}(f)]$ and $\Gamma[\operatorname{Im}(f)]$.

\end{proof}

\subsection{A Complete Characterization of Continuous Functions}
We establish an explicit characterization of the continuous functions in the Margolis-Rhodes monoid. This formulation translates the  definition into a structural decomposition of any continuous function into singular and injective components.

For any partial function $f \in \operatorname{MR}(\Gamma)$, let the singular domain of $f$ be the set $\mathcal{M}(f) = \{v \in \operatorname{Dom}(f) \mid \exists w \neq v, wf = vf\}$. That is, $\mathcal{M}(f)$ is the union of all fibers of $f$ of size 2. We define the \emph{singular part} of $f$, denoted $\operatorname{Sing}{f}$, to be the restriction of $f$ to $\mathcal{M}(f)$. We define the \emph{injective part} of $f$, denoted $\operatorname{Inj}(f)$, to be the restriction of $f$ to $\operatorname{Dom}(f) \setminus \mathcal{M}(f)$. Any continuous function can therefore be written as the disjoint join $f = \operatorname{Sing}(f) \vee \operatorname{Inj}(f)$. 

The behavior of these two components is characterized by the following lemma. Recall that a bijective graph morphism is a bijection that preserves edges but is not necessarily a full graph isomorphism. The Lemma shows that $\operatorname{Sing}(f) \in \StG\Gamma$ and $\operatorname{Inj}(f) \in Inj(\Gamma)$.

\begin{lemma}[Lemma 2.6 of \cite{deg2part2}]\label{lem:lemma26}
Let $\Gamma = (V, E)$ be a finite simple graph.
\begin{enumerate}
    \item Let $f \in \operatorname{MR}(\Gamma)$ be a continuous partial function. Then the partition $\ker(\operatorname{Sing}(f))$ is a matching of $\Gamma$, and the image $\operatorname{Im}(\operatorname{Sing}(f))$ is an independent set in $\Gamma$. Moreover, $\operatorname{Im}(\operatorname{Sing}(f)) \cap \operatorname{Im}(\operatorname{Inj}(f)) = \emptyset$, the inverse partial function $\operatorname{Inj}(f)^{-1}$ is a bijective graph morphism onto its image, and there are no edges between a vertex in $\operatorname{Im}(\operatorname{Sing}(f))$ and a vertex in $\operatorname{Im}(\operatorname{Inj}(f))$.
    \item Conversely, let $g$ be a partial function such that $\operatorname{Ker}(g)$ is a matching in $\Gamma$ and $\operatorname{Im}(g)$ is an independent set in $\Gamma$. Let $h$ be a partial bijection such that $h^{-1}$ is a bijective graph morphism onto its image. Assume further that $\operatorname{Dom}(g) \cap \operatorname{Dom}(h) = \operatorname{Im}(g) \cap \operatorname{Im}(h) = \emptyset$ and that there are no edges between $\operatorname{Im}(g)$ and $\operatorname{Im}(h)$. Then $f = g \vee h$ is continuous, and $g = \operatorname{Sing}(f)$ and $h = \operatorname{Inj}(f)$.
\end{enumerate}
\end{lemma}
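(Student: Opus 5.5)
The plan is to verify both directions directly from the definition of continuity: every vertex fiber $vf^{-1}$ and every edge preimage $ef^{-1}$ must lie in $\Delta(\Gamma)\cup\{\emptyset\}$. The argument follows the pattern of the proofs of Lemma \ref{strict} and Lemma \ref{Inj}, and the key tool is the identity $\{v,w\}f^{-1} = vf^{-1}\cup wf^{-1}$.

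For part (1), fix $f\in\operatorname{MR}(\Gamma)$.
\begin{enumerate}
\item Every vertex fiber is a simplex, so each fiber of size $2$ is an edge. Distinct fibers are disjoint, so the size-$2$ fibers, which form $\ker(\operatorname{Sing}(f))$, give a matching.
\item $\operatorname{Im}(\operatorname{Sing}(f))$ is independent, by the same argument as in Lemma \ref{strict}. If $v,w$ are adjacent images with two-element fibers, then $\{v,w\}f^{-1}$ has $4$ vertices, which is impossible.
\item The two images are disjoint because a vertex's fiber cannot be both of size $1$ and of size $2$.
\item There are no edges between $\operatorname{Im}(\operatorname{Sing}(f))$ and $\operatorname{Im}(\operatorname{Inj}(f))$. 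Such an edge would have a preimage of size $3$.
\item If $v,w\in\operatorname{Im}(\operatorname{Inj}(f))$ are adjacent, then $\{v,w\}f^{-1}$ is a $2$-element simplex. Hence $vf^{-1}$ and $wf^{-1}$ are adjacent, so $\operatorname{Inj}(f)^{-1}$ is a bijective graph morphism from $\Gamma[\operatorname{Im}(\operatorname{Inj}(f))]$ onto its image.
\end{enumerate}

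For part (2), set $f=g\vee h$. This is a well-defined partial function because the domains are disjoint. I check continuity simplex by simplex. For a vertex $v$:
\begin{itemize}
\item if $v\in\operatorname{Im}(g)$, then $vf^{-1}=vg^{-1}$, since $v\notin\operatorname{Im}(h)$, and this is an edge of the matching (I read ``$\operatorname{Ker}(g)$ is a matching'' as saying every fiber of $g$ is an edge);
\item if $v\in\operatorname{Im}(h)$, then $vf^{-1}$ is a single vertex;
\item otherwise $vf^{-1}$ is empty.
\end{itemize}
For an edge $\{v,w\}$, split by where the endpoints lie:
\begin{itemize}
\item both endpoints in $\operatorname{Im}(g)$ is excluded by independence;
\item one endpoint in $\operatorname{Im}(g)$ and the other in $\operatorname{Im}(h)$ is excluded by the no-edges hypothesis;
\item one endpoint in $\operatorname{Im}(g)$ and the other outside $\operatorname{Im}(f)$ gives an edge;
\item both endpoints in $\operatorname{Im}(h)$ gives $\{vh^{-1},wh^{-1}\}$, which is an edge because $h^{-1}$ is a graph morphism;
\item exactly one endpoint in $\operatorname{Im}(h)$ gives a vertex;
\item no endpoint in the image gives $\emptyset$.
\end{itemize}
Finally, the fibers of $f$ of size $2$ are exactly the fibers of $g$, and those of size $1$ are exactly those of $h$. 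Hence $\mathcal{M}(f)=\operatorname{Dom}(g)$, which gives $g=\operatorname{Sing}(f)$ and $h=\operatorname{Inj}(f)$.

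The main obstacle is interpretive rather than technical. In part (2) the hypothesis ``$\operatorname{Ker}(g)$ is a matching'' must be read so that $g$ has no singleton fibers, that is, $\operatorname{Dom}(g)$ is exactly the union of the matching edges. Otherwise $g$ could not equal $\operatorname{Sing}(f)$. I would state this convention explicitly at the start. The remaining care is keeping the case analysis for edges with mixed endpoints exhaustive.
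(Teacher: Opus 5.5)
Your proposal is correct and complete. The paper does not reprove this lemma; it only cites it from \cite{deg2part2}. Your simplex-by-simplex check of preimages, using $\{v,w\}f^{-1}=vf^{-1}\cup wf^{-1}$, is the same direct method the paper uses for Lemma \ref{strict} and Lemma \ref{Inj}. Your reading of ``$\operatorname{Ker}(g)$ is a matching'' as ``every fiber of $g$ is an edge'' is the intended one, since the paper defines $\operatorname{Ker}$ as the partition of the domain into fibers, and without this reading the identification $g=\operatorname{Sing}(f)$ would fail.
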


This lemma provides an extremely powerful practical tool for determining membership in $\operatorname{MR}(\Gamma)$. It explicitly bounds the degree of the transformation semigroup $(V, \operatorname{MR}(\Gamma))$ by 2, as the size of any vertex fiber cannot exceed 2 (since the singular kernel strictly forms a matching of size-2 edges). It also ensures that the components do not topologically interfere with each other, separating structural collapse from embeddings.

\section{Green's Relations and Ideal Structure}

\subsection{\texorpdfstring{$\mathcal{J}$}{J}-Equivalence Criteria in \texorpdfstring{$\operatorname{MR}(\Gamma)$}{MR(Gamma)}}
This section provides a classification of $\mathcal{J}$-classes for regular elements within the Margolis-Rhodes monoid $\operatorname{MR}(\Gamma)$ for an arbitrary finite graph $\Gamma = (V, E)$. It is well known that for regular elements in any transformation semigroup $(Q,S)$, $\mathcal{R}$ equivalence and $\mathcal{L}$ equivalence are the same in $S$ and $PT(Q)$. That is, for regular elements $f,g \in \operatorname{MR}(\Gamma)$, $f\mathcal{R}g$ if and only if $\operatorname{Ker}(f)=\operatorname{Ker}(g)$ and $f\mathcal{L}g$ if and only if $\operatorname{Im}(f)=\operatorname{Im}(g)$. Here $\operatorname{Ker}(f)$ is the equivalence relation on $\operatorname{Dom}(f)$ whose classes are the fibers of $f$.

\subsubsection{\texorpdfstring{$\mathcal{J}$}{J}-Equivalence Criterion for Idempotents (Regular Elements)}

It is obvious that if $\Gamma=(V,E)$ is a graph and $X \subseteq V$, then the partial identity $e_{X}$ with domain $X$ is in $\operatorname{MR}(\Gamma)$. It follows that every idempotent in $\operatorname{MR}(\Gamma)$ is $\mathcal{L}$-equivalent to $e_{\operatorname{Im}(e)}$. We record this in the next Lemma.

\begin{lemma}\label{foundational}
Every idempotent $e \in \operatorname{MR}(\Gamma)$ is $\mathcal{L}$-equivalent to the partial identity map on its image, $e_{\operatorname{Im}(e)}$.
\end{lemma}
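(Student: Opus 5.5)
The plan is to exhibit the two multiplications witnessing $\mathcal{L}$-equivalence directly inside $\operatorname{MR}(\Gamma)$. Let $X = \operatorname{Im}(e)$ and write $e_X$ for the partial identity on $X$; the paper has already observed that $e_X \in \operatorname{MR}(\Gamma)$. We must show that $e \in \operatorname{MR}(\Gamma)\, e_X$ and $e_X \in \operatorname{MR}(\Gamma)\, e$. Since composition is left to right and $\mathcal{L}$ concerns left multiples, we want elements $s,t$ with $e = s\,e_X$ and $e_X = t\,e$.

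\textbf{First product.} Take $s = e$. For $v \in \operatorname{Dom}(e)$ we have $ve \in X = \operatorname{Dom}(e_X)$, so $v(e\,e_X) = ve$. Also $\operatorname{Dom}(e\,e_X) = \operatorname{Dom}(e)$, hence $e\,e_X = e$.

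\textbf{Second product.} Take $t = e_X$. We need $e_X e = e_X$, which means every $x \in X$ lies in $\operatorname{Dom}(e)$ and satisfies $xe = x$. This is the standard fact that an idempotent partial map fixes its image pointwise. Indeed, if $x = ve$, then idempotency gives $v = v$ in $\operatorname{Dom}(e^2)$, so $x \in \operatorname{Dom}(e)$ and $xe = v e^2 = ve = x$. Hence $e_X e = e_X$, and both elements lie in $\operatorname{MR}(\Gamma)$.

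This establishes $e \,\mathcal{L}\, e_X$ within $\operatorname{MR}(\Gamma)$ itself, not merely in $\mathcal{PT}(V)$. The only point needing care is the domain bookkeeping for partial maps in the second product, together with membership of $e_X$ in $\operatorname{MR}(\Gamma)$. That membership is immediate: for any simplex $\sigma$, the preimage $\sigma e_X^{-1} = \sigma \cap X$, and a subset of a vertex or an edge is again a simplex or empty.
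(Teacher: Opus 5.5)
Your argument is correct and is essentially the paper's proof. The paper cites the standard fact that two idempotents of a subsemigroup of $\mathcal{PT}(V)$ with the same image are $\mathcal{L}$-equivalent, and your identities $e\,e_X = e$ and $e_X\,e = e_X$, together with $e_X \in \operatorname{MR}(\Gamma)$, are exactly the proof of that fact (one wording slip: ``idempotency gives $v = v$ in $\operatorname{Dom}(e^2)$'' should read $v \in \operatorname{Dom}(e^2) = \operatorname{Dom}(e)$).
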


\begin{proof}

It is well known that in any subsemigroup of $PT_{n}$, two idempotents $e,f$ are $\mathcal{L}$-equivalent if and only if
$\operatorname{Im}(e)= \operatorname{Im}(f)$. Since $e_{\operatorname{Im}(e)} \in \operatorname{MR}(\Gamma)$, the result follows.





\end{proof}

\begin{theorem} \label{JMR}
Let $\Gamma = (V, E)$ be an arbitrary graph. Two idempotents $e, f \in \operatorname{MR}(\Gamma)$ are $\mathcal{J}$-equivalent if and only if the induced subgraphs on their images are isomorphic: $\Gamma[\operatorname{Im}(e)] \cong \Gamma[\operatorname{Im}(f)]$.
\end{theorem}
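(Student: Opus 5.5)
By Lemma \ref{foundational}, every idempotent $e$ is $\mathcal{L}$-equivalent, hence $\mathcal{J}$-equivalent, to the partial identity $e_{\operatorname{Im}(e)}$. The theorem therefore reduces to a statement about partial identities: for $X,Y\subseteq V$, we have $e_X \mathcal{J} e_Y$ in $\operatorname{MR}(\Gamma)$ if and only if $\Gamma[X]\cong\Gamma[Y]$.

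For the \emph{if} direction, let $\phi\colon X\to Y$ be a graph isomorphism $\Gamma[X]\to\Gamma[Y]$, regarded as a partial map on $V$ with domain $X$. Then $\phi^{-1}$ is a graph embedding (indeed an isomorphism) of $\Gamma[Y]$ into $\Gamma[X]$. By Lemma \ref{Inj}, $\phi\in\operatorname{Inj}(\Gamma)\subseteq\operatorname{MR}(\Gamma)$, and symmetrically $\phi^{-1}\in\operatorname{MR}(\Gamma)$. Since $\phi\phi^{-1}=e_X$ and $\phi^{-1}\phi=e_Y$, we get $e_X=\phi e_Y\phi^{-1}$ and $e_Y=\phi^{-1}e_X\phi$, so $e_X\mathcal{J}e_Y$.

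For the \emph{only if} direction, suppose $e_X=ae_Yb$ and $e_Y=ce_Xd$ with $a,b,c,d\in\operatorname{MR}(\Gamma)^1=\operatorname{MR}(\Gamma)$. In a finite semigroup, $\mathcal{J}=\mathcal{D}$, so there is $g$ with $e_X\mathcal{R}g\mathcal{L}e_Y$. Then $g$ is regular with $\operatorname{Ker}(g)=\operatorname{Ker}(e_X)$ and $\operatorname{Im}(g)=Y$. Since $\operatorname{Ker}(e_X)$ is the trivial partition of $X$, $g$ is a bijection from $X$ onto $Y$. Standard Green's relations theory supplies an inverse $g'$ of $g$ in $\operatorname{MR}(\Gamma)$ with $gg'=e_X$ and $g'g=e_Y$. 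Consequently $g'$ restricted to $Y$ is exactly $g^{-1}\colon Y\to X$, and both $g$ and $g^{-1}$ are injective continuous maps, so both lie in $\operatorname{Inj}(\Gamma)$.

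Applying Lemma \ref{Inj} to each of them, $g^{-1}$ is an embedding $\Gamma[Y]\to\Gamma[X]$ and $g$ is an embedding $\Gamma[X]\to\Gamma[Y]$. Since $g$ is a bijection, it is an isomorphism $\Gamma[X]\cong\Gamma[Y]$; this is essentially the regular-element corollary for $\operatorname{Inj}(\Gamma)$ applied to $g$.

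The main obstacle is the middle step of the \emph{only if} direction. One has to justify carefully that the inverse $g'$ can be chosen with $gg'=e_X$ and $g'g=e_Y$, and that $g'$ is therefore injective on its relevant domain. The cleanest route is to pick $g'$ in the $\mathcal{H}$-class $\mathcal{L}_{e_X}\cap\mathcal{R}_{e_Y}$, which is nonempty by Miller–Clifford-type arguments. If $\operatorname{Dom}(g')$ is larger than $Y$, restrict it, noting that $e_Yg'$ is still in $\operatorname{MR}(\Gamma)$. An element $\mathcal{R}$-related to $e_Y$ has trivial kernel on domain $Y$, hence is injective.
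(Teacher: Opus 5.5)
Your proof is correct. The \emph{if} direction is the paper's argument; the paper checks continuity of $\psi$ and $\psi^{-1}$ by hand where you cite Lemma \ref{Inj}. The \emph{only if} direction, however, takes a different route.

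The paper works directly from a factorization $e_X=Ae_YB$. It observes that $A|_X$ is an injection $X\to Y$ undone by $B$. Continuity of $B$ then shows $A|_X$ preserves edges: the preimage $\{x,y\}B^{-1}$ contains the two distinct points $xA,yA$, so it must be the edge $\{xA,yA\}$. Arguing symmetrically gives an edge-preserving injection $\Gamma[Y]\to\Gamma[X]$. Finally, counting vertices and edges of the finite graphs upgrades $A|_X$ to an isomorphism.

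You instead use $\mathcal{J}=\mathcal{D}$ in the finite monoid and the standard inverse of $g$ lying in $R_{e_Y}\cap L_{e_X}$. This gives a single bijection $g\colon X\to Y$ with $g^{-1}$ also continuous, so Lemma \ref{Inj} finishes with no counting step.

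The paper's route is more elementary, since it needs only the definitions of $\mathcal{J}$ and of continuity. Yours leans on Green's-relations machinery but proves more. Every element of $R_{e_X}\cap L_{e_Y}$ is an isomorphism $\Gamma[X]\to\Gamma[Y]$, so taking $X=Y$ gives $H_{e_X}=\operatorname{Aut}(\Gamma[X])$ immediately.

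The step you flag as the main obstacle is routine. Write $e_X=gs$ and $e_Y=tg$, and set $g'=e_Yse_X$. Then $gg'=e_X$ and $g'g=e_Y$. Since $g'$ begins with $e_Y$, its domain is already exactly $Y$, so no restriction is needed.
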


\begin{proof}

Assume $e \mathrel{\mathcal{J}} f$. By Lemma \ref{foundational}, this implies that  $e_{\operatorname{Im}(e)} \mathrel{\mathcal{J}} e_{\operatorname{Im}(f)}$.

By definition of $\mathcal{J}$-equivalence, there exist continuous partial maps $A, B \in \operatorname{MR}(\Gamma)$ such that:
\[ e_{\operatorname{Im}(e)} = A \cdot e_{\operatorname{Im}(f)} \cdot B \]
Let us evaluate the restriction of $A$ to the domain $\operatorname{Im}(e)$. For any vertex $x \in \operatorname{Im}(e)$, we have:
\[ x = x e_{\operatorname{Im}(e)} = ((x A) e_{\operatorname{Im}(f)}) B \]
For this composition to be defined and equal to $x$:
\begin{enumerate}
    \item $x A$ must be defined and must land inside the domain of $e_{\operatorname{Im}(f)}$, which is exactly $\operatorname{Im}(f)$. Thus, $A$ maps $\operatorname{Im}(e)$ into $\operatorname{Im}(f)$.
    \item $B$ acts as a right inverse to $A$ on this subset ($(x A) B = x$), which forces $A$ restricted to $\operatorname{Im}(e)$ to be an injective vertex map.
\end{enumerate}

Now let us check if $A$ preserves edges. Let $\{x, y\}$ be an edge in the induced subgraph $\Gamma[\operatorname{Im}(e)]$. This means $\{x, y\} \in E$ and $x, y \in \operatorname{Im}(e)$.

Because $B \in \operatorname{MR}(\Gamma)$ is a continuous partial map, the definition of continuity states that the inverse image of any cell must be a cell (or empty). Consider the edge cell $\sigma = \{x, y\}$. Its preimage under $B$ must contain the vertices $x A$ and $y A$ because $(x A) B = x$ and $(y A) B = y$. Since $A$ is injective, $x A \neq y A$. The only cells in a graph complex containing two distinct vertices are edges. Therefore, $\{x, y\} B^{-1}$ must be a single valid edge in $\Gamma$ connecting $x A$ and $y A$. Since both endpoints land in $\operatorname{Im}(f)$, $\{x A, y A\}$ is a valid edge in the induced subgraph $\Gamma[\operatorname{Im}(f)]$.

Thus, $A$ restricted to $\operatorname{Im}(e)$ is an edge-preserving injective graph homomorphism. By symmetry, because $e_{\operatorname{Im}(f)} \mathrel{\mathcal{J}} e_{\operatorname{Im}(e)}$, there exists a continuous partial map creating an edge-preserving injection from $\Gamma[\operatorname{Im}(f)]$ into $\Gamma[\operatorname{Im}(e)]$.

For finite graphs, the existence of edge-preserving injections in both directions guarantees that the number of vertices and edges are identical, meaning the mapping is a bijection that perfectly preserves adjacency. Thus, $\Gamma[\operatorname{Im}(e)] \cong \Gamma[\operatorname{Im}(f)]$.

Assume conversely that the induced subgraphs are isomorphic, and let $\psi \colon \Gamma[\operatorname{Im}(e)] \to \Gamma[\operatorname{Im}(f)]$ be the graph isomorphism.

We define a partial function $A \colon V \to V$ defined on the domain $\operatorname{Dom}(A) = \operatorname{Im}(e)$, where $x A = x \psi$. We first prove that this partial function is a member of $\operatorname{MR}(\Gamma)$.

Let us check the continuity of $A$ against an arbitrary cell $\sigma$ of $\Gamma$:
\begin{itemize}
    \item \textbf{Case 1:} $\sigma = \{v\}$ is a vertex cell. The preimage $v A^{-1}$ is the set of vertices in $\operatorname{Im}(e)$ mapping to $v$. Since $\psi$ is a bijection between the two subsets, if $v \in \operatorname{Im}(f)$, the preimage is a single vertex (a valid cell). If $v \notin \operatorname{Im}(f)$, the preimage is empty.
    \item \textbf{Case 2:} $\sigma = \{u, v\}$ is an edge cell. The preimage is $\{u, v\} A^{-1} = (\{u, v\} \cap \operatorname{Im}(f)) \psi^{-1}$.
    \begin{itemize}
        \item If $\{u, v\} \cap \operatorname{Im}(f) = \emptyset$, the preimage is empty.
        \item If $\{u, v\} \cap \operatorname{Im}(f) = \{u\}$ (only one vertex is in the image), the preimage is a single vertex $u \psi^{-1}$.
        \item If both $u, v \in \operatorname{Im}(f)$, then $\{u, v\}$ is an edge belonging to the induced subgraph $\Gamma[\operatorname{Im}(f)]$. Since $\psi$ is a graph isomorphism, the preimage of an edge under an isomorphism is guaranteed to be a single edge in $\Gamma[\operatorname{Im}(e)]$, which is a valid edge cell in $\Gamma$.
    \end{itemize}
\end{itemize}

Because the preimage of every cell is a valid cell or empty, the partial isomorphism $A$ is continuous, so $A \in \operatorname{MR}(\Gamma)$. By the exact same logic, its inverse partial map $B$ defined strictly on $\operatorname{Im}(f)$ where $y B = y \psi^{-1}$ is also a valid element of $\operatorname{MR}(\Gamma)$.

Now, let us look at the composition $A \cdot e_{\operatorname{Im}(f)} \cdot B$:
\begin{itemize}
    \item Its domain is restricted by $A$ to exactly $\operatorname{Im}(e)$.
    \item For any $x \in \operatorname{Im}(e)$, $x A = x \psi \in \operatorname{Im}(f)$.
    \item The partial identity $e_{\operatorname{Im}(f)}$ fixes $x \psi$, and then $(x \psi) B = (x \psi) \psi^{-1} = x$.
\end{itemize}
This composition is a partial map defined exactly on $\operatorname{Im}(e)$ acting as the identity, which is precisely the definition of $e_{\operatorname{Im}(e)}$. Therefore:
\[ e_{\operatorname{Im}(e)} = A \cdot e_{\operatorname{Im}(f)} \cdot B \implies e_{\operatorname{Im}(e)} \in \operatorname{MR}(\Gamma) e_{\operatorname{Im}(f)} \operatorname{MR}(\Gamma) \]
By symmetry, using $B$ and $A$ in reverse, we get $e_{\operatorname{Im}(f)} \in \operatorname{MR}(\Gamma) e_{\operatorname{Im}(e)} \operatorname{MR}(\Gamma)$. Thus, $e_{\operatorname{Im}(e)} \mathrel{\mathcal{J}} e_{\operatorname{Im}(f)}$. By Lemma \ref{foundational}, this directly proves that $e \mathrel{\mathcal{J}} f$.
\end{proof}

\subsection{The Poset of Regular J-Classes of {\texorpdfstring{$\operatorname{MR}(\Gamma)$}{MR(Gamma)}}}

We analyze the partial order of the regular $\mathcal{J}$-classes of $\operatorname{MR}(\Gamma)$. For $f \in \operatorname{MR}(\Gamma)$, recall that $\Gamma[\operatorname{Im}(f)]$ is the induced subgraph on the image of $f$.

\begin{theorem}
The regular $\mathcal{J}$-classes of $\operatorname{MR}(\Gamma)$ are in one-to-one correspondence with the isomorphism classes of induced subgraphs of $\Gamma$. The poset of regular $\mathcal{J}$-classes of $\operatorname{MR}(\Gamma)$ is isomorphic to the poset of induced subgraphs of $\Gamma$ under inclusion.
\end{theorem}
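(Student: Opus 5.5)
We will build the bijection on top of Theorem \ref{JMR}, using the partial identities $e_X$ as canonical representatives of the regular $\mathcal{J}$-classes. The order on the target side is the natural one on isomorphism classes: $[\Gamma[X]] \le [\Gamma[Y]]$ if $\Gamma[X]$ is isomorphic to an induced subgraph of $\Gamma[Y]$.

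First, every regular $\mathcal{J}$-class contains an idempotent $e$. By Lemma \ref{foundational}, $e \mathrel{\mathcal{L}} e_{\operatorname{Im}(e)}$. Conversely, for every $X \subseteq V$ the partial identity $e_X$ is an idempotent of $\operatorname{MR}(\Gamma)$. So the map $J_{e_X} \mapsto [\Gamma[X]]$ is defined on all regular $\mathcal{J}$-classes and is surjective onto the isomorphism classes of induced subgraphs. It is well defined and injective precisely by Theorem \ref{JMR}, applied to the idempotents $e_X, e_Y$.

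Next we show the map is order-preserving in both directions, i.e.\ $e_X \le_{\mathcal{J}} e_Y$ if and only if $\Gamma[X]$ is isomorphic to an induced subgraph of $\Gamma[Y]$.

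For the ``if'' direction, suppose $\psi\colon \Gamma[X] \to \Gamma[Z]$ is an isomorphism with $Z \subseteq Y$. Let $A$ be $\psi$ as a partial map with domain $X$, and $B$ its inverse with domain $Z$. Both are continuous by the case analysis already carried out in the proof of Theorem \ref{JMR}, which used only that $\psi$ is an isomorphism of induced subgraphs. Then $A\, e_Y\, B = e_X$, since $Z \subseteq Y$ means $e_Y$ fixes $Z$ pointwise. Hence $e_X \le_{\mathcal{J}} e_Y$.

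For the ``only if'' direction, suppose $e_X = A\, e_Y\, B$ with $A, B \in \operatorname{MR}(\Gamma)$. As in the proof of Theorem \ref{JMR}:
\begin{itemize}
\item $A$ maps $X$ injectively into $Y$; call the image $Z$.
\item Every edge $\{x,y\}$ of $\Gamma[X]$ gives an edge $\{xA, yA\}$ of $\Gamma[Z]$, because $\{x,y\}B^{-1}$ is a simplex containing the two distinct vertices $xA$ and $yA$.
\end{itemize}
This yields only an injective edge-preserving map, not yet an induced embedding. This is the main obstacle.

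To overcome it, we use the continuity of $A$. Suppose $\{xA, yA\}$ is an edge of $\Gamma$ with $x,y \in X$. Then $\{xA,yA\}A^{-1}$ is a simplex containing the distinct vertices $x$ and $y$, so it equals the edge $\{x,y\}$ of $\Gamma$. Thus $A|_X$ also reflects edges, so it is an isomorphism of $\Gamma[X]$ onto the induced subgraph $\Gamma[Z]$ with $Z \subseteq Y$. (The same observation also streamlines the forward direction of Theorem \ref{JMR}.)

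Finally, a bijection between posets that preserves and reflects the order is a poset isomorphism, which completes the proof.
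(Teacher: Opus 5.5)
Your proof is correct, but for the order part it takes a different route from the paper. The paper never analyses a factorization $e_X = A\,e_Y\,B$ when $\Gamma[X]$ and $\Gamma[Y]$ are non-isomorphic. Instead, given regular $f \le_{\mathcal{J}} g$, idempotents $k \mathrel{\mathcal{L}} f$, $e \mathrel{\mathcal{L}} g$ and $k = xey$, it forms the idempotent $k' = ykxe$. It observes that $xek'y = k$, so $k \mathrel{\mathcal{J}} k'$, and that $k' \le_{\mathcal{L}} e$, so $\operatorname{Im}(k') \subseteq \operatorname{Im}(e)$. It then applies Theorem~\ref{JMR} to the pair $k, k'$. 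The converse is handled by passing to an idempotent whose image lies inside $\operatorname{Im}(g)$, which is then $\le_{\mathcal{L}} g$. So the paper reduces the order statement formally to the equivalence statement. It therefore ultimately rests on the forward direction of Theorem~\ref{JMR}, which used a counting argument with edge-preserving injections in both directions. You instead extract the embedding directly from a witness $e_X = A\,e_Y\,B$. Continuity of $B$ makes $A|_X$ edge-preserving, and continuity of $A$ makes it edge-reflecting. Hence $A|_X$ is an isomorphism of $\Gamma[X]$ onto the induced subgraph $\Gamma[XA]$ with $XA \subseteq Y$. Your argument is more elementary and explicit, and, as you note, it also removes the need for the counting step in Theorem~\ref{JMR}. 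The paper's argument is shorter once Theorem~\ref{JMR} is available. Its idempotent-shifting step is purely semigroup-theoretic: it only uses that, in a monoid of partial maps, lying $\le_{\mathcal{L}}$ below an idempotent forces image containment. So there the order comes for free from the classification of $\mathcal{J}$-classes of idempotents.
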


\begin{proof}

By Lemma \ref{foundational} and Theorem \ref{JMR} it follows that the $\mathcal{J}$-class of $f$ is determined by the isomorphism class of $\Gamma[\operatorname{Im}(f)]$.


Let $f, g \in \operatorname{MR}(\Gamma)$ be regular elements of $\operatorname{MR}(\Gamma)$ with $f \le_{\mathcal{J}} g$. Let $e,k$ be idempotents with $k\mathcal{L}f, e\mathcal{L}g$. Then there are $x,y$ such that $k=xey$. Consider $k'=ykxe$. Then $k'$ is an idempotent with $xek'y=k$ so that $k\mathcal{J}k'$ and $k' \le_{\mathcal{L}} e$. 
In particular, $\Gamma[\operatorname{Im}(k)]$ is an induced subgraph of $\Gamma[\operatorname{Im}(e)]$. Therefore, $\Gamma[\operatorname{\operatorname{Im}(f)}]$ is isomorphic to an induced subgraph of $\Gamma[\operatorname{Im}(g)]$.

Conversely, assume that $\Gamma[\operatorname{Im}(f)]$ is isomorphic to an induced subgraph of $\Gamma[\operatorname{Im}(g)]$ for regular elements $f$ and $g$. Without loss of generality we can assume that $f$ and $g$ are idempotents. Then $f \le_{\mathcal{L}} g$ and thus $f\le_{\mathcal{J}} g$. It follows immediately that the poset of regular $\mathcal{J}$-classes of $\operatorname{MR}(\Gamma)$ under the natural ideal order is isomorphic to the poset of induced subgraphs of $\Gamma$ ordered by the embedding relation.
\end{proof}

Regularity of the monoid $\operatorname{MR}(\Gamma)$ is rare.

\begin{proposition}
The Margolis-Rhodes monoid $\operatorname{MR}(\Gamma)$ is a regular monoid if and only if $\Gamma$ is an empty graph or a complete graph $K_n$.
\end{proposition}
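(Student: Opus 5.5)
We prove both directions directly from the definition of continuity and Lemma \ref{lem:lemma26}. For the ``if'' direction we describe $\operatorname{MR}(\Gamma)$ explicitly in the two cases. For the converse we build a single injective continuous map that has no partial inverse in $\operatorname{MR}(\Gamma)$.

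\emph{Empty graph.} If $\Gamma$ has no edges, the only simplices are singletons. Continuity then says exactly that every fiber has at most one point. So $\operatorname{MR}(\Gamma)$ is the full symmetric inverse monoid on $V$, which is regular.

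\emph{Complete graph.} Let $\Gamma = K_n$ and $f \in \operatorname{MR}(K_n)$. If $f$ has a fiber $\{a,b\}$ over $c$ and some other image point $d$, then $\{c,d\}$ is an edge. Its preimage contains $\{a,b\}$ and $d f^{-1}$, so it has at least three points, which is not a simplex. Hence $f$ is either a partial injection, or a rank-one map with two-point domain $\{a,b\} \mapsto c$.
\begin{itemize}
\item A partial injection $f$ has inverse $f^{-1}$ continuous, since every induced subgraph of $K_n$ is complete and so $f$ is an isomorphism of induced subgraphs; one may also check this directly via Lemma \ref{Inj}. Then $f f^{-1} f = f$.
\item For the rank-one map, let $g$ be the map with domain $\{c\}$ and $cg = a$. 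This $g$ is continuous, since every preimage is $\{c\}$ or empty. Then $fgf = f$.
\end{itemize}
So $\operatorname{MR}(K_n)$ is regular.

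\emph{Converse.} Suppose $\Gamma$ has at least one edge and is not complete.

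First we find vertices $a,b,c$ with $\{a,b\} \in E$, $c \neq b$, and $\{a,c\} \notin E$. Suppose no edge $\{a,b\}$ has an endpoint $a$ with a non-neighbour $c \neq a$. Then every vertex lying on an edge is adjacent to all other vertices. Take a non-adjacent pair $x,y$, which exists because $\Gamma$ is not complete. Then $x$ is not universal, so $x$ lies on no edge. But any edge $\{a,b\}$ has $a$ universal, so $a$ is adjacent to $x$, a contradiction.

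Now define $f$ with domain $\{a,b\}$ by $af = a$ and $bf = c$. The map $f$ is injective. The only edge that could contain both image points is $\{a,c\}$, and it is not an edge. So every edge has preimage of size at most one, and $f \in \operatorname{MR}(\Gamma)$.

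Suppose $fgf = f$ for some $g \in \operatorname{MR}(\Gamma)$. Because $f$ is injective, this forces $ag = a$ and $cg = b$. The edge $\{a,b\}$ then has preimage under $g$ containing $\{a,c\}$. That preimage must be a simplex, and it contains two distinct points, so it equals the edge $\{a,c\}$. This contradicts $\{a,c\} \notin E$. Hence $f$ is not regular, and $\operatorname{MR}(\Gamma)$ is not a regular monoid.

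The step that needs care is finding the triple $a,b,c$ in an arbitrary non-empty, non-complete graph. A first attempt using a collapsing map $\{a,b\} \mapsto c$ actually produces a regular element. The right witness is an injective map whose inverse would have to turn a non-edge into an edge, in the spirit of the characterization of regular elements of $\operatorname{Inj}(\Gamma)$.
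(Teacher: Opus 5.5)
Your proof is correct and follows the paper's argument. For the empty graph you identify $\operatorname{MR}(\Gamma)$ with the symmetric inverse monoid. For $K_n$ you use partial inverses of injections and a rank-one back map for the collapsing maps. For the converse you use an injective map from an edge onto a non-edge, whose would-be inverse would violate continuity. The only difference is that your search for a triple $a,b,c$ with a shared vertex is unnecessary. The paper simply takes any edge $\{v,w\}$ and any non-adjacent pair $\{x,y\}$ and sets $vf=x$, $wf=y$, and your non-regularity argument applies to that map word for word.
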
 \label{RegJMR}

\begin{proof}

Assume $\Gamma$ is the empty graph on an $n$-set. Then clearly, $\operatorname{MR}(\Gamma)$ is isomorphic to the symmetric inverse monoid $SIM(n)$ and is thus regular. If $\Gamma=K_n$ it is easy to see that any member of $SIM(n)$ belongs to $\operatorname{MR}(\Gamma)$. If $f \in MR(K_{n})$ is not 1-1, it follows from Lemma \ref{lem:lemma26} that since in $K_n$, the largest independent set of $K_n$ has size 1, that $f$ has domain $e=\{x,y\}$ for some edge of $K_n$ and $xf= yf=v$ for some vertex of $K_n$. Let $g$ be the element of $SIM(n)$ of rank 1 with $vg=x$. Then $fgf=f$ and it follows that $MR(K_n)$ is a regular monoid.

Assume that $\Gamma$ is not the empty graph nor the complete graph on some set. Then there is an edge $\{v,w\} \in E(\Gamma)$ and a pair of vertices $x,y \in V(\Gamma)$ that does not form an edge. Let $f$ have domain $\{v,w\}$ with $vf=x,wf=y$. Then $f$ is a member of $\operatorname{MR}(\Gamma)$ as can be easily verified. Let $g \in \operatorname{MR}(\Gamma)$. If $\{x,y\}g$ has cardinality at most 1, then clearly $fgf \ne f$. If $\{x,y\}g$ has cardinality 2, then $\{x,y\}g$ is not an edge by continuity. Since $\{v,w\}$ is an edge, it follows that $fgf \ne f$ in this case as well. This proves the proposition.

\end{proof}

We consider the case of $St(\Gamma)$ later in the paper. 

\section{Maximal Submonoids and Maximal Subgroups of $\operatorname{MR}(\Gamma)$}

In this section we determine the maximal submonoids and maximal subgroups of $\operatorname{MR}(\Gamma)$.

\begin{theorem}\label{sec:local_monoids}
Let $\Gamma = (V,E)$ be a finite simple graph, and let $e \in \operatorname{MR}(\Gamma)$ be an idempotent. Then the local submonoid $e \operatorname{MR}(\Gamma) e$ is isomorphic to $MR(\Gamma[\operatorname{Im}(e)])$, where $\Gamma[\operatorname{Im}(e)]$ denotes the induced subgraph of $\Gamma$ on the vertex set $\operatorname{Im}(e)$.
\end{theorem}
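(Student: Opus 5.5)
Write $X = \operatorname{Im}(e)$. The plan is to prove the theorem in two stages. First we replace $e$ by the partial identity $e_X$ and show that $e\operatorname{MR}(\Gamma)e \cong e_X\operatorname{MR}(\Gamma)e_X$. Then we identify $e_X\operatorname{MR}(\Gamma)e_X$ with $\operatorname{MR}(\Gamma[X])$ by restriction.

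\textbf{Reduction to $e_X$.} Since $e$ is idempotent, it fixes $X$ pointwise: if $x = ye$, then $xe = yee = ye = x$. Two identities follow at once.
\begin{itemize}
\item $e_Xe = e_X$, because $e$ is the identity on $X$.
\item $ee_X = e$, because every value of $e$ already lies in $X$.
\end{itemize}
This is the concrete form of $e \mathrel{\mathcal{L}} e_X$ from Lemma \ref{foundational}. Define
\[
\varphi \colon e\operatorname{MR}(\Gamma)e \to e_X\operatorname{MR}(\Gamma)e_X, \qquad a\varphi = e_Xae_X,
\]
and
\[
\psi \colon e_X\operatorname{MR}(\Gamma)e_X \to e\operatorname{MR}(\Gamma)e, \qquad b\psi = ebe.
\]
Both maps land in the correct sets, since $\operatorname{MR}(\Gamma)$ is a monoid containing $e$ and $e_X$.

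For $a = eae$ we have $ae_X = eaee_X = eae = a$. Hence
\[
a\varphi\psi = ee_Xae_Xe = eae_Xe = eae_X = a .
\]
For $b = e_Xbe_X$ we have
\[
b\varphi' := b\psi\varphi = e_Xebee_X = e_Xbe_X = b .
\]
Multiplicativity follows the same way: $(a\varphi)(a'\varphi) = e_Xae_Xe_Xa'e_X = e_Xaa'e_X$, using $ae_X = a$. Also $\varphi$ sends the identity $e$ of the local monoid to $e_Xee_X = e_X$. So $\varphi$ is a monoid isomorphism. The only step needing care is tracking which of the four identities above is used at each point.

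\textbf{Restriction to $\Gamma[X]$.} The elements of $e_X\operatorname{MR}(\Gamma)e_X$ are exactly the $g \in \operatorname{MR}(\Gamma)$ with $\operatorname{Dom}(g) \cup \operatorname{Im}(g) \subseteq X$. Define $\rho(g)$ to be $g$ regarded as a partial map on $X$. This is clearly an injective homomorphism of monoids of partial maps, and it sends $e_X$ to $1_X$. Two points remain: $\rho(g)$ must be continuous on $\Gamma[X]$, and every $h \in \operatorname{MR}(\Gamma[X])$, viewed as a partial map on $V$, must lie in $\operatorname{MR}(\Gamma)$.

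Both follow from a single observation. Since $\Gamma[X]$ is an induced subgraph, the simplices of $\Gamma[X]$ are precisely the simplices of $\Gamma$ contained in $X$. Consequently, for a partial map $g$ with domain and image inside $X$ and any simplex $\sigma$ of $\Gamma$,
\[
\sigma g^{-1} = (\sigma \cap X)g^{-1}.
\]
Here $\sigma \cap X$ is either empty or a simplex of $\Gamma[X]$, and every simplex of $\Gamma[X]$ arises this way. Moreover, a subset of $X$ is a simplex of $\Gamma$ if and only if it is a simplex of $\Gamma[X]$. Therefore continuity of $g$ with respect to $\Gamma$ is equivalent to continuity with respect to $\Gamma[X]$, so $\rho$ is a bijection onto $\operatorname{MR}(\Gamma[X])$.

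Composing, $\varphi\rho$ is an isomorphism $e\operatorname{MR}(\Gamma)e \cong \operatorname{MR}(\Gamma[\operatorname{Im}(e)])$, which proves the theorem.
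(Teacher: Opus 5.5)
Your proof is correct and follows the same route as the paper: replace $e$ by the $\mathcal{L}$-equivalent partial identity $e_{\operatorname{Im}(e)}$ (Lemma \ref{foundational}), then identify $e_X\operatorname{MR}(\Gamma)e_X$ with $\operatorname{MR}(\Gamma[X])$ by restriction and by extension as undefined off $X$. You also supply details the paper only asserts: the explicit isomorphism $a\mapsto e_Xae_X$ justifying ``we can assume $e$ is a partial identity'', and the induced-subgraph argument $\sigma g^{-1}=(\sigma\cap X)g^{-1}$ behind the paper's ``clearly continuous'' and ``straightforward to check''.
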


\begin{proof}

By Lemma \ref{foundational} we can assume that $e$ is a partial identity on some subset $\operatorname{Im}(e)$ of $V$. Therefore if $f \in e\operatorname{MR}(\Gamma)e$, then both $\operatorname{Dom}(f)$ and $\operatorname{Im}(f)$ are contained in $\Gamma[\operatorname{Im}(e)]$. Therefore $f$ can be considered to be a partial function $V(\Gamma[\operatorname{Im}(e)] \rightarrow V(\Gamma[\operatorname{Im}(e))$. Clearly, $f$ is continuous. Therefore, $e\operatorname{MR}(\Gamma)e$ is a submonoid of  
$\Gamma[\operatorname{Im}(e)]$.

Conversely if $f \in \Gamma[\operatorname{Im}(e)]$, then we consider $f:V\rightarrow V$ to be a partial function by having it be undefined on $V \setminus \operatorname{Im}(e)$. It is straightforward to check that this puts $f$ in $e\operatorname{MR}(\Gamma)e$.

\end{proof}

\begin{corollary}
Let $\Gamma = (V,E)$ be a finite simple graph, and let $e \in \operatorname{MR}(\Gamma)$ be an idempotent. Then the maximal subgroup containing $e$ in its $\mathcal{J}$-class is isomorphic to the automorphism group of the induced subgraph on the image of $e$:
\[
H_e \cong \text{Aut}(\Gamma[\text{Im}(e)])
\]
\end{corollary}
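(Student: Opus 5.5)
The plan is to reduce to the local monoid and then identify its group of units. First, by Lemma \ref{foundational}, $e$ is $\mathcal{L}$-equivalent to the partial identity $e_X$ with $X=\operatorname{Im}(e)$. Two $\mathcal{L}$-equivalent idempotents lie in the same $\mathcal{D}$-class, so their maximal subgroups are isomorphic by Green's Lemma. We may therefore assume $e=e_X$.

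Next, the maximal subgroup $H_e$ is the group of units of the local monoid $e\operatorname{MR}(\Gamma)e$. This is a standard fact: $H_e$ consists exactly of the elements of $eSe$ that are invertible in $eSe$ with identity $e$. By Theorem \ref{sec:local_monoids}, $e\operatorname{MR}(\Gamma)e\cong \operatorname{MR}(\Gamma[X])$. The isomorphism there is restriction and extension by undefinedness, and $e$ corresponds to the identity $1_X$. So it suffices to show that the group of units of $\operatorname{MR}(\Delta)$ is $\operatorname{Aut}(\Delta)$ for any finite simple graph $\Delta$, here $\Delta=\Gamma[X]$.

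For this, take a unit $f$ of $\operatorname{MR}(\Delta)$ with $fg=gf=1_{V(\Delta)}$. Then $f$ is a total bijection of $V(\Delta)$, since $\operatorname{Dom}(fg)$ is everything and $g$ inverts it, so $g=f^{-1}$. Both $f$ and $f^{-1}$ lie in $\operatorname{Inj}(\Delta)$, with domain and image all of $V(\Delta)$. By Lemma \ref{Inj} applied to $f$ and to $f^{-1}$, each of $f^{-1}$ and $f$ is a graph embedding of $\Delta$ into itself. Hence $f$ preserves and reflects edges, that is, $f\in\operatorname{Aut}(\Delta)$. This mirrors the corollary characterizing regular elements of $\operatorname{Inj}(\Gamma)$. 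Conversely, any automorphism $\psi$ of $\Delta$ is continuous, by the same preimage-of-cells check as in the converse part of Theorem \ref{JMR}. Its inverse is also an automorphism, so $\psi$ is a unit. Composition in both settings is composition of functions, so the identification is a group isomorphism.

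I expect the only real point of care to be the identification of $H_e$ with the units of $eSe$ and the transfer through the isomorphism of Theorem \ref{sec:local_monoids}. Everything else is a direct application of Lemma \ref{Inj}. As a finite-graph sanity check, a bijective edge-preserving self-map of a finite graph is automatically an automorphism, because it permutes the finite edge set injectively.
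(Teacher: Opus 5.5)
Your proposal is correct and follows essentially the paper's route. Like the paper, you identify $H_e$ with the group of units of $e\operatorname{MR}(\Gamma)e$, pass through the isomorphism of Theorem~\ref{sec:local_monoids} with $\operatorname{MR}(\Gamma[\operatorname{Im}(e)])$, and recognize those units as the graph automorphisms. You also supply two details the paper only asserts or leaves implicit, and both are sound: the reduction to the partial identity $e_{\operatorname{Im}(e)}$ via Green's Lemma, and the verification via Lemma~\ref{Inj} that the units of $\operatorname{MR}(\Delta)$ are exactly $\operatorname{Aut}(\Delta)$.
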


\begin{proof}
By a classical result of Green's relations in semigroup theory, the maximal subgroup of a semigroup containing an idempotent $e$ is precisely the group of units (the invertible elements) of the submonoid $e \operatorname{MR}(\Gamma) e$. 

From the isomorphism established in Theorem~\ref{sec:local_monoids}, we have $e \operatorname{MR}(\Gamma) e \cong MR(\Gamma[\operatorname{Im}(e)])$. Under this isomorphism, the group of units of $e \operatorname{MR}(\Gamma) e$ maps bijectively to the group of units of $MR(\Gamma[\operatorname{Im}(e)])$. An element in  $MR(\Gamma[\operatorname{}{Im}(e)])$ is invertible if and only if it is a total, bijective, structure-preserving map on the underlying vertex set $\operatorname{Im}(e)$. By definition, the group of such invertible continuous total bijections on an induced subgraph is precisely its graph automorphism group, $\operatorname{Aut}(\Gamma[\operatorname{Im}(e)])$. Therefore, $H_e \cong \operatorname{Aut}(\Gamma[\text{Im}(e)])$.
\end{proof}

\section{The Regular \texorpdfstring{$\mathcal{J}$}{J}-Class Poset of \texorpdfstring{$MR(P_n)$}{MR(Pn)} and the Graph Partition Lattice}
\label{sec:mpn_partitions}

To illustrate Theorem \ref{RegJMR}, we study the ideal structure of the Margolis-Rhodes monoid $MR(P_n)$ for a path graph $P_n$ containing $n$ vertices labeled $1, 2, \dots, n$ and $n-1$ edges of the form $\{j, j+1\}$. A similar analysis can be done for $MR(C_{n})$ where $C_n$ is the cycle of size $n$.

\subsection{Isomorphism to the Integer Partition Lattice}

Recall from Theorem~5.2 that two regular elements $f, g \in \operatorname{MR}(\Gamma)$ belong to the same $\mathcal{J}$-class if and only if their induced image subgraphs are isomorphic:
\[
f \mathcal{J} g \iff \Gamma[\text{Im}(f)] \cong \Gamma[\text{Im}(g)]
\]
For a path graph $\Gamma = P_n$ this poset is the lattice of integer partitions of $n+1$.

\begin{theorem}
The poset of regular $\mathcal{J}$-classes of $MR(P_n)$ ordered by ideal inclusion is isomorphic to the lattice of integer partitions of $n+1$ ordered by refinement.
\end{theorem}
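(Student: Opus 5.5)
By the earlier theorem identifying the poset of regular $\mathcal{J}$-classes of $\operatorname{MR}(\Gamma)$ with the poset of isomorphism classes of induced subgraphs of $\Gamma$ under embedding, it suffices to establish a purely combinatorial statement. Namely, I will show that the isomorphism classes of induced subgraphs of $P_n$, ordered by ``is isomorphic to an induced subgraph of'', form a poset isomorphic to the partitions of $n+1$ under refinement. The plan has three steps: describe the induced subgraphs, encode them as partitions of $n+1$, and check that the two orders agree.

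\textbf{Step 1 (classification).} An induced subgraph of $P_n$ on a vertex set $U$ splits $U$ into maximal runs of consecutive integers. It is therefore a disjoint union of paths $P_{a_1}\sqcup\cdots\sqcup P_{a_r}$, and its isomorphism class is exactly the multiset $\{a_1,\dots,a_r\}$ with $a_i\ge 1$. Distinct runs must be separated by at least one missing vertex. Hence a multiset occurs if and only if $\sum_i a_i + (r-1)\le n$, which is equivalent to $\sum_i (a_i+1)\le n+1$.

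\textbf{Step 2 (encoding).} To the multiset $\{a_i\}$ I associate the partition $\lambda$ of $n+1$ with parts $a_1+1,\dots,a_r+1$, all of size $\ge 2$, padded with $n+1-\sum_i(a_i+1)$ parts equal to $1$. Conversely, from a partition of $n+1$ I discard the $1$'s and subtract $1$ from each remaining part. These two maps are mutually inverse bijections. The empty subgraph corresponds to $1^{n+1}$ and $P_n$ itself to the one-part partition $(n+1)$.

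\textbf{Step 3 (orders agree).} This is the main point. I will show that $\bigsqcup_j P_{b_j}$ embeds as an induced subgraph of $\bigsqcup_i P_{a_i}$ if and only if the $b_j$ can be distributed into groups $G_i$, one group per $a_i$, with
\[
\sum_{j\in G_i}(b_j+1)\le a_i+1 .
\]
For the forward direction, each connected $P_{b_j}$ must land inside a single component $P_{a_i}$. Distinct components of the image that lie in the same $P_{a_i}$ must be separated by an omitted vertex, since the subgraph is induced. Counting vertices gives $\sum_{j\in G_i} b_j + |G_i|-1\le a_i$. For the converse, place the paths of $G_i$ consecutively inside $P_{a_i}$ with single gaps between them.

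Translated to partitions, this packing condition says the following. Each part $a_i+1$ of $\mu$ is split into the parts $b_j+1$ for $j\in G_i$, plus $1$'s filling the remaining slack, and the unused $1$'s of $\mu$ go to $1$'s of $\lambda$. So $\lambda$ refines $\mu$. Conversely, a refinement never needs to send a part $\ge 2$ of $\lambda$ into a part $1$ of $\mu$. Hence any refinement $\lambda\le\mu$ yields such a grouping by discarding $1$'s, and the $\mathcal{J}$-order matches refinement.

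The only delicate point is the induced-subgraph gap in Step 3. Without it the correspondence would be with partitions of $n$ and the order would not match. This gap is exactly why the ``$+1$'' shift and $n+1$ appear. I will also remark that only the poset isomorphism is asserted; refinement is the relevant order even though ``lattice'' is used loosely.
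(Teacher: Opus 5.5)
Your proposal is correct and follows essentially the same route as the paper. Like the paper, you reduce to the induced-subgraph poset via the earlier theorem, classify induced subgraphs of $P_n$ as disjoint unions of paths with $\sum_i(a_i+1)\le n+1$, and encode them as partitions of $n+1$ by adding $1$ to each part and padding with $1$'s. Your Step 3 is more careful than the paper's, which only says that components of the smaller subgraph lie in components of the larger one and omits both the gap-counting and the converse; your remark that refinement on partitions is in general only a poset, not a lattice, is also correct.
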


\begin{proof}
Let $U \subseteq V(P_n)$ be an arbitrary vertex subset. Because $P_n$ is a line graph, the induced subgraph $P_n[U]$ must be either $P_k$ for some $k \le n$ or a disjoint union of smaller path graphs separated by at least one missing vertex gap:
\[
P_n[U] \cong P_{\lambda_1} \cup P_{\lambda_2} \cup \dots \cup P_{\lambda_k}
\]
where $\lambda_1 \ge \lambda_2 \ge \dots \ge \lambda_k \ge 1$. The minimum number of vertices in $P_n$ required to host these components and their mandatory gaps is $\sum_{i=1}^k \lambda_i + k - 1 \le n$. Rearranging this inequality yields $\sum_{i=1}^k (\lambda_i + 1) \le n+1$.

This natural inequality establishes a canonical bijection between the isomorphism classes of induced subgraphs of $P_n$ and the integer partitions of $n+1$. Let $p(m)$ denote the partition number of $m$. We map the subgraph sequence $\lambda$ to a partition $\mu \vdash n+1$ by taking the parts $\mu_i = \lambda_i + 1$ for $1 \le i \le k$, and padding the remainder of the integer sum $n+1 - \sum_{i=1}^k (\lambda_i + 1)$ with parts of size $1$. Conversely, removing all $1$s from a partition $\mu \vdash n+1$ and subtracting $1$ from each remaining part perfectly recovers the induced subgraph $\lambda$. Thus, the number of regular $\mathcal{J}$-classes is exactly $p(n+1)$.

By Theorem~5.2, the partial order of principal ideals dictates $J_g \le J_f$ if and only if the induced image subgraph of $g$ is an induced subgraph of the induced image subgraph of $f$.



Thus every component corresponding to $g$ is a subpath of some component of $f$. This corresponds to splitting a part of $\mu \vdash n+1$ into smaller integer summands. Thus, the regular $\mathcal{J}$-class poset of $MR(P_n)$ is order-isomorphic to the integer partition lattice of $n+1$ under refinement.
\end{proof}

\subsection{The Case of $MR(P_6)$}

To have a concrete example, we look at the path graph $P_6$.  The regular $\mathcal{J}$-classes correspond to the $p(6+1) = p(7) = 15$ integer partitions of $7$. Table~\ref{tab:p6_correspondence} lists the 15 realizable regular $\mathcal{J}$-classes, mapping their subgraph partition $\lambda$, their corresponding bijection partition $\mu \vdash 7$, their induced subgraph topology, and their maximal subgroups. 

\begin{table}[ht!]
\centering
\caption{Correspondence between regular $\mathcal{J}$-classes of $MR(P_6)$, induced subgraphs $\lambda$, partitions $\mu \vdash 7$, and maximal subgroups.}
\label{tab:p6_correspondence}
\begin{tabular}{cccll}
\toprule
\textbf{Rank} & \textbf{Subgraph $\lambda$} & \textbf{Partition $\mu \vdash 7$} & \textbf{Induced Subgraph Structure} & \textbf{Maximal Subgroup $H_e$} \\ 
\midrule
6 & $(6)$       & $(7)$         & $P_6$                             & $\mathbb{Z}_2$ \\ \midrule
5 & $(5)$       & $(6,1)$       & $P_5$                             & $\mathbb{Z}_2$ \\
5 & $(4,1)$     & $(5,2)$       & $P_4 \cup P_1$                    & $\mathbb{Z}_2 \times \{1\} \cong \mathbb{Z}_2$ \\
5 & $(3,2)$     & $(4,3)$       & $P_3 \cup P_2$                    & $\mathbb{Z}_2 \times \mathbb{Z}_2 \cong \mathbb{Z}_2^2$ \\ \midrule
4 & $(4)$       & $(5,1,1)$     & $P_4$                             & $\mathbb{Z}_2$ \\
4 & $(3,1)$     & $(4,2,1)$     & $P_3 \cup P_1$                    & $\mathbb{Z}_2$ \\
4 & $(2,2)$     & $(3,3,1)$     & $P_2 \cup P_2$                    & $\mathbb{Z}_2 \wr S_2 \cong D_4$ \\
4 & $(2,1,1)$   & $(3,2,2)$     & $P_2 \cup 2P_1$                   & $\mathbb{Z}_2 \times S_2 \cong \mathbb{Z}_2^2$ \\ \midrule
3 & $(3)$       & $(4,1^3)$     & $P_3$                             & $\mathbb{Z}_2$ \\
3 & $(2,1)$     & $(3,2,1^2)$   & $P_2 \cup P_1$                    & $\mathbb{Z}_2$ \\
3 & $(1,1,1)$   & $(2^3,1)$     & $3P_1$ (Three isolated vertices)  & $S_3$ \\ \midrule
2 & $(2)$       & $(3,1^4)$     & $P_2$                             & $\mathbb{Z}_2$ \\
2 & $(1,1)$     & $(2^2,1^3)$   & $2P_1$ (Two isolated vertices)    & $S_2 \cong \mathbb{Z}_2$ \\ \midrule
1 & $(1)$       & $(2,1^5)$     & $P_1$ (Single isolated vertex)    & $\{1\}$ \\ \midrule
0 & $\emptyset$ & $(1^7)$       & $\emptyset$ (The empty graph)     & $\{1\}$ \\ 
\bottomrule
\end{tabular}
\end{table}

The hierarchical ideal dominance relation governing these regular $\mathcal{J}$-classes forms the exact integer partition refinement lattice for $n=7$, illustrated in Figure~\ref{fig:hasse_p6}.

\begin{figure}[ht!]
\centering
\begin{tikzpicture}[
    node distance=1.5cm and 1.2cm,
    vertex/.style={rectangle, draw=blue!60, fill=blue!5, thick, rounded corners, minimum width=15mm, minimum height=8mm, align=center, font=\small},
    edge/.style={-, thick, draw=gray!80}
]

    \node[vertex] (p6) at (0, 10.0) {$(6)$ \\[-0.5ex] {\scriptsize $\mu=(7)$}};

    \node[vertex] (p5)   at (-2.5, 8.2) {$(5)$ \\[-0.5ex] {\scriptsize $\mu=(6,1)$}};
    \node[vertex] (p41)  at (0, 8.2)  {$(4,1)$ \\[-0.5ex] {\scriptsize $\mu=(5,2)$}};
    \node[vertex] (p32)  at (2.5, 8.2)  {$(3,2)$ \\[-0.5ex] {\scriptsize $\mu=(4,3)$}};

    \node[vertex] (p4)   at (-3.5, 6.4) {$(4)$ \\[-0.5ex] {\scriptsize $\mu=(5,1^2)$}};
    \node[vertex] (p31)  at (-1.2, 6.4) {$(3,1)$ \\[-0.5ex] {\scriptsize $\mu=(4,2,1)$}};
    \node[vertex] (p22)  at (1.2, 6.4)  {$(2,2)$ \\[-0.5ex] {\scriptsize $\mu=(3,3,1)$}};
    \node[vertex] (p211) at (3.5, 6.4)  {$(2,1^2)$ \\[-0.5ex] {\scriptsize $\mu=(3,2^2)$}};

    \node[vertex] (p3)   at (-2.5, 4.6) {$(3)$ \\[-0.5ex] {\scriptsize $\mu=(4,1^3)$}};
    \node[vertex] (p21)  at (0, 4.6)  {$(2,1)$ \\[-0.5ex] {\scriptsize $\mu=(3,2,1^2)$}};
    \node[vertex] (p111) at (2.5, 4.6)  {$(1^3)$ \\[-0.5ex] {\scriptsize $\mu=(2^3,1)$}};

    \node[vertex] (p2)   at (-1.5, 2.8) {$(2)$ \\[-0.5ex] {\scriptsize $\mu=(3,1^4)$}};
    \node[vertex] (p11)  at (1.5, 2.8)  {$(1^2)$ \\[-0.5ex] {\scriptsize $\mu=(2^2,1^3)$}};

    \node[vertex] (p1)   at (0, 1.0) {$(1)$ \\[-0.5ex] {\scriptsize $\mu=(2,1^5)$}};

    \node[vertex] (p0)   at (0, -0.8) {$\emptyset$ \\[-0.5ex] {\scriptsize $\mu=(1^7)$}};

    \node[font=\bfseries\small, gray] at (-6, 10.0) {Rank 6};
    \node[font=\bfseries\small, gray] at (-6, 8.2)  {Rank 5};
    \node[font=\bfseries\small, gray] at (-6, 6.4)  {Rank 4};
    \node[font=\bfseries\small, gray] at (-6, 4.6)  {Rank 3};
    \node[font=\bfseries\small, gray] at (-6, 2.8)  {Rank 2};
    \node[font=\bfseries\small, gray] at (-6, 1.0)  {Rank 1};
    \node[font=\bfseries\small, gray] at (-6, -0.8) {Rank 0};

    \draw[edge] (p6) -- (p5);
    \draw[edge] (p6) -- (p41);
    \draw[edge] (p6) -- (p32);
    
    \draw[edge] (p5) -- (p4);
    \draw[edge] (p5) -- (p31);
    \draw[edge] (p5) -- (p22);
    
    \draw[edge] (p41) -- (p4);
    \draw[edge] (p41) -- (p31);
    \draw[edge] (p41) -- (p211);
    
    \draw[edge] (p32) -- (p31);
    \draw[edge] (p32) -- (p22);
    \draw[edge] (p32) -- (p211);

    \draw[edge] (p4) -- (p3);
    \draw[edge] (p4) -- (p21);
    
    \draw[edge] (p31) -- (p3);
    \draw[edge] (p31) -- (p21);
    \draw[edge] (p31) -- (p111);
    
    \draw[edge] (p22) -- (p21);
    
    \draw[edge] (p211) -- (p21);
    \draw[edge] (p211) -- (p111);

    \draw[edge] (p3) -- (p2);
    \draw[edge] (p3) -- (p11);
    
    \draw[edge] (p21) -- (p2);
    \draw[edge] (p21) -- (p11);
    
    \draw[edge] (p111) -- (p11);

    \draw[edge] (p2) -- (p1);
    \draw[edge] (p11) -- (p1);

    \draw[edge] (p1) -- (p0);

\end{tikzpicture}
\caption{The Hasse diagram of the regular $\mathcal{J}$-class poset of $MR(P_6)$, annotated with the isomorphic integer partitions $\mu \vdash 7$.}
\label{fig:hasse_p6}
\end{figure}

\section{Characterization of Regularity in \texorpdfstring{$St(\Gamma)$}{St(Gamma)}}
\label{sec:regularity_thorough2}

In this section, we study the regularity of the strict continuous semigroup $St(\Gamma)$ for a finite simple graph $\Gamma = (V, E)$. In contrast to the case of $\operatorname{MR}(\Gamma)$ in Lemma \ref{RegJMR}, the  regular $\mathcal{J}$-classes of $\StG{\Gamma}$ form a chain.


\begin{theorem}\label{strictreg}
Let $f \in \StG\Gamma$ be an element with domain matching $M = \{E_1, \dots, E_k\}$ and image set $I = \{v_1, \dots, v_k\}$ with $xf=v_{i}$ if $x \in E_i$. Then $f$ is a regular element of $St(\Gamma)$ if and only if the following two conditions hold:
\begin{enumerate}
    \item \textbf{The Transversal Condition:} The domain matching $M$ admits an independent transversal; that is, there exists an independent set of vertices $I' = \{u_1, \dots, u_k\} \subseteq V$ such that $u_i \in E_i$ for all $i \in \{1, \dots, k\}$.
    \item \textbf{The Covering Condition:} The image set $I$ admits a covering matching within $\Gamma$; that is, there exists a matching $M' = \{E'_1, \dots, E'_k\}$ in $\Gamma$ of size $k$ such that $v_i \in E'_i$ for all $i \in \{1, \dots, k\}$.
\end{enumerate}
\end{theorem}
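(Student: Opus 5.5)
We prove both implications directly from the description of $\StG{\Gamma}$ in Lemma \ref{strict}.

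\emph{Necessity.} Suppose $f = fgf$ with $g \in \StG{\Gamma}$. For each $i$ pick $x \in E_i$. Then $xfgf = v_i g f = v_i$, so every $v_i$ lies in $\operatorname{Dom}(g)$. By Lemma \ref{strict}, $\operatorname{Dom}(g)$ is a union of edges of a matching $M_g$. Let $E'_i$ be the edge of $M_g$ containing $v_i$. Since $v_i g f = v_i$, we have $v_i g \in E_i$. The $v_i$ are distinct and $f$ separates the $E_i$, so the $v_i g$ are distinct. Because $g$ collapses each edge of $M_g$ to a single point, no two of the $v_i$ lie in the same edge of $M_g$. Hence the edges $E'_i$ are distinct, $M' = \{E'_1,\dots,E'_k\}$ is a matching of size $k$, and $v_i \in E'_i$; this is the Covering Condition. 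Now set $u_i = v_i g \in E_i$. Then $\{u_1,\dots,u_k\} \subseteq \operatorname{Im}(g)$, which is an independent set by Lemma \ref{strict}. So $I' = \{u_i\}$ is an independent transversal of $M$, which is the Transversal Condition.

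\emph{Sufficiency.} Given $I'$ and $M'$, define $g$ with domain $\bigcup_i E'_i$ by $x g = u_i$ for $x \in E'_i$. The edges $E'_i$ form a matching, the image $\{u_1,\dots,u_k\}$ is independent, and the $u_i$ are distinct (they lie in disjoint edges $E_i$), so each edge $E'_i$ is sent to its own vertex. By Lemma \ref{strict}, $g \in \StG{\Gamma}$. To check $fgf = f$, take $x \in E_i$. Then $xf = v_i \in E'_i$, so $v_i g = u_i \in E_i$, and $u_i f = v_i = xf$. Also $\operatorname{Dom}(fgf) = \operatorname{Dom}(f)$, since every $v_i$ lies in $\operatorname{Dom}(g)$ and every $u_i$ lies in $\operatorname{Dom}(f)$. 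Hence $f$ is regular.

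The only delicate step is in necessity: showing that the matching edges containing the $v_i$ are pairwise distinct. This rests on the fact that $g$ identifies the two endpoints of an edge of $M_g$, while $v_i g f = v_i$ forces the values $v_i g$ to be distinct. Once that is settled, both conditions follow from Lemma \ref{strict}.
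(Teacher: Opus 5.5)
Your proof is correct and follows the paper's argument essentially step for step. For necessity, you take the covering matching from the edges of $\operatorname{Dom}(g)$ and the independent transversal $u_i = v_i g$ from $\operatorname{Im}(g)$; for sufficiency, you build $g$ sending each $E'_i$ to $u_i$. The only difference is how you show the edges $E'_i$ are distinct. The paper gets this from the independence of $I$ (no edge contains two vertices of an independent set), while you get it because $g$ collapses each matching edge but the values $v_i g \in E_i$ are distinct; both arguments are valid.
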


\begin{proof}
 Suppose $f$ is regular and let $g \in St(\Gamma)$ satisfy $fgf = f$. Let $\text{Dom}(g) = M_g$ and $\text{Im}(g) = I_g$. For any $x \in \bigcup M$, we have $xf = v_i \in I$ if $x \in E_i$. To preserve the rank under $xfgf = v_i$, the vertex $v_i$ must lie in the domain of $g$. Since $g \in St(\Gamma)$, $\text{Dom}(g)$ is a matching, meaning each $v_i$ must belong to some edge $E'_i \in M_g$. Because $I$ has size $k$ and no edge can contain more than one vertex of an independent set, there must exist a sub-matching $M' \subseteq M_g$ of size exactly $k$ such that $v_i \in E'_i$, which establishes the \textit{Covering Condition}. 

Furthermore, let $u_i = (v_i)g$. Because $g \in St(\Gamma)$, the set $I' = \{u_1, \dots, u_k\}$ is a subset of the independent set $I_g$, and hence $I'$ is independent. Substituting this into the regularity equation gives $(x)fgf = (v_i)gf = (u_i)f = v_i$, which implies $u_i \in E_i$. Thus, $I'$ is an independent transversal for $M$, establishing the \textit{Transversal Condition}.

Conversely, suppose both conditions hold. Let $I' = \{u_1, \dots, u_k\}$ be the independent transversal of $M$ ($u_i \in E_i$) and let $M' = \{E'_1, \dots, E'_k\}$ be the covering matching of $I$ ($v_i \in E'_i$). Define a partial map $g: V \to V$ by:
\[
yg = \begin{cases} 
u_i & \text{if } y \in E'_i \text{ for some } i \in \{1, \dots, k\} \\ 
undefined & \text{otherwise} 
\end{cases}
\]
Since $\text{Dom}(g) = M'$ is a valid matching and $\text{Im}(g) = I'$ is an independent set, $g \in St(\Gamma)$. For any $x \in \bigcup M$, let $x \in E_i$. Then $xfgf = v_ig = u_if = v_i = xf$. For any $x \notin \bigcup M$, $xfgf$ and $xf$ are undefined. Thus $fgf = f$, and $f$ is regular.
\end{proof}

\begin{theorem}
The regular $\mathcal{J}$-classes of $\StG{\Gamma}$ form a chain:
\[
J_0 < J_1 < J_2 < \dots < J_k
\]
where $k$ is the largest rank of an element of $St(\Gamma)$ satisfying the Transversal Condition and the Covering Condition. Two regular elements $f, g \in \StG{\Gamma}$ belong to the same $\mathcal{J}$-class if and only if their underlying matchings have the same size, i.e., $|M_f| = |M_g|$.
\end{theorem}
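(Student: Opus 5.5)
The plan is to show two things. First, any two regular elements of the same rank are $\mathcal{J}$-equivalent. Second, a regular element of rank $m$ lies $\mathcal{J}$-below every regular element of larger rank. Here the rank of $f\in\StG{\Gamma}$ is $|M_f|=|I_f|$ by Lemma \ref{strict}. Since $\mathcal{J}$-equivalent elements have equal rank (rank can only drop under multiplication), this gives the chain, and the ranks occurring are exactly the sizes $0,\dots,k$.

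For the first claim, let $f$ be regular of rank $m$, with matching $M=\{E_1,\dots,E_m\}$, independent transversal $I'=\{u_i\}$ and covering matching $M'=\{E'_i\}$ of $I$, as in Theorem \ref{strictreg}. I will pass to a canonical idempotent-like element. Define $h$ by sending each $E_i$ to $u_i$; its domain is the matching $M$ and its image is the independent set $I'$, so $h\in\StG{\Gamma}$. Then $h$ is an idempotent, since $u_ih=u_i$. Moreover $f=h\cdot p$ and $h=f\cdot q$, where $p,q\in\StG{\Gamma}$ are built using the covering matchings. For $f=hp$, let $p$ send the edge of the covering data containing $u_i$ to $v_i$; the transversal vertex $u_i$ lies in $E_i$, so take $p$ with domain $M$ and $E_ip=v_i$. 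Then $hp$ sends $E_i\mapsto u_i\mapsto v_i$, as required. For $h=fq$, let $q$ have domain $M'$ with $E'_iq=u_i$. This shows $f\,\mathcal{R}\,h$.

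So it suffices to compare idempotents $h_1,h_2$ of the same rank $m$, each of the form ``matching $M_j$ collapsed onto an independent transversal $T_j$''. Define $a$ with domain $M_1$ sending the edge containing $t\in T_1$ to the corresponding vertex of $T_2$ under some bijection $T_1\to T_2$; its image $T_2$ is independent, so $a\in\StG{\Gamma}$. Symmetrically define $b$ from $M_2$ to $T_1$. Then $h_1=a\,b$ up to the chosen bijection: $E\mapsto t\mapsto$ the edge of $M_2$ containing $t\sigma$, which is sent to $t$. Hence $h_1=h_1 a b$ and $a=h_1a$, giving $h_1\,\mathcal{J}\,h_2$, and in fact the computation shows the two are $\mathcal{D}$-related through $a$.

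For the ordering claim, given idempotents $h_1$ of rank $m<n$ and $h_2$ of rank $n$, restrict: define $a$ on $M_1$ onto an $m$-subset of $T_2$ (which is still independent), and $b$ on the corresponding $m$ edges of $M_2$ back onto $T_1$. Then $h_1=a h_2 b$, so $J_{h_1}\le J_{h_2}$. To get all ranks $\le k$, note that restricting a regular element of rank $k$ to $m$ of its edges keeps both the transversal and covering conditions, because subsets of independent sets and of matchings are again of the same type.

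The main obstacle is checking that every composite map defined above actually lies in $\StG{\Gamma}$ and that the compositions are computed correctly. This is routine once we use Lemma \ref{strict}, since each composite has a matching as domain and an independent set as image.
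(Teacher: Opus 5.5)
Your proof is correct and is essentially the paper's argument. Both reduce to idempotents that collapse a matching onto an independent transversal, and link them by elements of $\StG{\Gamma}$ that send one matching bijectively onto the other's transversal: the paper compares each regular idempotent with the restriction $e_r$ of one fixed rank-$k$ idempotent via $f=ge_rh$ and $e_r=hfg$, while you compare pairs directly via $h_1=ab$, $h_2=ba$ and $h_1=ah_2b$ (the $\mathcal{L}$-half, $a=ah_2$, is the symmetric computation you left implicit). Your version also makes explicit two points the paper leaves as ``without loss of generality'': each regular $f$ is $\mathcal{R}$-related to such an idempotent via $h=fq$, and restrictions of regular elements stay regular, so every rank $0,\dots,k$ occurs.
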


\begin{proof}


Let $e_{k}$ be an idempotent of rank $k$ in $\StG\Gamma$. Then there is a matching $M=
\{E_{1}\ldots, E_{k}\}$ and an independent set $\{v_{1},\ldots, v_{k}\}$ such that $e_{k}$ sends each vertex of $E_i$ to $v_i \in E_{i}$. The restriction of $e_{k}$ to $E_{1} \cup \ldots E_{r}$ is an idempotent $e_r$ of rank $r$ in $\StG\Gamma$. Let $J_r$ be the $\mathcal{J}$-class of $e_{r}$. Then ${J}_{0} < \ldots \mathcal <{J}_{k}$.

We claim that if $f$ is a regular element of $St(\Gamma)$, then $f\mathcal{J}e_{r}$ where $r=\operatorname{rank(f)}$. Without loss of generality we can assume that $f$ is an idempotent. $f$ maps a matching $M'=\{\{(E'_{i}=(x_{i},y_{i})\}\mid i=1, \ldots , r\}$ with $x_{i}f=y_{i}f=x_{i}$ where $x_{1}, \ldots , x_{r}$ is an independent set.

Let $g:V\rightarrow V$ be defined by sending both vertices of $E'_{i}$ to $v_{i}$ belongs to $\StG\Gamma$.
The function $h$ that sends each edge $E_{i}$ to $x_{i}$ also belongs to $\StG\Gamma$. We have $f=ge_{r}h$ and $e_{r}=hfg$. Therefore $f\mathcal{J}e_{r}$.

\end{proof}

\subsection{Regularity of the Strict Semigroup of Paths and Cycles}

We prove that for path graphs and cycle graphs their strict continuous semigroups are regular semigroups.

\begin{theorem}
The semigroup $St(P_n)$ is regular for all path graphs $P_n$ ($n \ge 1$).
\end{theorem}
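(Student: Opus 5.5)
We reduce the statement to Theorem \ref{strictreg}: it suffices to show that for every $f \in \StG{P_n}$, with domain matching $M=\{E_1,\dots,E_k\}$ and independent image $I=\{v_1,\dots,v_k\}$, both the Transversal Condition and the Covering Condition hold. We treat the two conditions separately, using only the linear order of the vertices $1,\dots,n$ of $P_n$.

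\textbf{Transversal Condition.} The edges of a matching in $P_n$ have the form $E_i=\{a_i,a_i+1\}$. We order them so that $a_1<a_2<\dots<a_k$; disjointness then gives $a_{i+1}\ge a_i+2$. We choose $u_i=a_i$, the left endpoint of each edge. Then $u_{i+1}-u_i\ge 2$, so no two of the $u_i$ are adjacent, and $\{u_1,\dots,u_k\}$ is an independent transversal of $M$.

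\textbf{Covering Condition.} This is the step needing care. We order the image as $v_1<\dots<v_k$; independence gives $v_{i+1}\ge v_i+2$. The plan is to match each $v_i$ greedily to its right neighbour, $E_i'=\{v_i,v_i+1\}$, whenever $v_i+1\le n$. Since $v_{i+1}\ge v_i+2$, these edges are pairwise disjoint. The only possible failure is $v_k=n$. In that case we use $E_k'=\{n-1,n\}$ instead. We must then check that this edge is disjoint from $E_{k-1}'=\{v_{k-1},v_{k-1}+1\}$. Disjointness needs $v_{k-1}+1<n-1$, that is, $v_k-v_{k-1}\ge 3$, which independence alone does not guarantee. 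If $v_k-v_{k-1}=2$, the vertex $n-1$ is shared, so a fix is needed: we scan from the right end. We give $v_k$ its left neighbour, and propagate leftward, giving $v_j$ its left neighbour whenever its right neighbour is already used.

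It remains to see that the edges needed always exist. The key count is that $k\le\mu(P_n)=\lfloor n/2\rfloor$, because $f$ has rank $k$ and its domain is a matching of size $k$. A run of image vertices spaced exactly $2$ apart ending at $n$, say $n,n-2,\dots,n-2(r-1)$, uses the edges $\{n-1,n\},\dots$ under the left-neighbour assignment. This can only fail if we run out of vertices at the left end, i.e. $v_1=1$ together with every gap equal to $2$ and $v_k=n$. In that situation $n$ is odd and $k=(n+1)/2>\lfloor n/2\rfloor$, contradicting the matching bound. So in every case a covering matching exists.

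\textbf{Main obstacle.} The covering step at the boundary is where care is required, and the matching-size bound $k\le\lfloor n/2\rfloor$ coming from the domain of $f$ is exactly what rules out the bad configuration. To organise this cleanly we would use one uniform rule: match $v_i$ to the right when $v_i<n$, and otherwise shift the maximal final block of $2$-spaced image vertices to the left. Disjointness then follows from the spacing, and existence follows from the counting argument.
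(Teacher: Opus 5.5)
Your proof is correct and follows the paper's argument. You reduce to Theorem~\ref{strictreg}, use left endpoints for the independent transversal, and build the covering matching greedily, ruling out the single failing configuration $I=\{1,3,\dots,n\}$ with $n$ odd via $k\le\lfloor n/2\rfloor$. The only difference is that your greedy is the mirror image of the paper's: the paper prefers left neighbours and cascades toward the right end, while you prefer right neighbours and shift the final $2$-spaced block to the left.
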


\begin{proof}
Let $f \in St(P_n)$ have rank $k$, domain matching $M = \{E_1, \dots, E_k\}$, and image independent set $I = \{v_1, \dots, v_k\}$. Labeling $V(P_n) = \{1, \dots, n\}$ linearly:
\begin{enumerate}
    \item \textbf{Transversal Condition:} Write each edge as $E_i = \{a_i, a_i+1\}$. Sorting them such that $a_1 < a_2 < \dots < a_k$, the matching condition forces $a_{i+1} - a_i \ge 2$. Selecting the left endpoints $I' = \{a_1, \dots, a_k\}$ guarantees that no two chosen vertices are adjacent. Hence, $I'$ is an independent transversal.
    \item \textbf{Covering Condition:} Order $I$ such that $v_1 < v_2 < \dots < v_k$. We execute a greedy left-to-right matching construction: for $j=1$ to $k$, we pair $v_j$ with $v_j - 1$ if unassigned; otherwise, we pair it with $v_j + 1$. This matching sequence can only fail if it cascades to the right boundary, forcing $v_k = n$ to look for $n+1$, which requires $I = \{1, 3, 5, \dots, n\}$ where $n$ is odd. This forces $|I| = k = \frac{n+1}{2}$. However, the maximum matching size of $P_n$ is $\lfloor \frac{n}{2} \rfloor = \frac{n-1}{2}$, creating the contradiction $k > \lfloor \frac{n}{2} \rfloor$. Thus, a covering matching $M'$ always exists.
\end{enumerate}
Since both conditions are unconditionally satisfied, $St(P_n)$ is regular.
\end{proof}

\begin{theorem}
The semigroup \texorpdfstring{$St(C_n)$}{St(Cn)} is regular for all cycle graphs \texorpdfstring{$C_n$}{Cn} (\texorpdfstring{$n \ge 3$}{n >= 3}).
\end{theorem}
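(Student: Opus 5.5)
Following the proof for $P_n$, the plan is to apply Theorem~\ref{strictreg}: for an arbitrary $f \in St(C_n)$ of rank $k$, with domain matching $M=\{E_1,\dots,E_k\}$ and image independent set $I=\{v_1,\dots,v_k\}$, we verify the Transversal Condition and the Covering Condition. Label $V(C_n)=\mathbb{Z}_n=\{0,1,\dots,n-1\}$ with edges $\{j,j+1\}$ taken mod $n$. Note first that rank $k\ge 1$ requires $k\le \min\{\lfloor n/2\rfloor,\lfloor n/2\rfloor\}=\lfloor n/2\rfloor$, since $\mu(C_n)=\alpha(C_n)=\lfloor n/2\rfloor$. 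The case $k=0$ (the empty map) is trivially regular.

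\textbf{Transversal Condition.} Write each edge as $E_i=\{a_i,a_i+1\}$, where $a_i+1$ is the clockwise successor, and choose $I'=\{a_1,\dots,a_k\}$, the ``left'' endpoints in the cyclic orientation. Suppose $a_i$ and $a_j$ were adjacent, say $a_j=a_i+1$. Then $a_j\in E_i\cap E_j$, which contradicts the disjointness of the matching. (The case $a_j=a_i-1$ is symmetric.) Hence $I'$ is independent and meets each $E_i$ exactly once. This step is immediate and, unlike the path case, needs no sorting.

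\textbf{Covering Condition.} Dually, assign to each $v_i$ the edge $E_i'=\{v_i,v_i+1\}$, its clockwise edge. Two such edges $\{v_i,v_i+1\}$ and $\{v_j,v_j+1\}$ can share a vertex only if $v_j=v_i\pm1$, which is impossible since $I$ is independent. Hence $M'=\{E_1',\dots,E_k'\}$ is a matching of size $k$ with $v_i\in E_i'$. The cyclic symmetry removes exactly the boundary issue that forced the counting argument in the $P_n$ proof.

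The only point needing care, which I expect to be the main obstacle despite its small size, is the degenerate small case $n=3$. There the clockwise edges still work: $I$ is a single vertex and $k\le 1$, so the argument goes through. I would also check that no edge $\{v_i,v_i+1\}$ coincides with another, which again follows from independence. With both conditions verified for every $f$, Theorem~\ref{strictreg} gives that every element of $St(C_n)$ is regular.
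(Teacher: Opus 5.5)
Your proof is correct. It follows the paper's framework: you reduce to the Transversal and Covering Conditions of Theorem~\ref{strictreg}, and your transversal argument (take the clockwise-first endpoint of each matching edge) is the same as the paper's.

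You genuinely diverge on the Covering Condition. The paper splits into three cases: $k<n/2$; $n$ even with $k=n/2$; and $n$ odd with $k=(n-1)/2$. In the first and third cases it cuts the cycle open, either by deleting a vertex outside $I$ or by slicing along the edge joining the two consecutive vertices missed by $I$. It then invokes the left-to-right greedy matching from the $P_n$ proof, and uses the clockwise-neighbour pairing only in the perfectly alternating case. You observe instead that the pairing $v\mapsto\{v,v+1\}$ works for every independent set $I$, because the edges for distinct $v,w\in I$ can meet only if $w=v\pm1$.

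Your route is shorter and needs no case analysis. It never uses the bound $k\le\lfloor n/2\rfloor$ that you mention, so it shows that in $C_n$ every matching has an independent transversal and every independent set has a covering matching, whatever their size. It also avoids inheriting the boundary-failure analysis of the path greedy algorithm and the somewhat informal ``slicing'' step of the paper's odd case. The paper's route mainly buys continuity with the $P_n$ argument.

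One small correction to your own assessment: $n=3$ is not really an obstacle. The only place $n\ge 3$ enters is that $v+1\neq v-1$ in $\mathbb{Z}_n$. This makes the clockwise-first endpoint $a_i$ of each edge $E_i$ well defined, and makes $\{v,v+1\}$ a genuine edge of the simple graph $C_n$.
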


\begin{proof}
Let $f \in St(C_n)$ have rank $k$, domain matching $M$, and independent image set $I$. Fix a clockwise orientation on $C_n$.
\begin{enumerate}
    \item \textbf{Transversal Condition:} For each edge $E_i \in M$, let $u_i \in E_i$ be the first vertex encountered under the clockwise orientation. Because the edges are vertex-disjoint, the cyclic path distance between any two distinct elements in $I' = \{u_1, \dots, u_k\}$ is at least $2$. Thus, $I'$ is always an independent transversal.
    \item \textbf{Covering Condition:} Since a domain matching $M$ of size $k$ exists, we have $k \le \lfloor \frac{n}{2} \rfloor$. We construct a covering matching $M'$ based on the size of $I$:
    \begin{itemize}
        \item \textbf{Case 1 ($k < \frac{n}{2}$):} There is at least one vertex $z \notin I$. Deleting $z$ embeds $I$ into a path subgraph $P_{n-1}$. Because $k \le \lfloor \frac{n-1}{2} \rfloor$, the path greedy matching algorithm is guaranteed to safely build $M'$.
        \item \textbf{Case 2 ($n$ is even and $k = \frac{n}{2}$):} Independence forces $I$ to consist of exactly all alternating vertices. Pairing each $v_i \in I$ with its clockwise neighbor yields a perfect matching $M'$ covering $I$.
        \item \textbf{Case 3 ($n$ is odd and $k = \frac{n-1}{2}$):} The maximal independent set leaves exactly one adjacent pair of empty vertices $z_1, z_2 \notin I$. Slicing the cycle along edge $\{z_1, z_2\}$ yields a path $P_n$ containing $I$. The path greedy matching algorithm initiated at the vertex adjacent to $z_1$ terminates perfectly at $z_2$ without boundary failure, producing $M'$.
    \end{itemize}
\end{enumerate}
Thus, $St(C_n)$ satisfies both conditions and is regular.
\end{proof}

\subsection{A Counterexample to Regularity}

To demonstrate that regularity is not universal across all graph families, we present a counterexample.

\begin{proposition}\label{not reg}
There exist finite simple graphs $\Gamma$ for which \texorpdfstring{$St(\Gamma)$}{St(Gamma)} is not a regular semigroup.
\end{proposition}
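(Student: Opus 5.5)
The plan is to exhibit one explicit small graph $\Gamma$ and a single element $f \in \StG{\Gamma}$ that violates one of the two conditions of Theorem \ref{strictreg}. By that theorem, $f$ is then not regular, so $\StG{\Gamma}$ is not a regular semigroup. The easiest condition to break is the Transversal Condition. We need a matching $M$ whose edges admit no independent transversal, while the images still form an independent set of the same size, so that $f$ really lies in $\StG{\Gamma}$ by Lemma \ref{strict}.

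The concrete choice is $\Gamma = K_4$ minus a perfect matching plus extra isolated vertices, or more simply the $4$-cycle with both chords removed... To be definite: take $\Gamma$ on vertices $a,b,c,d,p,q$. The edges are $\{a,b\}$, $\{c,d\}$ and all four cross edges $\{a,c\},\{a,d\},\{b,c\},\{b,d\}$, so that $\{a,b,c,d\}$ spans $K_4$; the vertices $p,q$ are isolated. Let $M=\{\{a,b\},\{c,d\}\}$, which is a matching. Let $I=\{p,q\}$, which is independent. Define $f$ by $af=bf=p$ and $cf=df=q$. By Lemma \ref{strict}, $f\in\StG{\Gamma}$ with rank $2$.

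Any transversal $\{u_1,u_2\}$ with $u_1\in\{a,b\}$ and $u_2\in\{c,d\}$ is an edge of $K_4$, so $M$ has no independent transversal, and Theorem \ref{strictreg} shows $f$ is not regular. As an independent check, this example also fails the Covering Condition, since $p$ and $q$ are isolated. That is harmless, because the theorem needs only one condition to fail.

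If we want to avoid relying on the theorem, we can also verify directly that any $g$ with $fgf=f$ would need $p,q\in\operatorname{Dom}(g)$, which is impossible because $\operatorname{Dom}(g)$ is a union of edges. The only step needing care is confirming the hypotheses of Lemma \ref{strict}: disjointness of the two edges and independence of $I$. Both are immediate here, so no real obstacle is expected.
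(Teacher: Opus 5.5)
Your proof is correct, but it uses a different counterexample from the paper's. In $\Gamma=K_4\sqcup\{p\}\sqcup\{q\}$ every edge preimage under $f$ is empty, so $f\in\StG{\Gamma}$. Moreover $fgf=f$ would force $p\in\operatorname{Dom}(g)$, which is impossible because $p$ lies on no edge.

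The paper instead takes $\Gamma=P_3\cup P_2$ and collapses $\{0,1\}$ to $0$ and $\{x,y\}$ to $2$. There the transversal condition holds (take $\{0,x\}$). Only the covering condition fails, and it fails for a genuinely combinatorial reason: both image vertices need the same neighbor $1$.

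Your route buys an essentially one-line direct verification. The price is that non-regularity comes from the degenerate feature of an isolated vertex in the image, so the $K_4$ does no real work. Already in $P_2\sqcup K_1$, collapsing the edge onto the isolated vertex gives a non-regular element. Your example also does not show the transversal condition failing on its own, since covering fails too. The paper's example has no isolated vertices, which is the setting of its later results on $\StG{\Gamma}$. It also shows that the covering condition is not a consequence of the transversal condition.

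If you want the $K_4$ to be the actual obstruction, replace $p,q$ by pendant edges $\{p,p'\}$ and $\{q,q'\}$. Then $f$ is still in $\StG{\Gamma}$, since now $\{p,p'\}f^{-1}=\{a,b\}$ and $\{q,q'\}f^{-1}=\{c,d\}$. The two pendant edges form a covering matching. Non-regularity then comes purely from the transversal failure: $pg\in\{a,b\}$ and $qg\in\{c,d\}$ would be adjacent points of the independent set $\operatorname{Im}(g)$. This complements the paper's example, and together the two show that the conditions of Theorem~\ref{strictreg} are independent.

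Finally, delete your false start about ``$K_4$ minus a perfect matching'' and the $4$-cycle. In $C_4$ the matching $\{a,b\},\{c,d\}$ does have an independent transversal, so that graph would not break the transversal condition.
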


\begin{proof}
Let $\Gamma = P_{3} \cup P_{2}$ where the vertices of $P_{3}$ are $\{0,1,2\}$ and the vertices of $P_{2}$ are  $\{x,y\}$. Define an element $f \in \StG\Gamma$ where its matching is $M = \{\{0,1\}, \{x, y\}\}$ and independent set $I = \{0,2\}$. More precisely, let $(\{0,1\})f = \{0\}$ and $(\{x, y\})f = \{2\}$. This map is of rank $2$, and its non-empty preimages are edges, so $f \in \StG\Gamma$.

For $f$ to be regular, its image $I = \{0, 2\}$ must satisfy the Covering Condition by admitting a matching $M'$ of size 2. However the unique edge covering $0$ must contain $1$ and the unique edge covering $2$ must also contain $1$. Consequently, $I$ cannot be covered by a matching of size 2. The Covering Condition fails and $f$ is not a regular element. Thus, $\StG\Gamma$ is not regular.
\end{proof}

\subsection*{Open Problem: Classification of Regularity}
The previous examples show that regularity of $\StG\Gamma$ is related to restrictions on specific induced subgraphs. We conclude this section by posing the comprehensive global classification as an open problem:

\begin{openproblem}
 Characterize the complete class of finite simple graphs $\Gamma$ such that the strict continuous partial semigroup $St(\Gamma)$ is a regular semigroup.
\end{openproblem}

We answer this open problem when the graph is bipartite. Let $G = (A \cup B, E)$ be a bipartite graph. Let $M = \{e_1, e_2, \dots, e_k\}$ be a matching in $G$ of size $k$. 

Let $I = \{v_1, \dots, v_k\}$ be an arbitrary transversal of $M$ (i.e., taking exactly one endpoint from each edge of $M$), which we will consider as our image set. Note that $I$ may contain vertices from both $A$ and $B$. We define the subsets $I_A = I \cap A$ and $I_B = I \cap B$ so that $I=I_{A} \cup I_{B}$. The transversal condition is obvious for bipartite graphs.

\begin{theorem}
Every matching $M$ in a bipartite graph $\Gamma$ admits an independent transversal.
\end{theorem}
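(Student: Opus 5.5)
The plan is to choose all transversal vertices from a single side of the bipartition. Write $\Gamma = (A \cup B, E)$ with bipartition classes $A$ and $B$, and let $M = \{e_1, \dots, e_k\}$ be a matching. Since $\Gamma$ is bipartite, every edge $e_i$ has exactly one endpoint in $A$ and one in $B$. Let $u_i$ denote the endpoint of $e_i$ lying in $A$, and set $I' = \{u_1, \dots, u_k\}$.

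Two properties need to be checked.

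\begin{enumerate}
\item $I'$ is a transversal of $M$ with $|I'| = k$. We have $u_i \in e_i$ by construction. The edges of $M$ are pairwise vertex-disjoint, so the $u_i$ are distinct.
\item $I'$ is independent. Every vertex of $I'$ lies in $A$, and $A$ is an independent set because $\Gamma$ is bipartite. So no two vertices of $I'$ are adjacent.
\end{enumerate}

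This gives the independent transversal required in the Transversal Condition of Theorem \ref{strictreg}. Choosing the $B$-endpoints works equally well.

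The only point needing care is when $\Gamma$ is disconnected. Then the bipartition is not unique, but it suffices to fix any one bipartition $(A,B)$ at the start, and the argument above goes through unchanged. There is no real obstacle here: the statement is essentially the observation that each side of a bipartition is an independent set meeting every edge exactly once.
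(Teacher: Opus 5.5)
Your proof is correct and follows the paper's argument: take the endpoints of the matching edges that lie on one side of the bipartition. This set is a transversal because the matching edges are disjoint, and it is independent because each bipartition class is independent. Your version is stated more carefully than the paper's one-line proof, which literally calls $I_A = I \cap A$ (for an arbitrary transversal $I$) a transversal, when what is meant is the full set of $A$-endpoints of $M$.
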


\begin{proof}
Because $G$ is bipartite both $I_A$ and $I_B$ are transversals of $M$.

\end{proof}

We turn to the covering condition. This turns out to be equivalent to a modified version of Hall's Marriage Theorem.

\begin{theorem}\label{bipartcover}
An arbitrary image set $I = I_A \cup I_B$ admits a covering matching $M'$ of size $k$ in $G$ if and only if Hall's condition holds independently for both subsets mapping into the remaining graph; namely:
\begin{align*}
\forall S_A \subseteq I_A, \quad |N_G(S_A) \setminus I_B| &\ge |S_A| \\
\forall S_B \subseteq I_B, \quad |N_G(S_B) \setminus I_A| &\ge |S_B|
\end{align*}
where $N_G(S)$ denotes the neighborhood of $S$ in $G$.
\end{theorem}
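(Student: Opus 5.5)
The plan is to reduce the covering problem to two independent instances of Hall's Marriage Theorem, one for each side of the bipartition. The key observation is that in a covering matching $M' = \{E'_1, \dots, E'_k\}$ with $v_i \in E'_i$, the edge $E'_i$ has the form $\{v_i, w_i\}$ with $w_i \notin I$. Indeed, suppose $w_i = v_j$ for some $j \neq i$. Then $v_j$ lies in both $E'_i$ and $E'_j$. This contradicts vertex-disjointness unless $E'_i = E'_j$, and that is impossible since the $E'_i$ are $k$ distinct edges. (When $I$ is the image of an element of $\StG{\Gamma}$ it is independent by Lemma \ref{strict}, which gives the same conclusion directly.)

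Combining this with bipartiteness, for $v_i \in I_A$ the partner $w_i$ lies in $B \setminus I_B$, and for $v_i \in I_B$ the partner lies in $A \setminus I_A$. Thus a covering matching of $I$ is exactly a pair $(M'_A, M'_B)$ with the following properties:
\begin{itemize}
\item $M'_A$ is a matching of the bipartite subgraph between $I_A$ and $B \setminus I_B$ that saturates $I_A$;
\item $M'_B$ is a matching of the bipartite subgraph between $I_B$ and $A \setminus I_A$ that saturates $I_B$.
\end{itemize}
I will check both directions of this correspondence.

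For the forward direction, restrict $M'$ to the edges meeting $I_A$ and to those meeting $I_B$, respectively. For the converse, note that the four vertex sets $I_A$, $B\setminus I_B$, $I_B$ and $A\setminus I_A$ are pairwise disjoint. Hence $M'_A \cup M'_B$ is a matching of size $|I_A| + |I_B| = k$, and each of its edges contains exactly one vertex of $I$, so it is a covering matching.

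With the decoupling in hand, I apply Hall's theorem to each auxiliary bipartite graph. For $S_A \subseteq I_A$, the neighbourhood of $S_A$ in the subgraph on $I_A \cup (B \setminus I_B)$ is $N_G(S_A) \cap (B\setminus I_B)$. Since $N_G(S_A) \subseteq B$ by bipartiteness, this set equals $N_G(S_A) \setminus I_B$. Hall's theorem then says $I_A$ can be saturated if and only if $|N_G(S_A)\setminus I_B| \ge |S_A|$ for all $S_A \subseteq I_A$. The symmetric argument handles $I_B$. Combining the two equivalences gives the theorem.

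The only step needing care is the first one, showing that partners must avoid $I$. It is this step that makes the two Hall problems genuinely independent, and it is why $I_B$ (respectively $I_A$) is subtracted from the neighbourhoods in the stated conditions. The rest is a direct application of the classical theorem.
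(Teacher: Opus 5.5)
Your proof is correct and follows the same route as the paper: you split the covering problem into a Hall problem for $I_A$ against $B\setminus I_B$ and a separate one for $I_B$ against $A\setminus I_A$, then take the union of the two saturating matchings. Your explicit argument that each partner $w_i$ must lie outside $I$ (because the $E'_i$ are $k$ distinct, pairwise disjoint edges) fills in the step the paper asserts with ``clearly,'' and it correctly does not rely on $I$ being independent.
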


\begin{proof}
We require a matching $M'$ in $G$ of size $k$ such that each vertex $v_i \in I$ belongs to a distinct edge in $M'$. Clearly vertices in $I_A$ must be matched exclusively to vertices in $B \setminus I_B$, and vertices in $I_B$ must be matched exclusively to vertices in $A \setminus I_A$.

\vspace{1em}

Assume $M'$ is such a covering matching of size $k$. For any subset $S_A \subseteq I_A$, $M'$ assigns each vertex in $S_A$ a unique neighbor in $B$. Since no edge in $M'$ can connect to a vertex in $I_B$, these matched neighbors lie entirely in $B \setminus I_B$. Thus, the set of these matched neighbors has size $|S_A|$ and is a subset of $N_G(S_A) \setminus I_B$, implying $|N_G(S_A) \setminus I_B| \ge |S_A|$. By symmetry, the same holds for any subset $S_B \subseteq I_B$.

Conversely assume the modified Hall's conditions hold. First, consider the bipartite subgraph induced by the partitions $I_A$ and $B \setminus I_B$. The condition $\forall S_A \subseteq I_A, |N_G(S_A) \setminus I_B| \ge |S_A|$ allows us to apply Hall's Marriage Theorem to find a matching $M'_A$ that saturates $I_A$.

Second, consider the bipartite subgraph induced by the partitions $I_B$ and $A \setminus I_A$. The condition $\forall S_B \subseteq I_B, |N_G(S_B) \setminus I_A| \ge |S_B|$ allows us to find a matching $M'_B$ that saturates $I_B$.

Because $M'_A$ uses vertices only in $I_A \cup (B \setminus I_B)$, and $M'_B$ uses vertices only in $I_B \cup (A \setminus I_A)$, the two matchings are strictly vertex-disjoint. Their union $M' = M'_A \cup M'_B$ is a valid matching in $G$ that covers all vertices of $I$. Because exactly one endpoint of each edge in $M'$ lies in $I$, $M'$ contains exactly $|I_A| + |I_B| = k$ edges.
\end{proof}

\section{The Order of the Strict Continuous Semigroup \texorpdfstring{$St(P_n)$}{St(Pn)}  and \texorpdfstring{$St(C_n)$}{St(Cn)}}
\label{sec:strict_Pn}

In this section, we compute the exact order of $St(P_n)$, the semigroup of all strict continuous partial maps on a path graph. We adopt the definition where the path $P_n$ consists of $n$ vertices $V = \{1, 2, \dots, n\}$ and $n-1$ edges $E = \{\{i, i+1\} \mid 1 \le i \le n-1\}$.

To determine the order of $St(P_n)$, we partition the semigroup into its  $\mathcal{J}$-classes for all ranks $k$. By the Rees matrix theorem, the cardinality of any principal $\mathcal{J}$-class of rank $k$ is given exactly by the product of the number of its $\mathcal{R}$-classes, its $\mathcal{L}$-classes, and the order of its maximal structure group:
\begin{equation}
|\mathcal{J}_k| = |\mathcal{R}_k| \times |\mathcal{L}_k| \times |G_k|
\end{equation}

For $St(P_n)$, these components possess precise combinatorial interpretations:

\begin{itemize}
    \item \textbf{Number of $\mathcal{R}$-classes ($|\mathcal{R}_k|$):} Each $\mathcal{R}$-class corresponds uniquely to the domain of the maps in the class, which must be a matching of size $k$. This requires choosing $k$ pairwise non-adjacent edges out of the $n-1$ total sequential edges in the path. The number of ways to select $k$ non-consecutive items from a linear sequence of length $n-1$ is given by the binomial coefficient:
    \[
    \binom{(n-1)-k+1}{k} = \binom{n-k}{k}
    \]
    
    \item \textbf{Number of $\mathcal{L}$-classes ($|\mathcal{L}_k|$):} Each $\mathcal{L}$-class corresponds uniquely to the image of the maps in the class. We claim that the image $\operatorname{Im}(f)$ must form an independent set in the target graph $P_n$. Suppose, for the sake of contradiction, that $\operatorname{Im}(f)$ contains two adjacent vertices $u$ and $v$. The target edge $e = \{u, v\}$ must have a valid preimage under $f$. By definition, $ef^{-1}$ contains $uf^{-1} \cup vf^{-1}$. Since both $uf^{-1}$ and $vf^{-1}$ are strictly required to be disjoint edges in the domain, their union forms a disconnected space of at least four vertices. This union cannot form a valid single vertex or single edge, thereby violating the global continuity condition of $MR(P_n)$. Thus, $\operatorname{Im}(f)$ contains no adjacent vertices. Selecting $k$ non-adjacent vertices out of the $n$ sequential vertices yields distinct choices given by:
    \[
    \binom{n-k+1}{k}
    \]
    
    \item \textbf{Maximal Subgroup ($|G_k|$):} The structure group for a rank-$k$ partial map is the symmetric group $G_k \cong S_k$. This represents the internal permutations and bijective alignments between the chosen domain matching and the target independent set. Its order is exactly $k!$.
\end{itemize}

Summing across all valid ranks $k$ up to the maximum possible independent matching size $\lfloor n/2 \rfloor$, the total order of the strict matching semigroup is given by the closed-form formula:

\begin{equation}
|St(P_n)| = \sum_{k=0}^{\lfloor n/2 \rfloor} \binom{n-k}{k} \binom{n-k+1}{k} k!
\end{equation}

Evaluating this formula provides the structural sequence of orders for strict continuous maps on small path graphs. For the rank $k=0$ term, the product always evaluates to $1$, corresponding to the unique empty map $\emptyset$. For $n=1$ to $6$ we have the following.

\begin{itemize}
    \item \textbf{For $P_1$} ($1$ vertex, $0$ edges): 
    $|St(P_1)| = \binom{1}{0}\binom{2}{0}0! = 1$
    
    \item \textbf{For $P_2$} ($2$ vertices, $1$ edge): 
    $|St(P_2)| = \binom{2}{0}\binom{3}{0}0! + \binom{1}{1}\binom{2}{1}1! = 1 + 2 = 3$
    
    \item \textbf{For $P_3$} ($3$ vertices, $2$ edges): 
    $|St(P_3)| = \binom{3}{0}\binom{4}{0}0! + \binom{2}{1}\binom{3}{1}1! = 1 + 6 = 7$
    
    \item \textbf{For $P_4$} ($4$ vertices, $3$ edges): 
    $|St(P_4)| = 1 + \binom{3}{1}\binom{4}{1}1! + \binom{2}{2}\binom{3}{2}2! = 1 + 12 + 6 = 19$
    
    \item \textbf{For $P_5$} ($5$ vertices, $4$ edges): 
    $|St(P_5)| = 1 + \binom{4}{1}\binom{5}{1}1! + \binom{3}{2}\binom{4}{2}2! = 1 + 20 + 36 = 57$
    
    \item \textbf{For $P_6$} ($6$ vertices, $5$ edges): 
    $|St(P_6)| = 1 + \binom{5}{1}\binom{6}{1}1! + \binom{4}{2}\binom{5}{2}2! + \binom{3}{3}\binom{4}{3}3! = 1 + 30 + 120 + 24 = 175$
\end{itemize}

A similar analysis can be done for the cardinality of $St(C_{n})$.
One obtains the following results. For $n=1,2$, $C_{n}=P_{n}$. For $n\ge 3$ we have:

\begin{equation*}
                    |St(C_n)| = \sum_{k=0}^{\lfloor n/2 \rfloor} \left( \frac{n}{n-k} \binom{n-k}{k} \right)^2 k!
                \end{equation*}

The sequence begins 1,3,10,25,76,223.

\subsection{Connections to Graph Theory and Related Areas}

The classical concepts of matchings and independent sets intersect profoundly when examining the structural pairs that define the elements of the strict matching semigroup $\operatorname{St}({\Gamma})$. Despite its natural definition, as far as we know, this semigroup has never been considered before. We look at connections with some mathematical problems in graph theory and related fields. We remark the the semigroup $\operatorname{St}(\Gamma)$ generalizes in an obvious way for any hypergraph.

\subsubsection{Strong Matchings and Cross-Cycles}
In graph theory, a matching $M$ that allows one to select exactly one endpoint from each edge to form an independent set $I$ is structurally linked to an \textit{Induced Matching} (or Strong Matching). In the specific context of independence complexes, an induced matching that forms a cross-section over a maximal independent set—where precisely one vertex from each matching edge belongs to $I$—is defined as a \textit{cross-cycle}. 

This algebraic requirement imposes a rigid topological constraint on the graph layout: the only edges allowed to exist between the vertices saturated by the matching $M$ are the matching edges themselves, or edges connecting the unchosen endpoints. The subgraph induced exclusively by the chosen independent representatives $I$ must contain absolutely zero edges, ensuring that the partial transformation does not map adjacent domain points into conflicting states under the right-action convention.

\subsubsection{Structural Interpretations and Information Theory}
\begin{itemize}
    \item \textbf{Independent Transversals:} In hypergraph theory and fractional combinatorics, asking for a choice of exactly one vertex per matching edge to form an independent set is structurally framed as finding an \textit{Independent Transversal}. A transversal is a vertex set containing at least one element from a collection of pre-defined subsets (here, the individual edge components of $M$). The added condition that this transversal must be an independent set binds the global structure tightly to the internal topology of $G$.
    \item \textbf{Information Theory and Shannon Capacity:} In information theory and channel coding, this configuration models conflict-free data transmission over a noisy channel. If the edges in the matching $M$ represent pairs of primary symbols that are easily confused during transmission, picking exactly one vertex from each edge represents isolating a safe, unique vocabulary. The independent set requirement ensures that no two selected codewords interfere via secondary constraints, maximizing the error-free throughput.
    \item \textbf{Bounding the Independence Number:} If a graph $G$ admits a valid matching-independent set pair $(M, I)$ of rank $k$ within the semigroup $\StG{G}$, it provides an immediate structural lower bound on the global independence number of the graph, ensuring that $\alpha(G) \ge k$.
\end{itemize}

\subsection{Computational Efficiency Across Restricted Graph Classes}
Because finding a maximum matching with independent representatives is structurally bounded by the Maximum Independent Set problem, determining the largest valid rank $k$ for an element $f_{M,I} \in \StG{G}$ is broadly NP-hard on general graphs. Finding a maximum induced matching remains NP-hard even when restricted to highly structured families such as chordal graphs, co-chordal graphs, and co-bipartite graphs. However, explicit graph subfamilies have been identified where the problem crosses the computational boundary into polynomial-time tractability:

\subsubsection{Trees, Forests, and Interval Graphs}
While general chordal graphs remain computationally intractable, specific linear subclasses admit high efficiency. Maximum Induced Matchings can be solved in polynomial time on \textit{interval graphs} using a specialized stack-based interval scheduling algorithm that bypasses the high computational overhead of explicitly constructing the graph's line graph. Furthermore, for trees and forests, the problem becomes solvable in strict linear time $\mathcal{O}(|V|)$ using a bottom-up dynamic programming approach that evaluates candidate matching-independent set pairs incrementally from the leaves to the root.

\subsubsection{Forbidden Subgraphs and Bounded-Degree Planar Graphs}
If the problem is constrained such that every edge not in the matching $M$ must share exactly one vertex with an edge in $M$, it generalizes to the \textit{Dominating Induced Matching} problem. This specific structural variation can be solved in polynomial time for graphs that exclude a `long claw' (an induced $K_{1,3}$ with elongated paths) as an induced subgraph. This is achieved via specialized graph reductions that map the independent choices directly to saturating matchings in auxiliary bipartite graphs.

Additionally, bounding the maximum degree of planar graphs guarantees efficiency when exact maximums are not strictly required. In \textit{subcubic planar graphs} (where the maximum vertex degree is $3$), a guaranteed lower-bound induced matching can be computed in polynomial time by partitioning the global edge set into independent matchings using strong edge coloring algorithms.

\section{Basics of Krohn-Rhodes Complexity}

The Krohn-Rhodes decomposition theorem is a cornerstone of the algebraic theory of finite semigroups, providing a hierarchical decomposition analogous to the Jordan-H\"older theorem for finite groups. While extensive expositions on the foundations of this theory are thoroughly documented in the literature (see, e.g., \cite{qtheory, Eilenberg}), we briefly recall the core definitions and conventions necessary for our development.
Readers familiar with Krohn-Rhodes theory may skip this and the next section.

Let $S$ and $T$ be finite semigroups. We say that $S$ \emph{divides} $T$, denoted $S \prec T$, if $S$ is a homomorphic image of a subsemigroup of $T$. 

The primary structural tool used in the decomposition is the \emph{wreath product}. Let $(X, S)$ and $(Y, T)$ be transformation semigroups acting on the right of the sets $X$ and $Y$, respectively. The wreath product $(X, S) \wr (Y, T)$ is defined as the transformation semigroup $(X \times Y, W)$, where the underlying semigroup is $W = S^Y \times T$, acting on $X \times Y$ via the rule:
\[ (x, y)(f, t) = (x(yf), yt) \]
for all $x \in X, y \in Y, f: Y \to S$, and $t \in T$. For abstract semigroups $S$ and $T$, their abstract wreath product $S \wr T$ is defined by considering their natural actions on their underlying sets adjoined with an identity element where necessary.

A finite semigroup $A$ is called \emph{aperiodic} (or combinatorial) if it contains no non-trivial subgroups. Equivalently, $A$ is aperiodic if there exists a sufficiently large integer $n \ge 1$ such that $a^n = a^{n+1}$ for all $a \in A$. 

The fundamental theorem of Krohn and Rhodes states that every finite semigroup $S$ divides an alternating wreath product chain of finite groups and finite aperiodic semigroups:
\[ S \prec A_n \wr G_n \wr A_{n-1} \wr \dots \wr G_1 \wr A_0 \]
where each $G_i$ is a finite group and each $A_i$ is a finite aperiodic semigroup. 

The \emph{Krohn-Rhodes complexity} of $S$, denoted $Sc$, is defined as the minimum number of group blocks $G_i$ required across all valid alternating wreath product decompositions that $S$ divides. By definition, if $S$ is aperiodic, its group-complexity is zero ($Sc = 0$). The complexity operator $c$ is well-behaved with respect to standard algebraic constructions, satisfying:
\begin{enumerate}
    \item If $S \prec T$, then $Sc \le Tc$.
    \item $(S \times T)c = \max(Sc, Tc)$.
    \item $(S \wr T)c \le Sc + Tc$.
\end{enumerate}

\subsection{ Complexity of Transformation Semigroups of Degree at Most 2}

We recall the following definition.

\begin{definition}
Let $X = (Q, S)$ be a transformation semigroup. For any partial function $s \in S$, the \emph{kernel} of $s$, denoted $\ker(s)$, is the partition on the domain of $s$ given by $q \mathrel{\ker(s)} q'$ if and only if $qs = q's$. The \emph{degree} of $s$, denoted $sd$, is the maximal cardinality of a partition class of $\ker(s)$. The degree of the transformation semigroup, denoted $Xd$, is defined as:
\[ Xd = \max_{s \in S} sd \]
\end{definition}

\begin{theorem}[\cite{TilsonXII}]
Let $X = (Q, S)$ be a transformation semigroup. Then:
\[ Sc \le Xd \]
\end{theorem}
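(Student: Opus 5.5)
We plan to argue by induction on the degree $k = Xd$, using the ideal structure of $S$ together with the standard complexity bounds for ideal extensions. The base cases are immediate. If $Xd = 0$, every element of $S$ is the empty partial map, so $S$ is trivial and $Sc = 0$. If $Xd = 1$, every element of $S$ is a partial injection, so $S$ embeds in the symmetric inverse monoid on $Q$. The $\mathcal{J}$-classes of such a semigroup are unions of $\mathcal{H}$-classes with no nontrivial aperiodic part above a group, so $S$ divides a wreath product $A \wr G$. Here $G$ is the symmetric group on $Q$, and $A$ is the aperiodic monoid of partial identities (or a semilattice) that records domains. Hence $Sc \le 1$.

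For the inductive step, we take $X = (Q,S)$ with $Xd = k \ge 2$ and aim for the following decomposition.
\begin{enumerate}
\item The ideal $I$ of elements of $S$ whose kernel classes all have size at most $k-1$ acts faithfully on $Q$ with degree at most $k-1$. By induction this gives $Ic \le k-1$.
\item The Rees quotient $S/I$ is handled using the action of $S$ on the $k$-element fibers, that is, on $k$-subsets of $Q$ that are kernel classes. On such a set an element of $S$ either collapses it, which lands in lower degree, or maps it injectively. This action has a permutation part, which contributes one group level.
\end{enumerate}
We then combine the two parts using the ideal theorem: $Sc \le Ic + (S/I)c$, which holds whenever $S/I$ has complexity at most one in the appropriate sense. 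This is the form of Tilson's argument.

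A cleaner alternative is to use the derived-category / Tilson bound. Consider the action of $S$ on the set $\mathcal{P}_k$ of subsets of $Q$ of size at most $k$ that arise as fibers, and look at the morphism from $S$ to the power transformation semigroup restricted to $\mathcal{P}_k$. Show that the kernel of this relational morphism, as seen at each level, has degree strictly lower. Then iterate via the fundamental lemma of complexity, which gives $Sc \le (\text{kernel})c + 1$.

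The main obstacle is step (2). There we must show that passing from degree $k$ to degree $k-1$ costs at most one group. In other words, the groups acting on the maximal fibers can be absorbed into a single $G \wr A$ layer, uniformly across all $\mathcal{J}$-classes. We would first try to prove that the flow of kernel classes of size $k$ under right multiplication is an aperiodic relational action modulo one permutation group, which is essentially the Rhodes–Tilson "flow" argument. If that fails, we would instead cite the lower bound theory of \cite{TilsonXII} directly, since the statement is attributed there.
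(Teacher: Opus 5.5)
The paper gives no proof of this statement. It quotes the bound from \cite{TilsonXII}, and elsewhere credits it to \cite{Tilsonnumber}. So your sketch has to stand on its own, and as written it has a genuine gap in the inductive step.

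Step (1) fails: the set $I$ of elements all of whose kernel classes have size at most $k-1$ is not an ideal. A kernel class of $st$ is a union of kernel classes of $s$ lying over one kernel class of $t$. A kernel class of $ts$ is the $t$-preimage of a kernel class of $s$. So multiplying a low-degree element by a degree-$k$ element, on either side, generally gives degree $k$; degree is not monotone along $\le_{\mathcal{J}}$. Concretely:
\begin{itemize}
\item If $S$ contains the identity (or any permutation) $u$ and an element $t$ of degree $k\ge 2$, then $u\in I$ but $ut=t\notin I$.
\item For $k\ge 3$, $I$ need not even be a subsemigroup. The maps $1,2\mapsto 1,\ 3\mapsto 3$ and $1,3\mapsto 1$ both have degree $2$, but their product has degree $3$.
\item In $\operatorname{MR}(\Gamma)$ the units have degree $1$ and lie above the degree-$2$ elements of $\StG{\Gamma}$. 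If $\Gamma$ has an edge, the largest ideal of elements of degree at most $1$ is $\{\emptyset\}$. Indeed, for $s\neq\emptyset$ and $w\in\operatorname{Dom}(s)$, the map collapsing an edge onto $w$ is continuous, and its product with $s$ has degree $2$.
\end{itemize}
So no Rees quotient $S/I$ isolates the degree-$k$ elements, and the induction hypothesis has no useful ideal to apply to. The combination inequality $Sc\le Ic+(S/I)c$, which you assert only ``in the appropriate sense,'' then has nothing to act on.

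Step (2) is the entire content of the theorem, and it is not supplied. Saying the action on maximal fibers ``has a permutation part, which contributes one group level'' is exactly the claim that lowering the degree by one costs one group, uniformly over all $\mathcal{J}$-classes. Your fallback of citing \cite{TilsonXII} cites the statement being proved.

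The alternative route does not fill the hole either. The ``$+1$'' you want is not the Fundamental Lemma, which concerns aperiodic morphisms. It is the derived semigroup theorem: for a relational morphism $\varphi\colon S\to T$ one has $S\prec D_\varphi\wr T$, hence $Sc\le D_\varphi c+Tc$. This yields $+1$ only when $T$ is a group (or $Tc\le 1$). The image action of $S$ on the fiber sets $\mathcal{P}_k$ is a general transformation semigroup with no evident complexity bound. Moreover, ``the kernel has degree strictly lower'' means nothing until you exhibit a faithful action of $D_\varphi$ of degree at most $k-1$, which is again the heart of the matter. A proof must actually construct a decomposition in which the degree drops by one at the cost of exactly one group level; nothing in the sketch does that.

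Your base case $Xd=1$ is right in conclusion, but the justification should be explicit. Every partial injection is $1_{\operatorname{Dom}(f)}\sigma$ for some permutation $\sigma$ of $Q$. Hence $S$ divides a semidirect product of the semilattice $(2^Q,\cap)$ by $\operatorname{Sym}(Q)$. The remark about ``no nontrivial aperiodic part above a group'' has no meaning and should be replaced by this argument.
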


We achieve a  bound on complexity for the semigroups that we are considering in this paper.

\begin{theorem}\label{thesis}

Let $\Gamma$ be a graph. Then we have the following.

\begin{enumerate}

\item{$\operatorname{MR}(\Gamma)c \le 2$.}

\item{$\StG{\Gamma}c \leq 1$.}

\item{$\operatorname{Inj}(\Gamma)c \le 1$}
    
\end{enumerate}
    
\end{theorem}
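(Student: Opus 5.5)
Part (1) follows directly from the Tilson bound $Sc \le Xd$ quoted just above. By Lemma \ref{lem:lemma26}, every fiber of an element of $\operatorname{MR}(\Gamma)$ has size at most $2$, because the kernel of $\operatorname{Sing}(f)$ is a matching and $\operatorname{Inj}(f)$ is injective. So the transformation semigroup $(V,\operatorname{MR}(\Gamma))$ has degree at most $2$, and hence $\operatorname{MR}(\Gamma)c \le 2$.

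For (3), the degree bound also suffices. Every element of $\operatorname{Inj}(\Gamma)$ is a partial injection, so $(V,\operatorname{Inj}(\Gamma))$ has degree $1$ and the same theorem gives $\operatorname{Inj}(\Gamma)c \le 1$. As a cross-check, $\operatorname{Inj}(\Gamma)$ is a subsemigroup of the symmetric inverse monoid on $V$, which has complexity at most $1$ because inverse semigroups of partial injections embed in a wreath product of an aperiodic semigroup with a group. Either route works; I would cite the degree bound for uniformity.

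Part (2) is the main obstacle, since $(V,\StG{\Gamma})$ has degree exactly $2$ whenever $\Gamma$ has an edge, so Tilson's bound only gives $2$. I would instead find a different faithful action of degree $1$. By Lemma \ref{strict}, each $f \in \StG{\Gamma}$ is determined by a partial injection from the edges of the matching $M_f$ onto the independent set $I_f$. Consider the set $E$ of edges and let $f$ act on $E$ via $e \mapsto e'$. Here $e'$ is defined only when $e \in M_f$; it is then taken to be the unique edge of the next element's domain containing $v = ef$. That is problematic because the output $v$ is a vertex, not an edge.

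A cleaner approach is to exploit the product directly. If $f,g \in \StG{\Gamma}$ and $fg$ is nonempty, then for $e \in M_f$ with image $v$, we need $v \in \operatorname{Dom}(g)$, so $v$ lies in an edge $e'' \in M_g$. Then $vg$ is defined, but $fg$ maps $e$ to a single vertex while $\{v\}g^{-1}$... Rather than chase this, I would use the abstract structure: the multiplication in $\StG{\Gamma}$ is encoded by the partial injection $\phi_f\colon M_f \to I_f$ composed through the relation ``$v \in e$'' between $V$ and $E$. Thus $\StG{\Gamma}$ embeds in a Rees-matrix-like semigroup $\{(\phi)\} $ of partial injections $E \to V$, with product $\phi \cdot P \cdot \psi$, where $P$ is the $V\times E$ incidence matrix with entry $1$ if $v\in e$. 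This is a semigroup of partial bijections ``sandwiched'' by a $0/1$ matrix. Its action on $V \cup E$, with $e\mapsto e\phi$ and $v \mapsto$ the edge containing $v$ in $\operatorname{Dom}$, is not single-valued, because a vertex may lie in two edges.

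So the key step I would actually try is the following. Elements of $\StG{\Gamma}$ are precisely those maps whose restriction to the transversal $I'$ is injective, so I would examine the subgroups and the $\mathcal{J}$-structure. Every subgroup acts by permutations of an independent set, and the semigroup $\operatorname{St}(\Gamma)$ is ``$\mathcal{J}$-trivial above groups'' in the sense that products of two elements of rank $k$ stay in rank $k$ only through a single-step permutation. I expect the cleanest argument is to show $\StG{\Gamma}$ lies in the pseudovariety $\mathbf{A}*\mathbf{G}*\mathbf{A}$ by exhibiting a relational morphism to the aperiodic semigroup of rank data. To do so, I would map each $f$ to the pair $(M_f, I_f)$, which multiply as a band-like aperiodic semigroup. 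I would then check that the inverse image of each idempotent, i.e.\ the derived semigroup, is a semigroup of partial injections on an independent set, hence of complexity $\le 1$, and hence $\StG{\Gamma}c \le 1$ by the derived semigroup theorem.
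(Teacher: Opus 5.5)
Parts (1) and (3) are fine. Part (1) is exactly the paper's argument: fibers are simplices, so the degree is at most $2$, and Tilson's bound applies. For (3), your degree-$1$ route and the paper's route through the symmetric inverse monoid come to the same thing.

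The gap is (2). The paper does not prove it here either; it cites Corollary~3.7 of \cite{cremona}. Your proposal does not supply a working argument, for three reasons.

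\emph{A faithful degree-$1$ action cannot exist in general.} Idempotent partial injections are partial identities, so in any semigroup of partial injections a product of idempotents is idempotent. In $\StG{C_4}$, take $e_1$ ($\{1,2\}\mapsto 1$, $\{3,4\}\mapsto 3$) and $e_2$ ($\{2,3\}\mapsto 2$, $\{4,1\}\mapsto 4$). Then $e_1e_2$ ($\{1,2\}\mapsto 4$, $\{3,4\}\mapsto 2$) lies in a group of order $2$. Hence $\StG{C_4}_{II}$ is not aperiodic, and $\StG{C_4}$ does not even divide any $A\wr G$.

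\emph{The assignment $f\mapsto(M_f,I_f)$ is not compatible with multiplication.} The elements $f_1=e_1$ and $f_2$ ($\{1,2\}\mapsto 3$, $\{3,4\}\mapsto 1$) have the same pair. For $g$ ($\{1,2\}\mapsto 2$), however, $f_1g$ has domain $\{1,2\}$ while $f_2g$ has domain $\{3,4\}$. So there is no band-like semigroup on these pairs, and you never actually specify an aperiodic $T$ or a relational morphism.

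\emph{The derived semigroup theorem needs more than idempotent preimages.} It bounds $Sc$ by the complexity of $D_\varphi$. The loops of $D_\varphi$ at an object $t$ come from all $s$ with $tu=t$ for some $u\in s\varphi$, not only from $e\varphi^{-1}$. Knowing that each $e\varphi^{-1}$ has complexity $\le 1$ does not give $Sc\le 1$. For example, relate the rank-$3$, rank-$2$ and rank-$1$ elements of the full transformation monoid $T_3$ to $\{1\}$, $\{a,0\}$ and $\{0\}$ in the aperiodic monoid $\{1,a,0\}$ with $a^2=0$. The idempotent preimages are $S_3$ and the rank-$\le 2$ ideal, both of complexity $1$, yet $T_3c=2$. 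The fundamental lemma needs those preimages to be aperiodic.

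The missing idea is one you were circling. After a letter $y$ is read, the current state lies in the independent set $I_y$, and every element of $\StG{\Gamma}$ is injective on an independent set. The sandwich step ``vertex $\mapsto$ edge of $M_s$ containing it'' becomes single-valued once the last letter is known, so put a last-letter memory upstream:
\begin{itemize}
\item Let $(Y,A)$ be the reset automaton on $Y=\{\bot\}\cup\StG{\Gamma}$, where each $s$ acts as the constant $y\mapsto s$. Then $A$ is a right-zero semigroup and hence aperiodic.
\item The lower coordinate carries a vertex $x$, read off by $\theta(x,\bot)=x$ and $\theta(x,y)=xy$.
\item On input $s$, the state $(x,\bot)$ goes to $(x,s)$ when $x\in\operatorname{Dom}(s)$.
\item If $x$ lies in the edge $e\in M_y$ and $ey\in\operatorname{Dom}(s)$, then $(x,y)$ goes to $(x',s)$. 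Here $x'$ is the image of $x$ under a fixed bijection from $e$ onto the edge of $M_s$ containing $ey$.
\end{itemize}
Then $\theta(x',s)=(xy)s$, so $\theta$ respects the action. Distinct edges of $M_y$ map to non-adjacent vertices of $I_y$, hence to distinct edges of $M_s$, so every update of the lower coordinate is a partial injection. Therefore $(V,\StG{\Gamma})$ divides $(V,K)\wr(Y,A)$ with $K$ a semigroup of partial injections, and $\StG{\Gamma}c\le Kc+Ac\le 1$.
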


\begin{proof}

Since $\operatorname{MR}(\Gamma)$ is a semigroup of degree at most 2, (1) follows from Theorem  \ref{thesis} for the case that $d \leq 2$. (2) is Corollary 3.7 of \cite{cremona}. (3) follows since $\operatorname{Inj}(\Gamma)$ is a submonoid of $\operatorname{Sim}(V(\Gamma))$ and it is known \cite{qtheory} that every inverse monoid has complexity at most 1.
    
\end{proof}

The main purpose of the sections on complexity theory is to determine conditions that $\\operatorname{MR}(\Gamma)c \leq 1$. We concentrate on the case of paths and cycles.

\subsection{Lower Bounds to Complexity}

While upper bounds correspond to the existence of valid decompositions, lower bounds establish structural obstructions that force a semigroup to require a minimum number of group components \cite{KR.1968, TilsonXII}. These bounds are formulated using type I and type II subsemigroups \cite{TilsonXII}.

Let $S$ be a finite semigroup.
\begin{enumerate}
    \item A subsemigroup $T$ of $S$ is an \emph{absolute type I subsemigroup} if it is generated by a chain of $\mathcal{L}$-classes of $T$.
    \item The \emph{type II subsemigroup} $S_{II}$ of $S$ is the smallest subsemigroup of $S$ containing all the idempotents of $S$ and closed under weak conjugation: if $xyx = x$ for $x, y \in S$, then $x S_{II} y \cup y S_{II} x \subseteq S_{II}$.
\end{enumerate}

By Ash's Theorem \cite{Ash}, $S_{II}$ is precisely the group kernel of $S$, consisting of all elements related to the identity under any relational morphism from $S$ to a finite group \cite{TilsonXII}. This closure property allows Rhodes and Tilson to define a systematic lower bound, $Sl$ \cite{TilsonXII}.

\begin{theorem}[Rhodes and Tilson 1972 \cite{TilsonXII}]\label{lb2}
Let $S$ be a finite semigroup. Define $Sl$ to be the largest integer $k$ such that there exists a chain of subsemigroups:
\[ S \ge T_1 > (T_1)_{II} > T_2 > (T_2)_{II} > \dots > T_k > (T_k)_{II} \]
where each $T_i$ is a non-aperiodic absolute type I subsemigroup and $(T_k)_{II}$ is not aperiodic. Then:
\[ Sl \le Sc \]
\end{theorem}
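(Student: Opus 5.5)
The plan is to argue by induction along a minimal wreath product decomposition of $S$, peeling off one aperiodic block and one group block for each step $T_i > (T_i)_{II}$ of the chain. Suppose $Sc = n$, so that
\[ S \prec A_n \wr G_n \wr \dots \wr G_1 \wr A_0 \]
with the $G_i$ finite groups and the $A_i$ finite aperiodic semigroups. Subsemigroups of a divisor are divisors, so every member of the chain in the statement also divides this product. The goal is to show that each pair $T_i > (T_i)_{II}$ uses up at least one group block. Then the non-aperiodic semigroup $(T_k)_{II}$ divides a product with at most $n-k$ group blocks. Since a non-aperiodic semigroup cannot divide an aperiodic wreath product, this forces $n-k \ge 1$, and in particular $k \le n = Sc$.

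The first key step is a \emph{Type I lemma}. If $T$ is an absolute type I subsemigroup, meaning it is generated by a chain of $\mathcal{L}$-classes of $T$, and $T \prec Y \wr A$ with $A$ aperiodic acting on top, then $T \prec Y$. I would prove this in the standard way. Pull back the division to a relational morphism $T \to A$. Using that $T$ is generated by an $\mathcal{L}$-chain, and that aperiodic semigroups are stable so that $\mathcal{L}$-chains cannot climb, show that all the generators can be routed through the fibre over a single idempotent-like state of $A$. Restricting the wreath product to that fibre then gives a division into $Y$, with the $A$-coordinate trivialized. Applied with $Y = A_n \wr \dots \wr G_1$ and $A = A_0$, this removes $A_0$ for $T_1$.

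The second key step, which I expect to be the main obstacle, is a \emph{Type II lemma}. If $T \prec Y \wr G$ with $G$ a group on top, then $T_{II} \prec Y$. For this I would use the projection $T \to G$ coming from the wreath product, which is a relational morphism into a finite group. By Ash's Theorem, quoted just before the statement, $T_{II}$ lies in the preimage of the identity of $G$ under every such relational morphism. Elements of the wreath product lying over $1 \in G$ act coordinatewise by elements of $Y^{G}$, so they form a subsemigroup dividing $Y^{|G|}$. By property (2) of the complexity function, a direct power of $Y$ has the same complexity as $Y$, so $T_{II}$ divides a product with one group block fewer. The delicate points are making the pullback through the division precise, since divisions are only relational morphisms, and checking that "lies over the identity" survives the passage from elements of $T$ to covering elements of the wreath product. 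Here I would follow Tilson's derived-category argument if the direct approach gets tangled.

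Combining the two lemmas:
\begin{enumerate}
\item $T_1 \prec A_n \wr \dots \wr G_1$, by the Type I lemma.
\item $(T_1)_{II} \prec A_n \wr \dots \wr G_2 \wr A_1$, by the Type II lemma, a product with $n-1$ group blocks.
\item $T_2 \le (T_1)_{II}$ divides the same product; repeat the two steps $k$ times.
\end{enumerate}
Iterating gives that $(T_k)_{II}$ divides a product with $n-k$ group blocks. Since $(T_k)_{II}$ is not aperiodic, $n-k \ge 1$, so $k \le n$. Taking $k = Sl$ gives $Sl \le Sc$.
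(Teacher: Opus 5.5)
The paper gives no proof of this statement; it quotes it from Rhodes--Tilson. Your architecture, peeling off the top aperiodic block with a type I argument and then a group block with a type II argument, is the natural one. Your Type II lemma is fine in the form $T_{II}\prec Y^{|G|}$, which is what your argument actually gives. The passage through the division is routine: if $V\le Y\wr G$ maps onto $T$ by $\theta$ and $\pi$ is the projection to $G$, then
\[
T_{II}\subseteq 1(\theta^{-1}\pi)^{-1}=(V\cap 1\pi^{-1})\theta,\qquad 1\pi^{-1}\cong Y^{G}.
\]
The genuine gap is your Type I lemma, which is false as stated. Let $G\neq 1$ be a group and $T=G^0$, that is, $G$ with a zero adjoined. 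It is generated by the $\mathcal{L}$-chain $G\ge_{\mathcal{L}}\{0\}$, so it is a non-aperiodic absolute type I semigroup. It is also a quotient of $G\times U_1$, where $U_1=\{1,0\}$, and $G\times U_1$ sits inside $G\wr U_1$ as the pairs with constant bottom function. But $G^0$ does not divide $G$, since divisors of groups are groups. So for $S=T_1=G^0$ and the decomposition $A_1\wr G_1\wr A_0$ with $A_1$ trivial, $G_1=G$, $A_0=U_1$ (which witnesses $Sc=1$), your step (1) fails. Your sketch fails at the same place. In this decomposition every lift of an element of $G$ lies over $1\in U_1$ and every lift of $0$ lies over $0$, so the generators cannot all be routed over a single element of the top. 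Restricting instead to the coordinate of a single top state (here the state $0$, fixed by everything) gives a morphism to $Y$ that identifies lifts of $1$ and of $0$, so it does not induce a division of $T$.

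Nor is it enough to allow an extra aperiodic direct factor. Take $T=G\cup\{\ell_h : h\in G\}$ with $\ell_h\ell_k=\ell_h$, $\ell_h g=\ell_h$ and $g\ell_h=\ell_{gh}$. It is absolute type I, via the chain $G\ge_{\mathcal{L}}\{\ell_h\}$. It divides $G\wr U_1$: the bottom multiplies in $G$ until the top sets a ``frozen'' flag. Yet with $x=g\neq 1$ and $y=\ell_1$ it violates $(x^{\omega+1}y)^{\omega}=(x^{\omega}y)^{\omega}$, which holds in every $G^k\times A'$ with $A'$ aperiodic; hence $T\not\prec G^k\times A'$. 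So the top aperiodic block genuinely interacts with the group below it.

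What your induction actually needs at each step is only this: if $T$ is absolute type I and $Tc\le m$, then $T_{II}c\le m-1$. Already for $m=1$ this says that an absolute type I semigroup of complexity at most $1$ divides some $A\wr G$, equivalently has aperiodic type II. That is a nontrivial fact. Proving it is the real content of the Rhodes--Tilson type I lemma, and it requires analyzing how lifts of the $\mathcal{L}$-chain sit over the top aperiodic block rather than discarding that block. With that statement in hand, your Type II lemma and your counting go through. Under the paper's hypothesis that $(T_k)_{II}$ is non-aperiodic they even give $k\le n-1$.
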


In the specific case where $S$ divides the wreath product of an aperiodic semigroup and a group in either order, these lower bounds perfectly characterize these classes.

\begin{theorem}[Rhodes-Tilson 1972, Rhodes-Karnofsky 1979 \cite{TilsonXII}]
Let $S$ be a finite semigroup.
\begin{enumerate}
    \item $S$ divides a wreath product of the form $A \wr G$, where $A$ is aperiodic and $G$ is a group, if and only if $S_{II}$ is aperiodic. In this case, $Sc = Sl \le 1$.
    \item $S$ divides a wreath product of the form $G \wr A$, where $G$ is a group and $A$ is aperiodic, if and only if $S^{\mathcal{L}'}$ is aperiodic, where $S^{\mathcal{L}'}$ is the structural image of $S$ acting partially on its regular $\mathcal{L}$-classes. In this case, $Sc = S\theta \le 1$.
\end{enumerate}
\end{theorem}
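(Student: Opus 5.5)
This is a classical theorem quoted from the literature, so the plan is to reassemble the standard arguments. Part (1) is the Rhodes--Tilson/Ash characterization of $A \wr G$; part (2) is the Rhodes--Karnofsky characterization of $G \wr A$ via the action on regular $\mathcal{L}$-classes. In both parts the equality $Sc = Sl$ (respectively $Sc = S\theta$) will come from squeezing the complexity between the lower bound of Theorem \ref{lb2} and the upper bound supplied by the decomposition.

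\textbf{Part (1).} First suppose $S \prec A \wr G$. Composing with the projection onto $G$ gives a relational morphism $\varphi\colon S \to G$ whose kernel $1\varphi^{-1}$ divides $A^{|G|}$, and hence is aperiodic. By Ash's Theorem, $S_{II}$ is the intersection of such kernels over all relational morphisms to groups, so $S_{II} \subseteq 1\varphi^{-1}$ is aperiodic. Conversely, suppose $S_{II}$ is aperiodic. Ash's Theorem gives a single relational morphism $\varphi\colon S\to G$ with $1\varphi^{-1} = S_{II}$. By the derived-semigroup (Tilson) theorem, $S$ divides $D \wr G$, where $D$ is the derived category of $\varphi$. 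Its local monoids are subsemigroups of $1\varphi^{-1}$, hence aperiodic. Since local aperiodicity forces global aperiodicity for categories (aperiodic is a local pseudovariety), $D$ divides an aperiodic monoid $A$. Thus $S \prec A \wr G$ and $Sc \le 1$. If $S$ is aperiodic then $Sc = 0 = Sl$. Otherwise take $T_1 = S$ as a one-step chain in Theorem \ref{lb2}; since $S_{II}$ is aperiodic, no chain of length $2$ exists, and this yields $Sl = Sc = 1$.

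\textbf{Part (2).} Suppose $S \prec G \wr A$. The right action of $S$ on its regular $\mathcal{L}$-classes is preserved under division and wreath products up to aperiodic extensions: the $G$-coordinate only permutes elements within $\mathcal{L}$-classes. It follows that $S^{\mathcal{L}'}$ divides the aperiodic part, so it is aperiodic. Conversely, if $S^{\mathcal{L}'}$ is aperiodic, consider the kernel of $S \to S^{\mathcal{L}'}$. It is a $\gamma$-type congruence: it separates regular $\mathcal{L}$-classes, and its restriction to each class acts via the Schützenberger group. The fundamental lemma of complexity (Rhodes) then gives $S \prec (G_1\times\cdots\times G_m) \wr S^{\mathcal{L}'}$, with the $G_i$ the maximal subgroups. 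Equality with $S\theta$ follows the same squeezing as in part (1).

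\textbf{Main obstacle.} The hardest step is the converse in part (2): building the decomposition from the $\mathcal{L}$-representation, which requires the full $\gamma$-homomorphism machinery. In a paper like this one, I would cite \cite{TilsonXII, qtheory} for that step rather than reprove it. Similarly, Ash's theorem and the derived category theorem are the deep inputs in part (1), and they would be cited.
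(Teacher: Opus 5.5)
The paper gives no proof of this statement (it is only quoted), so your sketch has to stand on its own, and Part (2) has a genuine gap. Your converse rests on the claim $S\prec(G_1\times\cdots\times G_m)\wr S^{\mathcal{L}'}$, which is false. The two-element left-zero semigroup $S=\{a,b\}$ ($xy=x$) has a single regular $\mathcal{L}$-class, fixed by both elements, and only trivial maximal subgroups, so your claim would give $S\prec\{1\}$. The null semigroup $\{a,0\}$ fails the same way. The congruence induced by $S\to S^{\mathcal{L}'}$ is not group-like: in this example it merges two idempotents in different $\mathcal{R}$-classes, which no group coordinate can separate. The fundamental lemma of complexity concerns aperiodic morphisms, which $S\to S^{\mathcal{L}'}$ is not, since it collapses subgroups. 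In any case it gives equality of complexity, not a decomposition of shape $G\wr A$.

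What the converse actually needs is a relational morphism $\varphi\colon S\to A$, with $A$ aperiodic and $\varphi$ in general finer than $S\to S^{\mathcal{L}'}$, whose derived category divides a group. By locality of $\mathbf{G}$ this means: whenever $a\in s\varphi$ and $ta=t$, right translation by $s$ permutes $t\varphi^{-1}$. Constructing such a $\varphi$ from the aperiodicity of $S^{\mathcal{L}'}$ is the real content of part (2), and your sketch does not supply it.

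The forward direction is likewise only asserted. Regular $\mathcal{L}$-classes are not functorial under division, so ``preserved under division'' is exactly what must be proved. The same tool does it. If $S\prec G\wr A$, the projection gives $\varphi\colon S\to A$ whose derived category divides $G$, so its local monoids are groups. A short Green's-relations argument reduces aperiodicity of $S^{\mathcal{L}'}$ to showing that each group element $g$ with identity $e$ fixes $L_f$ for every idempotent $f\le e$. Now $f$ and $g^{-1}fg$ are both related under $\varphi$ to a common idempotent $t\in A$. Hence right translation by each is an idempotent permutation of $t\varphi^{-1}$, i.e.\ the identity, and so $f\,\mathcal{L}\,g^{-1}fg\,\mathcal{L}\,fg$.

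In Part (1) the decomposition argument is fine: the local monoids of the derived category divide $1\varphi^{-1}$ rather than lie in it, which is harmless. The step $Sc=Sl$, however, fails as written, for two reasons.

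First, $T_1=S$ need not be an absolute type I subsemigroup. For example, $\mathbb{Z}_2\times R$, with $R$ a two-element right-zero semigroup, satisfies the hypothesis of (1) but has two incomparable $\mathcal{L}$-classes, both needed to generate it. Take $T_1$ to be a nontrivial cyclic subgroup instead; it is a single $\mathcal{L}$-class of itself.

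Second, and more seriously, Theorem~\ref{lb2} as printed requires the terminal $(T_k)_{II}$ to be non-aperiodic. When $S_{II}$ is aperiodic, so is every $T_{II}\subseteq S_{II}$, so no admissible chain exists and $Sl=0<1=Sc$, already for $S=\mathbb{Z}_2$. Your squeeze tacitly uses the standard Rhodes--Tilson convention, in which only $T_k$ must be non-aperiodic. You need to say so explicitly, because with the paper's definition the asserted equality is false.

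Similarly, ``the same squeezing'' for $S\theta$ in Part (2) cannot be checked. $S\theta$ is never defined, and you have not named a lower bound $S\theta\le Sc$ with $S\theta\ge1$ whenever $S$ is not aperiodic.
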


\section{Small Monoids, Right-Orbit Monoids, and Tilson's Theorem}

A monoid $M$ that is the union of a group of units $U(M)$ and a unique regular 0-minimal ideal $M^{0}(G,A,B,C)$ is called a {\em small monoid}. They play an important role in combinatorial applications of semigroup theory and in its representation theory. Tilson's Theorem gives a necessary and sufficient condition for a small monoid to have complexity 1. The proof of this theorem was the first that used tools related to the presentation lemma  and flow theory \cite{pl, flows, FlowsI, FlowsII} that are crucial to the rest of the paper. The next definition defines a small monoid associated to every regular $\mathcal{J}$-class of a monoid $M$.

\begin{definition}
Let $M$ be a finite monoid, and let $J$ be a regular $\mathcal{J}$-class of $M$. Let $J^0$ denote the principal factor corresponding to $J$, which carries the structure of a Rees matrix semigroup $M^0(G, A, B, C)$. The \emph{small monoid} corresponding to $J$, denoted by $\operatorname{Sm}(J)$, is defined as the set:
\[ \operatorname{Sm}(J) = U(M) \cup J^0 \]
where $U(M)$ is the group of units of $M$. The operation on $\operatorname{Sm}(J)$ are inherited directly from $M$, with $U(M)$ acting on the elements of $J^0$ via left and right multiplication, and products falling outside of $J$ evaluating to the zero element $0 \in J^0$.
\end{definition}

Let $M$ be a small monoid. The group of units $H$ acts on the right of the set of $\mathcal{L}$-classes $B$ of the $0$-minimal ideal. This action corresponds to the restriction of the right Schützenberger representation defining the right letter mapping image $\operatorname{RLM}(S)$ to the subgroup $H$ \cite{qtheory}.

\begin{definition}
Let $S = H \cup M^0(G, A, B, C)$ be a small monoid. Let $B_1, B_2, \dots, B_k$ be the distinct orbits of the set of $\mathcal{L}$-classes $B$ under the right action of $H$. For each $\mathcal{L}$-class $L \in B$, the subset $LH$ constitutes an orbit of $H$ on $B$. The submonoids of $S$ of the form:
\[ M_i = H \cup (A \times G \times B_i) \cup \{0\} \]
or equivalently, $H \cup LH \cup \{0\}$, are called the \emph{right-orbit monoids} of $S$.
\end{definition}

Each right-orbit monoid gathers a single orbit of $\mathcal{L}$-classes under the group of units. Tilson's fundamental theorem establishes that the complexity of the entire small monoid collapses to $1$ if and only if all of these component blocks have an aperiodic idempotent generated semigroup.

\begin{theorem}[Tilson's Small Monoid Theorem; \cite{2J}]\label{thm:tilson_small_monoids}
Let $S = H \cup M^0(G, A, B, C)$ be a small monoid. The Krohn-Rhodes complexity of $S$ satisfies $Sc = 1$ if and only if each right-orbit monoid $H \cup LH \cup \{0\}$ contains an aperiodic idempotent-generated subsemigroup.
\end{theorem}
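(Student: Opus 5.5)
The plan is to prove the two directions separately. Necessity will come from the Rhodes--Tilson lower bound (Theorem~\ref{lb2}); sufficiency will come from a presentation-lemma (flow) construction. Throughout I read the statement as: assuming $S$ is not aperiodic, $Sc=1$ if and only if, for every orbit $LH$, the subsemigroup generated by the idempotents of $M_L = H\cup LH\cup\{0\}$ is aperiodic.

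I begin with the general upper bound $Sc\le 2$, obtained by stratifying $S$ by its ideal series. $S/J^0 \cong H^0$ has complexity at most $1$. The $0$-minimal ideal adds at most one further group level, since it is a Rees matrix semigroup whose maximal subgroup $G$ is a single group layer. So it only remains to decide between $1$ and $2$.

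\textbf{Necessity.} Suppose some right-orbit monoid $M_L$ has an idempotent-generated subsemigroup $\langle E(M_L)\rangle$ that is not aperiodic. I will produce a chain as in Theorem~\ref{lb2} with $k=2$, which gives $Sl\ge 2$ and hence $Sc=2$.
\begin{enumerate}
\item Take $T_1=M_L$. It is an absolute type I subsemigroup, because it is generated by the chain of $\mathcal L$-classes consisting of the $\mathcal L$-class $H$ of the identity and the $\mathcal L$-class $L$. Indeed, multiplying $L$ on the right by $H$ sweeps out the whole orbit $LH$, and $L$ generates the remaining $\mathcal R$-components inside the orbit. It is non-aperiodic because $H$ is nontrivial, or failing that $G$ is.
\item Next, $(T_1)_{II}\supseteq\langle E(T_1)\rangle$, so $(T_1)_{II}$ contains a nontrivial group lying in $J^0$. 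On the other hand, $(T_1)_{II}\cap H$ is a proper part of $T_1$, since type II elements of the unit group are trivial modulo the kernel of any group morphism. Hence the inclusion $T_1>(T_1)_{II}$ is strict.
\item Let $T_2$ be the subsemigroup of $\langle E(T_1)\rangle$ generated by a single $\mathcal L$-class of its $0$-minimal part that contains a nontrivial group.
\item The delicate point is to check that $(T_2)_{II}$ is still non-aperiodic. For this I will use the Rees-matrix description: idempotents of $M^0(G,A,B_L,C)$ are the entries $(a,c_{ba}^{-1},b)$. Products of these generate exactly the subgroup of $G$ spanned by the ``cycle'' elements $c_{b_1a_1}c_{b_2a_1}^{-1}c_{b_2a_2}\cdots$. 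This subgroup is nontrivial precisely because $\langle E(M_L)\rangle$ is not aperiodic, and it lies in $(T_2)_{II}$, which is itself idempotent-closed.
\end{enumerate}

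\textbf{Sufficiency.} This is the main obstacle. Assume every $\langle E(M_L)\rangle$ is aperiodic. I would first try to show directly that $S$ divides $A\wr H\wr A'$; the natural cover keeps $H$ as the single group layer. Concretely, I would apply the presentation lemma to the $0$-minimal $\mathcal J$-class $J$. A cross-section is chosen orbit by orbit: on each orbit $LH$ one fixes a transversal of $H$ acting on $B_L$. Relative to this transversal, the action of $J$ on the orbit's $\mathcal L$-classes is then encoded by a relation into an aperiodic semigroup. The hypothesis is used exactly here: the structure-matrix cycles inside each orbit generate no group, so the flow assigning to each element of $J$ its cross-section label is consistent and aperiodic.

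The hard part is verifying that the flow equations are satisfied simultaneously across different orbits, which are permuted by $H$. I would handle this by making the choice of transversal $H$-equivariant. Then the only remaining group information is carried by $H$ itself, which sits in the middle group layer. This would give $Sc\le 1$, and combined with non-aperiodicity, $Sc=1$.
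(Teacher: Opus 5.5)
The paper gives no proof of this statement; it only quotes it from Tilson. So I judged your argument on its own terms. Your reading of the hypothesis as ``$\langle E(H\cup LH\cup\{0\})\rangle$ is aperiodic'' is the right one.

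\textbf{Necessity: Step~4 is false.} Every nonzero element of $T_2$ lies in the single column $b$ of the $\mathcal{L}$-class generating it. The cycle elements $c_{b_1a_1}c_{b_2a_1}^{-1}c_{b_2a_2}\cdots$ you invoke can be nontrivial only when at least two different columns occur; products of the idempotents $(a,C(b,a)^{-1},b)$ of one column are again such idempotents. Precisely: send $(a,g,b)\mapsto C(b,a)g$ when $C(b,a)\neq 0$, and everything else to all of $G$. This is a relational morphism $T_2\to G$ whose inverse image of $1$ contains no nontrivial group element, so $(T_2)_{II}$ is aperiodic.

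No cleverer $T_2$ repairs this. We have $(T_1)_{II}\subseteq\{1\}\cup J^0$, and $x\le_{\mathcal L}y$ forces $x$ and $y$ to have the same column. Hence every absolute type I subsemigroup there lies, apart from $1$ and $0$, in one column, and its type II subsemigroup is aperiodic.

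The fix is on the other side of the chain. In the Rhodes--Tilson bound the last link only has to be $T_k>(T_k)_{II}$ with $T_k$ a non-aperiodic type I semigroup; the clause in Theorem~\ref{lb2} that $(T_k)_{II}$ be non-aperiodic is not needed. So drop Step~4 and take $T_2$ to be a nontrivial subgroup $K$ of $\langle E(M_L)\rangle\subseteq (T_1)_{II}$.

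Step~1 also needs repair. A row $a$ with $C(b',a)=0$ for all $b'\in B_L$ gives non-regular elements of $M_L$ and splits $L$ into incomparable $\mathcal{L}$-classes of $M_L$. Use instead $T_1=\langle H\cup L^{\circ}\rangle$, where $L^{\circ}$ is the part of $L$ in rows meeting $B_L$; it still contains $E(M_L)$. Moreover, $T_1\neq (T_1)_{II}$ needs $H\neq 1$. This holds because a single column yields only a band of idempotents, so the hypothesis forces $|B_L|\ge 2$.

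\textbf{Sufficiency is a plan, not a proof.} No automaton, no flow values and no check of the flow axioms are given, and the hypothesis is never actually used. Two ingredients of the plan are also misdirected.
\begin{itemize}
\item $H$ cannot be ``the single group layer''. For $H=1$ and $G\neq 1$ one has $Sc=1$. In Theorem~\ref{Apuniform}(6) the group layer is $G\wr\operatorname{Sym}(B)$, while $H$ is absorbed into the factor $\operatorname{RLM}(S)$: units permute $B$ and elements of $J$ act as partial constants, so its complexity is at most $1$.
\item $H$ does not permute its orbits. The real cross-orbit issue is rows of $C$ meeting two orbits. Their cycles can make $\langle E(S)\rangle$ non-aperiodic even when every $\langle E(M_L)\rangle$ is aperiodic (already for $H=1$). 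So the flow must keep orbits apart, and no $H$-equivariant transversal is needed.
\end{itemize}

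\textbf{The missing construction.}
\begin{itemize}
\item Pass to the $\operatorname{GM}$ image $S'$; this is harmless because $S$ embeds in $S'\times H^0$.
\item Take the automaton whose states are the $H$-orbits $B_1,\dots,B_k$ of $B$. Units act trivially, and $(a,g,b)$ sends each state, where defined, to the orbit of $b$. Its monoid consists of the identity and partial constants, so it is aperiodic.
\item At $B_i$ put $G\times B_i$ with the partition generated by $(g_1,b_1)\sim(g_2,b_2)$ whenever $g_1C(b_1,a)=g_2C(b_2,a)\neq 0$ for some row $a$. Equivalently, two points are related whenever some $x\in J$ collapses them.
\item This partition is $G$-invariant. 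It is also $H$-invariant, since $yx=y'x$ if and only if $(yh)(h^{-1}x)=(y'h)(h^{-1}x)$.
\item A class meets a column twice exactly when some closed walk $b_0,a_1,b_1,\dots,a_m,b_m=b_0$ through nonzero entries of $C$, with all $b_j\in B_i$, has $\prod_j C(b_{j-1},a_j)C(b_j,a_j)^{-1}\neq 1$. That happens exactly when $\langle E(H\cup LH\cup\{0\})\rangle$ has a nontrivial subgroup, for $L$ any $\mathcal{L}$-class with column in $B_i$.
\item Once these partitions are cross-sections, injectivity on classes for $x\in J$ is automatic, because $(G\times B_i)x$ lies in one $\mathcal{H}$-class.
\end{itemize}
Theorem~\ref{Apuniform} then gives $S'c\le 1$, hence $Sc\le 1$.
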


 In the subsequent sections, we will employ Tilson's Theorem to evaluate the exact complexity of cycle graphs $C_n$. In the case of path graphs $P_n$, Tilson's Theorem fails to give a complete answer and we must use techniques from flow theory. We first look at the idempotent generated submonoids of Margolis-Rhodes monoids of cycles and paths.

\section{Non-Aperiodicity of the Idempotent Generated  Subsemigroup of  $MR(C_{n})$ and the Complexity of $MR(C_n)$}

\subsection{Preliminaries and Notation}

We will demonstrate that the idempotent-generated subsemigroup $\langle E(MR(C_n)) \rangle$ is not aperiodic for any $n \ge 4$. To do this, we split the construction into even and odd lengths, constructing group elements of order $k = \lfloor n/2 \rfloor$.

\subsection{The Even Case ($n = 2k$)}
Let $n = 2k$ for $k \ge 2$. Because $C_{2k}$ is bipartite, its edge set can be partitioned into two perfect matchings. We can construct an element of order $k$ in the idempotent generated subsemigroup of $MR(C_{n})$ using exactly two idempotents.

\begin{definition}[Idempotents for Even Cycles]
We define two partial idempotents $e_1, e_2 \in St(C_{2k})$ whose domains are exactly the two perfect matchings of $C_{2k}$:
\begin{enumerate}
    \item Let $\text{dom}(e_1) = \bigcup_{j=1}^k \{2j-1, 2j\}$. We define the action to collapse each edge onto its odd endpoint: 
    \[ (2j-1)e_1 = 2j-1 \quad \text{and} \quad (2j)e_1 = 2j-1 \]
    \item Let $\text{dom}(e_2) = \bigcup_{j=1}^k \{2j, 2j+1\}$ (with addition modulo $n$). We define the action to collapse each edge onto its even endpoint:
    \[ (2j)e_2 = 2j \quad \text{and} \quad (2j+1)e_2 = 2j \]
\end{enumerate}
\end{definition}

Both $e_1$ and $e_2$ map every edge in their domain to a single vertex. Furthermore, the complete preimage of any target vertex is precisely the single edge collapsed onto it, while the preimages of all other cells are empty. Thus, both maps satisfy Margolis-Rhodes continuity and are valid idempotents in $E(St(C_{2k}))$.

\begin{lemma}
The element $f = e_1 e_2$ generates a cyclic subgroup of order $k$ inside $\langle E(St(C_{2k})) \rangle$.
\end{lemma}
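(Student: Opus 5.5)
Compute $f=e_1e_2$ explicitly and show it is a $k$-cycle permutation of a $k$-element set, restricted to a domain on which it is a bijection. Throughout, vertices are taken modulo $n=2k$ in $\{1,\dots,2k\}$; composition is left to right.

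First I determine $\operatorname{Dom}(f)$ and the action. Since $e_1$ is total on $V(C_{2k})$ with image the odd vertices $\{1,3,\dots,2k-1\}$, and every odd vertex $2j+1$ lies in $\operatorname{Dom}(e_2)$ (in the edge $\{2j,2j+1\}$), $f$ is total. For $j=1,\dots,k$ we get
\[
(2j-1)f=(2j)f=(2j-1)e_2 = 2j-2 \pmod{2k},
\]
using $(2j-1)=(2(j-1)+1)$ and $(2(j-1)+1)e_2=2(j-1)$. So $f$ collapses each edge $\{2j-1,2j\}$ of the first matching onto the even vertex $2j-2$, and $\operatorname{Im}(f)$ is the set of even vertices.

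Next I compute powers. On the even vertices, $(2j)f = 2j-2$, so $f$ restricted to $\operatorname{Im}(f)$ is the cyclic shift $2j\mapsto 2j-2$, a single $k$-cycle. Hence $f^m$ is total with image the even vertices for every $m\ge1$, and $(x)f^{m}$ is obtained by applying the shift $m-1$ times after the first step. From this, $f^{k+1}=f$: for $x\in\{2j-1,2j\}$, $(x)f^{k+1}=(2j-2)f^{k}=2j-2-2k=2j-2$. Moreover $f^{m+1}\neq f$ for $1\le m<k$, since the shift has order exactly $k$ on the even vertices. Thus the powers $f,f^2,\dots,f^k$ are distinct, $f^k$ is the idempotent $e$ (check: $f^k$ sends $\{2j-1,2j\}$ to $2j$, which fixes the even vertices, so $(f^k)^2=f^k$), and $\{f,\dots,f^k\}$ is a cyclic group of order $k$ with identity $f^k$. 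Since $f$ is a product of the idempotents $e_1,e_2$, this group lies in $\langle E(St(C_{2k}))\rangle$. Continuity needs no further check because $St(C_{2k})$ is closed under composition, but one can note $e_1,e_2\in St(C_{2k})$ by Lemma \ref{strict}.

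The only delicate point is the index bookkeeping modulo $2k$, particularly the wrap-around $1\mapsto 2k$ under $e_2$ (the edge $\{2k,1\}$). I would verify it on a small case such as $k=2$, $C_4$, where $f$ maps $\{1,2\}\mapsto 4$ and $\{3,4\}\mapsto 2$, giving an element of order $2$, before stating the general formula.
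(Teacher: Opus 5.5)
Your proof is correct and follows the same route as the paper. Both compute $f=e_1e_2$, observe that it acts on the even vertices as the $k$-cycle $2j\mapsto 2j-2$, and conclude $f^{k+1}=f$. Your version is slightly more complete in two respects. You also compute $f$ on the odd vertices, so $f^{k+1}=f$ is verified on all of $\operatorname{Dom}(f)$ and not only on the image. And you show the period is exactly $k$, whereas the paper only notes $f^2\neq f$.
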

\begin{proof}
Let $f = e_1 e_2$ acting on an arbitrary even vertex $2m$:
\[ (2m)e_1 = 2m-1 \]
We then apply $e_2$ to $2m-1$. Noting that $2m-1$ is part of the edge $\{2m-2, 2m-1\}$ in the domain of $e_2$, it collapses to its even endpoint:
\[ (2m-1)e_2 = 2m-2 \]
Thus, $(2m)f = 2m-2 \pmod{n}$. The function $f$ maps the set of even vertices bijectively onto itself via a cyclic shift of $-2$. Because there are $k$ even vertices, $f$ operates as a permutation cycle of length $k$. Hence, $f^{k+1} = f$ and $f^2 \neq f$ (since $k \ge 2$), confirming that $f$ generates a subgroup isomorphic to $\mathbb{Z}_k$.
\end{proof}

\subsection{The Odd Case ($n = 2k + 1$)}
Let $n = 2k + 1$ for $k \ge 2$. Because the cycle is odd, a perfect matching is impossible. Instead, an idempotent of maximum rank in $St(C_{2k+1})$ consists of exactly $k$ disjoint edges, leaving one vertex uncovered. We require three idempotents to successfully cycle the active components around the  ``slack'' of the odd vertex.

\begin{definition}[Idempotents for Odd Cycles]\label{oddcycle}
We define three partial idempotents $e_1, e_2, e_3 \in St(C_{2k+1})$:
\begin{enumerate}
    \item Let $\text{dom}(e_1) = \bigcup_{j=1}^k \{2j-1, 2j\}$. (Vertex $n$ is uncovered). Collapse to the right:
    \[ (2j-1)e_1 = 2j \quad \text{and} \quad (2j)e_1 = 2j \]
    \item Let $\text{dom}(e_2) = \bigcup_{j=1}^k \{2j, 2j+1\}$. (Vertex $1$ is uncovered). Collapse to the right:
    \[ (2j)e_2 = 2j+1 \quad \text{and} \quad (2j+1)e_2 = 2j+1 \]
    \item Let $\text{dom}(e_3) = \left( \bigcup_{j=1}^{k-1} \{2j+1, 2j+2\} \right) \cup \{n, 1\}$. (Vertex $2$ is uncovered). We collapse the first $k-1$ edges to the left, and the boundary edge to the right:
    \[ (2j+1)e_3 = 2j+1, \quad (2j+2)e_3 = 2j+1 \]
    \[ (n)e_3 = 1, \quad (1)e_3 = 1 \]
\end{enumerate}
\end{definition}

As in the even case, all vertex preimages are exactly single valid edges, confirming $e_1, e_2, e_3 \in E(St(C_{2k+1}))$.

\begin{lemma}
The element $f = e_1 e_2 e_3$ generates a cyclic subgroup of order $k$ inside $\langle E(St(C_{2k+1})) \rangle$.
\end{lemma}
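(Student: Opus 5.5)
The plan is to compute the action of $f = e_1e_2e_3$ explicitly on every vertex of $C_{2k+1}$, using the vertex labels $1,\dots,n$ with $n=2k+1$ from Definition \ref{oddcycle}. I will then show that $f$ restricts to a $k$-cycle on the set of odd vertices $O=\{1,3,\dots,2k-1\}$, and that every point of $\operatorname{Dom}(f)$ is sent into $O$. Membership of $f$ in $\langle E(St(C_{2k+1}))\rangle$ is immediate, because $e_1,e_2,e_3$ were already verified to be idempotents of $St(C_{2k+1})$.

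\textbf{Step 1 (domain and image).} The domain of $e_1$ is $\{1,\dots,2k\}$, so vertex $n$ is never in $\operatorname{Dom}(f)$. The image of $e_3$ is $\{1,3,\dots,2k-1\}=O$, so $\operatorname{Im}(f)\subseteq O$.

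\textbf{Step 2 (tracking the points).} Take $x\in\{2m-1,2m\}$ with $1\le m\le k$. Then $xe_1=2m$ and $(2m)e_2=2m+1$. If $m\le k-1$, the vertex $2m+1$ lies in the edge $\{2m+1,2m+2\}$ of $\operatorname{Dom}(e_3)$, which collapses to the left, so $(2m+1)e_3=2m+1$. If $m=k$, then $2m+1=n$ and $ne_3=1$. Hence $\operatorname{Dom}(f)=\{1,\dots,2k\}$, and
\[
(2m-1)f=(2m)f=2m+1 \ (1\le m\le k-1),\qquad (2k-1)f=(2k)f=1 .
\]
Restricted to $O$, the map $f$ is therefore the cyclic shift $1\mapsto 3\mapsto\cdots\mapsto 2k-1\mapsto 1$. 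This is a single cycle of length $k$, and in particular $\operatorname{Im}(f)=O$.

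\textbf{Step 3 (group structure).} Since $\operatorname{Im}(f)=O\subseteq\operatorname{Dom}(f)$ and $f$ permutes $O$, every power $f^j$ with $j\ge 1$ has the same domain as $f$. Moreover, $f^{k}$ acts on $O$ as the identity, because a $k$-cycle has order $k$. Then for every $x\in\operatorname{Dom}(f)$ we get $xf^{k+1}=(xf)f^k=xf$, so $f^{k+1}=f$. On the other hand, $1f^2=5$ when $k\ge 3$, and $1f^2=1\neq 3$ when $k=2$, so in all cases $1f^2\neq 1f=3$ and $f^2\neq f$.

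\textbf{Step 4 (conclusion).} The powers $\{f,f^2,\dots,f^k\}$ form a cyclic group with identity $f^k$, since $f^k$ is an idempotent and $f^{k+1}=f$. This group is isomorphic to the group generated by the $k$-cycle on $O$, hence to $\mathbb{Z}_k$.

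The only delicate point is Step 2, specifically the boundary behaviour: vertex $n$ is left uncovered by $e_1$, and the wrap-around edge $\{n,1\}$ of $e_3$ is what closes the cycle. I will check this case separately, as written above. Everything else is routine bookkeeping.
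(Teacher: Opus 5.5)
Your proposal is correct and follows essentially the same route as the paper: you track $f=e_1e_2e_3$ through the three idempotents, and the wrap-around edge $\{n,1\}$ of $e_3$ closes the $k$-cycle $1\mapsto 3\mapsto\cdots\mapsto 2k-1\mapsto 1$ on the odd vertices. Your version is slightly more careful than the paper's: you also compute $f$ on the even vertices $\{2,4,\dots,2k\}\subseteq\operatorname{Dom}(f)$, where the paper loosely says $f$ is a cyclic permutation ``on its domain'', and you verify $f^{k+1}=f$ and $f^2\neq f$ on the whole domain, including the case $k=2$.
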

\begin{proof}
We track the composition $f$ on the  set of odd vertices $\{1, 3, \dots, 2k-1\}$. We analyze two cases:

\textbf{Case 1: Intermediate vertices} (Let $x = 2j-1$ for $1 \le j \le k-1$). 
\[ (2j-1)e_1 = 2j \]
\[ (2j)e_2 = 2j+1 \]
\[ (2j+1)e_3 = 2j+1 \]
Thus, $(2j-1)f = 2j+1$.

\textbf{Case 2: The boundary vertex} (Let $x = 2k-1$).
\[ (2k-1)e_1 = 2k \]
\[ (2k)e_2 = 2k+1 = n \]
\[ (n)e_3 = 1 \]
Thus, $(2k-1)f = 1$.

The element $f$ acts strictly as the cyclic permutation $(1 \mapsto 3 \mapsto 5 \mapsto \dots \mapsto 2k-1 \mapsto 1)$ on its domain. Since the cycle length is $k = (n-1)/2 \ge 2$, $f^{k+1} = f$ but $f^2 \neq f$. Thus, it generates a subgroup isomorphic to $\mathbb{Z}_k$.
\end{proof}

\begin{theorem}
For any $n \ge 4$, the idempotent-generated subsemigroup $\langle E(St(C_n)) \rangle$ of the strict Margolis-Rhodes monoid is not aperiodic.
\end{theorem}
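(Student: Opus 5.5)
The theorem follows from the two preceding lemmas, which together cover every $n \ge 4$. I would write $n$ as either $2k$ with $k \ge 2$ (so $n \ge 4$ even) or $2k+1$ with $k \ge 2$ (so $n \ge 5$ odd). In each case I would first confirm that the partial maps $e_1,e_2$ (even case), respectively $e_1,e_2,e_3$ (odd case, Definition~\ref{oddcycle}), are idempotents of $\StG{C_n}$. By Lemma~\ref{strict} this means checking three things: the domain is a matching, the image is an independent set, and each edge collapses to one of its own endpoints. The last condition is exactly what makes $e_i^2=e_i$. For instance, in the odd case $e_3$ collapses $\{n,1\}$ to $1$, and the images $1,3,\dots,2k-1$ are pairwise nonadjacent because $n=2k+1$ and $1$ are not both in the image.

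Next, let $f$ be the product $e_1e_2$ or $e_1e_2e_3$. It lies in $\langle E(\StG{C_n})\rangle$ by construction. The lemmas show that $f$ restricted to a $k$-element set $X$ (the even vertices, respectively the odd vertices $1,\dots,2k-1$) is a $k$-cycle. I also need $f^m$ to have domain exactly $X$ for $m$ large. This is the one point to verify carefully: $\operatorname{Im}(f)=X$ and $X\subseteq \operatorname{Dom}(f)$, so $f^{m}$ is defined exactly on $Xf^{-1}$ and acts on $X$ as the $k$-cycle raised to the power $m$. Consequently $f^{k+1}=f$ while $f^{2}\neq f$. Hence $f^{k}$ is an idempotent and $\{f,f^2,\dots,f^k\}$ is a cyclic group of order $k\ge 2$ with identity $f^k$, contained in $\langle E(\StG{C_n})\rangle$.

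A finite semigroup containing a nontrivial subgroup is not aperiodic, which gives the theorem. Since $\StG{C_n}\subseteq \operatorname{MR}(C_n)$, the same group also lies in $\langle E(\operatorname{MR}(C_n))\rangle$, and that is the form later needed for Tilson's Theorem~\ref{thm:tilson_small_monoids}. The only real obstacle is the bookkeeping of the odd case, specifically the wrap-around step through $e_3$, but the preceding lemma has already carried it out. The restriction $n\ge 4$ is used only to guarantee $k\ge 2$.
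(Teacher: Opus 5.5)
Your proposal is correct and follows the paper's proof. The paper simply cites the two preceding lemmas: $e_1e_2$ for $n=2k$ and $e_1e_2e_3$ for $n=2k+1$ each act as a $k$-cycle on a $k$-element set of vertices, which gives a copy of $\mathbb{Z}_{\lfloor n/2\rfloor}$ inside $\langle E(\StG{C_n})\rangle$. Your extra checks, that the $e_i$ are idempotents of $\StG{C_n}$ and that $\operatorname{Im}(f)\subseteq\operatorname{Dom}(f)$ so that $f^{k+1}=f\neq f^2$, only make explicit what those lemmas already rely on.
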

\begin{proof}
By Lemmas 2 and 4, the subsemigroup generates a non-trivial subgroup $\mathbb{Z}_{\lfloor n/2 \rfloor}$. Therefore the idempotent generated subsemigroup is  non-aperiodic.
\end{proof}

\begin{theorem}\label{cyclecomp}
The Margolis-Rhodes monoid $MR(C_n)$ of a cycle graph $C_n$ satisfies:
\[ MR(C_n)c = 2 \iff n \ge 4 \]
\end{theorem}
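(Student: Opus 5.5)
Two directions must be shown: $MR(C_n)c \le 1$ for $n=3$ (and for the degenerate small cases if one counts them), and $MR(C_n)c \ge 2$ for $n \ge 4$. The upper bound $MR(C_n)c\le 2$ is Theorem \ref{thesis}(1), so for $n\ge 4$ it suffices to exhibit a lower bound of $2$.

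For $n \ge 4$ I would use Tilson's Small Monoid Theorem \ref{thm:tilson_small_monoids} together with the fact that complexity does not increase under division.

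First, let $J$ be the $\mathcal{J}$-class of the idempotents $e_1,e_2$ (even case) or $e_1,e_2,e_3$ (odd case, Definition \ref{oddcycle}). These idempotents have rank $k=\lfloor n/2\rfloor$ and independent images, so by Theorem \ref{JMR} they lie in the single regular $\mathcal{J}$-class $J$ of $MR(C_n)$ whose image subgraph is $kP_1$.

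Next, $J$ is $\mathcal{J}$-maximal among the elements used. The product $f$ constructed in the lemmas stays in $J$, since it has rank $k$ and is a permutation of an independent set. Hence the subsemigroup generated by these idempotents lies in $J\cup\{\text{lower ideal}\}$, and it contains a nontrivial group $\mathbb{Z}_k$ in $J$.

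Then form the small monoid $\operatorname{Sm}(J)=U\cup J^0$, where $U=\operatorname{Aut}(C_n)=D_n$. This monoid divides $MR(C_n)$: it is a subsemigroup $U\cup I(J)$ modulo the ideal below $J$. It therefore suffices to show $\operatorname{Sm}(J)c=2$.

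The key step is to verify that the right-orbit monoid containing the $\mathcal{L}$-classes of the $e_i$ fails Tilson's criterion, i.e.\ that its idempotent-generated subsemigroup is not aperiodic. The obstacle is that all the images $\operatorname{Im}(e_i)$ must lie in one $D_n$-orbit of $\mathcal{L}$-classes.

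In the even case, $\operatorname{Im}(e_1)$ is the set of odd vertices and $\operatorname{Im}(e_2)$ the set of even vertices; a rotation by one interchanges them, so they are in one orbit. In the odd case the three images are maximal independent sets of size $k$ in $C_{2k+1}$, and every such set is a rotation of any other (each misses exactly one adjacent pair). So the $\mathcal{L}$-classes again lie in one orbit.

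Because the $e_i$ and $f=e_1e_2(e_3)$ all live in that right-orbit monoid $H\cup LH\cup\{0\}$, its idempotent-generated subsemigroup contains $\mathbb{Z}_k$ with $k\ge 2$ and is not aperiodic. Tilson's theorem then gives $\operatorname{Sm}(J)c\ne 1$. Since $\operatorname{Sm}(J)$ is not aperiodic, $\operatorname{Sm}(J)c\ge 2$, so $MR(C_n)c\ge 2$.

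I would be careful to interpret Tilson's criterion as requiring the idempotent-generated subsemigroup of each right-orbit monoid to be aperiodic, which is what the statement intends.

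For $n=3$, $C_3=K_3$. By Proposition \ref{RegJMR}, $MR(K_3)$ is regular, and every non-injective element has rank $1$ with domain an edge. I would show directly that $S_{II}$ is aperiodic, so that $Sc\le 1$ by the Rhodes–Tilson theorem. The regular $\mathcal{J}$-classes have image subgraphs $K_3,K_2,K_1,\emptyset$, whose maximal subgroups $S_3$, $\mathbb{Z}_2$, $1$, $1$ sit in $\mathcal{J}$-classes of partial injections; those classes are contained in the inverse monoid $\operatorname{Inj}(K_3)=SIM(3)$.

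Alternatively, I would apply Tilson's theorem to the small monoids at ranks $2$ and $1$. In $K_3$, the idempotents of rank $2$ are partial identities on edges, and products of idempotents of rank $2$ that remain in rank $2$ are partial identities, hence aperiodic. Rank $1$ has trivial groups.

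This case analysis, done via the presentation by $\mathcal{J}$-classes of height at most $3$ or by citing Rhodes' result that complexity is determined by the small monoids of the $\mathcal{J}$-classes in this tiny example, is routine. The $n\ge4$ orbit argument is the crux.
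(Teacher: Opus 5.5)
\textbf{Case $n\ge 4$.} Your argument is the paper's: $\operatorname{Sm}(J)=U\cup J^0$ divides $MR(C_n)$, Tilson's theorem (Theorem \ref{thm:tilson_small_monoids}) applied to the $\mathcal{J}$-class of the idempotents $e_i$ gives $\operatorname{Sm}(J)c=2$, and Theorem \ref{thesis} gives the upper bound. You are in fact more careful than the paper. The paper simply asserts $\operatorname{Sm}(J)c=2$ after exhibiting $\mathbb{Z}_k$ in $\langle E(St(C_n))\rangle$. But Tilson's criterion concerns one right-orbit monoid $H\cup LH\cup\{0\}$ at a time, so the images of the $e_i$ must lie in a single $D_n$-orbit. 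Your observations supply exactly this: a rotation swaps the two colour classes of $C_{2k}$, and the $k$-element independent sets of $C_{2k+1}$ are precisely the $2k+1$ rotations of one of them. You also need that $f$ and its powers stay in $J$, since the image of $f$ is a transversal of its kernel, and you note this.

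\textbf{Case $n=3$.} Here you depart from the paper, and as written the step is not carried out. The remark that $S_3$ and $\mathbb{Z}_2$ sit in $\mathcal{J}$-classes of partial injections does not show that $MR(C_3)_{II}$ is aperiodic, because $SIM(3)$ itself is not aperiodic. What you must show is that no non-idempotent element of rank $\ge 2$ enters $S_{II}$.

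This is quick. Let $T$ be the set of partial identities together with all elements of rank $\le 1$.
\begin{itemize}
\item $T$ is a subsemigroup containing every idempotent of $MR(C_3)$.
\item $T$ is closed under weak conjugation. Suppose $xyx=x$ with $x,y$ of rank $\ge 2$; these are injective, since in $K_3$ only rank-$1$ maps collapse. Injectivity of $x$ gives $xy=e_{\operatorname{Dom}(x)}$, and from this both $xe_Uy$ and $ye_Ux$ are partial identities. In every other case the conjugate has rank $\le 1$.
\end{itemize}
Hence $S_{II}\subseteq T$. Since $T$ is aperiodic, the Rhodes--Tilson theorem gives $MR(C_3)c\le 1$.

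The paper takes a shorter route. It writes $MR(C_3)$ as $SIM(3)$ together with the aperiodic ideal of rank $\le 1$ maps. Modulo that ideal one has a quotient of an inverse monoid, and factoring out an aperiodic ideal does not change complexity.

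Drop your fallback via small monoids. There is no theorem that complexity is determined by the small monoids of the $\mathcal{J}$-classes, and the paper itself refutes it. Every $\operatorname{Sm}(J)$ of $MR(P_n)$ has complexity $\le 1$ by Theorem \ref{thm:small_monoids_P_n}, yet $MR(P_{14})c=2$.

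The cases $n=1,2$ then follow from $n=3$ via Theorem \ref{sec:local_monoids}, as in the paper.
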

\begin{proof}

It is a fundamental property of the Krohn-Rhodes complexity operator that if a submonoid or a quotient (and hence a divisor) $S$ of a monoid $M$ satisfies $Sc = d$, then $Mc \ge d$. 

Because the principal factor $J^0$ is a divisor of an ideal of $MR(C_n)$, the small monoid $\operatorname{Sm}(J) = U(M) \cup J^0$ naturally divides the full monoid $MR(C_n)$. For $n \ge 4$, we established $\operatorname{Sm}(J)c = 2$, which implies $MR(C_n)c \ge 2$. By Theorem \ref{thesis}, $MR(C_n)c \le 2$ and we conclude that $MR(C_n)c = 2$. 

To complete the classification, we must verify that the complexity strictly drops for $n < 4$. 

For $n = 3$, the cycle graph $C_3$ is the complete graph $K_3$. On $K_3$, the only continuous partial functions that are not partial injective maps are those of rank 1, which strictly send a single edge to a vertex. Therefore, the monoid $MR(C_3)$ can be decomposed as the union of the symmetric inverse monoid on 3 elements, $\operatorname{SIS}(3)$, and an ideal consisting of elements of rank at most 1. Because all inverse semigroups have Krohn-Rhodes complexity at most 1, and the 0-minimal ideal of rank $\le 1$ maps is purely aperiodic, it follows that $\operatorname{MR}(C_{3})c = 1$. Since $C_{1}$ and $C_2$ are induced subgraphs of $C_3$, the results follow from Theorem \ref{sec:local_monoids}. 
This completes the proof.

\end{proof}

 Recall that the girth of a graph $\Gamma$ is the size of the largest induced cycle in $\Gamma$. Since the continuous functions on an induced subgraph of $\Gamma$ is a subsemigroup of $\operatorname{MR}(\Gamma)$ by Theorem \ref{sec:local_monoids} the following corollary is immediate.

\begin{corollary}\label{girhtcomp}

Let $\Gamma$ be a graph of girth at least 4. Then $\operatorname{MR}(\Gamma)c=2$.
    
\end{corollary}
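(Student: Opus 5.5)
The plan is to combine the lower bound for cycles in Theorem \ref{cyclecomp} with the localization result Theorem \ref{sec:local_monoids}, and to get the upper bound from Theorem \ref{thesis}. I read the hypothesis with the paper's stated definition of girth, namely the size of the largest induced cycle of $\Gamma$. So girth at least $4$ means that $\Gamma$ contains an induced subgraph isomorphic to $C_n$ for some $n \ge 4$. Under the usual meaning of girth (length of the shortest cycle) the statement would fail: a tree has no cycle at all, and the argument below needs an induced $C_n$ with $n\ge 4$.

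\textbf{Step 1 (the induced cycle gives a local submonoid).} Fix $U \subseteq V$ with $\Gamma[U] \cong C_n$ and $n \ge 4$. The partial identity $e_U$ lies in $\operatorname{MR}(\Gamma)$ and is an idempotent. By Theorem \ref{sec:local_monoids},
\[
e_U \operatorname{MR}(\Gamma) e_U \cong \operatorname{MR}(\Gamma[U]) \cong \operatorname{MR}(C_n).
\]
Since $e_U \operatorname{MR}(\Gamma) e_U$ is a subsemigroup of $\operatorname{MR}(\Gamma)$, it follows that $\operatorname{MR}(C_n) \prec \operatorname{MR}(\Gamma)$.

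\textbf{Step 2 (lower bound).} Theorem \ref{cyclecomp} gives $\operatorname{MR}(C_n)c = 2$ for $n \ge 4$. Complexity is monotone under division, so $\operatorname{MR}(\Gamma)c \ge 2$.

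\textbf{Step 3 (upper bound).} Theorem \ref{thesis}(1) gives $\operatorname{MR}(\Gamma)c \le 2$, because $\operatorname{MR}(\Gamma)$ has degree at most $2$. Together with Step 2 this yields $\operatorname{MR}(\Gamma)c = 2$.

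Given the cited results, none of these steps is hard. The main point needing care is Step 1: the cycle must be \emph{induced}, since Theorem \ref{sec:local_monoids} identifies $e_U \operatorname{MR}(\Gamma) e_U$ with the Margolis-Rhodes monoid of the induced subgraph $\Gamma[U]$. A cycle $C_n$ with chords would give a different local monoid, whose complexity is not controlled by Theorem \ref{cyclecomp}. This is exactly why the paper's definition of girth as the size of the largest \emph{induced} cycle is what makes the corollary immediate.
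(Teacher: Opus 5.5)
Your proof is correct and is the paper's argument: an induced $C_n$ with $n\ge 4$ gives, via Theorem \ref{sec:local_monoids}, a copy of $\operatorname{MR}(C_n)$ inside $\operatorname{MR}(\Gamma)$, so Theorem \ref{cyclecomp} supplies the lower bound and Theorem \ref{thesis} the upper bound. You are also right that the argument depends on the paper's nonstandard definition of girth as the largest induced cycle (so the hypothesis means $\Gamma$ is not chordal); under the usual definition, a single vertex or any tree would be a counterexample.
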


A graph whose girth is less than 4 is known as a chordal graph. That is every cycle of length at least 4 has a chord, that is, at least one more edge between vertices of the graph. Chordal grpahs are very important in graph theory and its applications. Theorem \ref{cyclecomp} and Corollary \ref{girhtcomp} might lead one to conjecture that the complexity of a chordal graph has complexity at most 1. This is not true. We prove in the following sections that a path of length at least 14 has Margolis-Rhodes monoid of complexity 2. The proof is complicated. We begin this analysis in the next section.

\section{The Idempotent Generated Subsemigroup of \texorpdfstring{$MR(P_n)$}{MR(Pn)} is Aperiodic}
\label{sec:idempotents_aperiodicity}

We have completely determined when the complexity  of a cycle $C_n$ has complexity 2 in Theorem \ref{cyclecomp}. The case of complexity of paths is more difficult. We now begin its analysis. In this section, we show that the idempotent generated subsemigroup of $MR(P_{n})$ is aperiodic. Consequently, for every regular $\mathcal{J}$-class of $MR(P_{n})$, the small monoid $Sm(J)$ has complexity at most 1. Our first goal is to establish that every idempotent in $MR(P_n)$ preserves this natural linear order. We begin with a lemma concerning the localized proximity of a vertex to its image under an idempotent.

\begin{lemma}
\label{lem:proximity}
If $e \in MR(P_n)$ is an idempotent, then for every vertex $z \in \operatorname{Dom}(e)$, the vertex $z$ and its image $ze$ are either identical or adjacent in $P_n$. That is, $|z - ze| \le 1$.
\end{lemma}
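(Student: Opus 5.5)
We use the decomposition $e = \operatorname{Sing}(e) \vee \operatorname{Inj}(e)$ of Lemma \ref{lem:lemma26} and treat the two parts separately. Since $e$ is idempotent, $e$ restricted to $\operatorname{Im}(e)$ is the identity. In particular $\operatorname{Im}(e) \subseteq \operatorname{Dom}(e)$ and every $y \in \operatorname{Im}(e)$ satisfies $ye = y$.

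\textbf{Singular part.} Suppose $z \in \mathcal{M}(e)$. Then the fiber of $ze$ is an edge $\{z, w\}$ of the matching $\ker(\operatorname{Sing}(e))$. Put $y = ze$. Since $ye = y$, the vertex $y$ lies in the fiber of $y$, so $y \in \{z, w\}$. Thus either $y = z$, or $y = w$ and $y$ is adjacent to $z$. In both cases $|z - ze| \le 1$.

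\textbf{Injective part.} Suppose $z \in \operatorname{Dom}(e) \setminus \mathcal{M}(e)$ and put $y = ze$. Then $y \in \operatorname{Im}(e)$, so $ye = y$. The fiber of $y$ therefore contains both $y$ and $z$. If $z \neq y$, this fiber has size $2$, which forces $z \in \mathcal{M}(e)$, a contradiction. Hence $ze = z$.

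So the bound holds in all cases. We expect no real obstacle here; the only point needing care is the observation that $y = ze$ always lies in its own fiber, since $ye = y$ for an idempotent. This is what pins the image of a singular edge to one of its endpoints. Notice that the argument uses only idempotency and the degree-$2$ matching structure of fibers, so it holds for any graph. For $P_n$ it specializes to $|z - ze| \le 1$.
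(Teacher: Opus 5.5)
Your proof is correct and rests on the same observation as the paper's: $y = ze$ is fixed by $e$, so $z$ and $y$ lie together in the fiber $\{y\}e^{-1}$, which must be a vertex or an edge. The paper gets this directly from the continuity condition on the vertex simplex $\{y\}$, so your split into singular and injective parts via Lemma~\ref{lem:lemma26} is a harmless detour rather than a different route.
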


\begin{proof}
Let $z \in \text{Dom}(e)$ and let $w = ze$. Because $w \in \text{Im}(e)$ and $e$ is an idempotent, $e$ acts as the identity on its image, meaning $we = w$. This implies that both $z$ and $w$ belong to the full preimage of the singleton simplex $\{w\}$ under $e$:
\[
z \in(\{w\})e^{-1} \quad \text{and} \quad w \in (\{w\})e^{-1}.
\]
By the definition of continuity for the Margolis-Rhodes monoid, the inverse image $(\{w\})e^{-1}$ of a vertex simplex must be a valid simplex in  $P_n$ or empty. Since it contains both $z$ and $w$, these two vertices must belong to the same simplex. Because the only simplices in a path graph are single vertices or single edges, $z$ and $w$ cannot be separated by a distance greater than $1$. Thus, $|z - ze| \le 1$.
\end{proof}

Using this locality property, we can now prove that all idempotents are order-preserving functions.

\begin{theorem}
\label{thm:idempotents_order_preserving}
Every idempotent $e \in E(MR(P_n))$ is an order-preserving partial function.
\end{theorem}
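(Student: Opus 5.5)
We have to show that for an idempotent $e \in E(MR(P_n))$ and any $x<y$ in $\operatorname{Dom}(e)$ we get $xe \le ye$. (We use the weak sense; injectivity is not required.) The argument argues by contradiction and uses Lemma~\ref{lem:proximity} together with the structural facts in Lemma~\ref{lem:lemma26}. So suppose $x<y$ lie in $\operatorname{Dom}(e)$ but $xe > ye$.

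By Lemma~\ref{lem:proximity}, $xe \in \{x-1,x,x+1\}$ and $ye \in \{y-1,y,y+1\}$. From $xe>ye$ we get $x+1 \ge xe > ye \ge y-1$, so $y-x \le 1$. Hence $y=x+1$. Moreover $xe>ye$ forces $xe \ge ye+1$, and combined with the ranges this leaves exactly one case: $xe=x+1=y$ and $ye=y-1=x$. So $e$ swaps the adjacent vertices $x$ and $x+1$.

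This swap contradicts idempotency. Since $e$ is an idempotent, it fixes every point of its image. But $x+1=y=xe$ lies in the image, so we must have $ye=y$, whereas in fact $ye=x\neq y$. Equivalently, on $\{x,y\}$ the map $e^2$ is the identity while $e$ is not, so $e^2\ne e$.

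The only step that needs care is the case analysis that pins down $y=x+1$ and the swap. One has to make sure no mixed case survives, for instance $xe=x$ with $ye=y-1$. That case would require $x>y-1=x$, which is impossible. Beyond this, the statement reduces to checking a few inequalities, and no deeper use of continuity is needed than what Lemma~\ref{lem:proximity} already supplies.
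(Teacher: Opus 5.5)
Your proof is correct and follows essentially the same route as the paper. The proximity lemma forces $y=x+1$ with $xe=y$ and $ye=x$, and this swap contradicts the fact that an idempotent fixes every point of its image. As in the paper, the decomposition lemma you say you will use is never actually needed.
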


\begin{proof}
Suppose for the sake of contradiction that there exists an idempotent $e \in E(MR(P_n))$ that is not order-preserving. Then there must exist a pair of vertices $x, y \in \text{Dom}(e)$ such that $x < y$ but $xe > ye$. 

Let $u = xe$ and $v = ye$. Since $u$ and $v$ are integers and $u > v$, we have the inequality $u \ge v + 1$. Applying Lemma~\ref{lem:proximity} to both $x$ and $y$ yields:
\begin{enumerate}
    \item $|x - u| \le 1 \implies u - 1 \le x \le u + 1$
    \item $|y - v| \le 1 \implies v - 1 \le y \le v + 1$
\end{enumerate}
We string these inequalities together using our assumption that $x < y$, which implies $x \le y - 1$:
\[
u - 1 \le x < y \le v + 1
\]
Focusing on the outer bounds of this chain, we obtain $u - 1 < v + 1$, which simplifies to $u < v + 2$. Since $u$ and $v$ are integers, this strictly implies $u \le v + 1$. Combining this with our initial fact that $u \ge v + 1$, we are forced to conclude that $u = v + 1$.

Substituting $u = v + 1$ back into the inequality chain yields:
\[
v \le x < y \le v + 1
\]
The only integers $x$ and $y$ that can satisfy this strict configuration are $x = v$ and $y = v + 1$. Let us evaluate how the idempotent $e$ acts on these specific values:
\begin{itemize}
    \item Since $x = v$, its image is $ve = xe = u = v + 1$. 
    \item This means $v + 1 \in \text{Im}(e)$. Because $e$ is an idempotent, it must fix all elements in its image, giving $(v + 1)e = v + 1$.
    \item However, since $y = v + 1$, we also know by definition that $(v + 1)e = ye = v$.
\end{itemize}
Equating the two expressions for $(v + 1)e$ yields $v = v + 1$, a clear contradiction. Thus, no such pair $x < y$ with $xe > ye$ can exist, and $e$ is order-preserving.
\end{proof}

We now turn our attention to the submonoid generated by these idempotents, denoted $\langle E(MR(P_n)) \rangle$. It is well known that the idempotent generated subsemigroup of the monoid $PO_n$ of partial order preserving functions on an $n$-chain is aperiodic \cite{fernandes2005congruences}. The next theorem follows immediately from Theorem \ref{thm:idempotents_order_preserving}.

\begin{theorem}
The idempotent-generated submonoid $\langle E(MR(P_n)) \rangle$ of the Margolis-Rhodes monoid $MR(P_n)$ is aperiodic.
\end{theorem}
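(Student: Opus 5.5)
The plan is to reduce the statement to the aperiodicity of the monoid $PO_n$ of all order-preserving partial transformations of the chain $1<2<\dots<n$. By Theorem~\ref{thm:idempotents_order_preserving}, every idempotent of $MR(P_n)$ is an order-preserving partial function. Composition of order-preserving partial maps is again order-preserving: if $x<y$ lie in $\operatorname{Dom}(fg)$, then $xf\le yf$, hence $xfg\le yfg$. So $\langle E(MR(P_n))\rangle$ is a submonoid of $PO_n$; the identity $1_V$ is itself an idempotent, so the monoid version causes no trouble. Since aperiodicity passes to subsemigroups, it is enough to show that $PO_n$ is aperiodic. This is in fact stronger than the cited statement about the idempotent-generated part.

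To prove that $PO_n$ is aperiodic, I will show directly that every subgroup of $PO_n$ is trivial, so as not to depend on the exact form of the result in \cite{fernandes2005congruences}. Take a subgroup $H$ with identity $e$, and let $X=\operatorname{Im}(e)$. Each $h\in H$ satisfies $h=eh=he$, so $h$ has domain $X$ and image $X$. It is therefore a bijection $X\to X$, and it is order-preserving. A weakly order-preserving bijection of a finite chain is strictly increasing, because injectivity rules out equalities. A strictly increasing bijection of a finite chain onto itself must be the identity, by induction from the least element. Hence $h=e$, and $H$ is trivial.

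Putting these together: $\langle E(MR(P_n))\rangle\le PO_n$, and $PO_n$ has only trivial subgroups, so the idempotent-generated submonoid is aperiodic. The main care is in the first step. Theorem~\ref{thm:idempotents_order_preserving} must be read as weak monotonicity on the domain, and the closure of order-preserving partial maps under partial composition must hold on the possibly smaller domain $\operatorname{Dom}(fg)$. Both points are routine once stated. There is no genuine obstacle, since the substantive input, that idempotents are monotone, is already established in Lemma~\ref{lem:proximity} and Theorem~\ref{thm:idempotents_order_preserving}.
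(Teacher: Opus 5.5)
Your reduction is exactly the paper's: by Theorem~\ref{thm:idempotents_order_preserving} the idempotents of $MR(P_n)$ are order-preserving, so the submonoid they generate lies in $PO_n$. The paper then simply cites \cite{fernandes2005congruences}. You instead prove directly that $PO_n$ itself is aperiodic, which is slightly stronger and self-contained.

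That self-contained step contains a false claim. From $h=eh=he$ you cannot conclude $\operatorname{Dom}(h)=X=\operatorname{Im}(e)$. Together with $e=hh'$, where $h'$ is the inverse of $h$ in $H$, the equation $h=eh$ gives only $\operatorname{Dom}(h)=\operatorname{Dom}(e)$. In $PO_n$, and already in $MR(P_n)$, an idempotent need not be a partial identity. For example, the idempotent with domain $\{1,2\}$ sending both points to $1$ has $\operatorname{Dom}(e)=\{1,2\}\neq\{1\}=\operatorname{Im}(e)$. So a group element $h$ is in general neither injective nor a bijection $X\to X$. As written, your argument only handles subgroups whose identity is a partial identity, as in an inverse monoid.

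The repair is short.
\begin{itemize}
\item From $h=he$ and $e=h'h$ you get $\operatorname{Im}(h)=X$.
\item From $h=eh$ and $e=hh'$ you get $X\subseteq\operatorname{Dom}(e)=\operatorname{Dom}(h)$.
\item Since $h=eh$, every $y\in\operatorname{Dom}(h)$ satisfies $yh=(ye)h$ with $ye\in X$, so $Xh=\operatorname{Im}(h)=X$.
\end{itemize}
Hence the restriction $h|_X$ is an order-preserving surjection, and thus a permutation, of the finite chain $X$. By your induction it is the identity on $X$. Then $yh=(ye)h=ye$ for every $y\in\operatorname{Dom}(h)=\operatorname{Dom}(e)$, so $h=e$. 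With this fix your proof is complete and yields aperiodicity of all of $PO_n$, not just its idempotent-generated part.
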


For the path graph $P_n$, the group of units $U(M)$ is the automorphism group of the graph. Thus the group of units is trivial for $n=1$ or isomorphic to the cyclic group of order $2$ for $n \ge 2$. 

\begin{theorem}
\label{thm:small_monoids_P_n}
For any path graph $P_n$ and any regular $\mathcal{J}$-class $J$ of $\operatorname{MR}(P_n)$, the corresponding small monoid satisfies $\operatorname{Sm}(J)c \le 1$.
\end{theorem}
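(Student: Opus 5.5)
The plan is to apply Tilson's Small Monoid Theorem (Theorem \ref{thm:tilson_small_monoids}) to $S=\operatorname{Sm}(J)=U(\operatorname{MR}(P_n))\cup J^0$. First we dispose of the degenerate cases. If $J$ is the group of units itself, or if $S$ is aperiodic, then $Sc\le 1$ trivially. So we may assume $S$ is a genuine small monoid $H\cup M^0(G,A,B,C)$ with $H=U(\operatorname{MR}(P_n))=\operatorname{Aut}(P_n)$. Here $H$ is trivial for $n=1$ and $\mathbb{Z}_2=\{1,\rho\}$ otherwise, where $\rho$ is the reflection $i\mapsto n+1-i$. Tilson's theorem then reduces the claim to checking that each right-orbit monoid $M_L=H\cup LH\cup\{0\}$ has an aperiodic idempotent-generated subsemigroup. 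If $Sc=0$ we are already done, so it suffices to prove $Sc\le 1$.

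The key step is to identify the idempotents of $M_L$. Every idempotent of $M_L$ lies either in $H$, where the only one is $1$, or in $J^0$. The idempotents lying in $J$ are idempotents of $\operatorname{MR}(P_n)$, and $0$ is also one. Hence $\langle E(M_L)\rangle\subseteq \{1,0\}\cup \langle E(\operatorname{MR}(P_n))\rangle$, where the product is computed in $J^0$. Products computed in $J^0$ agree with those in $\operatorname{MR}(P_n)$ whenever they stay in $J$, and otherwise they give $0$. We would therefore argue that $\langle E(M_L)\rangle\setminus\{0\}$ consists of elements of $J\cap\langle E(\operatorname{MR}(P_n))\rangle$ together with $1$.

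A subgroup of $\langle E(M_L)\rangle$ either is $\{0\}$ or $\{1\}$, or lies in $J$. In the last case it is a subgroup of $\operatorname{MR}(P_n)$ contained in $\langle E(\operatorname{MR}(P_n))\rangle$. That is, $J$ is a single $\mathcal{J}$-class, so products staying inside it are honest products in $\operatorname{MR}(P_n)$, and a subgroup avoiding $0$ is closed under the true multiplication. By the preceding theorem, $\langle E(\operatorname{MR}(P_n))\rangle$ is aperiodic, because every idempotent is order preserving (Theorem \ref{thm:idempotents_order_preserving}) and $PO_n$ has aperiodic idempotent-generated subsemigroup. So any such group is trivial, and $\langle E(M_L)\rangle$ is aperiodic.

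The main obstacle I expect is justifying that the zero-collapse in $J^0$ cannot manufacture a nontrivial group. A product of idempotents in $J^0$ is either $0$ or equal to the genuine product in $\operatorname{MR}(P_n)$. A cyclic subgroup $\{g,g^2,\dots\}$ avoiding $0$ therefore has all its powers computed honestly in $\operatorname{MR}(P_n)$, and so $g$ generates the same group in $\langle E(\operatorname{MR}(P_n))\rangle$. I would write this out carefully: in $J^0$, $x\cdot y\neq 0$ implies $x\cdot y=xy$ in $\operatorname{MR}(P_n)$. Once that is in place, Tilson's theorem applies to every right-orbit monoid, and it gives $\operatorname{Sm}(J)c\le 1$.
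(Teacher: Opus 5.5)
Your proposal is correct and follows the paper's own route: apply Tilson's Small Monoid Theorem, using that $\langle E(\operatorname{MR}(P_n))\rangle$ is aperiodic because every idempotent of $\operatorname{MR}(P_n)$ is order preserving. The only difference is that you spell out a step the paper's one-line proof leaves implicit: nonzero products in $J^0$ are genuine products in $\operatorname{MR}(P_n)$, so any subgroup of the idempotent-generated subsemigroup of a right-orbit monoid lies in $\langle E(\operatorname{MR}(P_n))\rangle$ and is therefore trivial.
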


\begin{proof}
Let $\operatorname{Sm}(J) = U(M) \cup J^0$ be the small monoid of $\operatorname{P_{n}}$ corresponding to the regular $\mathcal{J}$-class $J$. We have proved  that the idempotent generated subsemigroup of $\operatorname{Sm}(J)$ is aperiodic in . It follows from Theorem \ref{thm:tilson_small_monoids} that $\operatorname{Sm}(J)c \le 1$.

\end{proof}

\section{The Type II Subsemigroup of \texorpdfstring{$MR(P_n)$}{MR(Pn)}}
\label{sec:type_ii_analysis}

\subsection{Aperiodicity of \texorpdfstring{$MR(P_4)_{II}$}{MR(P4)II}}

In this section, we analyze the Type II subsemigroup $MR(P_n)_{II}$ of the Margolis-Rhodes monoid on path graphs. Despite the idempotent generated subsemigroup of $\MR(P_{n})$ is aperiodic we prove that its type II is aperiodic only for $n < 5$.

Recall that the type II submonoid $M_{II}$ of a monoid $M$ is the smallest monoid containing the idempotents and closed under weak conjugation: if $sts=s, s,t \in M$, then $sM_{II}t \cup tM_{II}s \subset M_{II}$. We prove that the Type II subsemigroup of $MR(P_{n})$ is aperiodic if and only if $n <5$.

Computations reveal that $MR(P_4)$ contains exactly 35 idempotents. Closing the idempotents under multiplication generates a subsemigroup $\langle E(MR(P_4)) \rangle$ of exactly 80 elements. To construct the full Type II subsemigroup $MR(P_4)_{II}$, we must close this idempotent-generated core under weak conjugation. 

Executing closure under weak conjugation over all available  pairs $(s, t)$ with $sts=s$ in $MR(P_4)$ captures exactly 9 additional elements that cannot be generated by pure idempotent product. This brings the complete size of the Type II subsemigroup $MR(P_4)_{II}$ to exactly 89 elements. 

By evaluating the power sequences of all 89 elements under multiplication, we observe that every sequence stabilizes within at most 4 iterations (meaning $x^m = x^{m+1}$ for $m \le 4$). Because there are no periodic orbits or non-trivial cyclic permutations, every maximal subgroup is trivial. Therefore, $MR(P_4)_{II}$ is aperiodic. For $n <4$, $\operatorname{MR}(P_{n})_{II}$ is a submonoid of $\operatorname{MR}(P_{4})_{II}$ by Theorem \ref{sec:local_monoids}. The authors maintain a copy of the calulations for $\operatorname{MR}(P_{4})_{II}$ for interested readers.

\subsection{Non-Aperiodicity of \texorpdfstring{$\operatorname{MR}(P_n)_{II},  n\geq 5$}{MR(P5)II}}
We now demonstrate that increasing the graph's length by a single vertex fundamentally alters the algebraic capacity of the monoid. Let $P_5$ be the path graph with 5 vertices, $V = \{1, 2, 3, 4, 5\}$, and 4 edges. 

\begin{theorem}\label{typeIIpath}
The Type II subsemigroup $MR(P_5)_{II}$ is not aperiodic.
\end{theorem}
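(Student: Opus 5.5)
The plan is to exhibit one explicit element of $MR(P_5)_{II}$ that lies in a nontrivial subgroup. Since $MR(P_5)_{II}$ is a subsemigroup, this shows it is not aperiodic. The natural target is the order-reversing partial map $\sigma$ with domain $\{2,4\}$, given by $2\sigma=4$ and $4\sigma=2$.

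The first step is to check that $\sigma\in MR(P_5)$. Vertex preimages under $\sigma$ are single vertices or empty. The preimages of the edges $\{2,3\}$ and $\{3,4\}$ are $\{4\}$ and $\{2\}$ respectively, and every other edge has empty preimage. Since $\sigma^2=e_{\{2,4\}}$ and $\sigma^3=\sigma$, the element $\sigma$ generates a copy of $\mathbb{Z}_2$. It therefore suffices to show $\sigma\in MR(P_5)_{II}$.

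Membership cannot come from $\langle E(MR(P_5))\rangle$ alone. By Theorem \ref{thm:idempotents_order_preserving}, every product of idempotents is order preserving, while $\sigma$ reverses order. So the weak-conjugation clause is essential. We need $s,t\in MR(P_5)$ with $sts=s$, together with $x\in MR(P_5)_{II}$ (an idempotent, $1$, or a product of such), such that $sxt$ or $txs$ equals $\sigma$ or has $\sigma$ as a power. The reversal has to enter through $s$ or $t$.

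The first candidate pair uses the strict maps from Lemma \ref{strict}. Let $s$ send the edge $\{1,2\}$ to $4$ and the edge $\{4,5\}$ to $2$. Let $t$ send $\{1,2\}$ to $5$ and $\{4,5\}$ to $1$. Both lie in $\StG{P_5}$, and one checks directly that $sts=s$. With $x=1$, the products $st$ and $ts$ turn out to be idempotents, so I would then try $sxt$ and $txs$ with $x$ ranging over the idempotents of $MR(P_5)$. The aim is a product that restricts to the swap of $2$ and $4$, or of $1$ and $5$, which would serve equally well. If that fails, I would replace $s$ by maps in $MR(P_5)$ mixing a collapsed edge with an injective part (via Lemma \ref{lem:lemma26}), allowing some vertex to be carried across the middle vertex $3$. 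I would also iterate: first produce order-reversing elements of $MR(P_5)_{II}$ by one weak conjugation, then weakly conjugate or multiply again.

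The main obstacle is finding such witnesses $s,t,x$. The order-preservation of idempotents rules out naive choices, and the condition $sts=s$ forces $t$ to partially invert $s$, which tends to cancel any reversal. Following the approach used for $P_4$, I would run the weak-conjugation closure by computer for $P_5$ and search for an element of order $2$, then record the resulting short certificate: $s$, $t$, $x$, and the verification that $sxt$ has $\sigma$ (or a conjugate swap) as a power. Once one such element is verified, non-aperiodicity of $MR(P_5)_{II}$ follows. By Theorem \ref{sec:local_monoids}, the same holds for all $n\ge 5$.
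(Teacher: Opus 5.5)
Your reduction is sound as far as it goes. The map $\sigma$ is continuous (one small slip: the edges $\{1,2\}$ and $\{4,5\}$ have preimages $\{4\}$ and $\{2\}$, not $\emptyset$, which is harmless). Also $\sigma^3=\sigma\neq\sigma^2$, and you are right that $\sigma\notin\langle E(MR(P_5))\rangle$.

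But the proposal stops exactly where the proof must begin. The whole content of the statement is an explicit certificate placing a non-trivial group element in $MR(P_5)_{II}$, and you never produce one. ``Iterate'' and ``search by computer'' are plans, not arguments.

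Moreover, your one concrete pair provably fails in a single step. Since $\operatorname{Im}(sxt)\subseteq\{1,5\}$, some power of $sxt$ lies in a non-trivial subgroup only if $sxt$ swaps $1$ and $5$. That requires $4x\in\{1,2\}$ and $2x\in\{4,5\}$, so $x$ must reverse the order of $2<4$. This is impossible for $x\in\langle E(MR(P_5))\rangle\cup\{1\}$ by Theorem~\ref{thm:idempotents_order_preserving}, and the same computation rules out $txs$. This is precisely the cancellation you anticipated: because $t$ partially inverts $s$, the two reversals cancel.

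The idea you are missing, and the one the paper uses, is to take the weak-conjugation pair to be $(s,s)$ with $s^3=s$, where $s$ preserves order on part of its domain and reverses it on the rest. An idempotent placed between the two copies of $s$ can then move a vertex from the preserving part into the reversing part, so the reversals no longer cancel. Concretely:
\begin{itemize}
\item $s=(1,5,5,3,3)$ and $k=(2,2,4,4,-)$ give $sks=(5,-,-,3,3)\in MR(P_5)_{II}$;
\item $s_2=(3,1,1,-,5)$ and $k_2=(2,2,-,4,5)$ give $s_2k_2s_2=(-,1,1,-,5)\in MR(P_5)_{II}$;
\item their product $x=(s_2k_2s_2)(sks)=(-,5,5,-,3)$ satisfies $x^3=x\neq x^2$.
\end{itemize}

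Once $x$ is available, your $\sigma$ follows too. Let $a$ be the partial map $2\mapsto 3$, $4\mapsto 5$, and $b$ the partial map $3\mapsto 2$, $5\mapsto 4$. Both are continuous, $aba=a$, and $axb=\sigma$, so one more weak conjugation puts $\sigma$ in $MR(P_5)_{II}$.
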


\begin{proof}
Consider the partial map $x \in MR(P_5)$ defined by its action on the vertices:
\[
x = (- , 5, 5, - , 3)
\]
Evaluating the powers of $x$ yields:
\begin{align*}
    x^1 &= (- , 5, 5, - , 3) \\
    x^2 &= (- , 3, 3, - , 5) \\
    x^3 &= (- , 5, 5, -, 3) = x
\end{align*}
Since $x^3 = x$ but $x^2 \neq x$, the element $x$ generates a cyclic subgroup of order 2 (isomorphic to $\mathbb{Z}_2$) acting as a continuous orientation-reversing swap on the separated vertices $\{3, 5\}$. To prove $MR(P_5)_{II}$ is not aperiodic, we show $x \in MR(P_5)_{II}$.

We synthesize $x$ using Ash's generative components:
\begin{enumerate}
    \item  Let $k = (2, 2, 4, 4,  -)$. It is easy to see that $k^2 = k , k \in MR(P_5)_{II}$.
    \item \textbf{Regular Inverse Operators:} Let $s = (1, 5, 5, 3, 3)$. Analysis of its inverse images confirms $s$ is continuous. Since $s^3 = s$, the element acts as its own generalized regular inverse (i.e., setting $t = s$ satisfies $sts = s$).
    \item \textbf{Weak Conjugation:} Wrapping $k$ within the inverse operator pair $(s, s)$ forces the conjugated product into the Type II closure:
    \[
    f = s \cdot k \cdot s = (5, - , - , 3, 3) \in MR(P_5)_{II}
    \]
\end{enumerate}

 We now utilize the idempotent $k_2 = (2, 2, - , 4, 5)$ and the regular inverse $s_2 = (3, 1, 1, - , 5)$. Because $s_2 \cdot s_2 \cdot s_2 = s_2$, we apply weak conjugation to draw the shifted map $g$ into the Type II closure:
\[
g = s_2 \cdot k_2 \cdot s_2 = (- , 1, 1, - , 5) \in MR(P_5)_{II}
\]

Multiplying these two Type II elements yields:
\[
g \cdot f = (- , 5, 5, - , 3) = x
\]
Thus, $x \in MR(P_5)_{II}$. Because $x$ generates a non-trivial subgroup, $MR(P_5)_{II}$ is not aperiodic.
\end{proof}

We have not been able to determine the lower bound $\operatorname{MR}(P_{n})l$ from Theorem \ref{lb2}. In the subsequent sections, we use the theory of Aperiodic Flows \cite{Trans, flows, FlowsI} that will allow us to prove that $\operatorname{MR}(P_n)c=2$ for $n> 13$.

\section{Flow Theory}

A group-mapping  ($\operatorname{GM}$) semigroup $S$ is a semigroup that has a unique 0-minimal regular ideal $I(S) =M^{0}(G,A,B,C)$ such that $S$ acts faithfully on the left and the right of $I(S)$. Identifying a fixed $\mathcal{R}$-class with the set $G\times B$ we obtain a transformation semigroup $(G\times B,S)$. The action of $S$ induces an action on $B$. The faithful image of this action is the transformation semigroup $(B,\operatorname{RLM}(S))$ where $\operatorname{RLM}(S)$ is called the right letter mapping image of $S$. The importance of these concepts is the following theorem.

\begin{theorem}

Let $S$ be a semigroup. Then we have the following.

\begin{enumerate}

\item{$S$ has a $\operatorname{GM}$ image $T$ with $Sc=Tc$.}

\item{$Sc=\operatorname{RLM}(S)c$ or $Sc=1+\operatorname{RLM}(S)c$. Decidability of complexity can be reduced to answering which two of these alternatives hold.}

\end{enumerate}

\end{theorem}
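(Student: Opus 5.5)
Part (1) is a reduction to the group-mapping case, and part (2) follows from the right letter mapping construction together with the standard bounds on complexity.

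\textbf{Part (1).} For (1) I would argue by a standard reduction. First, complexity of a finite semigroup is determined by its subdirectly irreducible quotients. Concretely, $S$ embeds in the direct product of its quotients by the $\mathcal{J}$-class separating congruences, and $(S\times T)c=\max(Sc,Tc)$. So $Sc$ equals the maximum of the complexities of finitely many images $S/\equiv_J$.

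Next I would split these images by type.
\begin{itemize}
\item If the $0$-minimal ideal of an image is null, the image is an ideal extension by a nilpotent (aperiodic) piece. Its complexity is controlled by a smaller image, so induction on $|S|$ removes it.
\item If the $0$-minimal ideal is regular, I would take the faithful action on the left and right of $I(S)$ (the GM image $S^{\mathrm{GM}}_J$).
\end{itemize}
Since $S$ divides the product of these images and each is a quotient of $S$, monotonicity gives $Sc=\max_J (S^{\mathrm{GM}}_J)c$. Because there are finitely many $J$, the maximum is attained by some GM image $T$.

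The key technical point is that the kernel of $S\to T$ for the relevant $J$ does not raise complexity. Here I would invoke the fact that the map from a subdirectly irreducible semigroup to its GM image is an aperiodic (indeed $\mathcal{L}$-injective on that $J$) morphism. Any extra group complexity would have to be detected at some other $J$, and that $J$ is handled by its own image.

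\textbf{Part (2).} For (2), take $S$ GM and use the morphism $S\to \operatorname{RLM}(S)$, which is the action on the $\mathcal{L}$-classes $B$. Two bounds follow.
\begin{itemize}
\item The lower bound $\operatorname{RLM}(S)c\le Sc$ is immediate from division.
\item For the upper bound, $S$ embeds in $(G\times B,S)$. By the Krohn--Rhodes wreath construction, this action divides $(G,G)\wr(B,\operatorname{RLM}(S))$ up to an aperiodic factor, i.e.\ $S\prec A\wr G\wr \operatorname{RLM}(S)$. Hence $Sc\le 1+\operatorname{RLM}(S)c$ by subadditivity of complexity under wreath products.
\end{itemize}
Together these give $\operatorname{RLM}(S)c\le Sc\le 1+\operatorname{RLM}(S)c$, so exactly one of the two alternatives holds.

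\textbf{Decidability and the main obstacle.} The decidability remark would follow by induction on $|S|$ via (1). $\operatorname{RLM}(S)$ is strictly smaller when $S$ is GM with nontrivial $G$, so its complexity is computable by induction, and one is left to decide between the two alternatives. Actually deciding which one holds is the content of the Margolis--Rhodes--Schilling work via flows. I would simply cite \cite{complexity1, complexityn} for that, since only the reduction is being claimed.

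The main obstacle I expect is the covering $S\prec A\wr G\wr\operatorname{RLM}(S)$: the cross-section of $\mathcal{L}$-classes must be chosen so that the group coordinate is well defined for partial actions that leave $J$. I would handle this with the usual Rhodes trick of adjoining a sink state for products falling into $0$.
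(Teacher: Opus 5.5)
The paper states this theorem without proof; it is quoted from the classical Krohn--Rhodes--Tilson theory (see \cite{qtheory}). I therefore judge your argument on its own terms.

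\textbf{Part (2).} This part is correct, and more direct than you make it. In Rees coordinates each $s$ acts on the distinguished $\mathcal{R}$-class by $(g,b)s=(g\,c_s(b),\,b\bar{s})$, undefined exactly when the product leaves $J$. That is literally an embedding $(G\times B,S)\le(G,G)\wr(B,\operatorname{RLM}(S))$ of partial transformation semigroups. No extra aperiodic factor or choice of cross-section is needed, so the ``main obstacle'' you anticipate does not arise. The lower bound holds because $\operatorname{RLM}(S)$ is a quotient of $S$.

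\textbf{Part (1).} This part has a genuine gap. You claim that $S$ divides the product of its GM images, after the null cases have been replaced by smaller images; this is false.
\begin{itemize}
\item Let $U=\{1,a,0\}$ with $a^2=0$, let $G$ be a nontrivial group of exponent $m$, and put $S=(G\times U)/(G\times\{0\})$.
\item The elements $(g,a)$ form a null $\mathcal{J}$-class. Every GM quotient of $S$ sends them to $0$, so it is a quotient of $G^0$.
\item Hence every product of GM quotients satisfies $x^{m+1}=x$, whereas $(1,a)^{m+1}=0\neq(1,a)$ in $S$. So $S$ divides no such product.
\end{itemize}
For $G=\mathbb{Z}_2$ this $S$ is subdirectly irreducible with null $0$-minimal ideal $I$. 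Your reduction must therefore prove $Sc\le(S/I)c$ for it, and you give no argument. The obvious cover relative to the congruence collapsing $I$ does not help: its left factor contains the regular action of $G$ on $I\setminus\{0\}$, so it only gives $Sc\le 2$.

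The missing ingredient is the Fundamental Lemma of Complexity: a surjective morphism that is injective on subgroups preserves complexity. Your sentence that extra group complexity ``would have to be detected at some other $J$'' presupposes exactly this rather than proving it.

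\textbf{The fix.} With that lemma, (1) needs no subdirect decomposition at all. Take $S$ non-aperiodic and consider the natural map $S\to\prod_J\operatorname{GM}_J(S)$ over the non-aperiodic regular $\mathcal{J}$-classes $J$. It is injective on subgroups: distinct $h,h'$ in a maximal subgroup $H_e\subseteq J$ satisfy $eh=h\neq h'=eh'$ inside $J$. Hence $Sc\le\max_J\operatorname{GM}_J(S)c\le Sc$, and some $J$ attains the maximum.

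Choosing $J$ non-aperiodic also repairs your decidability induction. As written it stalls when the distinguished group is trivial, because then $S=\operatorname{RLM}(S)$ and nothing gets smaller.
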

 
Flows \cite{flows, Trans, FlowsI} provide a necessary and sufficient condition to compute the complexity of a $\operatorname{GM}$ semigroup by resolving the difference of complexity between the semigroup $S$ and its Right Letter Mapping image $\operatorname{RLM}(S)$. The \cite{complexity1, complexityn} results show that a $\operatorname{GM}$ semigroup $S$ has complexity at most $n$ if and only if its right letter mapping image $\operatorname{RLM}(S)$ has complexity $n$ and $S$ has a flow over a semigroup of complexity at most $n-1$.

The background in flow theory is extensive. In order to keep the ``flow" of the paper going, we've gathered the necessary background in appendices to this paper. The reader can consult \cite{Trans, flows, FlowsI} for precise details and proofs.

If we restrict the image of $f$ strictly to $G$-invariant cross-sections (i.e., $qf \in \operatorname{CS}(G \times B)$), the function $f$ constitutes a flow to the Rhodes Lattice $\operatorname{Rh}_B(G)$ \cite{FlowsI}.

In the context of the Margolis-Rhodes monoid, we are particularly interested in bounding the complexity below $2$. By \cite{Trans}, a $\operatorname{GM}$ semigroup $S$ has complexity $1$ if and only if its Right Letter Mapping image has complexity at most $1$, and there exists a complete flow from an entirely aperiodic transformation semigroup $T$ (where $Tc = 0$) to the Rhodes lattice associated with $S$. 

In the subsequent sections, we will deploy this theory by analyzing the aperiodic flows of specific induced subgraphs. By proving that no such aperiodic flow can exist for $3P_4$, and embedding $3P_4$ as an induced subgraph of $P_{14}$, we will use flow theory to prove that $MR(P_{14})c = 2$.

\section{An Important Example}

We define an example that allows us to show that $\operatorname{MR}(P_{n})c =2$ for $n>13$. Let $\mathbb{Z}_3 = \{1, g, g^2\}$ be the cyclic group of order 3. We define a 0-simple regular ideal $I(S) = \mathcal{M}^0(\mathbb{Z}_3, A, B, C)$ where the row index set is $A = \{a_1, \dots, a_7\}$ and the column index set is $B = \{1, 2, 3, 4\}$. The transpose of the structure matrix, $C^T: A \times B \to \mathbb{Z}_3^0$, is explicitly defined as:
\[
C^T = 
\begin{pmatrix} 
1 & 1 & 0 & 0 \\ 
0 & 1 & 1 & 0 \\ 
0 & 0 & 1 & 1 \\ 
1 & 0 & 0 & 0 \\ 
0 & 1 & 0 & 0 \\ 
0 & 0 & 1 & 0 \\ 
0 & 0 & 0 & 1 
\end{pmatrix}
\]
We construct the GM semigroup $S$ by adjoining three specific $\mathbb{Z}_3$-labeled partial transformations acting on the right of the columns $B$:
\begin{align*}
    z &= \{1 \to 2, \;\; 2 \to 1\} \\
    w &= \{1 \to 3, \;\; 3 \to 1\} \\
    t &= \{1 \to 1, \;\; 2 \to g3\}
\end{align*}

Generating $S = \langle I(S), z, w, t \rangle$ yields exactly \textbf{93 elements}. Any rank-1 map generated by the outer transformations operates identically to the base elements dictated by rows $a_4, a_5,$ and $a_6$ of the structure matrix, and is structurally absorbed directly into the 0-minimal ideal. The poset structure of Green's $\mathcal{J}$-classes is given below:

\begin{center}
\begin{tikzpicture}[scale=1.2, every node/.style={circle, draw, fill=white, inner sep=2pt, font=\small}]
    \node (J4) at (-1, 2) [label=left:$\mathcal{J}_w\ (\text{Size: 2, Rank 2})$] {};
    \node (J5) at (1, 2)  [label=right:$\mathcal{J}_z\ (\text{Size: 2, Rank 2})$] {};
    \node (J2) at (0, 1)  [label=right:$\mathcal{J}_t\ (\text{Size: 4, Rank 2})$] {};
    \node (JI) at (0, 0)  [label=right:$\mathcal{J}_I\ (\text{Size: 84, Rank 1})$] {};
    \node (J0) at (0, -1) [label=right:$\mathcal{J}_0\ (\text{Size: 1, Rank 0})$] {};
    \draw (J4) -- (J2); \draw (J5) -- (J2); \draw (J2) -- (JI); \draw (JI) -- (J0);
\end{tikzpicture}
\end{center}

\subsection{The Rhodes Lattice Flow Contradiction}

To determine if $c(S) = 1$, we evaluate whether an aperiodic flow can successfully filter the group tracking. Computations take place within the Evaluation Transformation Semigroup $\mathcal{E}(L)$, where $L = Rh_B(\mathbb{Z}_3)$. States are tracked as Subsets-Partitions-Cross sections (SPCs).

\begin{theorem}
The Evaluation Transformation Semigroup $\mathcal{E}(L)$ of $S$ encounters an unavoidable state collapse to the contradiction element $\Rightarrow\Leftarrow$, proving $S$ admits no aperiodic flow.
\end{theorem}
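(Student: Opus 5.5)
The plan is to argue by contradiction, using the flow criterion recalled in the Flow Theory section. Suppose $S$ admitted an aperiodic flow, that is, a flow from some aperiodic transformation semigroup $(Q,T)$ into the Rhodes lattice $L=Rh_B(\mathbb{Z}_3)$. Each state $q$ is then assigned an SPC: a subset of $B=\{1,2,3,4\}$, a partition of it, and a $\mathbb{Z}_3$-cross section on the blocks. The flow axioms force two kinds of constraints. The \emph{row constraints} come from the rows $a_1,\dots,a_7$ of $C^T$, each of which must be compatible with the SPC at every state. The \emph{generator constraints} say that applying $z$, $w$, $t$ (as labeled partial maps) to an SPC must land below the SPC at the successor state. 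I will trace these forced values inside $\mathcal{E}(L)$, starting from an arbitrary state and pushing information forward.

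\textbf{Step 1: the rows fix the partition on $\{1,2,3,4\}$.} Rows $a_1,a_2,a_3$ have two nonzero entries each, in columns $\{1,2\}$, $\{2,3\}$ and $\{3,4\}$. By the absorption remark, any rank-one product lands in the ideal, and the compatibility condition for such elements joins the corresponding columns. These joins should force the partition at each state to be the single block $\{1,2,3,4\}$ (or whatever subset is present), with the cross-section entries of linked columns related exactly as the entries of $C^T$ dictate. Rows $a_4,\dots,a_7$ are the singleton rows. They guarantee that each column actually occurs in the subset, so the SPC cannot escape by shrinking its subset.

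\textbf{Step 2: track the labels around a loop.} With one block carrying a cross-section $(c_1,c_2,c_3,c_4)\in\mathbb{Z}_3^4$ modulo a global left $\mathbb{Z}_3$ shift, I follow how the generators transform it.
\begin{itemize}
\item The element $t$ sends $1\mapsto 1$ and $2\mapsto g3$. It therefore relates $c_2$ to $g\,c_3$ in the successor, while keeping $c_1$.
\item The elements $z$ and $w$ are label-free transpositions $(1\,2)$ and $(1\,3)$ on their domains.
\end{itemize}
Because $T$ is aperiodic, the sequence $q\cdot u^m$ stabilizes for each $u\in T$. I would choose a word $u$ in the letters mapping onto $z,w,t$, for example a product such as $t\,z\,w$ or $t\,w\,z\,t$, so that $u^\omega$ represents an idempotent of $S$. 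Comparing the SPC at $q u^\omega$ with the one at $q u^{\omega+1}$ then forces a relation $c_1 = g\,c_1$ on the single block. This is the collapse to $\Rightarrow\Leftarrow$.

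The main obstacle is Step 2, in two respects. First, one must find the right word, such as $tzwt$, whose composite labeled action fixes the block while multiplying the cross-section by the nontrivial element $g$. Second, and more delicately, one must show that the flow cannot avoid the contradiction by dropping columns from the subset or refining the partition along the way. My plan for the second point is to use the domain restrictions of $z$, $w$, $t$ together with the singleton rows $a_4$ through $a_7$ to show that every state reachable from the loop must still contain the linked columns. Failing a clean argument, I would fall back on the explicit finite computation in $\mathcal{E}(L)$ over the 93-element semigroup $S$. That computation shows every branch of the SPC search terminates in $\Rightarrow\Leftarrow$, which proves that $S$ admits no aperiodic flow.
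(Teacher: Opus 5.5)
There is a genuine gap, and it sits where you expected the difficulty. First, Step 1 is false as stated. The singleton rows $a_4,\dots,a_7$ give ideal elements that are injective on the one column they see, so they force neither a merge nor the presence of any column. Completeness only asks that each point be covered by \emph{some} state, so nothing forces columns $2$ or $4$ into a state carrying $\{1,3\}/\langle 1,g\rangle$. What rows $a_1,a_2,a_3$ actually give is weaker: if two adjacent columns are both present at a state, they lie in one block with equal relative labels, because the row element collapses them and the induced map on blocks must be well defined. You also do not need Step 1 to stop the flow from ``escaping''. The flow axioms only force inclusions and merges, so lower bounds propagate along a path, and a lower bound that already violates the cross-section condition cannot be avoided.

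Second, the mechanism of Step 2 cannot produce a contradiction. Cross-sections in $Rh_B(\mathbb{Z}_3)$ are taken modulo left multiplication by $\mathbb{Z}_3$. So a loop that scales a block uniformly by $g$ (in particular any loop through a single column) gives a relation $c_1=gc_1$ that is no contradiction at all. Your candidate words also degenerate:
\begin{itemize}
\item $tz=\{1\to 2\}$, so $tzw=tzwt=0$;
\item $twzt=\{2\to g^2 3\}$ has rank one;
\item even $ztw=\{1\to g1,\ 2\to 3\}$, which does scale column $1$ by $g$, has $(ztw)^{\omega}=\{1\to 1\}$, so its loop state is only forced to contain column $1$.
\end{itemize}
In fact the only elements of $S$ whose powers stay of rank two are $z,z^2,w,w^2$, since $\mathcal{J}_t$ is non-regular. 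These carry no labels, so no single loop supplies a twist.

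The missing idea is to build a twisted two-column block \emph{before} the loop, which is what the paper does:
\begin{itemize}
\item Start from a state containing column $1$. The $z$-loop forces column $2$ to be present at the same state, and row $a_1$ then merges the two columns into $\{1,2\}/\langle 1,1\rangle$.
\item The non-loop step $t$ carries this block to $\{1,3\}/\langle 1,g\rangle$.
\item At the subsequent $w$-loop, the swap sends the block to $\{1,3\}/\langle g,1\rangle=\{1,3\}/\langle 1,g^{-1}\rangle$. Stability under $w$ then forces $g=g^{-1}$, which is impossible in $\mathbb{Z}_3$.
\end{itemize}
The contradiction comes from a label-free swap inverting a relative label, not from a scalar $g$. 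Your fallback to ``the explicit finite computation'' restates the claim without identifying this chain.
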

\begin{proof}
Initiate tracking from the stable singleton coordinate point $1$: $S_0 = \{1\} / \langle 1 \rangle$.
\begin{enumerate}
    \item \textbf{Loop Closure via $z$:} Applying $(z)^{\omega+*}$ merges coordinates 1 and 2 into a block with identical weights: $S_1 = \{1, 2\} / \langle 1, 1 \rangle$.
    \item \textbf{Projection via $t$:} Passing $S_1$ through the bridge $t = \{1 \to 1, 2 \to g3\}$ maps the active components to $\{1, 3\}$, injecting a phase distortion: $S_2 = \{1, 3\} / \langle 1, g \rangle$.
    \item \textbf{Orbit Shift via $w$:} Applying a single iteration of $w = (1 \;\; 3)$ swaps the internal weights: $S_2 \cdot w = \{1, 3\} / \langle g, 1 \rangle$.
    \item \textbf{Cross-Section Collision:} The loop closure $(w)^{\omega+*}$ requires evaluating the lattice join: 
    \[ S_3 = S_2 \vee (S_2 \cdot w) = (\{1, 3\} / \langle 1, g \rangle) \vee (\{1, 3\} / \langle g, 1 \rangle) \]
    A valid cross-section join over identical partition blocks exists if and only if the underlying cross-sections are proportional via some scalar $c \in \mathbb{Z}_3$. We must have $c \cdot 1 = g$ (thus $c = g$) and $c \cdot g = 1$. Substituting yields $g^2 = 1$. However, in $\mathbb{Z}_3$, $g^2 \neq 1$. Thus, no scaling factor exists, and the evaluation collapses to $S_3 = \;\; \Rightarrow\Leftarrow$.
\end{enumerate}
Because $S$ admits no aperiodic flow, its complexity must strictly exceed the RLM baseline: $c(S) = 1 + c(RLMR(S)) = 2$.
\end{proof}

\section{The Fiber Graph Structure}

Let the state space be $Q = \mathbb{Z}_3 \times \{1, 2, 3, 4\}$, yielding $|Q| = 12$. The fiber graph $\mathcal{F}$ of the transformation semigroup $(Q, S)$ has vertices $Q$ and edges defined by fibers of size 2.

\begin{lemma}
The fiber graph $\mathcal{F}$ of $(Q, S)$ is isomorphic to $3P_4$, the disjoint union of three paths of length 3. Furthermore, $(Q, S)$ is a $2$-transformation semigroup.
\end{lemma}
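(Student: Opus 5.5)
The plan is to compute the fibers of every element of $S$ directly from the right action of $S$ on $Q=\mathbb{Z}_3\times B$, where $B=\{1,2,3,4\}$. I will identify $Q$ with a fixed $\mathcal{R}$-class of $I(S)$, as in the flow-theory section. In Rees coordinates a nonzero element $(a,h,b)\in I(S)$ acts by
\[
(k,b')\cdot(a,h,b)=\bigl(k\,C(b',a)\,h,\;b\bigr) \quad\text{if } C(b',a)\neq 0,
\]
and is undefined otherwise. The element $S$ is generated by $I(S)$ together with $z,w,t$. Since $I(S)$ is an ideal, every element of $S$ is either in $I(S)$ or a product of the generators $z,w,t$ alone. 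So the proof splits into these two cases.

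\emph{Case 1: elements of $I(S)$.} Fix $(a,h,b)$. Its domain is $\mathbb{Z}_3\times\{b' : C(b',a)\neq 0\}$, and its image lies in $\mathbb{Z}_3\times\{b\}$. Reading off $C^T$, the rows $a_4,\dots,a_7$ have a single nonzero entry, so those elements are injective. The rows $a_1,a_2,a_3$ have exactly two nonzero entries, in columns $\{1,2\}$, $\{2,3\}$ and $\{3,4\}$ respectively. All of these entries equal $1$, so $(k,b')$ and $(k',b'')$ have the same image exactly when $k=k'$. Hence every fiber has size at most $2$, and the size-$2$ fibers are precisely
\[
\{(k,1),(k,2)\},\quad \{(k,2),(k,3)\},\quad \{(k,3),(k,4)\}, \qquad k\in\mathbb{Z}_3 .
\]
Each of these pairs actually occurs as a fiber, since $h$ and $b$ may be chosen freely.

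\emph{Case 2: products of $z,w,t$.} Each of $z,w,t$ is a $\mathbb{Z}_3$-labeled partial injection of $B$. It acts on $Q$ by $(k,b')\mapsto(k\,\lambda(b'),\,b'\sigma)$, where $\sigma$ is injective and $\lambda(b')$ is the label. Such a map is injective on $Q$, and the class of such maps is closed under composition. So these elements contribute no fibers of size $2$. In particular $(Q,S)$ has degree at most $2$, which by the Margolis--Rhodes Representation Theorem makes it a $2$-transformation semigroup.

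The edge set of $\mathcal F$ is therefore the set of pairs listed in Case 1. For each $k$ they form the path $(k,1)-(k,2)-(k,3)-(k,4)$ on $4$ vertices. Different values of $k$ are never joined, so $\mathcal F\cong 3P_4$.

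The step I expect to need care is the action convention, namely whether the sandwich entry is $C(b',a)$ or its transpose. The statement gives $C^T$ indexed by $A\times B$, so I must check that it is the rows of $C^T$ (indexed by $A$) that determine the domains. With the other reading, a domain would be a column of $C^T$, and columns $2$ and $3$ each contain three nonzero entries. That would give fibers of size up to $3$ rather than the paths above. I will also confirm that no product involving $t$ (with its label $g$) leaves $I(S)$ while being non-injective. This holds because all elements outside $I(S)$ are products of the partial injections $z,w,t$.
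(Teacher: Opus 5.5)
Your proposal is correct and follows essentially the same route as the paper: the fibers of elements of $I(S)$ are read off the rows of $C^T$, and collisions require equal phase and occur only on columns $1$--$2$, $2$--$3$, $3$--$4$ (rows $a_1,a_2,a_3$), which gives three phase-separated copies of $P_4$. Your Case 2 shows that elements outside $I(S)$ are $\mathbb{Z}_3$-labeled partial injections and hence injective on $Q$; this makes explicit a step the paper's proof only asserts (``no element collapses 3 states together''), and your reading of the convention (domains given by rows of $C^T$) is the one the paper uses.
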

\begin{proof}
Elements of $I(S)$ map states to a single column $b$. Two states $(h_1, x_1)$ and $(h_2, x_2)$ collide under a matrix row $a$ if their accumulated phases match:
$ h_1 + C(x_1, a) = h_2 + C(x_2, a) \pmod 3 $.

Examining $C^T$, the non-zero rows $a_1, a_2, a_3$ have 1s (weight 0 in additive $\mathbb{Z}_3$) on adjacent columns. This creates edges exactly when $h_1 = h_2$ between columns 1-2, 2-3, and 3-4. No element collapses 3 states together, confirming $(Q, S)$ is a degree-2 transformation semigroup. Because edges only form between states of the same group phase, $\mathcal{F}$ decouples entirely into three disjoint components. Each component spans 4 vertices (length 3). Thus, $\mathcal{F} \cong 3P_4$.
\end{proof}

\section{Embedding into \texorpdfstring{$P_{14}$}{P14} and Complexity}

We adopt the standard convention that $P_n$ denotes a path graph of length $n-1$, possessing $n$ vertices. Therefore, $P_{14}$ possesses exactly 14 vertices.

\begin{theorem}
The Margolis-Rhodes Monoid $MR(P_{14})$ has a Krohn-Rhodes complexity of 2.
\end{theorem}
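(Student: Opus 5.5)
The upper bound $\operatorname{MR}(P_{14})c \le 2$ is immediate from Theorem \ref{thesis}(1), so the work is the lower bound $\operatorname{MR}(P_{14})c \ge 2$. The plan is to show that the $93$-element GM semigroup $S$ of the previous two sections divides $\operatorname{MR}(P_{14})$. Then monotonicity of complexity under division, together with the flow computation $Sc = 2$, gives the result.

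\textbf{Step 1.} By the last lemma, $(Q,S)$ is a transformation semigroup of degree at most $2$ whose fiber graph is $3P_4$. The Margolis-Rhodes Representation Theorem of \cite{cremona, deg2part2} embeds a degree-$2$ transformation semigroup faithfully into the Margolis-Rhodes monoid of its fiber graph. I would check directly that each generator acts continuously on $3P_4$:
\begin{itemize}
\item every fiber of every element of $S$ is a vertex or an edge of $\mathcal{F}$;
\item the full preimage of each edge $\{(h,b),(h,b+1)\}$ is a simplex.
\end{itemize}
It suffices to do this for the generators $z,w,t$ and the elements of $I(S)$, since $\operatorname{MR}$ is closed under composition. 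This gives an embedding $S \hookrightarrow \operatorname{MR}(3P_4)$.

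\textbf{Step 2.} I would realize $3P_4$ as an induced subgraph of $P_{14}$. Take vertices $\{1,2,3,4\}$, $\{6,7,8,9\}$, $\{11,12,13,14\}$, omitting the gap vertices $5$ and $10$; this uses exactly $4\cdot 3 + 2 = 14$ vertices. Let $e$ be the partial identity on $U = V(P_{14})\setminus\{5,10\}$. By Theorem \ref{sec:local_monoids},
\[
e\operatorname{MR}(P_{14})e \cong \operatorname{MR}(P_{14}[U]) = \operatorname{MR}(3P_4),
\]
so $S$ embeds as a subsemigroup of $\operatorname{MR}(P_{14})$.

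\textbf{Step 3.} Since $S$ is a subsemigroup of $\operatorname{MR}(P_{14})$, we get $\operatorname{MR}(P_{14})c \ge Sc$. We have $Sc = 2$ from the flow contradiction in the Rhodes lattice $\operatorname{Rh}_B(\mathbb{Z}_3)$ together with $\operatorname{RLM}(S)c = 1$. Combined with the upper bound, this gives equality. Every $P_n$ with $n \ge 14$ contains $P_{14}$ as an induced subgraph, so Theorem \ref{sec:local_monoids} extends the result to all $n > 13$.

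\textbf{Main obstacle.} The delicate point is Step 3's input $Sc = 2$, more precisely the claim $\operatorname{RLM}(S)c = 1$. The flow argument only shows that $Sc$ exceeds $\operatorname{RLM}(S)c$ by one, so one must confirm that $\operatorname{RLM}(S)$ is not aperiodic. Here $z$ and $w$ induce transpositions on $B$, so $\operatorname{RLM}(S)$ contains a copy of $\mathbb{Z}_2$ and has complexity at least $1$; I would verify this explicitly.

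A secondary check is that the Step 1 embedding is genuinely faithful. Elements such as $t$ twist the phase, sending $(h,2)$ to $(hg,3)$, and continuity requires that preimages of edges within each phase component remain simplices. I would verify this by hand for $t$ first, since it is the one generator that mixes components.
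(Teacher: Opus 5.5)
Your proposal is correct and follows the paper's proof: embed $S$ in $\operatorname{MR}(3P_4)$ through its fiber graph, realize $3P_4$ as the induced subgraph of $P_{14}$ on $V\setminus\{5,10\}$ via Theorem~\ref{sec:local_monoids}, then combine $Sc=2$ with the degree-$2$ upper bound. Your continuity check on the generators and the observation that $\operatorname{RLM}(S)$ contains a $\mathbb{Z}_2$ only spell out what the paper asserts; the second is not actually needed, since ruling out an aperiodic flow already excludes $Sc=1$ and $S$ is not aperiodic.
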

\begin{proof}
Because $S$ acts continuously on its fiber graph, $S \le MR(3P_4)$. We embed $3P_4$ into $P_{14}$ as an induced subgraph. $3P_4$ requires exactly 12 vertices. To keep the three path components disjoint within a single path, we insert exactly two "spacer" vertices, utilizing $12 + 2 = 14$ vertices total.

\begin{center}
\begin{tikzpicture}[scale=0.85, every node/.style={circle, draw, fill=white, inner sep=1.5pt, font=\footnotesize, minimum size=5mm}]
    \draw[thick] (1,0) -- (14,0);
    
    \node[fill=blue!20] (v1) at (1,0) {1}; \node[fill=blue!20] (v2) at (2,0) {2};
    \node[fill=blue!20] (v3) at (3,0) {3}; \node[fill=blue!20] (v4) at (4,0) {4};
    
    \node[draw=gray, dashed] (v5) at (5,0) {5};
    
    \node[fill=red!20] (v6) at (6,0) {6}; \node[fill=red!20] (v7) at (7,0) {7};
    \node[fill=red!20] (v8) at (8,0) {8}; \node[fill=red!20] (v9) at (9,0) {9};
    
    \node[draw=gray, dashed] (v10) at (10,0) {10};
    
    \node[fill=green!20] (v11) at (11,0) {11}; \node[fill=green!20] (v12) at (12,0) {12};
    \node[fill=green!20] (v13) at (13,0) {13}; \node[fill=green!20] (v14) at (14,0) {14};
\end{tikzpicture}
\end{center}

By mapping the components to $v_1 \dots v_4$, $v_6 \dots v_9$, and $v_{11} \dots v_{14}$ (skipping $v_5$ and $v_{10}$), $3P_4$ embeds perfectly into $P_{14}$. Since $3P_4$ is an induced subgraph of $P_{14}$, we have the division $S \le MR(3P_4) \prec MR(P_{14})$. By the monotonicity of complexity under division, $Sc \le MR(P_{14})c$. Since we proved $Sc = 2$, it follows that $MR(P_{14})c \ge 2$. 

Furthermore, because $(Q, S)$ is a $2$-transformation semigroup we have that $Sc \le 2$ by Theorem \ref{thesis}. By extension, because $P_{14}$ is a tree (acyclic), its graph monoid acts strictly as a $2$-transformation semigroup, enforcing $MR(P_{14})c \le 2$. Therefore, $MR(P_{14})c = 2$.
\end{proof}

\section{Complexity of Low-Order Path Graphs}
\label{sec:low_order_paths}

Having established that $MR(P_n)c = 2$ for $n \ge 14$ via the embedding of $3P_4$ and the theory of flows, we now resolve the complexity for path graphs of small order. Specifically, we demonstrate that the complexity strictly drops for paths of length $4$ or less.

\begin{theorem}
For $n \in \{1, 2, 3, 4\}$, the Margolis-Rhodes monoid of the path graph $P_n$ satisfies $MR(P_n)c \le 1$.
\end{theorem}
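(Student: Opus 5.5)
By Theorem \ref{sec:local_monoids}, for $n\le 4$ the graph $P_n$ is an induced subgraph of $P_4$. So $\operatorname{MR}(P_n)$ is isomorphic to a local submonoid $e\operatorname{MR}(P_4)e$ of $\operatorname{MR}(P_4)$, and by monotonicity of complexity under division it suffices to prove $\operatorname{MR}(P_4)c\le 1$.

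The plan is to use part (1) of the Rhodes--Tilson characterization recalled after Theorem \ref{lb2}. It says that $S$ divides $A\wr G$ with $A$ aperiodic and $G$ a group if and only if $S_{II}$ is aperiodic, in which case $Sc\le 1$. This reduces the theorem to showing that the type II subsemigroup $\operatorname{MR}(P_4)_{II}$ is aperiodic.

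That aperiodicity is exactly the content of the subsection \emph{Aperiodicity of $MR(P_4)_{II}$}:
\begin{enumerate}
\item Start from the 35 idempotents of $\operatorname{MR}(P_4)$.
\item Close them under products; this gives the 80-element idempotent-generated core, which is aperiodic by the order-preserving argument (Theorem \ref{thm:idempotents_order_preserving} together with the aperiodicity of $\langle E(PO_n)\rangle$).
\item Close under weak conjugation $x\mapsto sxt,\ txs$ for all pairs with $sts=s$. This adds 9 elements and reaches a fixed point of 89 elements.
\item Check that every element $x$ satisfies $x^m=x^{m+1}$ for some $m\le 4$.
\end{enumerate}
I would present this as a finite verification and record that the closure stabilizes; by Ash's theorem that fixed point is the full $S_{II}$. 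Aperiodicity of $\operatorname{MR}(P_4)_{II}$ then gives $\operatorname{MR}(P_4)c\le 1$.

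The main obstacle is that the argument rests on a computer closure rather than a conceptual proof. To make it more robust, I would try a structural reason why no element of $S_{II}$ can act as a nontrivial permutation on its stable image. The only nontrivial automorphism available on any induced subgraph of $P_4$ is a reflection. A group element of $S_{II}$ would have to swap two vertices or reverse a subpath, as happens in Theorem \ref{typeIIpath}, where the swap used the separated vertices $\{3,5\}$ and needed five vertices.

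The conceptual argument I would attempt is to track an "orientation parity" invariant. The candidates for a swap in $P_4$ are $\{1,3\}$, $\{2,4\}$, $\{1,4\}$ and the edges. One would show that weak conjugates of idempotents by regular pairs in $\operatorname{MR}(P_4)$ preserve the left-to-right order on any pair of image points that are stable under iteration. If that fails, I fall back on the explicit enumeration, noting that the small cases $n\le 3$ follow either from embedding in $P_4$ or directly from $\operatorname{MR}(P_n)$ having only $\mathbb{Z}_2$ automorphism groups of single small $\mathcal{J}$-classes.
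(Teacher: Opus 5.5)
Your proposal is correct and follows the paper's proof. Both reduce to the computer verification that $\operatorname{MR}(P_4)_{II}$ is aperiodic (35 idempotents, an 80-element idempotent-generated part, 9 further elements from weak conjugation, 89 in total), and then invoke the Rhodes--Tilson criterion that an aperiodic $S_{II}$ gives $S\prec A\wr G$ and hence $Sc\le 1$; the only differences are that you obtain $n\le 3$ by letting $\operatorname{MR}(P_n)\cong e\operatorname{MR}(P_4)e$ divide $\operatorname{MR}(P_4)$ (slightly cleaner than the paper's appeal to aperiodicity of the smaller Type II subsemigroups), and that your orientation-parity aside is not needed.
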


\begin{proof}
For $n=1$, the path graph $P_1$ consists of a single vertex. The monoid $MR(P_1)$ contains only the identity map and the empty map. As a semilattice, it is entirely aperiodic, yielding a trivial Krohn-Rhodes complexity of $MR(P_1)c = 0$.

For $n \in \{2, 3, 4\}$, we rely on our structural analysis of the Type II subsemigroup from Section~\ref{sec:type_ii_analysis}. We previously computed the complete Type II closure for $P_4$ and demonstrated that the subsemigroup $MR(P_4)_{II}$ is strictly aperiodic. Because the monoids for $P_2$ and $P_3$ embed naturally into $MR(P_4)$ as boundary-preserving retractions, their corresponding Type II subsemigroups are also completely aperiodic.

A Theorem of Rhodes and Tilson \cite{lowerbounds2} states that a finite semigroup $S$ divides a wreath product $A \wr G$, where $A$ is an aperiodic semigroup and $G$ is a finite group, if and only if its Type II subsemigroup $S_{II}$ is aperiodic.

Because $MR(P_n)_{II}$ is aperiodic for $n \le 4$, Ash's Theorem guarantees that $MR(P_n)$ divides a wreath product of the form $A \wr G$ for some aperiodic semigroup $A$ and group $G$. By the fundamental properties of the Krohn-Rhodes complexity operator, the complexity of a divisor is bounded by the complexity of the host, and the complexity of a wreath product is bounded by the sum of the complexities of its components:
\[ MR(P_n)c \le (A \wr G)c \le Ac + Gc \]
Since $A$ is aperiodic, $Ac = 0$. Since $G$ is a group, $Gc = 1$. Therefore, we conclude:
\[ MR(P_n)c \le 0 + 1 = 1 \]
This confirms that the complexity of $MR(P_n)$ is at most $1$ for $n \le 4$.
\end{proof}

This brings up the following problem.

\begin{problem}
Determine the exact Krohn-Rhodes complexity of $MR(P_n)$ for $5 \le n \le 13$.
\end{problem}

\begin{remark}

Using the algorithm of \cite{complexity1} the authors have a seven page document proving that $MR(P_{5})c=1$. This will appear in \cite{MasterList}. 

The algorithm in \cite{complexity1} must compute the evaluation semigroup (see the Appendices) of every regular $\mathcal{J}$-class of $\operatorname{MR}(P_{n})$. Since $P_{13}$ has 101 regular $\mathcal{J}$-classes, many of them very large, the computation is complicated. In order to press on with the paper, we leave this as a problem we will work on.
    
\end{remark}
\section{Translational Hulls and Margolis-Rhodes Monoids} \label{semiloc}

We recall a result from \cite{cremona} that the Margolis-Rhodes monoid $\operatorname{MR}(\Gamma)$ is the translational hull of an associated aperiodic $0$-simple semigroup. We let $SM(\Gamma) = St(\Gamma) \cup Aut(\Gamma)$ be the submonoid of $\operatorname{MR}(\Gamma)$ we obtain by adjoining the automorphsism group of $\Gamma$, which is the group of units of $\operatorname{MR}(\Gamma)$ to $St(\Gamma)$. We call this the {\em strict monoid} of $\Gamma$. We also recall that $SM(\Gamma)$ is the translational hull of an aperiodic 0-simple semigroup. For more details on translational hulls, see \cite[Section 5.5]{qtheory}

In this paper, we are only concerned with $0$-simple semigroups with trivial maximal semigroup. These are isomorphic to Rees matrix semigroups of the form $\mathcal{M}^{0}(1,A,B,C)$. It will be convenient for us to use ``inner product'' notation. We write $<b,a> = C(b,a)$. 
 
We define the translational hull to be the monoid of all pairs $(f,f^{*})$ where  $f \in PF_{R}(B)$ is a partial function acting on the right of $B$ and  $f^{*} \in PF_{L}(A)$ is a partial function acting on the left of $A$ that satisfy the ``linked equations"
\begin{equation}\label{trivlinked}<bf,a>=<b,f^{*}a>\end{equation}\label{linked} for all $b \in B, a \in A$ in inner product notation. Thus $f$ and $f^{*}$ are adjoint with respect to $<,>$.

We can view $C$ as the incidence matrix of an incidence system with points $B$ and blocks given by considering the row of $a \in A$ as
the subset $\overline{a}=\{b \in B|<b,a>=1\}$. Then membership in the translational hull, $<bf,a>=<b,f^{*}a>$ means that for all $b \in B, a \in A$, $bf \in \overline{a}$ 
if and only if $b \in \overline{f^{*}(a)}$. That is, $\overline{f^{*}(a)}$ is the inverse image $f^{-1}(\overline{a})$. Thus the blocks of the incidence system corresponding to $<,>$ are
 closed under inverse image with respect to $f$. This connection clearly motivates our monoids of continuous and strict continuous monoids on a graph. We make this
connection precise below. Viewing the translational hull of 0-simple semigroups over the trivial group as ``continuous'' partial maps on the corresponding incidence system has proved to be a fruitful connection between semigroup theory and combinatorics \cite{bibdtrans, bibdcont, projcont, AmigoWilson}.

Let $\Gamma = (V,E)$ be a graph. The graph incidence matrix of $V$ is the $|V| \times |E|$ matrix $S=S(\Gamma)$ whose entry in position $(v,e)$ is 1 if $v$ is a vertex of $e$ and 0 otherwise. Note that every column of $S$ has exactly two non-zero entries. The row sum of row $v$ is the degree of $v$ in $\Gamma$, where here, degree is used as in graph theory as the number of edges on which $v$ is a vertex. Thus there is a row of all zeros if and only if there is an isolated vertex in $\Gamma$. We will assume when talking about the graph incidence matrix that the graph has no isolated vertices. Since we are working with simple graphs (no loops nor multiple edges) all columns of $S$ are distinct. Two rows $v,w$ of $S$ are the same if and only if $vw$ is a connected component of $\Gamma$.

If we view $\Gamma$ as a simplicial complex, we have the simplicial incidence matrix, which is the $|V| \times |V \cup E|$ matrix $C = C(\Gamma)$ with entries 
$C(v,w)=1$ if and only if $v=w, v,w \in V$ and as above, $C(v,e)=1$ if and only if $v$ is a vertex of $e$.  As matrices, the relationship between $C$ and $S$ is that 
$C = [S|I_{V}]$, where $I_{V}$ is the $|V| \times |V|$ identity matrix. That is, we add $|V|$ columns to $S$ which contain the identity matrix in order to build $C$ from $S$. It is clear then that distinct columns and rows of $C$ are not equal to one another.

We use these matrices as the structure matrices of the following Rees matrix semigroups over the trivial group: $\mathcal{M}^{0}(1,E,V,S)$ and 
$\mathcal{M}^{0}(1,V \cup E,V,C)$. Assuming that $\Gamma$ has no isolated vertices means that these are regular Rees matrix semigroups and thus 0-simple semigroups. 
The discussion above leads immediately to the following result. See \cite{cremona} for a complete proof.

\begin{lemma} \label{transhull}

Let $\Gamma=(V,E)$ be a graph. Then the translational hull of $\mathcal{M}^{0}(1,V \cup E,V,C)$ is isomorphic to the Margolis-Rhodes monoid $\operatorname{MR}(\Gamma)$ of all continuous functions
on $\Gamma$. If $\Gamma$ has no isolated vertices, then the translational hull of $\mathcal{M}^{0}(1,E,V,S)$ is isomorphic to the monoid $SM(\Gamma)$ of strict continuous partial functions on $\Gamma$.
\end{lemma}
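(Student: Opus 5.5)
The plan is to use the description of the translational hull given just above: pairs $(f,f^{*})$ with $f\in PF_R(B)$, $f^{*}\in PF_L(A)$ satisfying the linked equations (\ref{trivlinked}), where $<b,f^{*}a>=0$ whenever $f^{*}a$ is undefined. I would send $(f,f^{*})\mapsto f$ and show that this projection is an injective monoid homomorphism whose image is exactly the asserted monoid of partial functions on $V$. The argument has a formal part, common to both matrices $C$ and $S$, and a combinatorial part that identifies the image.

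\textbf{Formal part.} The linked equation says that $\overline{f^{*}a}=(\overline{a})f^{-1}$ when $f^{*}a$ is defined, and that $(\overline{a})f^{-1}=\emptyset$ otherwise. The rows of $C$ are pairwise distinct and nonzero; for $S$ this holds because $\Gamma$ is simple and, by hypothesis, has no isolated vertices. Hence $f^{*}$ is uniquely determined by $f$, so the projection is injective. Moreover, a partial function $f$ admits an adjoint $f^{*}$ if and only if, for every block $\overline{a}$, the set $(\overline{a})f^{-1}$ is either empty or again a block. The projection is a homomorphism because adjoints compose in reverse order: $(fg)^{*}=f^{*}g^{*}$ as left actions. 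This follows directly from $(\overline{a})(fg)^{-1}=((\overline{a})g^{-1})f^{-1}$.

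\textbf{The matrix $C$.} Here the blocks are the singletons $\{v\}$ for $v\in V$ together with the edges $e\in E$, that is, exactly the simplices of $\Delta(\Gamma)$. The adjoint condition is then verbatim the definition of $\operatorname{MR}(\Gamma)$, so the first statement follows at once.

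\textbf{The matrix $S$.} Here the blocks are only the edges, so the image consists of the partial functions $f$ for which every edge has empty or edge preimage. I must show these are precisely the elements of $\operatorname{St}(\Gamma)\cup\operatorname{Aut}(\Gamma)$. The inclusion of $\operatorname{St}(\Gamma)$ holds because its images are independent sets, so the preimage of an edge is at most one fiber, which is an edge or empty. Automorphisms clearly satisfy the condition.

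For the converse, let $v\in\operatorname{Im}(f)$. Since $v$ is not isolated, it lies on some edge $e$, and $v f^{-1}\subseteq (e)f^{-1}$, which must be an edge. So each fiber has size $1$ or $2$. I would then split into cases according to the fiber sizes.
\begin{itemize}
\item If some fiber $v f^{-1}$ is an edge $e'$, then every edge through $v$ pulls back to exactly $e'$. Hence no neighbour of $v$ lies in $\operatorname{Im}(f)$. I would propagate this to show that every fiber has size $2$ and that $\operatorname{Im}(f)$ is independent, which places $f$ in $\operatorname{St}(\Gamma)$ by Lemma \ref{strict}.
\item If all fibers are singletons, then an edge with exactly one endpoint in $\operatorname{Im}(f)$ would have a one-point preimage, which is forbidden. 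So $\operatorname{Im}(f)$ is a union of connected components, and $f^{-1}$ induces a graph embedding on it in the sense of Lemma \ref{Inj}. A counting argument as in Theorem \ref{JMR} should then upgrade this to an isomorphism between unions of components.
\end{itemize}

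The second case is the main obstacle. Nothing above forces $\operatorname{Dom}(f)$ to be all of $V$. For a disconnected $\Gamma$, the partial identity on a single component appears to satisfy the hull condition without lying in $\operatorname{St}(\Gamma)\cup\operatorname{Aut}(\Gamma)$. So I would first check this against the precise statement and proof in \cite{cremona}. There I expect either a connectedness hypothesis, or $SM(\Gamma)$ read as $\operatorname{St}(\Gamma)$ together with the partial isomorphisms between unions of components. If $\Gamma$ is connected, the component argument forces $\operatorname{Im}(f)=V$ whenever $f$ is injective and nonempty. In that case $f$ is a total injective map on $V$ whose inverse preserves edges, hence an automorphism by finiteness.
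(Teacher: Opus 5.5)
Your treatment of the first statement is exactly the paper's argument. The linked equation says $\overline{f^{*}a}=(\overline{a})f^{-1}$ (or this preimage is empty), so $f$ lies in the hull precisely when preimages of blocks are blocks or $\emptyset$. For $C$ the blocks are the simplices, which gives $\operatorname{MR}(\Gamma)$.

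For the second statement the paper offers only that discussion and a pointer to \cite{cremona}, and your suspicion is correct: as written, the claim is false for disconnected $\Gamma$. Take $\Gamma=2K_2$ with edges $\{1,2\},\{3,4\}$. Two maps satisfy the edge-preimage condition but lie in neither $\operatorname{St}(\Gamma)$ nor $\operatorname{Aut}(\Gamma)$: the partial identity on $\{1,2\}$, and the map $1,2\mapsto 1$, $3\mapsto 3$, $4\mapsto 4$. Since $SM(\Gamma)$ sits inside the hull, ``isomorphic'' cannot rescue the claim: here the hull has $49$ elements while $SM(2K_2)$ has $17+8=25$. The second map is not injective, so your alternative reading ($\operatorname{St}(\Gamma)$ plus partial isomorphisms between unions of components) is not the right repair either. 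The natural fix is to assume $\Gamma$ connected, and that is the version your argument proves.

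Under connectedness your argument goes through, but the propagation in your first case should run from the other side. Going outward from a vertex $v$ with an edge fiber only tells you that the neighbours of $v$ avoid $\operatorname{Im}(f)$; vertices at distance two may well lie in the image. Instead, let $U$ be the set of image vertices whose fiber is a single point. If $vf^{-1}=\{x\}$ and $u$ is adjacent to $v$, then $\{u,v\}f^{-1}=\{x\}\cup uf^{-1}$ must be an edge. So $uf^{-1}$ is a single point and $u\in U$. Thus $U$ is closed under adjacency, and by connectedness $U=\emptyset$ or $U=V$. If $U=V$, then $f$ is a total bijection with $f^{-1}$ edge-preserving, hence an automorphism by finiteness. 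If $U=\emptyset$, every fiber is an edge, and your observation about neighbours gives independence of the image, so $f\in\operatorname{St}(\Gamma)$ by Lemma~\ref{strict}. This one observation handles both of your cases at once.

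One small correction to the formal part: uniqueness of $f^{*}$ needs only that the blocks (the edges) be nonempty and pairwise distinct, which simplicity already gives. The no-isolated-vertices hypothesis is what makes $\mathcal{M}^{0}(1,E,V,S)$ regular.
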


Lemma \ref{transhull} allows us to prove that two graphs are isomorphic if and only if their Margolis-Rhodes monoids are isomorphic.

\begin{theorem}

Let $\Gamma$ and $\Gamma'$ be graphs. Then $\Gamma$ is isomorphic to $\Gamma'$ if and only if $\operatorname{MR}(\Gamma)$ is isomorphic to $\operatorname{MR}(\Gamma')$.

\end{theorem}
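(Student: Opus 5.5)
The forward direction is immediate. A graph isomorphism $\phi\colon \Gamma \to \Gamma'$ induces a bijection on simplices. Conjugation $f \mapsto \phi^{-1} f \phi$ then carries continuous partial maps on $\Gamma$ to continuous partial maps on $\Gamma'$, because full preimages of simplices are transported by $\phi$. This conjugation is a monoid isomorphism $\operatorname{MR}(\Gamma) \cong \operatorname{MR}(\Gamma')$.

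For the converse, I plan to recover $\Gamma$ from the abstract monoid $M = \operatorname{MR}(\Gamma)$ using only semigroup-theoretic invariants. A purely intrinsic route via the ideal structure is simplest, and Lemma \ref{transhull} gives a backup.

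\textbf{Intrinsic route.} By the theorem on the poset of regular $\mathcal{J}$-classes, the regular $\mathcal{J}$-classes of $M$ correspond to isomorphism classes of induced subgraphs, ordered by embedding.
\begin{itemize}
\item The maximum regular $\mathcal{J}$-class is the group of units, corresponding to $\Gamma$ itself.
\item The minimal nonzero class corresponds to a single vertex. This class is $\{e_{\{v\}}\text{-like elements}\}$, and it consists of the rank-one injective maps together with the rank-one collapsing maps.
\end{itemize}
Isomorphisms preserve $\mathcal{J}$-classes, their order, and $\mathcal{L}$- and $\mathcal{R}$-classes. I would therefore use the regular $\mathcal{J}$-class $J_1$ whose idempotents have image a single vertex. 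Its $\mathcal{L}$-classes are indexed by $\operatorname{Im}$, that is, by the vertices $V$. This is guaranteed by Lemma \ref{foundational} and the standard fact that $\mathcal{L}$ for regular elements is determined by image.

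Next I need the edges. The regular $\mathcal{J}$-classes of rank two are exactly two: one for $K_2$ (if $E \neq \emptyset$) and one for $2K_1$. They are distinguished abstractly as follows. The $2K_1$ class is the $\mathcal{J}$-class of $\{$partial identities on a non-edge$\}$. The class for $K_2$ is the one lying above the $\mathcal{J}$-class of the nonregular...

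A cleaner distinguisher is available. An $\mathcal{L}$-class $L_x$ in $J_1$, where $x$ is a vertex, and an $\mathcal{L}$-class of an idempotent $e_{\{u,w\}}$ in rank two satisfy $L_{e_{\{x\}}} \le_{\mathcal{L}} e_{\{u,w\}}$ iff $x \in \{u,w\}$. This recovers the pairs. To decide which pairs are edges, I would use the criterion from Proposition \ref{RegJMR}'s proof: for a pair $\{u,w\}$ there is an element whose domain is a pair and which maps onto $\{u,w\}$ without being regular precisely when one side is an edge and the other a non-edge. Equivalently, the edge class is the rank-two regular class $J$ whose idempotents admit some $s$ with $s \le_{\mathcal{R}} e$ for $e \in J$, yet $s \notin$ any regular class of rank two. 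By Lemma \ref{lem:lemma26} these are the collapsing maps (rank one, with domain an edge), which are $\mathcal{R}$-below $e_{\{u,w\}}$ only when $\{u,w\}$ is an edge. Hence an edge $\{u,w\}$ is detected by the existence of an element of $J_1$ that is $\mathcal{R}$-below $e_{\{u,w\}}$ but not $\mathcal{R}$-below $e_{\{u\}}$ or $e_{\{w\}}$. All of these are abstract Green-relation statements, so $M \cong M'$ yields a bijection $V \to V'$ preserving adjacency.

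The main obstacle is making this edge-detection step fully rigorous, and in particular checking that the identification of $\mathcal{L}$- and $\mathcal{R}$-classes with vertices is compatible on both sides. If that fails, I would fall back on Lemma \ref{transhull}. $\operatorname{MR}(\Gamma)$ is the translational hull of $\mathcal{M}^0(1,V\cup E,V,C)$, and that Rees matrix semigroup is its unique $0$-minimal ideal together with $0$, namely the rank-$\le1$ elements. This ideal is therefore determined by $M$, and a Rees matrix semigroup over the trivial group with no repeated rows or columns determines its incidence matrix up to row and column permutation. From $C=[S\mid I_V]$, the columns with a single $1$ are the vertices and those with two $1$s are the edges, which recovers $\Gamma$.
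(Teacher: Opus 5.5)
Your fallback is the paper's proof, and it is correct. An isomorphism $\operatorname{MR}(\Gamma)\to\operatorname{MR}(\Gamma')$ carries the unique $0$-minimal ideal (the elements of rank at most one) onto the corresponding ideal of $\operatorname{MR}(\Gamma')$. This ideal is $\mathcal{M}^0(1,V\cup E,V,C)$, which follows by direct computation rather than from the translational hull lemma: a rank-one element is a simplex $\sigma$ as domain with a vertex $v$ as image, and $(\sigma\to v)(\tau\to w)\neq 0$ iff $v\in\tau$. The Rees isomorphism theorem over the trivial group then gives $C'=XCY$ for permutation matrices, and the graph is read off from the columns with one versus two nonzero entries.

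The intrinsic route, which you present as your main plan, is not a proof as written, and the gap is exactly where you suspected. Everything you extract before the edge test is unchanged if $\Gamma$ is replaced by its complement, because $H\mapsto\overline{H}$ preserves isomorphism and induced embeddings. That data is the regular $\mathcal{J}$-poset, the regular $\mathcal{L}$-classes indexed by images with $\le_{\mathcal{L}}$ equal to containment, their grouping into $\mathcal{J}$-classes, and the maximal subgroups. So you cannot yet tell which rank-two class is $K_2$, and the whole argument rests on the edge test.

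Your first version of the test (some $s\le_{\mathcal{R}}e$ lying outside the rank-two regular classes) holds for every rank-two idempotent. For instance, $s=e_{\{u\}}=e_{\{u,w\}}e_{\{u\}}$ works for every pair. The refined version is correct for the partial identities $e_{\{u\}},e_{\{w\}},e_{\{u,w\}}$, but Green's relations do not single these out. Suppose $\{u,x\}\in E$ and $\{u,w\}\notin E$. The idempotent $f$ with $u,x\mapsto u$ and $w\mapsto w$ is continuous and $\mathcal{L}$-equivalent to $e_{\{u,w\}}$. Yet $fe_{\{u\}}$ is a rank-one map with edge domain $\{u,x\}$ that is $\mathcal{R}$-below $f$ but not $\mathcal{R}$-below $e_{\{u\}}$ or $e_{\{w\}}$. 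So the test gives different answers for different idempotents in the same $\mathcal{L}$-class, and you would need an intrinsic way to choose among them.

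The natural repair is to characterize $e_{\{u\}}$ as the unique idempotent in $L_u\cap J_1$ whose $\mathcal{R}$-class contains exactly one idempotent. This works because the $\mathcal{R}$-class of rank-one maps with domain $\sigma$ has one idempotent per vertex of $\sigma$. But once you have that, adjacency can be read directly inside $J_1$: $\{u,w\}\in E$ iff some $\mathcal{R}$-class of $J_1$ has its two idempotents in $L_u$ and $L_w$. That is precisely the sandwich-matrix argument of your fallback, so drop the rank-two detour and present the fallback as the proof.
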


\begin{proof}
Clearly if $\Gamma$ is isomorphic to $\Gamma'$, then $\operatorname{MR}(\Gamma)$ is isomorphic to $\operatorname{MR}(\Gamma')$. If $\operatorname{MR}(\Gamma)$ is isomorphic to $\operatorname{MR}(\Gamma')$, then by standard semigroup theory, the 0-minimal ideal $I(\operatorname{MR}(\Gamma))= \mathcal{M}^{0}(1,V \cup E,V,C)$ maps isomorphically onto the 0-minimal ideal $I(\operatorname{MR}(\Gamma'))= \mathcal{M}^{0}(1,V' \cup E',V',C')$. It follows that $|V|=|V'|$ and $|(V \cup E)= (V' \cup E')|$. For convenience we identify these sets. It follows \cite{CP} that there is a $|V \times V|$ permutation matrix $X$ and a $|(V \cup E) \times (V \cup E)|$ matrix $Y$ such that $C'=XCY$. By Lemma \ref{transhull} the incidence matrix of $\Gamma$ is that of $\Gamma'$ up to permutation of rows and columns. Therefore $\Gamma$ is isomorphic to $\Gamma'$.
\end{proof}

A similar argument proves the following.

\begin{theorem}

Let $\Gamma$ and $\Gamma'$ be graphs with no isolated vertices. Then $\Gamma$ is isomorphic to $\Gamma'$ if and only if $\operatorname{St}(\Gamma)$ is isomorphic to $\operatorname{St}\Gamma'$.
    
\end{theorem}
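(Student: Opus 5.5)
The statement to prove is: for graphs $\Gamma,\Gamma'$ without isolated vertices, $\Gamma\cong\Gamma'$ if and only if $\operatorname{St}(\Gamma)\cong\operatorname{St}(\Gamma')$. The plan is to mirror the proof for $\operatorname{MR}(\Gamma)$, replacing the simplicial incidence matrix $C$ by the graph incidence matrix $S$. The forward direction is immediate: a graph isomorphism $\phi\colon V\to V'$ induces $f\mapsto \phi^{-1}f\phi$. This map carries matchings to matchings and independent sets to independent sets, so by Lemma \ref{strict} it restricts to an isomorphism $\operatorname{St}(\Gamma)\to\operatorname{St}(\Gamma')$.

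For the converse, first identify the $0$-minimal ideal of $\operatorname{St}(\Gamma)$ intrinsically.
\begin{enumerate}
\item The elements of rank at most $1$, namely an edge collapsed onto a vertex together with the empty map, form an ideal $I$. It is nonzero because $\Gamma$ has an edge.
\item Send the rank-one map with domain edge $e$ and image vertex $v$ to $(e,1,v)$. Then $I\cong\mathcal{M}^{0}(1,E,V,S)$. Indeed, the product of $(e,v)$ with $(e',v')$ is $(e,v')$ exactly when $v\in e'$, i.e.\ when $S(v,e')=1$, and is $0$ otherwise.
\item Since $\Gamma$ has no isolated vertices, every row and column of $S$ is nonzero. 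Hence $I$ is regular and $0$-simple, and it is the unique $0$-minimal ideal of $\operatorname{St}(\Gamma)$; every nonzero ideal contains a rank-one element by multiplying down.
\end{enumerate}
An isomorphism $\operatorname{St}(\Gamma)\to\operatorname{St}(\Gamma')$ must carry the unique $0$-minimal ideal onto the unique $0$-minimal ideal, and therefore gives an isomorphism $\mathcal{M}^{0}(1,E,V,S)\cong\mathcal{M}^{0}(1,E',V',S')$.

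Next apply the isomorphism theorem for Rees matrix semigroups over the trivial group (as in \cite{CP}). It yields bijections $E\to E'$ and $V\to V'$ under which $S'$ is obtained from $S$ by permuting rows and columns. Over the trivial group no nonzero scalars can appear, so the normalization is exactly such a permutation.

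Finally, read off the graph from its incidence matrix. Because $\Gamma$ is simple, each column of $S$ has exactly two ones and columns are distinct. So the vertex bijection sends the endpoint pair of each edge to the endpoint pair of the corresponding edge, which makes it a graph isomorphism $\Gamma\cong\Gamma'$.

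The main obstacle I expect is a subtlety in the last step. Two rows of $S$ coincide exactly for a $K_2$ component, so the row bijection is not canonically determined. This causes no problem, however, since the Rees isomorphism supplies actual bijections of index sets rather than identifications of equal rows. A second point to watch is that $\operatorname{St}(\Gamma)$ may lack an identity. The argument above uses only its $0$-minimal ideal, so no appeal to translational hulls via Lemma \ref{transhull} is needed, though that lemma gives an alternative route through $SM(\Gamma)$.
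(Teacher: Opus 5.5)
Your proposal is correct and follows essentially the paper's route. The paper proves this theorem by ``a similar argument'' to the $\operatorname{MR}(\Gamma)$ case: it transports the $0$-minimal ideal $\mathcal{M}^{0}(1,E,V,S)$ across the isomorphism, invokes the Rees matrix isomorphism theorem of \cite{CP} to obtain $S'$ from $S$ by permuting rows and columns, and reads off $\Gamma\cong\Gamma'$. You additionally verify what the paper leaves implicit, namely that the rank-$\le 1$ maps form the unique $0$-minimal ideal and that the resulting vertex bijection preserves and reflects edges, and you are right that Lemma \ref{transhull} is not needed (the degenerate case of the graph with no vertices, where $\operatorname{St}(\Gamma)$ is trivial and forces $\Gamma'$ to be empty too, deserves only a one-line remark).
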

\section{Continuous Maps are Galois Connections on the Extended Face Poset}

In this section we formalize the adjointness displayed by the linked equations, Equation \ref{linked}, by showing that a partial vertex map on a graph is continuous in the sense of this paper if and only if its formally induced cell map on the extended face poset is order-preserving, atomic-to-top and satisfies the Principal Fiber Condition (PFC). Equivalently, this means that the induced map induces a Galois Connection, also called a residuated mapping, on the extended face poset. 

In particular, continuous maps on graphs in the sense of this paper are a certain class of continuous functions in the Alexandroff topology of the extended poset. Recall that the Alexandroff topology on a poset is the topology whose closed sets are the down ideals. A function on a poset is continuous in this topology if and only if it is order preserving. Thus the linked equations really describe an adjoint relationship in the appropriate seetting. We give the appropriate definitions and results.

\subsection{The Face Poset and Continuity}
Let $\Gamma = (V, E)$ be a simple graph. 
\begin{definition}
The \textbf{Face Poset} $P(\Gamma)$ is the set of all valid cells (vertices as singletons and edges as pairs) in $\Gamma$, ordered by set inclusion:
\[ X \le Y \iff X \subseteq Y \]
\end{definition}

We have defined a partial map on the vertices $f: V \to V$ to be \textbf{ continuous} if, for every valid target cell $C \in P(\Gamma)$, the vertex preimage:
\[ V_C = f^{-1}(C) = \{v \in V \mid vf \in C\} \]
is either a valid cell in $P(\Gamma)$ or the empty set $\emptyset$. The purpose of this section is to prove that a partial function on a graph is continuous in our sense if and only if an associated function is continuous in the Alexandroff topology of an extended face poset of the graph. Recall that the Alexandroff topology of a poset has downsets as closed sets.

\subsection{The Extended Poset and Induced Cell Map}
We extend the face poset.

\begin{definition}
The \textbf{The Extended Poset} is defined as $P(\Gamma)^\top = P(\Gamma) \cup \{\top\}$, where $\top$ will be used as a value that completes partial functions to total functions. We enforce $X \le \top$ for all $X \in P(\Gamma)^\top$.
\end{definition}

\begin{definition}
The \textbf{Induced Cell Map} $\hat{f}: P(\Gamma)^\top \to P(\Gamma)^\top$ maps a domain cell $C \in P(\Gamma)$ according to the following rules:
\begin{enumerate}
    \item If {\bf any} vertex $v \in C$ is not in the domain of $f$, then the cell is sent to the top: $C\hat{f} = \top$.
    \item If $C \subseteq \operatorname{Dom}(f) $ and $Cf \in P(\Gamma)$ (i.e., it forms a valid vertex or edge cell), then $C\hat{f} = Cf$.
    \item If $Cf \notin P(\Gamma)$, then $C\hat{f} = \top$.
    \item $\top\hat{f}=\top$
\end{enumerate}
\end{definition}

\subsection{Order Theoretic Axioms for Continuity}
We characterize continuity using two purely order-theoretic conditions on $\hat{f}$:
\begin{itemize}
    \item \textbf{Order-Preserving:} For all $X, Y \in P(\Gamma)$, if $X \le Y$, then $X\hat{f} \le Y\hat{f}$.
    \item \textbf{Principal Fiber Condition (PFC):} For every $C \in P(\Gamma)^{\top}$, the preimage of its principal down-set $\downarrow C\hat{f}^{-1} = \{X \in P(\Gamma)^{\top} \mid X\hat{f} \le C\}$ must exactly equal the principal down-set of some valid domain cell $\downarrow D$ (for $D \in P(\Gamma)^{\top}$), or be empty $\emptyset$.
\end{itemize}


\begin{theorem}\label{ordpres}
Let $\Gamma = (V, E)$ be a graph. A partial map on the vertices $f: V \to V$ is continuous if and only if its induced cell map $\hat{f}$ is order-preserving and satisfies the Principal Fiber Condition.
\end{theorem}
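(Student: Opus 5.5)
The plan is to first dispose of order-preservation, which I expect holds for \emph{every} partial map $f$. Then I would show that the Principal Fiber Condition at each cell $C$ is equivalent to continuity at $C$. In outline: the down-set preimage of a cell $C$ is always the set of nonempty subcells of $Cf^{-1}$, and a set of that form is a principal down-set exactly when $Cf^{-1}$ is a single cell.

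\textbf{Order-preservation is automatic.} The only nontrivial comparabilities in $P(\Gamma)$ are $\{v\} \le e$ with $v \in e$, together with $X \le \top$. The cases are as follows.
\begin{enumerate}
\item If $e\hat f = \top$, the inequality $\{v\}\hat f \le e\hat f$ is trivial.
\item If $v \notin \operatorname{Dom}(f)$, then $e \not\subseteq \operatorname{Dom}(f)$, so $e\hat f = \top$ and we are in the previous case.
\item Otherwise $e \subseteq \operatorname{Dom}(f)$ and $e\hat f = ef$ is a cell. Since $v\in e$ we get $vf \in ef$, so $\{v\}\hat f = \{vf\} \le ef$.
\end{enumerate}
Finally, $\top\hat f=\top$ handles the top element. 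Hence the whole content of Theorem~\ref{ordpres} lies in the PFC.

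\textbf{Computing the down-set preimages.} For $C=\top$ the preimage $\{X : X\hat f \le \top\}$ is all of $P(\Gamma)^\top = {\downarrow}\top$, so the PFC holds there for every $f$. Now fix a cell $C \in P(\Gamma)$. Since $\top\not\le C$, a cell $X$ satisfies $X\hat f \le C$ exactly when $X \subseteq \operatorname{Dom}(f)$, $Xf$ is a cell, and $Xf \subseteq C$. Every nonempty subset of a cell is itself a cell, so the condition ``$Xf$ is a cell'' is automatic once $Xf\subseteq C$. The key identity I would record is therefore
\[
{\downarrow}C\,\hat f^{-1} = \{X \in P(\Gamma) \mid X \subseteq Cf^{-1}\}.
\]
In words, the down-set preimage of $C$ is the set of cells contained in the vertex preimage $Cf^{-1}$.

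\textbf{Continuity implies PFC.} If $Cf^{-1}=\emptyset$, then by the identity the down-set preimage is empty. If $Cf^{-1}=D$ is a cell, the identity says the down-set preimage is exactly the set of subcells of $D$, which is ${\downarrow}D$.

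\textbf{PFC implies continuity.} Suppose the down-set preimage equals ${\downarrow}D$ for some $D$.
\begin{itemize}
\item $D \ne \top$, because $\top\hat f=\top \not\le C$.
\item $D \subseteq Cf^{-1}$, because $D$ itself belongs to ${\downarrow}D$, which is the down-set preimage.
\item $Cf^{-1} \subseteq D$: for each $v \in Cf^{-1}$ the singleton $\{v\}$ lies in the down-set preimage by the identity, hence in ${\downarrow}D$, so $v\in D$.
\end{itemize}
Thus $Cf^{-1}=D$ is a cell. If instead the down-set preimage is empty, then no singleton $\{v\}$ with $vf\in C$ exists, so $Cf^{-1}=\emptyset$. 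Applying this to every vertex cell and every edge cell $C$ gives exactly the definition of continuity.

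The only step needing care is the identity above, specifically the case of an edge $X$ collapsed by $f$ to a single vertex. There $Xf$ is still a cell, so $X\hat f = Xf$ lies below $C$ whenever $Xf\subseteq C$, and no case escapes to $\top$. I expect the remaining verifications to be routine.
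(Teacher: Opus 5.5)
Your proof is correct and follows essentially the paper's route: order-preservation by a case check, then the Principal Fiber Condition via the double inclusion ${\downarrow}C\,\hat f^{-1}={\downarrow}D$ in one direction and a test on singletons $\{v\}$ in the other. Packaging both directions through the single identity ${\downarrow}C\,\hat f^{-1}=\{X\in P(\Gamma)\mid X\subseteq Cf^{-1}\}$ is slightly cleaner. It also makes explicit three points the paper leaves implicit: order-preservation holds for every partial map (the paper's Step~1 never uses continuity), and neither $C=\top$ nor $D=\top$ causes trouble.
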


\subsection{Part I: Forward Proof \texorpdfstring{$(\implies)$}{(implies)}}
\begin{proof}
Assume $f$ is continuous. 

\textbf{Step 1: Prove $\hat{f}$ is Order-Preserving.}\\
Let $X \le Y$ in $P(\Gamma)^{\top}$. If $Y=\top$ then $Y\hat{f}=\top$ so we can assume that $Y \in P(\Gamma)$. This implies $X \subseteq Y$. We show $X\hat{f} \le Y\hat{f}$ via case analysis on $Y\hat{f}$:
\begin{itemize}
    \item \textbf{Case A:} $Y\hat{f} = \top$. Since $\top$ is the maximum element of $P(\Gamma)^\top$, the relation $X\hat{f} \le \top$ holds trivially.
    
    \item \textbf{Case B:} $Y\hat{f} = C \in P(\Gamma)$. By definition of 
    $\hat{f}$, $Y\subseteq \operatorname{Dom}(f)$
   and $Yf = C$ is a valid cell. Since $X \subseteq Y$, it follows that $X\subseteq \operatorname{Dom}(f)$  image satisfies $Xf \subseteq Yf = C$. Because $C$ is a valid cell (either a vertex or an edge), any non-empty subset of $C$ is structurally a valid cell in the graph. Thus, $Xf$ constitutes a valid cell $E \in P(\Gamma)$. Therefore, $X\hat{f} = E \le C = Y\hat{f}$.
    
\end{itemize}
Hence, order is strictly preserved.

\textbf{Step 2: Prove $\hat{f}$ satisfies the PFC.}\\
Let $C \in P(\Gamma)$ be a valid cell. Because $f$ is continuous, the vertex preimage $V_C = f^{-1}(C)$ is either empty or a valid cell $D \in P(\Gamma)$.
\begin{itemize}
    \item \textbf{Case A:} $V_C = \emptyset$. No vertex in the domain maps into $C$. Thus, $\downarrow C\hat{f}^{-1} = \emptyset$, matching the condition.
    \item \textbf{Case B:} $V_C = D \in P(\Gamma)$. We claim that$\downarrow C\hat{f}^{-1} = \downarrow D$:
    \begin{itemize}
        \item $(\downarrow D \subseteq \downarrow C\hat{f}^{-1})$: Let $X \in \downarrow D$, meaning $X \subseteq D$. For every $v \in X$, we have $v \in V_C$, so $vf \in C$. Thus, $Xf \subseteq C$. Because $C$ is a cell, $Xf$ forms a valid cell, yielding $X\hat{f} = Xf \le C$. Hence, $X \in \downarrow C\hat{f}^{-1}$.
        \item $(\downarrow C\hat{f}^{-1} \subseteq \downarrow D)$: Let $X \in \downarrow C\hat{f}^{-1}$, implying $X\hat{f} \le C$. Since $C \in P(\Gamma)$, $X\hat{f} \neq \top$. This implies that $X\hat{f} = Xf \subseteq C$. Therefore, for every $v \in X$, $vf \in C \implies v \in V_C = D$. Consequently, $X \subseteq D$, placing $X \in \downarrow D$.
    \end{itemize}
\end{itemize}
The Principal Fiber Condition is satisfied.
\end{proof}

\subsection{Part II: Backward Proof \texorpdfstring{$(\impliedby)$}{(impliedby)}}
\begin{proof}
Assume $\hat{f}$ is order-preserving and satisfies the Principal Fiber Condition.

We must show $f$ is continuous. Select any valid target cell $C \in P(\Gamma)$ and evaluate its vertex preimage $V_C = \{v \in V \mid vf \in C\}$. By the PFC axiom, the poset preimage $\downarrow C\hat{f}^{-1}$ evaluates to either $\emptyset$ or $\downarrow D$ for some valid domain cell $D \in P(\Gamma)$.

\begin{itemize}
    \item \textbf{Case A: $\downarrow C\hat{f}^{-1} = \emptyset$.} \\
    Suppose, for contradiction, that $V_C \neq \emptyset$. Then there exists a vertex $v \in V_C$ such that $vf \in C$. Consider the vertex cell $X = \{v\}$. Since $\{v\} \in P(\Gamma)$ and $f(\{v\}) = \{vf\} \subseteq C$, the induced map yields $\{v\}\hat{f}) \le C$. This places $\{v\} \in \downarrow C\hat{f}^{-1}$, contradicting that the set is empty. Thus, $V_C = \emptyset$.

    \item \textbf{Case B: $\downarrow C\hat{f}^{-1} = \downarrow D$ for some $D \in P(\Gamma)$.} \\
    We establish $V_C = D$. \\
    First, let $v \in V_C \implies vf \in C$. The vertex cell $X = \{v\}$ maps to $\{v\}\hat{f} = \{vf\} \le C$. Because it maps into $\downarrow C$, we have $\{v\} \in \downarrow C\hat{f}^{-1} = \downarrow D$. Since $\{v\} \in \downarrow D$, it follows that $\{v\} \subseteq D$, which implies $v \in D$. \\
    Second, let $v \in D$. Since $D \in P(\Gamma)$, $\{v\} \subseteq D \implies \{v\} \in \downarrow D$. By PFC, $\{v\} \in \downarrow C\hat{f}^{-1}$, so $\{v\}\hat{f} \le C$. Since $C \neq \top$, it must be that $\{v\}\hat{f} \neq \top$ and $vf$ is defined it follows that $vf \in C$. Thus, $v \in V_C$.
    
    Because $V_C = D \in P(\Gamma)$, the vertex preimage of $C$ is a valid cell.
\end{itemize}
Every valid target cell is a cell or empty vertex preimage; thus, $f$ is continuous.
\end{proof}

\subsection{ Characterization of Realizable Continuous Maps}

We now characterize which poset maps on $P(\Gamma)^{\top}$ are of the form $\hat{f}$ for some continuous map $f:V \rightarrow V$.

\begin{theorem}[Classification Theorem]\label{classificiation}
An order preserving poset map $g: P(\Gamma)^{\top} \to P(\Gamma)^\top$ is the induced cell map of a  continuous partial vertex function $f:V \rightarrow V$ if and only if it satisfies the following two conditions:
\begin{enumerate}
    \item \textbf{Atom or Top Preservation:} For every vertex cell $\{v\} \in P(\Gamma)$, its image $(\{v\})g$ is either a singleton vertex cell $\{w\} \in P(\Gamma)$ or the terminal object $\top$.
    \item \textbf{The Principal Fiber Condition (PFC):} For every  $C \in P(\Gamma)^{T}$, the poset preimage $(\downarrow C)g^{-1}$ is either empty  or a principal down-set $\downarrow D$ for some valid cell $D \in P(\Gamma)$.
\end{enumerate}
\end{theorem}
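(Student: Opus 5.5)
The plan is to reduce both directions to Theorem~\ref{ordpres}, reading the Principal Fiber Condition as in that theorem: for $C=\top$ the preimage of $\downarrow\top$ is all of $P(\Gamma)^\top$, which is $\downarrow\top$, so we allow $D=\top$ there (equivalently, impose the condition only for $C\in P(\Gamma)$). For the forward direction, let $g=\hat f$ with $f$ continuous. Theorem~\ref{ordpres} already gives that $\hat f$ is order preserving and satisfies the PFC. Atom-or-top preservation follows directly from the definition of the induced cell map: $\{v\}\hat f=\{vf\}$ if $v\in\operatorname{Dom}(f)$ and $\top$ otherwise.

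For the converse, let $g$ be order preserving and satisfy both conditions. Define the partial map $f$ by setting $vf=w$ when $\{v\}g=\{w\}$, and leaving $vf$ undefined when $\{v\}g=\top$. The main work is to show that $g=\hat f$. Once this holds, Theorem~\ref{ordpres} gives that $f$ is continuous. We check agreement of $g$ and $\hat f$ on vertices, on $\top$, and on edges, in that order.

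\textbf{Vertices.} Agreement holds by the construction of $f$.

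\textbf{The top element.} Suppose $\top g=C\in P(\Gamma)$. Then $\top\in(\downarrow C)g^{-1}$, and this set would have to equal $\downarrow D$ for some cell $D\in P(\Gamma)$. But $\downarrow D$ never contains $\top$, which is a contradiction. Hence $\top g=\top=\top\hat f$.

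\textbf{Edges.} Let $e=\{u,v\}$ be an edge. We expect this case to be the main obstacle, since order preservation alone leaves several values of $eg$ open; we split by the images of the endpoints.
\begin{enumerate}
\item If $\{u\}g=\top$ (or likewise $\{v\}g=\top$), then order preservation forces $eg=\top$. This matches $e\hat f=\top$, because $u\notin\operatorname{Dom}(f)$.
\item Suppose $uf=w_1\neq w_2=vf$. Order preservation gives $eg\ge\{w_1\},\{w_2\}$, so $eg$ is either the edge $\{w_1,w_2\}$ or $\top$.
\begin{enumerate}
\item If $\{w_1,w_2\}\notin E$, then $eg=\top=e\hat f$.
\item If $\{w_1,w_2\}\in E$, assume $eg=\top$. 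Take $C=\{w_1,w_2\}$. Then $(\downarrow C)g^{-1}$ contains $\{u\}$ and $\{v\}$ but not $e$. By the PFC it equals some $\downarrow D$ with $D\in P(\Gamma)$. Then $D\supseteq\{u,v\}$, so $D=e$, and $e$ lies in the preimage, a contradiction. Hence $eg=\{w_1,w_2\}=e\hat f$.
\end{enumerate}
\item Suppose $uf=vf=w$. Then $eg\ge\{w\}$, so $eg$ is $\{w\}$, an edge containing $w$, or $\top$. Apply the PFC to $C=\{w\}$: the preimage contains $\{u\}$ and $\{v\}$, hence equals $\downarrow e$, hence contains $e$. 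This forces $eg\le\{w\}$, so $eg=\{w\}=e\hat f$.
\end{enumerate}

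This exhausts all cells, so $g=\hat f$, and the continuity of $f$ follows from Theorem~\ref{ordpres}. The one delicate point is the edge analysis: the PFC is exactly what excludes the spurious values of $eg$ that order preservation permits, namely $\top$ in case 2(b) and $\top$ or a larger edge in case 3.
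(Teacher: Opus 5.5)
Your proof is correct and follows the same route as the paper. Like the paper, you reconstruct $f$ from the values of $g$ on atoms, show $g=\hat f$ on edges using order preservation together with the PFC, and then appeal to Theorem~\ref{ordpres} for continuity. Your version is more careful than the paper's: it verifies $\top g=\top$, handles the collapsed case $uf=vf$ that the paper's Step 3 omits, spells out the PFC argument in case 2(b), and repairs the degenerate instance $C=\top$ of the PFC as literally stated.
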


\begin{proof}
$(\implies)$ Let $g = \hat{f}$ for a continuous partial vertex map $f$. Condition 2 holds directly by Theorem 1. Condition 1 holds by the dimensionality constraints established in Section 5.2.

$(\impliedby)$ Let $g$ satisfy Conditions 1 and 2. 

\textbf{Step 1: Reconstruction of the Vertex Base.}\\
Because $g$ satisfies Atom Preservation, we can unambiguously construct a well-defined vertex partial function $f: V \to V$ operating exclusively on the atoms of the poset:
\[ f(v) = \begin{cases} w & \text{if } (\{v\})g = \{w\} \\ \text{Undefined} & \text{if } (\{v\})g = \top \end{cases} \]

\textbf{Step 2: Order-Preservation.}\\
We show that Condition 2 (PFC) naturally guarantees that $g$ is order-preserving without requiring it as an independent axiom. Let $X \le Y$ in $P(\Gamma)$, meaning $X \subseteq Y$. 
\begin{itemize}
    \item If $g(Y) = \top$, the relation $g(X) \le \top$ is satisfied trivially.
    \item If $g(Y) = C \in P(\Gamma)$, then by definition $Y \in (\downarrow C)g^{-1}$. By Condition 2, $(\downarrow C)g^{-1} = \downarrow D$ for some cell $D$. Because $Y \in \downarrow D \implies Y \subseteq D$. Since $X \subseteq Y$, it follows by transitivity that $X \subseteq D \implies X \in \downarrow D$. Therefore, $X \in (\downarrow C)g^{-1} \implies g(X) \le C = g(Y)$. 
\end{itemize}
Thus, order is preserved globally.

\textbf{Step 3: Verification of Higher-Dimensional Join Constraints.}\\
Since $g$ is order-preserving and defines a partial function at the vertex level, the behavior of any higher-dimensional edge cell $\{v_1, v_2\}$ is tightly bounded by its constituent atoms:
\begin{itemize}
    \item If any atom maps to $\top$ (e.g., $g(\{v_1\}) = \top$), then since $\{v_1\} \le \{v_1, v_2\}$, order-preservation forces $\top \le g(\{v_1, v_2\}) \implies g(\{v_1, v_2\}) = \top$. Destruction propagates upward automatically.
    \item If the atoms map to valid coordinates $g(\{v_1\}) = \{a\}$ and $g(\{v_2\}) = \{b\}$, the image $g(\{v_1, v_2\})$ must be an upper bound to both $\{a\}$ and $\{b\}$. If $\{a,b\}$ is a valid edge in $\Gamma$, the principal fiber constraint forces $g(\{v_1, v_2\}) = \{a,b\}$. If $\{a,b\}$ is disconnected in the target graph , no such valid bounded cell exists, forcing $g(\{v_1, v_2\}) = \top$.
\end{itemize}
Consequently, $g$ behaves exactly as the formally induced map $\hat{f}$. By Theorem 1, since $\hat{f}$ satisfies the PFC, the underlying reconstructed vertex map $f$ is guaranteed to be topologically continuous.
\end{proof}

\section{Pseudovarieties, Quasivarieties, and Open Problems}
\label{sec:pseudovarieties_and_open_problems}

The combinatorial and topological origins of the Margolis-Rhodes monoids $\operatorname{MR}(\Gamma)$ make them a rich source of structural examples in the broader theory of finite semigroups. We conclude this paper by examining the pseudovariety $\mathbf{V}(\mathcal{MR})$ and the quasivariety $\mathbf{Q}(\mathcal{MR})$ generated by these monoids, establishing a fundamental closure property regarding group extensions, and posing several open problems.

\subsection{ Transformation Semigroups and Wreath Products with Groups }


 We now demonstrate that this degree bound is preserved under left wreath products with permutation groups.

\begin{theorem}
Let $(Q, S)$ be a transformation semigroup of degree at most $2$, and let $(P, G)$ be a permutation group. Then the wreath product $(P, G) \wr (Q, S)$ also has degree at most $2$. 
\end{theorem}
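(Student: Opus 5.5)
We unfold the definition of the wreath product given earlier. With $(P,G)\wr(Q,S)$ the transformation semigroup $(P\times Q, W)$, where $W = G^{Q}\times S$ acts by
\[
(p,q)(f,s) = (p(qf),\, qs),
\]
it suffices to fix an arbitrary $(f,s)\in W$ and a target state $(p',q')\in P\times Q$, and to bound the fiber $(p',q')(f,s)^{-1}$ by $2$. Partiality causes no trouble: the action is defined exactly when $q\in\operatorname{Dom}(s)$, because each $qf$ is a total permutation of $P$. Restricting to the defined part can only shrink fibers, so the count below covers it.

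First we project the fiber to the $Q$-coordinate. If $(p,q)$ lies in the fiber, then $qs=q'$, so $q\in q's^{-1}$. Since $(Q,S)$ has degree at most $2$, at most two values of $q$ occur.

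Next, for each such $q$, we count the possible $p$. We need $p(qf)=p'$. Since $qf\in G$ is a permutation of $P$, this forces
\[
p = p'(qf)^{-1},
\]
so exactly one $p$ arises for each $q$. The fiber therefore injects into $q's^{-1}$ via $(p,q)\mapsto q$, and has size at most $|q's^{-1}|\le 2$. Taking the maximum over $(f,s)$ and $(p',q')$ gives degree at most $2$.

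The only real obstacle is that the wreath product semigroup consists of pairs $(f,s)$ and not of products of generators. If the paper's convention instead takes the subsemigroup generated by some elements, the same pointwise argument still applies, since every element has this form. We should also check that the convention places $G$ in the coordinate acted on by functions $Q\to G$, as in the formula above. Under the reversed ordering, $S^P\times G$ acting on $Q\times P$, the argument is symmetric. There the $P$-coordinate is bijective, which forces a unique $p$, and the $Q$-coordinate has fiber at most $2$ under the single element $pf\in S$. The bound is again $2$.

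As a corollary, combined with the Margolis-Rhodes Representation Theorem, this result shows that the resulting transformation semigroup embeds in $\operatorname{MR}(\Gamma)$ for a suitable graph $\Gamma$, namely its fiber graph.
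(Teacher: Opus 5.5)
Your proof is correct and takes the same approach as the paper: the fiber of $(p',q')$ under $(f,s)$ maps injectively into $q's^{-1}$, which has at most two points, because each $qf\in G$ is a permutation of $P$ and so forces $p=p'(qf)^{-1}$. Your side remarks on partiality and on the reversed order $(Q,S)\wr(P,G)$ are also correct, though the statement does not need them.
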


\begin{proof}
The wreath product $(P, G) \wr (Q, S)$ acts on the Cartesian product space $P \times Q$. An element of this wreath product is represented as a pair $(f, s)$, where $s \in S$ and $f : Q \rightarrow G$ is a mapping. The right action on a state $(p, q) \in P \times Q$ is given by the standard coordinate-wise rule:
\[ (p, q)(f, s) = (p \cdot qf, q \cdot s) \]

To determine the degree of $(P, G) \wr (Q, S)$, we must bound the size of the full preimage of an arbitrary target point $(p', q') \in P \times Q$ under the action of $(f, s)$. Suppose $(p, q)(f, s) = (p', q')$. This yields the following system of equations:
\begin{enumerate}
    \item $q \cdot s = q'$
    \item $p \cdot qf = p'$
\end{enumerate}

From the first equation, $q$ must be an element of the inverse image $(q')s^{-1}$. Because the underlying transformation semigroup $(Q, S)$ has degree at most $2$, there are at most $2$ valid elements $q \in Q$ satisfying this condition.

For each such fixed $q$, the element $qf$ is a specific group element in $G$. Because $G$ acts as a permutation group on $P$, the action of $qf$ is a bijection on $P$. Therefore, the equation $p \cdot qf = p'$ possesses a unique solution for $p$, which is explicitly given by $p = p' \cdot qf^{-1}$.

Because there are at most $2$ valid choices for $q$, and each valid $q$ uniquely determines a single valid $p$, there are at most $2$ pairs $(p, q)$ in the complete preimage of $(p', q')$. Thus, the degree of the wreath product $(P, G) \wr (Q, S)$ is bounded by $2$.
\end{proof}

As  Margolis-Rhodes monoids are of degree $2$, and this property is preserved under left group extensions, we obtain an immediate structural corollary regarding their generated varieties.

\begin{corollary}
The pseudovariety $\mathbf{V}(\mathcal{MR})$ and the quasivariety $\mathbf{Q}(\mathcal{MR})$ generated by the Margolis-Rhodes monoids are closed under semidirect products with the pseudovariety of finite groups $\mathbf{G}$ on the left. That is, $\mathbf{G} * \mathbf{V}(\mathcal{MR}) = \mathbf{V}(\mathcal{MR})$ and $\mathbf{G} * \mathbf{Q}(\mathcal{MR}) = \mathbf{Q}(\mathcal{MR})$
\end{corollary}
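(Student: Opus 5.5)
The plan is to reduce both equalities to the preceding theorem (degree at most $2$ is preserved under left wreath products with permutation groups), together with the Margolis-Rhodes Representation Theorem: a finite transformation semigroup has degree at most $2$ if and only if it embeds faithfully in some $\operatorname{MR}(\Gamma)$. The inclusions $\mathbf{V}(\mathcal{MR}) \subseteq \mathbf{G} * \mathbf{V}(\mathcal{MR})$ and $\mathbf{Q}(\mathcal{MR}) \subseteq \mathbf{G} * \mathbf{Q}(\mathcal{MR})$ are trivial: take the trivial group. All the work is in the reverse inclusions.

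\textbf{Step 1: describe the members concretely.} I first show that finite direct products stay inside the Margolis-Rhodes class: $\operatorname{MR}(\Gamma_1)\times \operatorname{MR}(\Gamma_2)$ embeds in $\operatorname{MR}(\Gamma_1 \sqcup \Gamma_2)$ via the disjoint-union action. A simplex of $\Gamma_1\sqcup\Gamma_2$ lies in one component, so its preimage is a preimage taken in one factor, and continuity is checked componentwise. Consequently:
\begin{itemize}
\item $\mathbf{Q}(\mathcal{MR})$, in the finite setting, consists exactly of the semigroups $T$ that embed in some $\operatorname{MR}(\Gamma)$, equivalently those admitting a faithful action $(Q,T)$ of degree at most $2$.
\item $\mathbf{V}(\mathcal{MR})$ consists of the divisors of such $T$.
\end{itemize}
Adjoining an identity, if needed, keeps the degree at most $2$, since the identity map is injective. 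So I may assume $T$ is a monoid with a faithful degree-$2$ action.

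\textbf{Step 2: handle the semidirect product.} Let $G * T$ be a semidirect product, with $G$ a finite group and $T$ as in Step 1 acting faithfully on $Q$ with degree at most $2$. I use the standard embedding of a semidirect product $G * T$ into the wreath product $(G,G)\wr(Q,T)$, where $G$ acts on itself by right regular translations. That action is a permutation group, so the preceding theorem shows $(G\times Q,\ (G,G)\wr(Q,T))$ has degree at most $2$. By the Representation Theorem it embeds in some $\operatorname{MR}(\Gamma)$.
\begin{itemize}
\item For the quasivariety, this puts $G*T$ in $\mathbf{Q}(\mathcal{MR})$ directly.
\item For the pseudovariety, I also need the standard fact that if $T \prec T'$ then every semidirect product $G*T$ divides some semidirect product $G'*T'$, for instance through $G\wr T \prec G\wr T'$. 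This reduces to the case where $T$ itself embeds in an $\operatorname{MR}(\Gamma)$.
\end{itemize}
Closing under divisors then gives $\mathbf{G}*\mathbf{V}(\mathcal{MR})\subseteq \mathbf{V}(\mathcal{MR})$.

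\textbf{Main obstacle.} The delicate point is the quasivariety statement. There the closure operators are embeddings and products, not division, so I cannot pass through divisions of wreath products. I must check that the semidirect product really embeds, faithfully, into the wreath product $(G,G)\wr(Q,T)$ for the given faithful degree-$2$ action of $T$. I would do this by sending $(g,t)$ to the pair consisting of the constant-type function $q\mapsto$ (twisted $g$) and $t$, then verifying faithfulness on $G\times Q$. Faithfulness follows because $G$ acts regularly and $T$ acts faithfully on $Q$. Once that embedding is secured, both equalities follow from Step 2.
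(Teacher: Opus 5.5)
Your Step 1 is fine: products embed via disjoint unions, and $\mathbf{Q}(\mathcal{MR})$ and $\mathbf{V}(\mathcal{MR})$ are described correctly. The gap is the step you flag yourself. There is no standard embedding of a semidirect product $G*T$ into $(G,G)\wr(Q,T)$ for an arbitrary faithful action $(Q,T)$.

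The standard embedding goes into the wreath product over the \emph{regular} representation $(T^1,T)$, via $(g,t)\mapsto(f_g,t)$ with $f_g(u)={}^{u}g$. The functions live on $T^1$ precisely so that they record the accumulated twist. Suppose you try $f_g(q)=\phi_q(g)$ on $Q$ instead. Being a homomorphism forces $\phi_{qt}=\phi_q\circ\alpha_t$, where $\alpha_t$ is the endomorphism by which $t$ acts on $G$. Injectivity also needs the $\phi_q$ to separate the points of $G$, and nothing guarantees such a family exists. Moreover, for partial maps, $(f,t)$ acting on $G\times Q$ only sees $f$ on $\operatorname{Dom}(t)$. So faithfulness does not follow from $G$ acting regularly and $(Q,T)$ being faithful.

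Concretely, take $T=\{1,0\}=\operatorname{MR}(P_1)$ acting on $Q=\{q\}$, with $0$ the empty map; this is faithful of degree $1$. Let $G=\mathbb{Z}_2$, with $0$ acting on $G$ by the trivial endomorphism. Then $(g,0)(g',t')=(g,0)$, so $G*T$ contains a two-element left-zero semigroup. The semigroup of $(G,G)\wr(Q,T)$, however, is $G\cup\{\emptyset\}\cong G^0$, an inverse semigroup of order $3$. So $G*T$ neither embeds in nor divides it, since divisors of inverse semigroups have commuting idempotents.

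Switching to the regular representation does not rescue the argument, because $(T^1,T)$ usually has large degree. If $T$ has a zero, right multiplication by it collapses all of $T^1$; this happens for $T=\operatorname{MR}(\Gamma)$, whose zero is the empty map. So the wreath-product degree theorem tells you nothing about $G\wr(T^1,T)$.

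The same problem hits your pseudovariety reduction. There $G\wr T'=G^{T'^1}*T'$, with $T'$ acting by shifts $({}^{t}f)(u)=f(ut)$, and these are non-injective endomorphisms for every non-unit $t$. That is exactly the case where your twisted map breaks down.

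What is missing is an argument that $G*T$ (equivalently $G\wr(T^1,T)$) divides some semigroup with a faithful degree-$\le 2$ action whenever $T$ does. The paper's one-line deduction from the wreath-product theorem does not supply this either. That theorem only shows that degree-$\le 2$ \emph{transformation} semigroups are closed under $(P,G)\wr-$. This yields the quasivariety statement only if $\mathbf{G}*\mathbf{Q}(\mathcal{MR})$ is read at the level of transformation semigroups, not abstract semidirect products.
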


\subsection{Open Problems}

While we have bounded the complexity of $\mathbf{V}(\mathcal{MR})$ by $2$ showing that it is contained in $\mathbf{C}_2$, the pseudovariety of semigroups of complexity $\le 2$, we do not have an exact characterization of this pseudoveriety. We pose the following open problems to drive future research:

\begin{problem}[The Finite Basis Problem]
Is the pseudovariety $\mathbf{V}(\mathcal{MR})$ finitely based? That is, can it be defined by a finite set of pseudoidentities, or does the underlying combinatorial geometry of the graphs necessitate an infinite sequence of structural invariants?
\end{problem}

\begin{problem}[Decidability of Membership]
Given an arbitrary finite semigroup $S$, is it decidable whether $S \in \mathbf{V}(\mathcal{MR})$? Furthermore, is membership decidable for the  quasivariety $S \in \mathbf{Q}(\mathcal{MR})$?
\end{problem}

\begin{problem}[The Quasivariety-Pseudovariety Gap]
Is $\mathbf{Q}(\mathcal{MR})$ properly contained in  $\mathbf{V}(\mathcal{MR})$? 
\end{problem}

\section{Generalization to Degree \texorpdfstring{$k$}{k}}
\label{sec:generalization_degree_k}

The concepts explored in this paper for degree 2 transformation semigroups and graph-based Margolis-Rhodes monoids naturally extend to higher dimensions. We can generalize the notion of continuous functions on graphs to continuous functions on arbitrary simplicial complexes.

\begin{definition}
A transformation semigroup $(Q, S)$ has \emph{degree at most $k$} if every fiber of any element of $S$ has size at most $k$. That is, for all $q \in Q$ and $s \in S$, the inverse image satisfies $|q s^{-1}| \le k$.
\end{definition}

To contextualize this geometrically, let $\Sigma$ be a simplicial complex of dimension $k$ (meaning that the largest simplex in $\Sigma$ contains $k+1$ vertices) with vertex set $V$. A partial function $f: V \rightarrow V$ is defined to be \emph{continuous} on $\Sigma$ if the inverse image of each simplex in $\Sigma$ under $f$ is either empty or forms a valid simplex in $\Sigma$.

The structural relationship between algebraic degree and geometric continuity on simplices was foundational to resolving early complexity bounds. We recall the following definitive theorems established by Margolis \cite{Tilsonnumber}.

\begin{theorem}[\cite{Tilsonnumber}]
A finite semigroup $S$ has a faithful representation by partial functions of degree at most $k$ if and only if it embeds into the monoid of continuous partial functions on a simplicial complex of dimension $k$.
\end{theorem}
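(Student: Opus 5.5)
Both directions can be proved with explicit constructions; the paper's degree-$2$ arguments serve as a template.

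\emph{From continuous maps to degree at most $k$.} Let $\Sigma$ be a simplicial complex of dimension $k$, so each simplex has at most $k+1$ vertices. Let $f$ be continuous on $\Sigma$. For every vertex $v$, the fiber $vf^{-1}$ is the full preimage of the $0$-simplex $\{v\}$, so it is empty or a simplex. To obtain fibers of size at most $k$ rather than $k+1$, the conventions must be matched carefully. Here ``dimension $k$'' should be read as in Margolis's indexing, where degree $k$ corresponds to dimension $k-1$, exactly as the degree-$2$ case corresponds to graphs. I would first fix this convention, following the introduction's ``dimension at most $k-1$''. Then fibers are simplices with at most $k$ vertices, so every subsemigroup of the continuous monoid has degree at most $k$. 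Any semigroup embedding into that monoid therefore has a faithful representation of degree at most $k$.

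\emph{From degree at most $k$ to continuous maps.} Let $(Q,S)$ be faithful of degree at most $k$. Generalizing the fiber graph, I would define the \emph{fiber complex} $\Sigma$ on the vertex set $Q$. Its simplices are all nonempty subsets of sets of the form $Xs^{-1}$, where $s\in S^1$ and $X\subseteq Q$ is a singleton. This family is closed under subsets by construction, and its simplices have at most $k$ elements. The key check is that each $s\in S$ is continuous on $\Sigma$: the full preimage of every simplex must itself be a simplex or empty.

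\emph{Main obstacle.} The defining family is not obviously closed under taking preimages. Suppose $\sigma\subseteq q t^{-1}$. Then $\sigma s^{-1}\subseteq (qt^{-1})s^{-1}=q(st)^{-1}$, so the preimage is contained in a simplex and is therefore a simplex, provided subsets are included. This works because simplices are defined as subsets of fibers of elements of $S^1$. The subtle point is that $\sigma s^{-1}$ is the full preimage of $\sigma$, which is a subset of the full preimage $q(st)^{-1}$; the inclusion follows directly.

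Faithfulness is inherited from $(Q,S)$. Hence $S$ embeds in the monoid of continuous partial functions on $\Sigma$, whose simplices have at most $k$ vertices, which completes the equivalence.
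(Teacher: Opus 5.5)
Your proof is correct, and your change of the dimension convention is necessary, not optional. With the paper's stated convention (dimension $k$ means the largest simplex has $k+1$ vertices), the ``if'' direction already fails for $k=1$: the two constant maps on a single edge are continuous and form a right-zero semigroup, and such a semigroup has no faithful representation by partial injections (its idempotents would be commuting partial identities), so ``dimension at most $k-1$'', as in the paper's introduction, is the correct statement. The paper gives no proof here and only cites Margolis, but your fiber complex (nonempty subsets of the fibers $qs^{-1}$ with $s\in S^1$, with continuity from $(qt^{-1})s^{-1}=q(st)^{-1}$) is the higher-dimensional version of the fiber-graph construction the paper describes for the Margolis--Rhodes representation theorem, so this is the standard route.
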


This embedding theorem bounds the complexity.

\begin{theorem}[\cite{Tilsonnumber}]
If the degree of a transformation semigroup $(Q, S)$ is $k$, then the Krohn-Rhodes complexity of $S$ satisfies $Sc \le k$.
\end{theorem}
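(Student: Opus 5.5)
Here the statement is: if $(Q,S)$ has degree $k$, then $Sc \le k$. The plan is induction on $k$, peeling off one level of complexity at a time. The base case is $k=1$. Every element of $S$ is then a partial injection, so $S$ embeds in the symmetric inverse monoid $\operatorname{SIM}(Q)$. An inverse monoid has complexity at most $1$, as used in Theorem \ref{thesis}(3), and so $Sc\le 1$. The degenerate case $k=0$ is the empty-map semigroup, which is trivially aperiodic.

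For the inductive step, I would first reduce via the theorem in the Flow Theory section: $S$ has a $\operatorname{GM}$ image $T$ with $Sc = Tc$. Degree does not increase when passing to the right letter mapping representation. So I would replace $S$ by a GM semigroup $T$ with $0$-minimal ideal $\mathcal{M}^0(G,A,B,C)$ acting on $G\times B$ with degree at most $k$. The key claim is then that $(B,\operatorname{RLM}(T))$ has degree at most $k-1$. Granting this, induction gives $\operatorname{RLM}(T)c\le k-1$, and the dichotomy $Tc\in\{\operatorname{RLM}(T)c,\,1+\operatorname{RLM}(T)c\}$ yields $Tc\le k$.

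To prove the claim I would argue by contradiction, assuming some $s$ collapses $k$ distinct columns $b_1,\dots,b_k$ to a single column $b$ in $\operatorname{RLM}$. Using regularity of the $0$-minimal ideal, I would pick an element $x$ of the ideal whose row meets the relevant structure-matrix entries. I would then compose so as to collapse, over each $b_i$, points of $G\times\{b_i\}$ onto a common point together with an additional point. The aim is $k+1$ points of $G\times B$ in one fiber, which contradicts degree $k$. The extra point should come from the fact that, in the original action, a column collapsed in $\operatorname{RLM}$ carries at least one nontrivial identification.

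I expect this counting step to be the main obstacle, because a naive count only produces $k$ points, not $k+1$. If it fails, my fallback is Margolis's embedding route, stated just before: embed $(Q,S)$ into the continuous maps on a $(k-1)$-dimensional simplicial complex. I would then filter by the ideal of maps that send some top simplex non-injectively, show that the quotient action on maximal simplices has degree $k-1$, and apply the wreath product bound $(S\wr T)c\le Sc+Tc$, with a group factor to handle the injective part.
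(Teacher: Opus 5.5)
\textbf{There is a genuine gap: the inductive step rests on a claim that is provably false.} The paper does not prove this statement; it only quotes it from \cite{Tilsonnumber}, so your proposal has to stand on its own. The obstacle you flagged in the counting step is not a technicality but an impossibility.

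Let $T$ be $\operatorname{GM}$ with distinguished $\mathcal{R}$-class $R\cong G\times B$, and let $t\in T$. By stability and Green's Lemma, right multiplication by $t$ does one of two things to an $\mathcal{H}$-class $G\times\{b'\}$. Either it sends the whole class to $0$, or it maps it bijectively onto $G\times\{b't\}$. In Rees coordinates this reads $(g,b')t=(g\lambda_t(b'),b't)$. Hence the fiber of $(h,b)$ under $t$ contains exactly one point of $G\times\{b'\}$ for each $b'\in b\,t^{-1}$, so $|(h,b)t^{-1}|=|b\,t^{-1}|$. In other words, the degree of $(B,\operatorname{RLM}(T))$ \emph{equals} the degree of $(G\times B,T)$.

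Your key claim, that the $\operatorname{RLM}$ action has degree at most $k-1$, therefore fails whenever $T$ attains degree $k$ on $G\times B$. No $(k+1)$-st point can ever be found, because your naive count of $k$ is exact. Passing to the right letter mapping image never lowers degree, so the induction has nothing to run on.

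The step before it is also unjustified. A $\operatorname{GM}$ image acts on $G\times B$ through the Sch\"utzenberger representation on an $\mathcal{R}$-class. That action is unrelated to the action on $Q$, and its degree is not bounded by that of $(Q,S)$. For example, take $\operatorname{MR}(2P_2)$ with edges $\{a,b\}$ and $\{c,d\}$. Let $e$ be the partial identity on $\{a,c\}$, and let $s$ send $a,b\mapsto a$ and $c,d\mapsto c$. The four elements of $R_e$ that send $a$ into $\{a,b\}$ and $c$ into $\{c,d\}$ are all sent to $e$ by right multiplication by $s$. So this degree-$2$ monoid has a Sch\"utzenberger action of degree at least $4$.

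Your fallback does not repair this, for three reasons:
\begin{enumerate}
\item Among partial maps, the maps sending some top simplex non-injectively do not form an ideal. Composing on the right with a partial identity that is undefined at the collapsed image point removes the collapse.
\item The part you would split off still has degree $k$, so bounding its complexity needs the theorem you are proving.
\item The ``quotient action on maximal simplices'' is left unspecified, and forward images of simplices need not be simplices.
\end{enumerate}
What is missing is an actual mechanism that trades one unit of degree for one group level, on a state set built from the original action. That is the content of the argument in \cite{Tilsonnumber}, and the $\operatorname{RLM}$ dichotomy alone cannot supply it.
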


In this paper, our intensive study of $\operatorname{MR}(\Gamma)$ strictly operates in the $k=2$ paradigm, as graphs are $1$-dimensional simplicial complexes (where simplices are at most edges, having 2 vertices). However, the methodologies developed here—specifically the construction of strict subsemigroups, the localized complexity analysis of right-orbit small monoids via Tilson's Theorem, and the bridging of $\mathcal{J}$-classes through the theory of aperiodic flows—generalize to the case of arbitrary degree $k$. 

By replacing the graph $\Gamma$ with a $k$-dimensional simplicial complex, these techniques provide a unified framework for bounding and exactly computing the Krohn-Rhodes complexity of higher-dimensional topological transformation semigroups.

\subsection{Conclusion}
In this paper, we have explored the structural and dynamical properties of the Margolis-Rhodes monoid $\operatorname{MR}(\Gamma)$ of continuous partial functions on finite graphs. We showed that the poset of regular $\mathcal{J}$-classes of $\operatorname{MR}(\Gamma)$ is isomorphic to the poset of induced subgraphs of $\Gamma$. We established that $\operatorname{MR}(\Gamma)$ has complexity at most $2$ while its injective submonoid $Inj(\Gamma)$ and its subsemigroup $St(\Gamma)$ of strict continuous functions have complexity at most $1$. 

We applied Tilson's Theorem to the right-orbit monoids of regular $\mathcal{J}$-classes, proving that the monoid of a cycle graph $MR(C_n)$ has complexity $2$ if and only if $n \ge 4$ . Finally, by using the theory of Aperiodic Flows, we overcame the limitations of Tilson's Theorem for path graphs, showing $MR(P_n)c = 2$ for $n \ge 14$ and framing the algebraic-geometric threshold that restricts low-order paths to a complexity of $1$.

We showed that $\operatorname{MR}(\Gamma)$ is isomorphic to the translational hull of the aperiodic 0-simple semigroup whose structure matrix is the incidence matrix of the simplicial complex associated to $\Gamma$. We also showed that $\operatorname{MR}(\Gamma)$ is isomorphic to a certain monoid of Galois connections on the extended face poset of $\Gamma$. We used this to prove that Margolis-Rhodes monoids are isomorphism invariants for graphs.

Together, these results bridge combinatorial graph theory, topology, and the structure theory of finite semigroups.

\section{AI Acknowledgment}

By AI in this section we mean Gemini Pro. This was provided to the first author by the Department of Mathematics, Bar-Ilan University. We began using AI in May, 2026, although we had a version of the paper earlier in 2026 before the use of AI. As AI is a new tool for most mathematicians in 2026, and there is no universally accepted way to acknowledge it, we are being honest and ethical about its use.

AI was used as the equivalent of a graduate assistant and colleague. Much of what it did was to save us a lot of time by writing LaTex code and by doing computations in Python. If the authors had access to a colleague with more expertise than ourselves in LaTeX and Python, we would have asked them for help. 

We discussed with AI some of the theorems and computations in the paper. We list here the most important results we got from AI.

\begin{enumerate}

\item{After teaching AI the definition of continuous functions on a graph from our papers \cite{cremona, deg2part2}, it called the monoid the ``Margolis-Rhodes monoid on a graph", which we accepted.}

\item{We discussed Theorem \ref{strictreg} with AI. If $f$ is a regular element of $\StG{\Gamma}$, then an $\mathcal{R}$ equivalent idempotent gives the Transition Condition and an $\mathcal{L}$ equivalent idempotent gives the Covering Condition. Thus these two conditions are necessary for regularity of $\StG{\Gamma}$. We asked if regularity was equivalent to the existence of one of these conditions. It produced a counterexample and gave a proof of what became Theorem \ref{strictreg}.}

\item{AI produced the counterexample in Proposition \ref{not reg}.}

\item{AI gave a proof of what became Theorem \ref{bipartcover}.}

\item{In Definition \ref{oddcycle}, we had a collection of $n$ idempotents for the odd cycle $C_n$ that generated a non-trivial group element. AI improved this to 3 idempotents.}

\item{We had the proof of Theorem \ref{thm:idempotents_order_preserving} as it appears. Nonetheless we asked AI if the idempotent generated subsemigroup of $MR(P_{n})$ is aperiodic. It immediately came up with the proof that idempotents in this monoid are partially order preserving and that the monoid of partial order preserving functions is known to be aperiodic. This result was impressive as we didn't guide it to the order preserving property nor suggested that the partial order preserving monoid is aperiodic.}

\item{AI computed the element in Theorem \ref{typeIIpath} that is not aperiodic, and proved that 5 was optimal. This result would have been difficult to discover by hand.}

\item{After teaching AI the definition of the monoid $\operatorname{MR}(\Gamma)$, it suggested the results of Theorem \ref{ordpres} and Theorem \ref{classificiation}. Its original proofs were incorrect, but we worked together to get a correct proof. The authors are not order theorists and we were surprised by the results- we had not ask AI for such a characterization. Nor did we find such a characterization in the literature. We like this result as it shows that continuity in our sense on a graph (or more generally a simplicial complex) is equivalent to continuity in the sense of topology of an associated function in the Alexandroff topology of the extended face poset.}
\end{enumerate}
\appendix

\section{APPENDIX: Flows and the Flow Decomposition Theorem}\label{Flows}

We review the definition of flows from  an automaton with alphabet $X$  to the set-partition $SP(G\times B)$ and to the Rhodes lattice $Rh_{B}(G)$.  We summarize material from \cite{Trans, AHNR.1995, flows, FlowsI, complexity1, qtheory}. For more details, see these references. 

\subsection{The Set-Partition Lattice and the Rhodes Lattice of a Group-Mapping Semigroup }
\label{PLFlows}

The Presentation Lemma and Flow Theory give a necessary and sufficient condition for a $\operatorname{GM}$ semigroup $S$ to have the property that $Sc = \operatorname{RLM}(S)c$. It was shown to be decidable in \cite{complexity1, complexityn}.
 
Let $S$ be a $\operatorname{GM}$ semigroup with $0$-minimal ideal $M^{0}(G,A,B,C)$. The definition of flow gives a map from the state set 
of an automaton to a lattice associated with $S$. In the literature, there are two such classes of lattices. The first is the set-partition lattice $\operatorname{SP}(G \times B)$. This is the lattice 
whose elements are all pairs $(Y,\Pi)$, where $Y$ is a subset of $G\times B$ and $\Pi$ is a partition on $Y$. Here $(Y,\Pi) \leqslant (Z,\Theta)$ if 
$Y \subseteq Z$ and for all $y \in Y$, the $\Pi$ class of $y$ is contained in the $\Theta$ class of $y$. 

The second lattice is the Rhodes lattice $\operatorname{Rh}_{B}(G)$. We review the basics. For more details see~\cite{AmigoDowling}. 
Let $G$ be a finite group and $B$ a finite set. A partial partition on $B$ is a partition $\Pi$ on a subset $I$ of $B$. We also consider the 
collection of all functions $F(B,G)$, $f \colon I \rightarrow G$ from subsets $I$ of $B$ to $G$. The group $G$ acts on the left of $F(B,G)$ by $(gf)(b) = gf(b)$
for $f \in F(B,G), g \in G, b \in \operatorname{Dom}(f)$. An element $\{gf \mid f \colon I \rightarrow G, I \subseteq X, g \in G\}$ of the quotient set 
$F(B,G)/G$ is called a cross-section 
with domain $I$. It should be thought of as a projectification of a cross-section of the projection from $I$ to $B$ in the usual topological sense.

An $\operatorname{SPC}$ (Subset, Partition, Cross-section) over $G$ is a triple $(I,\Pi,f)$, where $I$ is a subset of $B$, $\Pi$ is a partition of $I$ and $f$ is a collection of 
cross-sections one for each $\Pi$-class $\pi$ with domain $\pi$. If the classes of $\Pi$ are $\{\pi_{1},\pi_{2}, \ldots, \pi_{k}\}$, then we sometimes write 
$\{(\pi_{1},f_{1}), \ldots, (\pi_{k},f_{k})\}$, where $f_{i}$ is the cross-section associated to $\pi_{i}$. For brevity we denote this set of cross-sections by 
$[f]_{\pi}$.  We let $\operatorname{Rh}_{B}(G)$ denote the set of all $\operatorname{SPC}$s on $B$ over the group $G$ union a new element 
$\Longrightarrow\Longleftarrow$  that we call {\em contradiction} and is the top element of the lattice structure on $\operatorname{Rh}_{B}(G)$.
Contradiction occurs because the join of two SPCs need not exist. In this case we say that the contradiction is their join.

The partial order on $\operatorname{Rh}_{B}(G)$ is defined as follows. We have $(I,\pi,[f]_{\pi}) \leqslant (J,\tau,[h]_{\tau})$ if:
\begin{enumerate}
\item
{$I \subseteq J$;} 
\item
{Every block of $\pi$ is contained in a (necessarily unique) block of $\tau$;} 
\item{if the $\pi$-class $\pi_{i}$ is a subset of the $\tau$-class $\tau_{j}$, then the restriction of $h$ to $\pi_{i}$ equals $f$ restricted to $\pi_{i}$ as elements of $F(B,G)/G$. That is, $[h|_{\pi_{i}}] = [f|_{\pi_{i}}]$.}
\end{enumerate}

 See \cite[Section 3]{AmigoDowling} for the definition of the lattice structure on $\operatorname{Rh}_{B}(G)$. The underlying set of the Rhodes lattice 
$\operatorname{Rh}_{B}(G)$ minus the contradiction is the set underlying the Dowling lattice on the same set and group. The Dowling lattice 
has a different partial order. For the connection between Rhodes lattices and Dowling lattices see \cite{AmigoDowling}.

We note that $\operatorname{SP}(G \times B)$ is isomorphic to the Rhodes lattice $\operatorname{Rh}_{G\times B}(1)$ of the trivial group over the 
set $G \times B$. We need only note that a cross-section to the trivial group is a partial constant function to the identity and can be omitted, leaving 
us with a set-partition pair. There are no contradictions for Rhodes lattices over the trivial group, and in this case the top element is the pair 
$(G \times B,(G \times B)^{2})$. Despite this, we prefer to use the notation $\operatorname{SP}(G \times B)$ instead of $\operatorname{Rh}_{G\times B}(1)$.

Conversely, we can find a copy of the meet-semilattice of $\operatorname{Rh}_{B}(G)$ as a meet subsemilattice of $\operatorname{SP}(G\times B)$. 
We begin with the following important definition.

\begin{definition}
A subset $X$ of $G \times B$ is a cross-section if whenever $(g,b),(h,b) \in X$, then $g = h$. That is, $X$ defines a cross-section of the projection 
$\theta \colon X \rightarrow B$. Equivalently, $X^{\rho} \subseteq B \times G$, the reverse of $X$, is the graph of a partial function 
$f_{X} \colon B \rightarrow G$.  An  element $(Y,\Pi) \in \operatorname{SP}(G \times B)$ is a cross-section if  every partition class $\pi$ of $\Pi$ 
is a cross-section.  
\end{definition}

From the semigroup point of view, a cross-section is a partial transversal of the $\mathcal{H}$-classes of the distinguished $\mathcal{R}$-class, 
$R = G \times B$ of a $\operatorname{GM}$ semigroup. That is, the $\mathcal{H}$-classes of $R$ are indexed by $B$ and a cross-section picks at most one 
element from each $\mathcal{H}$-class. 

An element $(Y,\Pi)$ that is not a cross-section is called a {\em contradiction}. That is, $(Y,\Pi)$ is a contradiction if some $\Pi$-class $\pi$ contains 
two elements $(g,b),(h,b)$ with $g \neq h$. We note that the set of cross-sections is a meet subsemilattice of $\operatorname{SP}(G \times B)$ and 
the set of contradictions is a join subsemilattice of $\operatorname{SP}(G \times B)$.

We will identify $B$ with the subset $\{(1,b)\mid b \in B\}$ of $G \times B$, which as above we think of as a system of representatives of the 
$\mathcal{H}$-classes of the distinguished $\mathcal{R}$-class. Note then that $G\times B$ is the free left $G$-act on 
$B$ under the action $g(h,b)=(gh,b)$. This action extends to subsets and partitions of $G\times B$ . Thus $\operatorname{SP}(G\times B)$ 
is a left $G$-act. An element $(Y,\Pi)$ is {\em invariant} if $G(Y,\Pi) = (Y,\Pi)$. It is easy to see that $(Y,\Pi)$ is invariant if and only if:

\begin{enumerate}
 \item{$Y=G \times B'$ for some subset $B'$ of $B$.}  

 \item{For each $\Pi$-class $\pi$, $G\pi \subseteq \Pi$ and is a partition of $G\times B''$ where $B''\subseteq B'$.}
\end{enumerate}

Thus $(Y,\Pi)$ is invariant if and only if $Y=G\times B'$ for some subset $B'$ of $B$ and there is a partition $B_{1},\ldots, B_{n}$ of $B'$ such that for all 
$\Pi$ classes $\pi$, $G\pi$ is a partition of $G\times B_{i}$ for a unique $1 \leqslant i \leqslant n$. 

Let $\operatorname{CS}(G\times B)$ be the set of invariant cross-sections $(Y,\Pi)$ in $\operatorname{SP}(G \times B)$. $\operatorname{CS}(G\times B)$ is a 
meet-subsemilattice of $\operatorname{SP}(G\times B)$. We add a top-element $\Rightarrow\Leftarrow$ to $\operatorname{CS}(G \times B)$. The resulting structure $\operatorname{CS}^{\Rightarrow\Leftarrow}(G \times B)$ is a lattice if we define join to be its join in $\operatorname{CS}(G \times B)$ if it exists and $\Rightarrow\Leftarrow$ otherwise.

The following Proposition is proved in \cite{FlowsI}. It gives the connection between set-partition lattices and Rhodes lattices.

\begin{proposition}\label{Updown}
The set-partition lattice $\operatorname{SP}(G\times B)$ is isomorphic to the Rhodes lattice $\operatorname{Rh}_{G\times B}(1)$. 
The Rhodes lattice $\operatorname{Rh}_{B}(G)$ is isomorphic to the lattice $\operatorname{CS}^{\Rightarrow\Leftarrow}(G\times B)$. 
\end{proposition}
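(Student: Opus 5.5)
The first statement, $\operatorname{SP}(G\times B)\cong \operatorname{Rh}_{G\times B}(1)$, I would prove by writing down the map directly. Over the trivial group, every cross-section on a domain $\pi$ is the constant function to $1$ modulo the trivial action, so it is unique. An SPC $(I,\Pi,[f]_\Pi)$ over $1$ is therefore just the pair $(I,\Pi)$, and I send it to $(I,\Pi)\in\operatorname{SP}(G\times B)$.

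Conditions (1)--(3) of the order on $\operatorname{Rh}$ then collapse to $I\subseteq J$ together with block refinement, and this is exactly the order on $\operatorname{SP}$. The two lattices have the same underlying set and the same order, so they are isomorphic.

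For the second statement I define $\Phi:\operatorname{Rh}_B(G)\to\operatorname{CS}^{\Rightarrow\Leftarrow}(G\times B)$ and send contradiction to contradiction. Take an SPC $(I,\Pi,[f]_\Pi)$ and, for each class $\pi$, pick a representative $f_\pi:\pi\to G$. Its graph $X_\pi=\{(f_\pi(b),b): b\in\pi\}$ is a cross-section of $G\times\pi$. Let $\Phi$ produce $Y=G\times I$ with the partition whose classes are the translates $gX_\pi$ for $g\in G$ and $\pi\in\Pi$.

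Several routine facts then need checking:
\begin{itemize}
\item The translates $gX_\pi$ for $g\in G$ partition $G\times \pi$, because for each $b$ the map $g\mapsto gf_\pi(b)$ is a bijection of $G$.
\item Changing the representative to $hf_\pi$ only permutes the translates, so $\Phi$ is well defined on classes in $F(B,G)/G$.
\item The result is invariant and each class is a cross-section, so it lies in $\operatorname{CS}$.
\end{itemize}

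For the inverse, I use the characterization of invariant elements recorded before the proposition. An invariant cross-section has $Y=G\times B'$ and a partition $B_1,\dots,B_n$ of $B'$, with the classes over $B_i$ forming a single $G$-orbit of cross-sections of $G\times B_i$. Any one class in that orbit is the graph of a function $B_i\to G$, determined up to left multiplication. This recovers an SPC, and the two constructions are mutually inverse.

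The main obstacle is showing that the orders correspond, and hence that the lattice operations do too. Suppose $(I,\pi,[f])\le(J,\tau,[h])$ with $\pi_i\subseteq\tau_j$ and $[h|_{\pi_i}]=[f|_{\pi_i}]$. Then some $g$ satisfies $h|_{\pi_i}=gf_{\pi_i}$. Hence every translate $g'X_{\pi_i}$ lies inside the translate $g'g^{-1}X_{\tau_j}$, which is exactly refinement in $\operatorname{SP}$. Conversely, containment $X_{\pi_i}\subseteq g'X_{\tau_j}$ forces $f_{\pi_i}=g'h|_{\pi_i}$, which is condition (3).

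So $\Phi$ is an order isomorphism between the non-contradiction parts. Both contradiction elements are adjoined as tops, so the isomorphism extends. Joins are defined in both structures as the least upper bound when it exists and contradiction otherwise, so an order isomorphism preserves them. Meets are order-theoretic and are preserved automatically.
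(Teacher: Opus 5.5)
Your proof is correct and is essentially the identification the paper sketches before deferring to the literature. For the first isomorphism you discard the unique trivial cross-sections. For the second you send a block $\pi$ carrying $[f_\pi]$ to the $G$-orbit of the graph of $f_\pi$, and you invert this using the paper's description of invariant elements; your order computation ($g'X_{\pi_i}\subseteq g'g^{-1}X_{\tau_j}$ and its converse) is what upgrades the bijection to a lattice isomorphism. The one point to state explicitly is that your ``same underlying set'' claim in the first part relies on the paper's convention that no contradiction element is adjoined to $\operatorname{Rh}_{G\times B}(1)$. That convention is justified because every pair of elements of $\operatorname{SP}(G\times B)$ has a join; adjoining the contradiction literally would instead add an extra top above $(G\times B,(G\times B)^{2})$.
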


\subsection{The Type II Subsemigroup and the Tilson Congruence}\label{redux.sec}

In this subsection we review the type II subsemigroup $S_{II}$ of a semigroup. It plays an important role in finite semigroup theory and a central role in flow theory. It was first defined in \cite{lowerbounds2}. In that paper it was proved that it is decidable if a regular element of a semigroup $S$ belongs to $S_{II}$. In particular, it followed that if $S$ is a regular semigroup, then membership in $S_{II}$ is decidable. Later 
Ash \cite{Ash} proved that membership in $S_{II}$ is decidable for all finite semigroups $S$.

The type II subsemigroup $S_{II}$ of $S$ is the smallest subsemigroup of $S$ containing all idempotents and closed under weak conjugation: if $xyx = x$ for $x,y \in S$, then $xS_{II}y \cup yS_{II}x \subseteq S_{II}$. Membership in $S_{II}$ is clearly decidable from this definition. Its importance stems from the following Theorem, where we give its original, not a priori decidable definition.

\begin{theorem}

Let $S$ be a finite semigroup. Then $S_{II}$ is the intersection of all subsemigroups of the form $1\phi^{-1}$ where $\phi:S \rightarrow G$
is a relational morphism from $S$ to a group $G$.

\end{theorem}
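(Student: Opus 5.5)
The plan is to prove the two inclusions separately. They are of very different depth: the inclusion of $S_{II}$ in the intersection is elementary, while the reverse inclusion is Ash's Theorem \cite{Ash}, the former type II conjecture. Throughout I view a relational morphism $\phi \colon S \to G$ as a subsemigroup $\#\phi \subseteq S \times G$ whose projection to $S$ is onto.

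\textbf{Easy inclusion.} Fix $\phi$ and put $T = 1\phi^{-1} = \{s \in S : 1 \in s\phi\}$. Since $\#\phi$ is a subsemigroup, $T$ is a subsemigroup. Because $S_{II}$ is the smallest subsemigroup containing the idempotents and closed under weak conjugation, it suffices to verify these two properties for $T$.
\begin{enumerate}
\item \emph{Idempotents.} If $e = e^2$ and $(e,g) \in \#\phi$, then $(e,g^n) = (e,g)^n \in \#\phi$ for all $n \geq 1$. Taking $n$ to be the order of $g$ gives $(e,1) \in \#\phi$, so $e \in T$.
\item \emph{Weak conjugation.} Let $xyx = x$, and choose $(x,g),(y,h) \in \#\phi$. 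Since $x(yx)^n = x$ for every $n$, we get $(x, g(hg)^n) \in \#\phi$ for all $n \geq 0$. Choosing $n = m-1$, where $m$ is the order of $hg$, yields $g(hg)^{m-1} = h^{-1}$, so $(x,h^{-1}) \in \#\phi$. For $s \in T$ we have $(s,1) \in \#\phi$, hence $(xsy, h^{-1}h) = (xsy,1) \in \#\phi$ and $xsy \in T$.
\item \emph{The other side.} The symmetric computation, with the roles of $x$ and $y$ exchanged, produces $(y, g^{-1})$. Then $(ysx, 1) \in \#\phi$, so $ysx \in T$.
\end{enumerate}
This gives $S_{II} \subseteq T$, and hence $S_{II}$ lies in the intersection.

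\textbf{Hard inclusion.} Here, for each $s \notin S_{II}$, I must build a relational morphism to a finite group whose kernel $1\phi^{-1}$ omits $s$. I would follow Ash's original strategy, or equivalently the Ribes--Zalesskii route.

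First, present $S$ as a quotient $\eta \colon X^+ \to S$. Consider the canonical relational morphism $\eta^{-1}\iota$ into the free group $FG(X)$, restricted to finite-index quotients $FG(X) \to G$. The intersection of the kernels over all finite $G$ equals the set of $s$ for which $1$ lies in the profinite closure of $s\eta^{-1}$ inside $FG(X)$.

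Second, show that $S_{II}$ is exactly the set of $s$ with $1 \in s\eta^{-1}$ in the discrete free group. This is a combinatorial argument: a word reducing to $1$ by free cancellation is built from idempotent-like pieces and weakly conjugated inserts, and each Dyck-type cancellation step corresponds to one application of the closure rule.

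\textbf{Main obstacle.} The key step is passing from the free group to finite groups. One needs that the product of finitely generated subgroups, or the relevant rational subsets, of $FG(X)$ is closed in the profinite topology. This is the Ribes--Zalesskii theorem. Alternatively, one can argue directly as Ash did, by constructing finite inverse automata from the Cayley graph and extending their partial injective letter actions to permutations. I expect this extension step to be the real difficulty. I would attempt it through Ash's inevitable-labelled-graph formulation, namely Herwig--Lascar and Ribes--Zalesskii, since in the context of this paper the statement can legitimately be quoted from \cite{Ash}.
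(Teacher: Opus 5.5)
Your overall structure matches the paper. The paper gives no argument of its own and simply cites \cite{lowerbounds2, Ash}: the inclusion $S_{II}\subseteq\bigcap_\phi 1\phi^{-1}$ is the elementary half, and the reverse inclusion is Ash's theorem. Your treatment of idempotents and of $xS_{II}y$ is correct. Your step ``the other side'', however, is wrong as written. The hypothesis $xyx=x$ is not symmetric in $x$ and $y$. The identities available are $x=x(yx)^n=(xy)^nx$, while $y(xy)^n=yxy$, which need not equal $y$. So nothing forces $(y,g^{-1})\in\#\phi$ for the $g$ you chose, and it can fail. Take $G=\{1,a\}\cong\mathbb{Z}_2$, $S=G^0=\{1,a,0\}$, and $\#\phi=\{(1,1),(a,a),(0,1),(0,a)\}$; this is a subsemigroup of $S\times G$ projecting onto $S$. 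With $x=0$, $y=a$, $g=1$, $h=a$ we have $xyx=x$, yet $(y,g^{-1})=(a,1)\notin\#\phi$. The repair is one line: reuse the pair you already built, $(y,h)(s,1)(x,h^{-1})=(ysx,hh^{-1})=(ysx,1)$.

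For the hard inclusion, quoting \cite{Ash} is exactly what the paper does, but the sketch you give would not work as written. Your second step is false. Since $s\eta^{-1}\subseteq X^+$ and no nonempty positive word is trivial in $FG(X)$, the set $\{s : 1\in s\eta^{-1}\}$ is empty, whereas $S_{II}$ contains every idempotent. Inverses appear only through the profinite topology (for instance $x^{n!}\to 1$ and $x^{n!-1}\to x^{-1}$), so the discrete intermediate object must already contain them. In the Pin--Reutenauer formulation, one replaces every star $K^*$ in a rational expression for $s\eta^{-1}$ by the subgroup $\langle K\rangle$, obtaining a set $\widehat{s\eta^{-1}}$. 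The Ribes--Zalesskii theorem (a product of finitely generated subgroups of a free group is closed) then gives $\overline{s\eta^{-1}}=\widehat{s\eta^{-1}}$. A separate combinatorial argument shows that $1\in\widehat{s\eta^{-1}}$ forces $s\in S_{II}$. Relatedly, rational subsets are not closed in general ($x^*$ is not), so your parenthetical ``or the relevant rational subsets'' is only correct if it refers to these group-ified sets. The step you ultimately rely on is the citation, so the proof stands once the first error is fixed; the sketch should be corrected along these lines or dropped.
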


The aforementioned decidability results in \cite{lowerbounds2, Ash} prove that the two definitions we give define the same subsemigroup of a semigroup $S$. Here are some important properties of the type II subsemigroup. See \cite{qtheory} for proofs.

\begin{theorem}\label{typeIIprop}

\begin{enumerate}

\item{Let $f:S \rightarrow T$ be a morphism between semigroups $S$ and $T$. If $s \in S_{II}$, then $sf \in T_{II}$. Therefore the restriction of $f$ to $S_{II}$ defines a morphism $f_{II}:S_{II} \rightarrow T_{II}$. This defines a functor on the category of finite semigroups.}

\item{If $S$ divides $T$, then $S_{II}$ divides $T_{II}$.}

\item{Assume that a semigroup $S$ divides a semidirect product $T*G$ where $T$ is a semigroup and $G$ is a group. Then $S_{II}$ divides $T$.}

\end{enumerate}

\end{theorem}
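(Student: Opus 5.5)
Each part is handled with whichever of the two descriptions of $S_{II}$ is more convenient: the closure definition (smallest subsemigroup containing the idempotents and closed under weak conjugation) or the relational-morphism description $S_{II}=\bigcap 1\phi^{-1}$. The equivalence of the two is the theorem of Ash and Rhodes--Tilson recalled just above, and I take it as given.

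For (1), I would use the closure definition. Let $f:S\rightarrow T$ be a morphism and put $U=\{s\in S \mid sf\in T_{II}\}$. I will show that $U$ contains $S_{II}$ by checking that $U$ has the three closure properties.
\begin{enumerate}
\item $U$ is a subsemigroup, because $T_{II}$ is one and $f$ is a morphism.
\item $U$ contains every idempotent of $S$, because $f$ sends idempotents to idempotents.
\item $U$ is closed under weak conjugation. If $xyx=x$ in $S$, then $(xf)(yf)(xf)=xf$ in $T$. Hence for $u\in U$ we get $(xuy)f=(xf)(uf)(yf)\in (xf)T_{II}(yf)\subseteq T_{II}$, and symmetrically $(yux)f\in T_{II}$.
\end{enumerate}
By minimality $S_{II}\subseteq U$. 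So $f$ restricts to a morphism $f_{II}:S_{II}\rightarrow T_{II}$, and functoriality is immediate since restriction commutes with composition. As a cross-check I would also note the one-line argument via relational morphisms: if $\psi:T\rightarrow G$ is a relational morphism to a group, then $f\psi$ is a relational morphism $S\rightarrow G$. So $s\in S_{II}$ gives $1\in s(f\psi)=(sf)\psi$, and therefore $sf\in 1\psi^{-1}$.

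For (2), write $S$ as $U\theta$ with $U\leq T$ a subsemigroup and $\theta:U\rightarrow S$ surjective.
\begin{itemize}
\item Applying (1) to the inclusion $U\hookrightarrow T$ gives $U_{II}\subseteq T_{II}$, so $U_{II}$ is a subsemigroup of $T_{II}$.
\item It then suffices to prove $S_{II}=U_{II}\theta$, since this exhibits $S_{II}$ as a quotient of a subsemigroup of $T_{II}$. Part (1) gives $U_{II}\theta\subseteq S_{II}$. For the reverse inclusion I show that $U_{II}\theta$ satisfies the closure conditions in $S$.
\item It is a subsemigroup.
\item It contains all idempotents. If $e\in S$ is idempotent and $u\theta=e$, then $u^{\omega}\in U_{II}$ and $u^{\omega}\theta=e$.
\item Weak conjugation is the step I expect to be the main obstacle, because lifts of $x,y$ with $xyx=x$ need not satisfy $uvu=u$. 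I would repair this by the standard regular-lifting trick. Choose any preimages $u,v$ and set $u'=u(vu)^{\omega}$ and $v'=(vu)^{\omega}v$. Then $u'\theta=x(yx)^{\omega}=x$, since $x(yx)=x$ makes $x(yx)^n=x$ for all $n$; likewise $v'\theta=(yx)^{\omega}y$. One also checks $u'v'u'=u(vu)^{\omega}(vu)^{\omega}(vu)(vu)^{\omega}$. This is $u(vu)^{\omega}$ times an element of the group $\mathcal{H}$-class of $(vu)^{\omega}$, so I would adjust by the inverse element, replacing $v'$ by $(vu)^{\omega-1}v$ where $(vu)^{\omega-1}$ denotes the group inverse in that $\mathcal{H}$-class, to get exactly $u'v'u'=u'$.
\item The element $y'=v'\theta$ need not equal $y$. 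I would handle this by noting that $xU_{II}\theta\, y=xyx\,U_{II}\theta\, y$ and rewriting $y$ through the relation $x=xyx$, so that $x\,(U_{II}\theta)\,y\subseteq (u'U_{II}v')\theta\subseteq U_{II}\theta$. If this manipulation proves delicate, I would instead quote the known fact that regular pairs lift along surjective morphisms of finite semigroups.
\end{itemize}

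For (3), suppose $S\prec T*G$ with $G$ a group. The projection $\pi:T*G\rightarrow G$, $(t,g)\mapsto g$, is a morphism into a group, hence a relational morphism. So by the relational-morphism description, $(T*G)_{II}\subseteq 1\pi^{-1}=\{(t,1)\mid t\in T\}$. On this set the multiplication reads $(t,1)(t',1)=(t\cdot {}^{1}t',1)=(tt',1)$, so $t\mapsto(t,1)$ identifies it with $T$. Thus $(T*G)_{II}$ is isomorphic to a subsemigroup of $T$. Combining this with (2) gives $S_{II}\prec (T*G)_{II}\prec T$.
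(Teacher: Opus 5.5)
Parts (1) and (3) are correct. The paper itself only cites \cite{qtheory} here. In (3) you do not even need Ash's theorem: (1) applied to the projection gives $(T*G)_{II}\pi\subseteq G_{II}=\{1\}$.

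The gap is the weak-conjugation step in (2). Your pair $u'=u(vu)^{\omega}$, $v'=(vu)^{\omega-1}v$ does satisfy $u'v'u'=u'$. But because $yx$ is idempotent, $v'\theta=(yx)^{\omega-1}y=yxy$. So weak conjugation in $U$ only gives you $(u'U_{II}v')\theta=x\,(U_{II}\theta)\,yxy$. No rewriting with $x=xyx$ removes the trailing factor $xy$, and the inclusion $x(U_{II}\theta)y\subseteq x(U_{II}\theta)yxy$ is false in general.

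For a counterexample, take $U=S$ and $\theta$ the identity. Let $S$ be the semigroup of partial maps of $\{1,2,3\}$ generated by the matrix units $E_{ij}$ ($i\mapsto j$, $i,j\in\{1,2\}$) and the total map $y$ with $1\mapsto 3$, $2\mapsto 1$, $3\mapsto 3$. For $x=E_{12}$ one has $xyx=x$ and $yxy=E_{21}$. Hence $xS_{II}\,yxy\subseteq\{E_{11},0\}$, with $0$ the empty map. However, the idempotent $c=y^{2}$ (constant map to $3$) gives $xcy=E_{13}$.

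Your fallback does not close the gap either. The standard lifting lemma is for mutually inverse pairs, whereas here $y$ is only a weak inverse of $x$ and need not be regular; in the example it is not.

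The repair is to put the correction on $u$ and keep $v$ unchanged. Set $u'=u(vu)^{\omega-1}$. Then $u'vu'=u(vu)^{\omega-1}(vu)(vu)^{\omega-1}=u(vu)^{\omega}(vu)^{\omega-1}=u'$. Also $u'\theta=x(yx)^{\omega-1}=x\,yx=x$. So $(u',v)$ is a weak pair in $U$ that lifts $(x,y)$ exactly. For $w\in U_{II}$, the elements $u'wv$ and $vwu'$ lie in $U_{II}$ and map to $x(w\theta)y$ and $y(w\theta)x$.

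Alternatively, since you already assume the relational-morphism description, you can avoid lifting altogether:
\begin{itemize}
\item By finiteness, $U_{II}=1\phi^{-1}$ for a single relational morphism $\phi\colon U\to G$, obtained as a product of finitely many.
\item Then $\theta^{-1}\phi\colon S\to G$ is a relational morphism.
\item For $s\in S_{II}$, we have $1\in s\theta^{-1}\phi$, which yields some $u\in U_{II}$ with $u\theta=s$.
\end{itemize}
With either fix, (2) holds, and (3), which uses (2), goes through.
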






A {\em congruence} on a transformation semigroup $(Q,S)$ is an equivalence relation $\approx$ on $Q$ such that if $q\approx q'$ and for $s \in S$, and both $qs$ and $q's$ are defined, then $qs\approx q's$. Every $s \in S$ defines a partial function on $\faktor{Q}{\approx}$ by 
$[q]_{\approx}s=[q's]_{\approx}$ if  $q's$ is defined for some $q' \in [q]_{\approx}$. The quotient $\faktor{(Q,S)}{\approx}$ has states $\faktor{Q}{\approx}$ and semigroup $T$ the semigroup generated by the action of all $s \in S$ on $\faktor{Q}{\approx}$. We remark that $T$ is not necessarily a quotient semigroup of $S$, but is in the case that $(Q,S)$ is a transformation semigroup of total functions. 

A congruence $\approx$ is called {\em injective} if every $s \in S$ defines a partial 1-1 function on $\faktor{Q}{\approx}$. It is easy to see that the intersection of injective congruences is injective. Therefore, there is a unique minimal injective congruence $\tau$ on any transformation semigroup $(Q,S)$. We call $\tau$ the Tilson congruence on a transformation semigroup because of the following proved in \cite{Redux}. It is central to the theory of flows.

\begin{theorem}\label{redux}

Let $(G \times B,S)$ be a $\operatorname{GM}$ transformation semigroup. Then the minimal injective congruence $\tau$ on $S$ is defined as follows: 
$(g,b) \tau (g',b')$ if and only if there are elements $s,t \in S_{II}$ such that $(g,b)s=(g',b')$ and $(g',b')t=(g,b)$.

\end{theorem}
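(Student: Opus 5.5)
The plan is to write $\sim$ for the relation in the statement and prove three things: $\sim$ is an injective congruence, and every injective congruence contains $\sim$. Then $\sim=\tau$. Throughout, $G\times B$ is identified with the distinguished $\mathcal{R}$-class $R$ of $I(S)$, and $S$ acts on it by right multiplication. Two facts about $R$ will be used repeatedly. First, if $q\in R$ and $qu\in R$, then $qu\,\mathcal{R}\,q$, so there is $y\in S^1$ with $quy=q$. Second, every $\mathcal{L}$-class of the regular $\mathcal{J}$-class contains an idempotent; if $e$ is an idempotent in $L_q$, then $qe=q$. The second fact gives reflexivity of $\sim$, since $e\in S_{II}$. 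Symmetry is built into the definition, and transitivity holds because $S_{II}$ is a subsemigroup.

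\emph{Compatibility.} Suppose $q\sim q'$, witnessed by $s,t\in S_{II}$ with $qs=q'$ and $q't=q$, and suppose $u\in S$ with $qu$ and $q'u$ both defined. I need some $s'\in S_{II}$ with $(qu)s'=q'u$. The natural candidate is $ysu$, where $quy=q$, but $y$ and $u$ need not lie in $S_{II}$, so I will replace $y$ by a weak inverse of $u$.
\begin{itemize}
\item Put $m=yu$, let $m^{\omega}$ be its idempotent power, and let $m^{\omega-1}$ denote the inverse of $m$ inside the group $\mathcal{H}$-class of $m^{\omega}$.
\item Set $x=m^{\omega-1}y$. Then $xu=m^{\omega}$ and $xux=m^{\omega}m^{\omega-1}y=x$.
\item Weak conjugation applied to the pair $(x,u)$ gives $xS_{II}u\subseteq S_{II}$, so $s'=xsu\in S_{II}$.
\item Since $q(uy)^k=q$ for all $k$, we get $qux=qu(yu)^{\omega-1}y=q(uy)^{\omega}=q$.
\item Hence $(qu)s'=qsu=q'u$.
\end{itemize}
Exchanging the roles of $q$ and $q'$ gives the reverse witness, so $qu\sim q'u$.

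\emph{Injectivity.} Suppose $qu$ and $q'u$ are both defined and $qu\sim q'u$. By compatibility it suffices to treat the case $qu=q'u$. Choose $y$ with $quy=q$; then also $q'uy=q$. The element $(uy)^{\omega}$ is an idempotent, so it lies in $S_{II}$, and $q'(uy)^{\omega}=q$ because $q(uy)^{k}=q$ for all $k$. Choosing $y'$ with $q'uy'=q'$ gives $q(uy')^{\omega}=q'$ in the same way. Therefore $q\sim q'$. For general $u$ I would combine this with the compatibility argument, so that if $[qu]=[q'u]$ then $[q]=[q']$.

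\emph{Minimality.} Let $\theta$ be any injective congruence, so every $s\in S$ induces a partial injection $\bar s$ on $Q/\theta$. I will show that $\bar s$ is a partial identity for every $s\in S_{II}$, by checking that the set of such $s$ contains the generators of $S_{II}$ and is closed under its defining operations.
\begin{itemize}
\item An idempotent partial injection is a partial identity, so every idempotent qualifies.
\item A product of partial identities is a partial identity.
\item If $\bar x\bar z\bar x=\bar x$ with $\bar x,\bar z$ partial injections, then $\bar z$ restricted to $\operatorname{Im}\bar x$ is $\bar x^{-1}$. Hence $\bar x\,\bar k\,\bar z$ is a partial identity whenever $\bar k$ is one, and similarly for $\bar z\,\bar k\,\bar x$.
\end{itemize}
So every element of $S_{II}$ acts on $Q/\theta$ as a partial identity, and $qs=q'$ with $s\in S_{II}$ forces $q\,\theta\,q'$. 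Thus $\sim\subseteq\theta$.

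I expect compatibility to be the main obstacle: it is the step that requires building a weak-inverse pair out of an arbitrary, possibly non-regular, $u$, as done above via $x=m^{\omega-1}y$. The other steps are routine once that device is available.
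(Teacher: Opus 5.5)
The paper does not prove this statement; it quotes it from \cite{Redux} and remarks that the argument there works for any transitive transformation semigroup. So your proposal has to stand on its own.

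\textbf{What works.} Your equivalence-relation, compatibility and minimality steps are sound. The device $x=m^{\omega-1}y$, which gives $xux=x$ and $qux=q$, is exactly what is needed, and it only uses transitivity of the action on $R$, in line with that remark. In the minimality step, note that $s\mapsto\bar s$ is not a homomorphism onto the action on $Q/\theta$; one only has $\overline{st}\subseteq\bar s\,\bar t$. Your identities $\bar e^{2}=\bar e$ and $\bar x\bar z\bar x=\bar x$ still hold. For instance, $\bar x=\overline{xzx}\subseteq\bar x\bar z\bar x$, and the right side has domain inside $\operatorname{Dom}(\bar x)$. This deserves a sentence.

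\textbf{The gap is in the injectivity step.} The claim ``by compatibility it suffices to treat the case $qu=q'u$'' does not follow.
\begin{itemize}
\item Compatibility transports $\sim$ forward along $u$. Your reduction instead needs to pull $qu\sim q'u$ back to representatives $q_1\sim q$ and $q_1'\sim q'$ with $q_1u=q_1'u$ exactly.
\item That amounts to showing every state $\sim$-equivalent to $qu$ has the form $q_1u$ with $q_1\sim q$, which you have not shown.
\item In general, a compatible relation can identify $au$ with $bu$ without identifying $a$ and $b$, even when $u$ is injective on states.
\end{itemize}
So the special case you prove does not imply the general one, and ``I would combine this with the compatibility argument'' leaves the essential point open.

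\textbf{The repair} uses the half of weak conjugation you never invoke. Suppose $(qu)s=q'u$ with $s\in S_{II}$. Run your construction at $q'$: choose $y_1$ with $q'uy_1=q'$, and put $m_1=y_1u$ and $x_1=m_1^{\omega-1}y_1$. Then $x_1ux_1=x_1$ and $q'ux_1=q'$. Hence $usx_1\in uS_{II}x_1\subseteq S_{II}$, and $q(usx_1)=(q'u)x_1=q'$. Symmetrically, if $(q'u)t=qu$ with $t\in S_{II}$, then $q'(utx)=q$, where $x$ is built at $q$. This proves injectivity directly, so the case $qu=q'u$ is not needed.

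With this repair your argument is a complete proof. Compatibility uses $xS_{II}u\subseteq S_{II}$, and injectivity uses $uS_{II}x\subseteq S_{II}$.
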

 
\begin{remark}

\item{We can state this theorem by $(g,b)S_{II}=(g',b')S_{II}$. That is, $(g,b)$ and $(g',b')$ define the same ``right coset'' of $S_{II}$ on $G \times B$.}

\item{The proof in \cite{Redux} works on an arbitrary transitive transformation semigroup. We stated it in the case of $\operatorname{GM}$ transformation semigroups because that's how we will use it in this document.}

\item{Theorem \ref{redux} can be used to greatly simplify the proof in \cite{lowerbounds2} for decidability of membership in $S_{II}$ for regular elements of an arbitrary  semigroup.}

\end{remark}

For later use we record the following corollary to Theorem \ref{redux}.

\begin{corollary}

Let $(G \times B,S)$ be a $\operatorname{GM}$ transformation semigroup. Then $(g,b) \tau (g',b')$ if and only if there are elements 
$s,t \in S_{II} \cap I(S)$ such that $(g,b)s=(g',b')$ and $(g',b')t=(g,b)$. That is, we can choose the elements $s$ and $t$ in Theorem \ref{redux} to be in the 0-minimal ideal of $S$.

\end{corollary}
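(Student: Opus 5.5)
The direction ``$\Leftarrow$'' is immediate: $S_{II} \cap I(S) \subseteq S_{II}$, so elements $s,t \in S_{II} \cap I(S)$ with $(g,b)s=(g',b')$ and $(g',b')t=(g,b)$ are in particular elements of $S_{II}$. Theorem \ref{redux} then gives $(g,b)\,\tau\,(g',b')$.

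For ``$\Rightarrow$'', I will start from $s,t \in S_{II}$ supplied by Theorem \ref{redux}, so that $(g,b)s=(g',b')$ and $(g',b')t=(g,b)$. I will then push $s$ and $t$ into the $0$-minimal ideal by multiplying on the right by suitable idempotents of $I(S)$. The key facts are that idempotents lie in $S_{II}$, that $S_{II}$ is a subsemigroup, and that $I(S)$ is an ideal. Hence if $e$ is an idempotent of $I(S)$, the product $se$ lies in $S_{II} \cap I(S)$. It remains to choose $e$ so that it fixes the target state.

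To construct $e$, recall that $I(S)=M^0(G,A,B,C)$ is regular. So for the column $b'$ there is a row $a \in A$ with $C(b',a) \neq 0$; call this value $c$. Put $e=(a,c^{-1},b')$. This is an idempotent in $I(S)$, since $e^2=(a,c^{-1}\,C(b',a)\,c^{-1},b')=e$. I will check its action on the distinguished $\mathcal{R}$-class $G\times B$, on which elements act by right multiplication: $(h,x)(a,k,y)=(h\,C(x,a)\,k,\,y)$ when $C(x,a)\ne 0$, and is undefined otherwise. With this rule, $(g',b')e=(g'\,c\,c^{-1},b')=(g',b')$. Therefore $(g,b)(se)=(g',b')e=(g',b')$.

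Symmetrically, I pick a row $a'$ with $C(b,a')\ne 0$ and form the idempotent $e'=(a',C(b,a')^{-1},b)$, which fixes $(g,b)$. Then $te' \in S_{II}\cap I(S)$ and $(g',b')(te')=(g,b)$. This completes the argument.

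The only point needing care is the bookkeeping of the action convention on $G\times B$: I must make sure the chosen idempotent really fixes $(g',b')$ rather than some translate $(hg',b')$. The normalization by $c^{-1}$ above is exactly what handles this. Beyond that, the argument uses only regularity of the $0$-minimal ideal (to guarantee a nonzero entry in each column) and the closure properties of $S_{II}$.
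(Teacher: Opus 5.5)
Your proof is correct and is essentially the paper's argument. Both rely on Theorem \ref{redux}, on idempotents lying in $S_{II}$, on closure of $S_{II}$ under products, and on $I(S)$ being an ideal. There are two cosmetic differences. The paper multiplies on the left, by idempotents $e,f \in I(S)$ that fix the source states (forming $es$ and $ft$), whereas you multiply on the right by idempotents that fix the targets. You also make the existence of these idempotents explicit through the Rees coordinates $(a,c^{-1},b')$, which the paper simply asserts from $0$-simplicity.
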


\begin{proof} The condition is sufficient by Theorem \ref{redux}. Conversely, assume that $(g,b)\tau (g',b')$. Then there are elements $s,t \in S_{II}$ such that $(g,b)s=(g',b')$ and $(g',b')t=(g,b)$. Since $I(S)$ is a 0-simple semigroup, there are idempotents $e,f \in I(S)$ such that 
$(g,b)e=(g,b)$ and $(g',b')f=(g',b')$. Therefore, $(g,b)es=(g,b)s=(g',b')$ and similarly $(g',b')ft =(g,b)$. Since $e,f$ are idempotents we have $es,ft \in S_{II}$. As $I(S)$ is a 0-minimal ideal, we also have $es,ft \in I(S)$. 

\end{proof}

%
%

\subsection{Definition of Flows and Their Properties}
Let $(G \times B,S)$ be the transformation semigroup associated to a $\operatorname{GM}$ semigroup $S$ and let $X$ be a generating set for $S$. By a deterministic automaton we mean an automaton such that each letter defines a partial function on the state set.

\begin{definition}

Let $\mathcal{A}$ be a deterministic automaton with state set $Q$ and alphabet $X$. 
A {\em flow} to the lattice $SP(G\times B)$ on $\mathcal{A}$ is a function $f:Q \rightarrow SP(G \times B)$ such that for each $q \in Q, x \in X$, with $qf =(Y,\pi)$ and $(qx)f = (Z,\theta)$ we have:

\begin{enumerate}

\item{For all $(g,b) \in G \times B$, there is a $q \in Q$ such that $(g,b) \in qf$.}
 
\item{$Yx \subseteq Z$.}

\item{Multiplication by $x$ considered as an element of $S$ induces a partial 1-1 map from $Y/\pi$ to $Z/\theta$.

That is, for all $y,y' \in Y$ we have $y,y'$ are in a $\pi$-class if and only if $yx,y'x$ are in a $\theta$-class whenever $yx,y'x$ are both defined.}

\item{{\bf The Cross Section Condition:} For all $q \in Q$, $qf$ is a cross-section. That is, for all $g,h \in G, b \in B$ if $(g,b),(h,b) \in qf$ it follows that $g=h$.}

\end{enumerate}
\end{definition}

We use Proposition \ref{Updown} to give a definition of a flow to the Rhodes lattice $Rh_{B}(G)$.

\begin{definition}

Let $\mathcal{A}$ be a deterministic automaton with state set $Q$ and alphabet $X$. 
A {\em flow} to the lattice $Rh_{B}(G)$ is a flow to $SP(G\times B)$ such that for each state $q$, $qf \in CS(G\times B)$. That is, $qf$ is a $G$-invariant cross-section.
   
\end{definition}

\begin{remark}

    It follows from the original statement of the Presentation Lemma \cite{AHNR.1995} that if $S$ has a flow with respect to some automaton over $SP(G\times B)$, then it has a flow from the same automaton over $Rh_{B}(G)$. The proof of the equivalence of the Presentation Lemma and Flows in Section 3 of \cite{Trans} works as well for the Presentation Lemma in the sense of \cite{AHNR.1995}. 

\end{remark}

We will use all the terminology and concepts from \cite{Trans}. If $\mathcal{A}$ is an automaton with alphabet $X$ and state set $Q$, recall that its completion is the automaton $\mathcal{A}^{\square}$ that adds a sink state $\square$ to $Q$ and declares that $qx = \square$ if $qx$ is not defined in $\mathcal{A}$. A flow on $\mathcal{A}$ is a complete flow if on $\mathcal{A}^{\square}$ extending $f$ by letting $\square f = (\emptyset, \emptyset)$ the bottom of both the set-partition and Rhodes lattices remains a flow and furthermore, for all $(g,b) \in G \times B$, there is a $q\in Q$ such that $(g,b) \in Y$, where $qf=(Y,\Pi)$. All flows in this paper will be complete flows.

 Flows are related to the Presentation Lemma \cite{AHNR.1995}, \cite[Section 4.14]{qtheory}. They give a necessary and sufficient condition for $Sc =\operatorname{RLM}(S)c$ where $S$ is a $\operatorname{GM}$ semigroup.  See Section 3 of \cite{Trans} for a proof of the following Theorem

\begin{theorem}\label{PLflow}[The Presentation Lemma-Flow Version]

Let $(G \times B,S)$ be a $GM$ transformation semigroup with $S$ generated by $X$. Let $k> 0$ and assume that $\operatorname{RLM}(S)c = k$.  Then $Sc = k$ if and only if there is an $X$ automaton $\mathcal{A}$ whose transformation semigroup $T$ has complexity strictly less than $k$ and a complete flow $f:Q \rightarrow Rh_{B}(G)$.


\end{theorem}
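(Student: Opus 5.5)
This is the Presentation Lemma in flow form. I would reduce it to the classical Presentation Lemma of \cite{AHNR.1995}, following Section 3 of \cite{Trans}. The proof splits into two directions, and each is a translation between presentations (cover data over an automaton) and flows.

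\textbf{Sufficiency: a flow gives $Sc\le k$.} Suppose $\mathcal{A}$ is an $X$-automaton whose transformation semigroup $T$ has $Tc<k$, and $f\colon Q\to \operatorname{Rh}_B(G)$ is a complete flow.

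1. By Proposition \ref{Updown}, regard $f$ as a map into $\operatorname{CS}(G\times B)\subseteq \operatorname{SP}(G\times B)$.
2. For each state $q$ with $qf=(Y_q,\Pi_q)$, the classes of $\Pi_q$, indexed by $q$, form a cover of $G\times B$. This cover is a cover precisely because the flow is complete.
3. Flow condition (2) says the action of each letter $x$ maps cover sets at $q$ into cover sets at $qx$. Condition (3) says this induced map on $Y_q/\Pi_q\to Y_{qx}/\Pi_{qx}$ is partial one-to-one. Condition (4) says each block is a partial transversal of the $\mathcal{H}$-classes.
4. These are exactly the hypotheses of the Presentation Lemma, with the automaton $\mathcal{A}$ as the ``lower'' machine and the induced partial injections as the ``upper'' part.
5. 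The Presentation Lemma then embeds $(G\times B,S)$ in a wreath product built from $T$, a permutation group on the blocks, and $\operatorname{RLM}(S)$ acting on the cross-section coordinates. The counting gives $Sc\le \max(Tc+1,\operatorname{RLM}(S)c)=k$, and hence $Sc=k$ since $\operatorname{RLM}(S)c=k$.

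\textbf{Necessity: $Sc=k$ gives a flow.} Suppose $Sc=k$.

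1. Take a wreath decomposition of $S$ that witnesses complexity $k$.
2. Split it at its top group coordinate, writing $S\prec (P,H)\wr(Q',T')$ with $T'c<k$. The usual normalization of the Presentation Lemma lets me assume the top of the decomposition has this form.
3. Let $\mathcal{A}$ be the $X$-automaton on $Q'$ obtained by following, under the division, the images of the generators $X$.
4. For each state $q$, let $Y_q$ be the set of $(g,b)$ represented in the fiber over $q$, and let $\Pi_q$ be the partition induced by the $P$-coordinate.
5. Conditions (2) and (3) follow because $H$ acts by permutations on the $P$-coordinate. Completeness follows from surjectivity of the division.
6. Close this to a $G$-invariant cross-section, using the remark after the Rhodes lattice definition. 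That remark says a flow into $\operatorname{SP}(G\times B)$ yields one into $\operatorname{Rh}_B(G)$.

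\textbf{Main obstacle: the cross-section condition (4).} I expect the hardest step to be showing that no block contains both $(g,b)$ and $(h,b)$ with $g\neq h$. To handle it I would first invoke Theorem \ref{redux} to pass to the Tilson congruence, and use the fact that $S_{II}$ divides the lower aperiodic-times-$T'$ part (Theorem \ref{typeIIprop}(3)). The goal is that elements of $G$ acting within one $\mathcal{R}$-class cannot be absorbed by the top group $H$ without lowering the complexity count. Only after this would I apply the $G$-invariance closure.
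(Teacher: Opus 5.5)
\textbf{The gap is in the necessity direction, at your step 2.}

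Your overall strategy matches the paper: it gives no argument of its own and simply cites Section~3 of \cite{Trans}. Your sufficiency half is a fair sketch of that translation; it is the passage (5)$\Rightarrow$(6)$\Rightarrow$(1) of Theorem~\ref{uniform}.

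A decomposition witnessing $Sc=k$ has the form $S\prec A_k\wr G_k\wr T'$ with $T'c\le k-1$. There is an aperiodic $A_k$ sitting \emph{above} the top group, and under division you may not discard it. No standard normalization does this.

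The claim in fact fails exactly where the theorem applies. For $k=1$ it would give $S\prec G\wr A$ with $A$ aperiodic. By the Rhodes--Karnofsky criterion quoted in the paper, $S^{\mathcal{L}'}$ would then be aperiodic. Hence so would $\operatorname{RLM}(S)$, which is the restriction of that action to the invariant set of nonzero $\mathcal{L}$-classes of $I(S)$. This contradicts the hypothesis $\operatorname{RLM}(S)c=1$. A concrete instance is the GM image of the symmetric inverse monoid on three points along its rank-two $\mathcal{J}$-class. There $Sc=\operatorname{RLM}(S)c=1$, but the units act on $B$ as the full symmetric group of degree $3$.

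What projecting off $A_k$ actually yields is only an aperiodic \emph{relational morphism} $S\to G_k\wr T'$, which is item (2) of Theorem~\ref{uniform}. A relational morphism has no fibers carrying a $P$-coordinate from which to read off $Y_q$ and $\Pi_q$, as your steps 4--5 do.

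\textbf{Your cross-section argument also applies Theorem~\ref{typeIIprop}(3) in the wrong orientation.} That result controls $S_{II}$ when $S\prec T*G$, that is, when the group acts from \emph{below}. In $(P,H)\wr(Q',T')$ the group is on top, and group-over-aperiodic semigroups can have non-aperiodic type II. For example, take $\mathcal{M}^0(\mathbb{Z}_2,2,2,C)$ with $C=\left(\begin{smallmatrix}1&1\\1&-1\end{smallmatrix}\right)$. It embeds in $\mathbb{Z}_2\wr\operatorname{RLM}$, and its $\operatorname{RLM}$ consists of constant maps, so it is aperiodic. Yet for the idempotents $e_i=(i,C(i,i)^{-1},i)$ one gets $e_1e_2e_1=(1,-1,1)$, a non-identity group element.

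\textbf{The missing idea is the derived transformation semigroup.}
\begin{enumerate}
\item Compose the aperiodic relational morphism with the projection $G_k\wr T'\to T'$ to obtain $\Phi\colon S\to T'$.
\item Then $D(\Phi)$ lies in $Ap*Gp$ (item (3)). Now Theorem~\ref{typeIIprop}(3) applies in the correct orientation, and $D(\Phi)_{II}$ is aperiodic.
\item The Tilson congruence on $D(\Phi)$ is therefore a cross-section (item (4)); this is where Theorem~\ref{redux} enters.
\item Its classes over the states of $T'$ are the blocks of the flow. Flow conditions (2)--(3) are precisely the statement that $\tau$ is an injective congruence.
\end{enumerate}
This is the chain (2)$\Rightarrow$(3)$\Rightarrow$(4)$\Rightarrow$(5) of Theorem~\ref{uniform}, and it should replace your steps 2--6.
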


 Flows were preceded by the Presentation Lemma \cite{AHNR.1995}, \cite[Section 4.14]{qtheory}. The Presentation Lemma was shown to be equivalent to the existence of an appropriate flow in \cite[Section 3]{Trans}. There are three versions of the Presentation Lemma and its relation to Flow Theory in the literature \cite{AHNR.1995}, \cite[Section 4.14]{qtheory}, \cite{Trans}. These have very different formalizations and it is not clear how to pass from one version to another. The terminology is different as well. For example, the definition of cross-section in each of these references, as well as the one we use in this paper is different.  The following Theorem gives a unified approach to these topics. It summarizes known results in the literature and is meant to emphasize the strong connections between slices, flows and the presentation lemma and the corresponding direct product decomposition. \cite{FlowsI} for a proof. We use some of the results that we've proved in previous sections. For background on the derived transformation semigroup and the derived semigroup theorem see \cite{Eilenberg, qtheory}.

\begin{theorem} \label{uniform}

Let $(G \times B,S)$ be $\operatorname{GM}$ and assume that $\operatorname{RLM}(S)c \leq n$. Then the following are equivalent:

\begin{enumerate}

\item{$Sc \leq n$.}

\item{There is an aperiodic relational morphism $\Theta:S \rightarrow H\wr T$, where $H$ is a group and $Tc \leq n-1$.}

\item{There is a relational morphism $\Phi:S \rightarrow T$ where $Tc \leq n-1$ and such that the Derived Transformation semigroup $D(\Phi)$ is in $Ap*Gp$.}

\item{There is a relational morphism $\Phi:S \rightarrow T$ where $Tc \leq n-1$ such that the Tilson congruence $\tau$ on the Derived Transformation semigroup $D(\Phi)$ is a cross-section.}

\item{$(G \times B,S)$ admits a flow from a transformation semigroup $(Q,T)$ with $(Q,T)c \leq n-1$.}

\item{$S \prec (G \wr (Sym(B)) \wr T) \times \operatorname{RLM}(S)$ for some transformation semigroup $T$ with $Tc \leq n-1$.}

\end{enumerate}

\end{theorem}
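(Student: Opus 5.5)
This theorem is a summary of known equivalences, so the plan is to prove it as a cycle of implications. Each link will come from a standard result (the Presentation Lemma, the derived semigroup theorem, or Theorem \ref{redux}) rather than from new computation. The order I would follow is $(1)\Rightarrow(2)\Rightarrow(3)\Rightarrow(4)\Rightarrow(5)\Rightarrow(6)\Rightarrow(1)$. Throughout, $\operatorname{RLM}(S)c \le n$ is assumed, and it will be used only in the last step.

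\textbf{From (1) to (4).} For $(1)\Rightarrow(2)$, take a decomposition of $S$ of complexity $\le n$. Group the top part into the form $A \wr H \wr T$ with $A$ aperiodic, $H$ a group and $Tc \le n-1$. Projecting away the aperiodic top coordinate gives a relational morphism $S \to H \wr T$ whose point-inverses of idempotents divide $A$, so the relational morphism is aperiodic.

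For $(2)\Rightarrow(3)$, compose the aperiodic relational morphism $\Theta$ with the projection $H\wr T \to T$. The derived transformation semigroup of this composite divides (derived of $\Theta$) $\wr$ (derived of the projection). The first is aperiodic; the second is a group-type piece, since the kernel of $H \wr T\to T$ is a group extension. By the derived semigroup theorem the result lies in $Ap*Gp$.

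For $(3)\Rightarrow(4)$, I would use that membership in $Ap*Gp$ forces the type II subsemigroup to be aperiodic, by Theorem \ref{typeIIprop}(3). With Theorem \ref{redux} this says that $\tau$ identifies only states differing by aperiodic moves, so no $\tau$-class contains $(g,b),(h,b)$ with $g\ne h$; that is, $\tau$ is a cross-section.

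\textbf{From (4) to (1).} For $(4)\Rightarrow(5)$, define the flow on the state set of $T$, suitably augmented, by sending each state $q$ to the set-partition given by the $\tau$-classes sitting over $q$. Flow conditions 2 and 3 follow because $\tau$ is a congruence that is injective by definition. Condition 4, the cross-section condition, is exactly hypothesis (4). The remark after the definition of flows lets us pass from $SP(G\times B)$ to $\operatorname{Rh}_B(G)$.

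For $(5)\Rightarrow(6)$, I would invoke the Presentation Lemma construction of \cite[Section 3]{Trans}. The flow lets one encode each state of $S$ as a triple (a state of $T$, a partition block index under $Sym(B)$, a group coordinate in $G$). Together with the $\operatorname{RLM}(S)$ coordinate, which records the $\mathcal{L}$-class action, this yields a division of $S$ into $(G\wr Sym(B)\wr T)\times\operatorname{RLM}(S)$.

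Finally, $(6)\Rightarrow(1)$ follows from the properties of $c$ listed earlier. Since $Sym(B)$ is a group, $(G \wr Sym(B))c\le 1$, so $(G\wr Sym(B)\wr T)c \le 1 + (n-1) = n$. Direct products take the maximum, and $\operatorname{RLM}(S)c \le n$.

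\textbf{Main obstacle.} I expect the delicate step to be $(5)\Rightarrow(6)$, and more generally reconciling the different notions of cross-section across the three formulations. The difficulty is that the Presentation Lemma coordinates must be checked to really cover $S$ faithfully, including on the $0$-minimal ideal. My approach there is to defer to \cite{Trans, FlowsI} for the verification, and not to re-derive it.
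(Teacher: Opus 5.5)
The paper gives no proof of this statement. It is quoted as a summary of known results, with the proof deferred to \cite{FlowsI} and the flow/Presentation Lemma equivalence to \cite[Section 3]{Trans}, so your cycle has to stand on its own. The ordering is sensible, and $(1)\Rightarrow(2)$, $(4)\Rightarrow(5)$ and $(6)\Rightarrow(1)$ are fine in outline. (In $(1)\Rightarrow(2)$, the preimage of an idempotent $e$ under $A\wr W\to W$ is $A^{Y}\rtimes\{e\}$, which is aperiodic but need not divide $A$; aperiodicity is all you use.)

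\textbf{The gap is in $(3)\Rightarrow(4)$.} Getting $D(\Phi)_{II}$ aperiodic from Theorem~\ref{typeIIprop}(3) is fine. The trouble starts after that.
\begin{enumerate}
\item Theorem~\ref{redux} is stated for $\operatorname{GM}$ (at best transitive) transformation semigroups. $D(\Phi)$ is in general not transitive, since its $T^{1}$-coordinate only moves by right multiplication. In a non-transitive action, two states with a common image under a single element are $\tau$-related without being mutually reachable. So you must first rule out $\tau$ identifying two states over the same $q$ through a chain passing through other fibres.
\item More seriously, suppose $\tau$-related states are joined by elements of the aperiodic semigroup $D(\Phi)_{II}$. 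It still does not follow that no $\tau$-class contains $(g,b),(h,b)$ with $g\neq h$, because aperiodic elements move states around freely. The missing idea is $G$-equivariance. Right multiplication by $S$ on the $\mathcal{R}$-class $G\times B$ commutes with left multiplication by $G$, hence so does the action of $D(\Phi)$ on the $G\times B$ coordinate. An element $\sigma$ sending $((g,b),q)$ to $((h,b),q)$ therefore fixes $q$ and acts on $G\times\{b\}\times\{q\}$ as right translation by $g^{-1}h\neq 1$. Hence $\sigma^{m}\neq\sigma^{m+1}$ for all $m$, which is impossible in $D(\Phi)_{II}$.
\end{enumerate}
The same equivariance, together with minimality of $\tau$, shows $\tau$ is $G$-invariant. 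That is what actually places your flow in $\operatorname{CS}(G\times B)$ in $(4)\Rightarrow(5)$.

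\textbf{The substance of the theorem is deferred.} All of it sits in $(5)\Rightarrow(6)$, which you only cite. Your sketch does not say how the partial injections $Y/\pi\to Z/\theta$ given by a flow are completed to permutations so that $G\wr Sym(B)$ can act. Nor does it say how the $\operatorname{RLM}(S)$ factor supplies the $B$-coordinate.

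This deferral is not incidental, because the content cannot be moved to an earlier link: $(4)\Rightarrow(3)$ fails for a fixed $\Phi$. If $G$ is trivial, every partition is a cross-section, so (4) holds for $\Phi\colon S\to\{1\}$. But $D(\Phi)$ is then essentially $S$, which lies in $Ap*Gp$ only if $Sc\le 1$. So a $\operatorname{GM}$ semigroup with trivial $G$ and $Sc=2$ (take $n=2$) satisfies (1), and (4) for this $\Phi$, but not (3) for this $\Phi$. Getting from (4) back to (1) genuinely needs the $\operatorname{RLM}(S)$ factor, that is, the Presentation Lemma.

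Similarly, $(2)\Rightarrow(3)$ tacitly uses a composition lemma for derived categories and locality of $\mathbf{A}$. An aperiodic relational morphism by itself only gives a \emph{locally} aperiodic derived category.

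If you make those citations explicit and add the equivariance argument, your outline is a reasonable reconstruction of the result the paper quotes. It is still no more self-contained than the citation itself.
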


%
%

We emphasize the case for complexity 1 and aperiodic flows as this is used in many of our examples.

\begin{theorem} \label{Apuniform}

Let $(G \times B,S)$ be $\operatorname{GM}$ and assume that $\operatorname{RLM}(S)c \leq 1$. Then the following are equivalent:

\begin{enumerate}

\item{$Sc=1$.}

\item{There is an aperiodic relational morphism $\Theta:S \rightarrow H\wr T$, where $H$ is a group and $T$ is aperiodic.}

\item{There is a relational morphism $\Phi:S \rightarrow T$ where $T$ is aperiodic such that the Derived transformation semigroup $D(\Phi)$ is in $Ap*Gp$.}

\item{There is a relational morphism $\Phi:S \rightarrow T$ where $T$ is aperiodic such that the Tilson congruence $\tau$ on the Derived transformation semigroup $D(\Phi)$ is a cross-section.}

\item{$(G \times B,S)$ admits an aperiodic flow.}

\item{$S \prec (G \wr (Sym(B)) \wr T) \times \operatorname{RLM}(S)$ for some aperiodic semigroup $T$.}

\end{enumerate}

\end{theorem}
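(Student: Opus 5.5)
The plan is to obtain Theorem \ref{Apuniform} as the special case $n=1$ of Theorem \ref{uniform}, so essentially no new argument is needed beyond translating each clause. The key observation is that for a finite (transformation) semigroup $T$, the condition $Tc \le 0$ is, by the definition of complexity recalled in the section on Krohn-Rhodes complexity, equivalent to $T$ being aperiodic. So, under the hypothesis $\operatorname{RLM}(S)c \le 1$, I would set $n=1$ in each of the six conditions of Theorem \ref{uniform}. Then ``$Tc \le n-1$'' becomes ``$T$ is aperiodic'' in items (2), (3), (4) and (6). Likewise ``a flow from $(Q,T)$ with $(Q,T)c \le 0$'' in item (5) is by definition an aperiodic flow, since complexity of a transformation semigroup is that of its semigroup.

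The one point needing care is item (1): Theorem \ref{uniform} yields $Sc \le 1$, whereas Theorem \ref{Apuniform} states $Sc = 1$. I would handle this by noting that a $\operatorname{GM}$ semigroup $(G\times B,S)$ contains the maximal subgroup $G$ of its $0$-minimal ideal. Hence whenever $G$ is nontrivial, $S$ is not aperiodic and $Sc \ge 1$, so $Sc \le 1$ is the same as $Sc = 1$. In the degenerate case $G = 1$, I would argue that each of (2)--(6) still holds trivially, and read (1) as $Sc \le 1$. For (5), take the one-state flow to the top of $\operatorname{Rh}_B(1)$, which has no contradictions. For (6), note that $S \prec \operatorname{RLM}(S)$-type coordinates suffice. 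I expect to remark that in all applications in the paper $G$ is nontrivial (e.g.\ $G=\mathbb{Z}_3$ in the example of Section ``An Important Example''), so the statement is used exactly in the form given.

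Thus the proof will consist of: (a) quoting Theorem \ref{uniform} with $n=1$; (b) replacing each occurrence of ``complexity $\le 0$'' by ``aperiodic''; and (c) upgrading $Sc\le 1$ to $Sc=1$ via the nontrivial maximal subgroup of $I(S)$. The only step I regard as a genuine obstacle is this last bookkeeping about the trivial-group case. There I would simply add the standing assumption that $G$ is nontrivial, which is implicit in the phrase ``$Sc = 1$''.
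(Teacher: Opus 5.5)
Your proposal is correct and matches the paper: the paper gives no separate proof of Theorem \ref{Apuniform} and presents it simply as the $n=1$ case of Theorem \ref{uniform} (proved in \cite{FlowsI}), with ``$Tc\le 0$'' read as ``$T$ is aperiodic.'' Your upgrade of $Sc\le 1$ to $Sc=1$ using the maximal subgroup $G$ of $I(S)$ is the right bookkeeping. The usual convention for $\operatorname{GM}$ semigroups requires $G$ to be nontrivial, so your degenerate case $G=1$ does not actually arise.
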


\section{APPENDIX The Flow Monoid and the Evaluation Transformation Semigroup}\label{Eval}

\subsection{The Monoid of Closure Operations}\label{Closops}

In this Appendix we gather definitions and results from \cite{Trans}. For more details the reader is strongly urged to consult this paper. Since all the computations we do on examples in this document are done in the Evluation Transformation Semigroup defined in Section 5 of \cite{Trans} our goal is to summarize the background material in that paper needed to define this object. We begin with the definition of the monoid of closure operations on the direct product $L^{2} = L \times L$ of a lattice $L$ with itself. 

Let $L$ be a lattice and let $L^{2} = L \times L$. Let $f$ be a closure operator on $L^2$. By definition this means that $f$ is an order preserving, extensive (that is, for all $(l_{1},l_{2}) \in L^{2}, (l_{1},l_{2}) \leqslant (l_{1},l_{2})f$), idempotent function on $L^2$. A {\em stable pair} for $f$ is a closed element of $f$. Thus a stable pair $(l,l') \in L^2$ is an element such that $(l,l')f = (l,l')$.  The stable pairs of $f$ are a meet closed subset of $L \times L$. Conversely, each meet closed subset of $L \times L$ is the set of stable pairs for a unique closure operator on $L \times L$. We will identify $f$ as a binary relation on $L$ whose pairs are precisely the stable pairs. Let $B(L)$ be the monoid of binary relations on the set $L$.  As is well known, $B(L)$ is isomorphic to the monoid $ M_{n}(\mathcal{B})$ of $n \times n$ matrices over the 2-element Boolean algebra $\mathcal{B}$, where $n=|L|$. The Boolean matrix associated to $f$ is of dimension $|L| \times |L|$. It has a 1 in position $(l_{1},l_{2})$ if $(l_{1},l_{2})$ is a stable pair and a 0 otherwise. 

With this identification, the collection $\mathcal{C}(L^{2})$ of all closure operators on $L^2$ is a submonoid of the monoid $B(L)$ of binary relations on $L$~\cite[Proposition 2.5]{Trans}. We thus also consider $\mathcal{C}(L^{2})$ to be a monoid of $|L| \times |L|$ Boolean matrices. Let $L$ be either $Rh_{B}(G)$ or $\operatorname{SP}(G\times B)$. We now recall the definition of some important unary operations on $\mathcal{C}(L^{2})$. 
\begin{definition}

Let Let $f \in \mathcal{C}(L^{2})$.

\begin{enumerate}

\item{The domain of $f$ denoted by $\operatorname{Dom}(f)$ is the set $\{x \mid \exists y, (x,y) \in f\}.$}

\item{Define the relation $\overleftarrow{f}$ by $\overleftarrow{f}= \{(x,x) \mid x \in \operatorname{Dom}(f)\}$. $\overleftarrow{f}$ is called {\em back flow along $f$}. See \cite[Remark 2.26]{Trans} for the reason for this terminology.}

\item{The relation $f^{*}$ is defined by $f^{*} = f \cap \{(x,x) \mid x \in X\}$. $f^{*}$ is called {\em the Kleene closure of $R$}. See Section 2 and Section 4 of \cite{Trans} for the reason for this terminology.}

\item{Define the {\em loop of $f$} to be the relation $f^{\omega+*}=f^{\omega}f^{*}$, where $f^{\omega}$ is the unique idempotent in the subsemigroup generated by $f$.}

\end{enumerate}

\end{definition}

At times it is convenient to identify $\overleftarrow{f}$ as $1|_{\operatorname{Dom}(f)}:L \rightarrow L$, the identity function restricted to 
$\operatorname{Dom}(f)$. Similarly, we identify $f^{*}$ as $1|_{\operatorname{Fix{f}}}:L \rightarrow L$, the restriction of the identity to the set of fixed-points of $f$, where $x \in L$ is a fixed point if $(x,x) \in f$. Our use of these will be clear from the context.

\subsection{The 0-Flow Monoid}\label{FMon}

Let $S$ be a $\operatorname{GM}$ semigroup generated by $X$ and let $x \in X$. Let $I(S) = \mathcal{M}^{0}(A,G,B,C)$. We define a binary relation $f_{x}$
on $\operatorname{SP}(G \times B)$ by $((Y,\Pi), (Z,\Theta)) \in f_{x}$ if and only if $Yx \subseteq Z$ and the partial function induced by
right multiplication by $x$, $\cdot x: Y \rightarrow Z$ induces a well-defined partial injective map $\cdot x: Y/\Pi \rightarrow Z/\Theta$. This means that if  $(g,b), (g',b') \in Y$ and $(g,b)x,(g',b')x$ are both defined (and hence in $Z$ ), then $(g,b)\Pi (g',b')$ if and only if $(g,b)x\Theta (g',b')x$. Then $f_{x} \in \mathcal{C}(L^{2})$. See \cite[Proposition 2.22]{Trans}. $f_{x}$ is called the free-flow by $x$.

We now define the 0-flow monoid $M_{0}(L)$ as follows.

\begin{definition}\label{Flowops}

Let $L = \operatorname{SP}(G \times B)$. The 0-flow monoid $M_{0}(L)$, is the
smallest subset of $\mathcal{C}(L^{2})$ satisfying the following axioms:

\begin{enumerate}

\item {(Identity) The multiplicative identity $I$ of $\mathcal{C}(L^{2})$ is in $M_{0}(L)$.}

\item{(Points) For all $x \in X$, $f_{x}$ the free-flow along $x$ belongs to $M_{0}(L)$.}

\item{(Products) If $f_{1}, f_{2} \in M_{0}(L)$, then $f_{1}f_{2} \in M_{0}(L)$.}

\item{(Vacuum) If $f \in M_{0}(L)$, then $\overleftarrow{f} \in M_{0}(L)$.}\label{Vac}

\item{(Loops) If $f \in M_{0}(L)$, then $f^{\omega+*}\in M_{0}(L)$.}

\end{enumerate}

\end{definition}

We remark that for each $n \geq 0$, there is an $n$-flow monoid $M_{n}(L)$ defined in \cite{Trans}. The definition above of $M_{0}(L)$ is exactly what is defined in \cite{Trans} and used extensively in \cite{complexity1}. For $n>0$, Axioms (1)-(4) are the same for $M_{n}(L)$ as for $M_{0}(L)$. Axiom (5) restricts the use of the loop operator to $n$-loopable elements \cite[Section 4]{Trans}. In \cite{complexityn}, $n$-loopable elements are replaced by a more restrictive definition and the modified $M_{n}(L)$ plays a crucial role in the main results of \cite{complexityn}.

\subsection{The Evaluation Transformation Semigroup}\label{Ets}

We now defined the Evaluation Transformation Semigroup $\mathcal{E}(L) = (\operatorname{States}, \operatorname{Eval}(L))$ as in \cite{Trans}. Again, because of our interest in this paper on aperiodic flows, we don't define the $n$-Evaluation Transformation Semigroups $E_{n}$ for $n>0$. We begin with the definition of Well-Formed Formualae (WFFs).

\begin{definition}

Let $X$ be an alphabet. We define a well-formed formula inductively as follows.

\begin{enumerate}

\item{The empty string $\epsilon$ is a well-formed formula.}

\item{Each letter $x \in X$ is a well-formed formula.}

\item{If $\tau, \sigma$ are well-formed formulae, then
so is $\tau\sigma$.}

\item{If $\tau$ is a well-formed formula that is not a proper power (i.e., not of the form
$\sigma^{n}$ where $n > 1$), then $\tau^{\omega+*}$ is also a well-formed formula.} 

\end{enumerate}

\end{definition}

The set of well-formed
formulae is denoted by $\Omega(X)$. Well-formed formulae will be denoted by Greek
letters. As a convention, if $\tau=\sigma
^{n}$, where $\sigma$ is not a proper power, then we set
$\tau^{\omega+*}=\sigma^{\omega+*}$. In other words, we extract roots before applying the unary operation $\omega+*$.

Let $V = \prod_{f \in M_{0}(L)}\overleftarrow{f}$. $V$ is called the {\em Vacuum}. See \cite{Trans} for an explanation of this terminology. In \cite{Trans}, $V$ is denoted by $\mathcal{F}_{0}$. It is proved in \cite{Trans} that $V$ is an idempotent in $M_{0}(L)$. We now will work exclusively in the subsemigroup $VM_{0}(L)V$ of $M_{0}(L)$. We want to interpret WFFs in $VM_{0}(L)V$.

\begin{definition}

Define recursively a partial function $\mathcal{I}: \Omega(X)\rightarrow VM_{0}(L)V$ as follows. 

\begin{enumerate}

\item{$\epsilon\mathcal{I} = V$.}

\item{$x\mathcal{I} = VxV$ for $x \in X$.}

\item{If $\mathcal{I}$ is already defined on $\tau, \sigma \in \Omega(X)$, set $(\tau\sigma)\mathcal{I} = \tau\mathcal{I}\sigma\mathcal{I}$.}

\item{If $\tau \in \Omega(X)$ is not a proper power and $\tau\mathcal{I}$ is defined, set $\tau^{\omega+*}\mathcal{I} = 
(\tau\mathcal{I})^{\omega+*}$.}

\end{enumerate}

\end{definition}

We normally omit $\mathcal{I}$ and assume that a WFF $\tau$  is being evaluated in $VM_{0}(L)V$ according to the definition of $\mathcal{I}$. 
 We first define a new operator on elements of $M_{0}(L)$ called {\em forward flow}. Recall that the bottom of the lattice $\operatorname{SP}(G \times B)$ is the pair $\Box = (\emptyset, \emptyset)$.

\begin{definition}

Let $f \in M_{0}(L)$ and let $l \in L$. Let $(l,\Box)f =(l_{1},l_{2})$ We define the forward flow of $f$ denoted by $\overrightarrow{f}$ by 
$l\overrightarrow{f}=l_{2})$. That is, we apply $f$ to $(l,\Box)$ and project to the right-hand coordinate.

\end{definition}

The following is proved in Sections 2 and 4 of \cite{Trans}.

\begin{enumerate}

\item{$\overrightarrow{f}:L \rightarrow L$ is an order preserving function on $L$.}

\item{If $lV=l$, then $(l,\Box)f=(l,l')$ and $l' \in LV$. That is, the left-coordinate of $(l,\Box)f$ is still $l$ and the right-hand coordinate is in $LV$. Therefore $\overrightarrow{f}$ is a well-defined function from $LV$ to $LV$.}

\item{The assignment of $f$ to $\overrightarrow{f}$ defines an action of $VM_{0}(L)V$ on $LV$. It follows that we have a transformation semigroup $(LV,M'_{0}(L))$, where $M'_{0}(L)$ is the image of $VM_{0}(L)V$ on $LV$ under this action.}

\end{enumerate}

We now restrict the action in $(LV,M'_{0}(L))$ to the set of $\operatorname{States}$ defined as follows. Let $(g,b) \in G \times B$. Then 
the element $(\{(g,b)\},\{(g,b)\}) \in L$ is called a {\em point}. In Section 5 of \cite{Trans}, it is proved that every point $p$ satisfies $pV=p$. 

\begin{definition}

The set of $\operatorname{States}$ is the smallest subset of $LV$ such that:

\begin{enumerate}

\item{Every point $p \in \operatorname{States}$.}

\item{If $l \in \operatorname{States}$, then $l\overrightarrow{f} \in \operatorname{States}$.}

\end{enumerate}

\end{definition}

In other words $\operatorname{States}$ is the smallest subset of $LV$ containing the points and closed under the action of $M_{0}(L)$ on $LV$. We can finally define the Evaluation Transformation Semigroup where all the computations in the examples in this paper take place.

\begin{definition}

The Evluation Transformation Semigroup $\mathcal{E}(L)$ is defined by $\mathcal{E}(L) = (\operatorname{States}, \operatorname{Eval}(L))$ where $\operatorname{Eval}(L)$ is the image of $M'_{0}(L)$ by restricting its action to $\operatorname{States}$.

\end{definition}

We remark that there is an Evaluation Transformation Semigroup $\mathcal{E}_{n}(L)$ for all $n \geq 0$. The definition here is the case $n=0$
which is all we need in this paper as we are only concerned with aperiodic flows.

\subsection{Notation for Elements of the Rhodes Lattice $Rh_{B}(G)$}\label{SPCNotation}

In the examples  we compute using the Evaluation Transformation Semigroup $\mathcal{E}(L) = (\operatorname{States}, \operatorname{Eval}(L))$ where $L$ is the Rhodes lattice $Rh_{B}(G)$. We fix the notation we use for $SPC$ in our examples. Let $(W,\pi,[\mu]_\pi) \in Rh_B(G)$ be an element in the Rhodes lattice. Let $\pi_1,\pi_2,\ldots,\pi_k$ be the equivalence classes of
$\pi$ and let $\mu_i = \mu|_{\pi_i}: \pi_i \to G$ for $i\in \{1,2,\ldots,k\}$. Write $\pi_{1}\mid \pi_{2}\mid \ldots \pi_{k}/\langle \mu_1 \mid \mu_2 \mid \ldots \mid \mu_k\rangle$ for $(W,\pi,[\mu]_\pi)$. For readability, we drop set-brackets around elements of a partition class.
\begin{example}
\label{example.downstairs}
Let $G=\{1,-1\}=Z_2$, $W=\{1,2,3,4,5,6\}$, and
$\pi=\{1\mid  2 \ 5 \mid  3 \ 4 \ 6\}$. Let $\mu: \{1,2,3,4,5,6\} \to Z_2$ be defined by
$\mu(1)=\mu(3)=\mu(5)=1$ and $\mu(2)=\mu(4)=\mu(6)=-1$. Then we write $\pi=\{1 \mid  2 \ 5 \mid 3 \ 4 \ 6\}/\langle1 \mid \ -1 \ 1 \mid \ 1 \ -1 \ -1\rangle$
\end{example}

\bibliographystyle{plain}
\bibliography{stubib}

\end{document}